\def\rr{{\mathbb R}}
\def\rn{{{\rr}^n}}
\def\nn{{\mathbb N}}
\def\cc{{\mathbb C}}
\def\cs{{\mathcal S}}
\def\cx{{\mathcal X}}
\def\cf{{\mathcal F}}
\def\cl{{\mathcal L}}
\def\cq{{\mathcal Q}}
\def\cm{{\mathcal M}}
\def\fz{\infty}
\def\ext{{\mathop\mathrm{\,ext\,}}}
\def\dist{{\mathop\mathrm{\,dist\,}}}
\def\lz{\lambda}
\def\hs{\hspace{0.3cm}}
\def\ls{\lesssim}
\def\gs{\gtrsim}
\def\gfz{\genfrac{}{}{0pt}{}}
\def\rn{{{\mathbb R}^n}}
\def\rr{{\mathbb R}}
\def\cc{{\mathbb C}}
\def\nn{{\mathbb N}}
\def\cm{{\mathcal M}}
\def\hs{\hspace{0.3cm}}
\def\fz{\infty}
\def\supp{{\mathop\mathrm{\,supp\,}}}
\def\dist{{\mathop\mathrm{\,dist\,}}}
\def\diam{{\mathop\mathrm{\,diam\,}}}
\def\lz{\lambda}
\def\ls{\lesssim}
\def\gs{\gtrsim}
\def\diam{{\mathop\mathrm{\,diam\,}}}
\def\dint{\displaystyle\int}
\def\dsup{\displaystyle\sup}
\def\lf{\left}
\newcommand{\sbt}{{b}_{p,q}^{s,\tau}(\mathbb{R}^n)}
\newcommand{\sft}{{f}_{p,q}^{s,\tau}(\mathbb{R}^n)}
\newcommand{\per}{{\rm per\,}}
\newcommand{\cfi}{{\mathcal{F}}^{-1}}
\def\hs{\hspace{0.3cm}}
\newtheorem{theorem}{Theorem}[section]
\newtheorem{lemma}[theorem]{Lemma}
\newtheorem{corollary}[theorem]{Corollary}
\newtheorem{proposition}[theorem]{Proposition}
\theoremstyle{definition}
\newtheorem{remark}[theorem]{Remark}
\newtheorem{definition}[theorem]{Definition}
\renewcommand{\appendix}{\par
	\setcounter{section}{0}%
	\setcounter{subsection}{0}%
	\setcounter{subsubsection}{0}%
	\gdef\thesection{\@Alph\c@section}%
	\gdef\thesubsection{\@Alph\c@section.\@arabic\c@subsection}%
	\gdef\theHsection{\@Alph\c@section.}%
	\gdef\theHsubsection{\@Alph\c@section.\@arabic\c@subsection}%
	\csname appendixmore\endcsname
}
\numberwithin{equation}{section}
\begin{document}
	\title{\bf\Large Regularity of Characteristic Functions
in Besov-Type and Triebel--Lizorkin-Type Spaces
	\footnotetext{\hspace{-0.35cm} 2020 {\it
			Mathematics Subject Classification}. Primary 46E35;
		Secondary   42C15.
		\endgraf {\it Key words and phrases.}   Besov(-type) space,
  Triebel--Lizorkin(-type) space,
	characteristic function, thick domain, weakly exterior thick domain,
orthonormal Haar-wavelet system.
		\endgraf  This project is partially supported by the National
Natural Science Foundation of China (Grant Nos. 12122102, 12431006 and 12371093)
and the Fundamental Research Funds
for the Central Universities (Grant No. 2233300008).}}
\author{Wen Yuan, Winfried Sickel\footnote{Corresponding author,
E-mail: \texttt{winfried.sickel@uni-jena.de}/{\color{red} November 21,
2024}/Revised version.}\ \ and Dachun Yang}
\date{}
\maketitle

\vspace{-0.8cm}

\begin{center}
\begin{minipage}{13cm}
{\small {\bf Abstract}\quad
In this article, the authors determine the optimal regularity of characteristic functions in  Besov-type and Triebel--Lizorkin-type spaces under
restrictions on the measure of the $\delta$-neighborhoods of the boundary. In particular,
the  necessary and sufficient
conditions for the membership in
these spaces of characteristic functions of  the snowflake domain and also
some  spiral type domains are obtained.
}
\end{minipage}
\end{center}

%
%


\section{Introduction}


The study on the regularity of  characteristic functions
is known to be an important part of the pointwise multiplier
problem for function spaces (see, for instance, \cite{Gu1,Gu2,MS09,RS,t06,LSYY24})
and  has also  found
several applications in partial differential equations such as
the Calder\'on inverse problem (see \cite{FR}).
Note that, for any $p\in[1,\infty]$,
the characteristic  function ${\bf 1}_E$ of any measurable set $E\subset \mathbb{R}^n$, $0 < |E|<\infty$,
belongs to  $L^p(\mathbb{R}^n)$ but does not belong to the Sobolev space $W^{1,p}(\mathbb{R}^n)$. Thus, compared with Lebesgue spaces and Sobolev spaces,
Besov and Triebel--Lizorkin spaces are more natural
spaces to measure the regularity of  characteristic functions
because they have more elaborate structure.
It is well known that the characteristic functions ${\bf 1}_Q, {\bf 1}_B$ of a cube $Q$
or a ball $B$ in $\mathbb{R}^n$ belong to all Besov spaces $B^{1/p}_{p,\infty}(\mathbb{R}^n)$, $p\in[1,\infty]$.
Additionally they belong to $BV (\mathbb{R}^n) \cap L^1 (\mathbb{R}^n)$, which is in case $n=1$ an improvement and otherwise an independent statement.
These results are sharp in the following sense:
there is no other Besov space $B^s_{p,q} (\mathbb{R}^n)$ or Triebel--Lizorkin space $F^s_{p,q} (\mathbb{R}^n)$ such that
\[
 {\bf 1}_Q, {\bf 1}_B \in A^s_{p,q} (\mathbb{R}^n), \quad A \in \{B,F\}\quad\mbox{and}\quad
 A^s_{p,q} (\mathbb{R}^n) \hookrightarrow B^{1/p}_{p, \infty}(\mathbb{R}^n).
\]
In addition we know that
$
 {\bf 1}_Q, {\bf 1}_B \not\in \mathring{B}^{1/p}_{p,\infty}(\mathbb{R}^n),
$
where $\mathring{B}^{1/p}_{p,\infty} (\mathbb{R}^n)$ denotes the closure of Schwartz functions in ${B}^{1/p}_{p,\infty} (\mathbb{R}^n)$.
Now we switch to the Morrey generalizations of these spaces denoted by
$A^{s,\tau}_{p,q} (\mathbb{R}^n)$, $A\in \{B,F\} $.
Recall that $A^{s,0}_{p,q} (\mathbb{R}^n) = A^s_{p,q}(\mathbb{R}^n)$ with $A\in \{B,F\} $.
As general references for $B^{s,\tau}_{p,q}(\mathbb{R}^n)$ and $F^{s,\tau}_{p,q}(\mathbb{R}^n)$ the booklet by the authors \cite{ysy},
the books by Triebel \cite{t13b,t14}, and the survey by Haroske and Triebel \cite{HT23} can be used;
see also \cite{s011,s011a,yy1,yy2,YHSY,HMS16}.
Locally these so-called   Besov-type and Triebel--Lizorkin-type spaces are monotone with respect to
the Morrey parameter $\tau$, that is,
$f \in A^{s,\tau_0}_{p,q} (\mathbb{R}^n)$ with $\supp f$ compact  implies
$f \in A^{s,\tau_1}_{p,q} (\mathbb{R}^n)$ for all $\tau_0,\tau_1\in [0,\infty)$ satisfying $\tau_1 < \tau_0$.
It has been a surprise for us that
\[
 {\bf 1}_Q,   {\bf 1}_B \in B^{1/p,\tau}_{p,\infty} (\mathbb{R}^n), \qquad
 p\in[1, \infty), \quad \tau\in \left[0, \frac{n-1}{np}\right],
\]
see Propositions \ref{charact1} and \ref{charact2}.
In view of the just mentioned monotonicity property this is a real improvement. Next we go one step further.
For $\tau=0$ we know that   all the characteristic functions
having the same smoothness as ${\bf 1}_Q$ or ${\bf 1}_B$ are those  belonging to $BV (\mathbb{R}^n) \cap L^1 (\mathbb{R}^n)$.
For any $\tau\in(0,\infty)$ this is no longer true.
There exists a compactly supported characteristic function  ${\bf 1}_E$
such that
\[
 {\bf 1}_E \in B^{1/p}_{p,\infty}(\mathbb{R}^n) = B^{1/p,0}_{p,\infty} (\mathbb{R}^n)  \quad \mbox{for\ all}\  p\in[1, \infty],
\]
and
\[
 {\bf 1}_E \not\in B^{1/p, \tau}_{p,\infty}(\mathbb{R}^n) \quad
 \mbox{for all}\   p \in [1,\infty)\
 \mbox{and all}\  \tau\in(0,\infty),
\]
see Theorems \ref{spirale0} and \ref{spirale1} below.
To make this a bit more transparent we introduce the notion of
the maximal regularity for characteristic functions, see Definition \ref{MAX}.
We have found two sufficient conditions for sets $E \subset \mathbb{R}^n$
such that the associated characteristic function has the maximal regularity.
One is using the wavelet coefficients from ${\bf 1}_E$, the other is in terms of estimates of the first order differences
\[
 \Delta_h^1 {\bf 1}_E (x):= {\bf 1}_E (x+h)-{\bf 1}_E (x), \quad x,h \in \mathbb{R}^n,
 \: |h|\le 1.
\]
It would be desirable to derive a characterization as in case of $\tau=0$, see Proposition \ref{prima0}.

As a second main topic we have studied also sets $E$ with ${\bf 1}_E$
not being of the maximal regularity.
By restricting to bounded weakly thick domains $E$
we have been able to derive necessary and sufficient conditions
in terms of the Lebesgue measure of the sets
\[
(\partial E)^\delta :=  \left\{x \in \mathbb{R}^n:~ {\mathrm{dist}}\,(x,\partial E)< \delta\right\},   \quad   \delta \in(0, 1),
\]
and guaranteeing  ${\bf 1}_E \in F^{s,\tau}_{p,q}(\mathbb{R}^n)$.
Our main example here is the snowflake domain. Also of certain interest in this context is a family of spiral-type domains.
In both cases we have been able to describe the necessary and sufficient conditions for the
membership in $F^{s,\tau}_{p,q}(\mathbb{R}^n)$.

The above described results make clear that
characteristic functions of open sets might have quite different regularity in the framework of type spaces but being of an identical regularity in the framework of classical Besov and Triebel--Lizorkin spaces.
For $\tau =0$ most of the results are known.
We refer to
Strichartz \cite{Str}, Jaffard and Meyer \cite{JM}, Runst and  Sickel \cite{RS},
Triebel \cite{t03,t06,t20}, Faraco and Rogers \cite{FR},
Schneider and Vybiral \cite{SV},  Lebedev \cite{Leb2013},
Ko and Lee \cite{KL2017},
Haroske and Triebel  \cite{HT23}, and Sickel \cite{s99,s99b,Si21,Si23}.

The use of the refined smoothness classes (Besov-type and Triebel--Lizorkin-type spaces) leads to some new insights. E.g., maximal regularity for Besov spaces will have a different meaning than maximal regularity for Besov-type spaces and this will have applications with respect to the pointwise multiplier problem for Besov spaces, see Theorem 4.14 below.
But also the other way around makes sense.
In general, the study of simple functions and their regularity properties within
the classes $B^{s,\tau}_{p,q}(\mathbb{R}^n)$ and  $F^{s,\tau}_{p,q}(\mathbb{R}^n)$ leads to a better understanding of the role of the four parameters $s,p,q$, and $\tau$.
Of particular interest here is the role of $\tau$, the Morrey parameter.
Also twenty years after the introduction of those spaces its influence is not well understood.
Here and later we shall concentrate   on
 the case of Banach spaces, i.e., to values $p,q \in [1, \infty]$ ($p<\infty$ in the $F$-case). Most often we restrict $\tau$ to the interval $[0,1/p]$.
This will lead to some simplifications of the results and proofs.
In addition we shall restrict $s$ to  positive values.

Recall that for $s>0$ the Besov spaces $ B^{s}_{\infty,\infty} (\mathbb{R}^n)$ belong to the scale of H\"older--Zygmund spaces.
 The following chain of embeddings is known since  a few years:
\begin{equation}\label{new1}
A^{s +n\tau, {\rm loc}}_{p,q} (\mathbb{R}^n) \subset
A^{s,\tau}_{p,q} (\mathbb{R}^n) \hookrightarrow B^{s+ n\tau- n/p}_{\infty,\infty} (\mathbb{R}^n),
\end{equation}
see Proposition \ref{embc} below. Here $A^{s + n\tau, {\rm loc}}_{p,q} (\mathbb{R}^n)$ is the collection of all tempered distributions $f$ such that the products $f\cdot \varphi$ belong to $A^{s + n\tau}_{p,q} (\mathbb{R}^n)$ for any $\varphi \in C_{\rm c}^\infty (\mathbb{R}^n)$.
Hence, the classical H\"older--Zygmund regularity of a function increases with $\tau$ or more exactly, $s$ and $n\tau$
have the same influence on this type of smoothness.
Because of
$
A^{s +n\tau}_{p,q} (\mathbb{R}^n) \hookrightarrow B^{s+ n\tau- n/p}_{\infty,\infty} (\mathbb{R}^n)
$
one could conjecture that $n\tau$, at least locally, contributes to the
smoothness as $s$ itself also in a wider sense
(now including spaces with a $p$-dependence).
One may even ask whether it is possible to replace $A^{s +n\tau}_{p,q} (\mathbb{R}^n)$ locally by
the larger space $A^{s,\tau}_{p,q} (\mathbb{R}^n) $ in certain problems.
We know at least one positive  example for such a statement.
For $p\in[1, \infty)$ and $n=1$ the Besov space $B^{s+\tau}_{p,p} (\mathbb{R})$ allows a characterization in terms of  Haar wavelets like in  Proposition \ref{haarprop} if and only if
the locally larger space $B^{s,\tau}_{p,p} (\mathbb{R})$ allows a characterization in terms of  Haar wavelets, see \cite{syyz}.

No such general relation will be true in case of the regularity of characteristic functions.
It will be enough to consider  the regularity of ${\bf 1}_Q$ and ${\bf 1}_B$. In Propositions \ref{charact1} and \ref{charact2}
we show the following. Let  $p\in[1, \infty)$.
\begin{itemize}
\item If $\tau =0$, then ${\bf 1}_Q, {\bf 1}_B \in B^{1/p,0}_{p,\infty} (\mathbb{R}^n)$.
\item If $\tau \in(0, \frac{n-1}{np}]$, then ${\bf 1}_Q, {\bf 1}_B \in
B^{1/p,\tau}_{p,\infty} (\mathbb{R}^n)$.
\item  If $\tau\in(\frac{n-1}{np},\infty)$, then
${\bf 1}_Q, {\bf 1}_B  \in B^{s,\tau}_{p,1} (\mathbb{R}^n)$ with  $s=n   (\frac 1p - \tau )$.
\end{itemize}
Because of $ {\bf 1}_Q, {\bf 1}_B \not\in B^{\frac 1p,0}_{p,q} (\mathbb{R}^n)$, $q< \infty$ (see Proposition \ref{gul}),
the fact that ${\bf 1}_Q, {\bf 1}_B \in
B^{\frac1p,\frac{n-1}{np}}_{p,\infty} (\mathbb{R}^n)$ will not imply   ${\bf 1}_Q, {\bf 1}_B \in
B^{\frac1p + \frac{n-1}{p}}_{p,\infty} (\mathbb{R}^n)$, even
 ${\bf 1}_Q, {\bf 1}_B \in
B^{\frac1p + \varepsilon}_{p,\infty} (\mathbb{R}^n)$ is wrong for any $\varepsilon >0$.
In case of these two characteristic functions it is therefore obvious that the Morrey parameter $\tau$ will not improve the smoothness in the scale of classical Besov spaces itself.

In Theorems \ref{spirale0} and \ref{spirale1} we
construct  a bounded spiral-type domain $E \subset \mathbb{R}^2$ such that ${\bf 1}_E \in
B^{s,\tau}_{p, \infty} (\mathbb{R}^2)$ if and only
if $\tau + s \le 1/p$. Hence, the parameter $s$ is
dominated by the difference $\frac 1p - \tau$. In case of
${\bf 1}_Q, {\bf 1}_B$ with $n=2$ and $\tau < 1/p$ the parameter $s$ is dominated by
$2(\frac 1p - \tau)$. But in case $\tau =0$ we know that all these domains belong to $B^{\frac1p}_{p,\infty} (\mathbb{R}^2)$.
So in view of the Besov-type spaces with $0 < \tau < 1/p$
the regularity of ${\bf 1}_E$ is much less than that of
${\bf 1}_Q, {\bf 1}_B$, but in the framework of Besov spaces
they have identical regularity.

Finally, in  Theorems \ref{spiral4} and \ref{spiral5} we discuss  a family $\{E_\alpha\}_{\alpha\in(0,\infty)}$ of bounded spiral-type domains. Concerning the regularity we will show that  the following three
cases for $  \tau \in[0,1/p]$ will occur:
\begin{itemize}
 \item  Let $\alpha \in(0, 1)$. Then
	${\bf 1}_{E_\alpha} \in B^{s,\tau}_{p, \infty} (\mathbb{R}^2)$  if and only if
		$s\le \frac{2\alpha}{\alpha+1} \Big(\frac 1p - \tau\Big)$.
\item Let $\alpha =1$. Then ${\bf 1}_{E_\alpha} \in B^{s,\tau}_{p, \infty} (\mathbb{R}^2)$ if and only if $s \le \frac1p -\tau$ and $s<\frac1p$.
\item Let $\alpha \in(1,\infty)$. Then
	${\bf 1}_{E_\alpha} \in B^{s,\tau}_{p, \infty} (\mathbb{R}^2)$  if and only if
		$s\le \frac{2\alpha}{\alpha+1} \Big(\frac 1p - \tau\Big)$ and $s\le 1/p$.
\end{itemize}
As consequences we obtain
\begin{itemize}
	\item The smoothness of  ${\bf 1}_{E_\alpha}$ increases with $\alpha$.
	\item For any $\varepsilon \in(0,\infty)$ and any $\tau \in (0,1/p)$ there exists  some $\alpha \in (0,\infty)$
	such that ${\bf 1}_{E_\alpha} \not\in B^{s,\tau}_{p, \infty} (\mathbb{R}^2).$
	\item Let us fix $p \in [1,\infty)$. We assume that $\tau \in (0,\frac 1p)$ and $s\in (0, 2(\frac 1p -\tau))$.
Then there exists some $\alpha \in (0,\infty)$ such that
\[
{\bf 1}_{E_\alpha} \in B^{s,\tau}_{p, \infty} (\mathbb{R}^2) \qquad \mbox{and}\qquad  {\bf 1}_{E_\alpha} \not\in
\bigcup_{\gfz{\varepsilon,\delta \ge 0}{\varepsilon + \delta >0}}\, B^{s+\varepsilon,\tau+\delta}_{p, \infty} (\mathbb{R}^2)\, .
\]
\item
There is only a rather vague summary. In all examples we have seen so far, there is a relation of the form
$
s+ \beta \tau = f(p)
$
to determine the best possible smoothness within type spaces.
Here $\beta \in [0,n]$ is a factor and $f:~[1,\infty) \mapsto (0,\infty) $ a continuous function and both  may depend on the concrete domain $E$.
If $\beta$ is positive, then increasing $s$ means decreasing $\tau$ and vice versa. If we choose $s=1/p$ and $\tau \in (0,\infty)$, then
a nontrivial characteristic function belonging to $A^{s,\tau}_{p,\infty}(\rn)$ will never be an element of $A^{s+n\tau}_{p,\infty} (\rn)$, see \eqref{new1}.
\end{itemize}

All these assertions will be supplemented by results involving the spaces $F^{s,\tau}_{p, q} (\mathbb{R}^n)$.
As said before, we shall also treat the regularity of the characteristic function of the snowflake domain, probably the most famous fractal.

We would like to   mention that our main motivation,
to study the regularity of characteristic functions
in Besov-type and Triebel--Lizorkin-type spaces, originates from some pointwise multiplier problems for Besov spaces, where Besov-type spaces
play an essential role.
This will be explained in detail in Section \ref{multi} below.

There is another very simple application.
Haar wavelet characterizations of Besov and Triebel-Lizorkin  spaces represent an interesting  and well studied subject (see, for instance, \cite{t10,Os81,Os18,Os21,GSU18,GSU21,GSU23}).
To find the counterpart of those assertions for type spaces
remains a challenging mainly open question. A partial answer can be found in \cite[Theorem 2.1]{ysy20}, see also \cite{HT23}.
There we have proved that,   for any $s\in\mathbb{R}$ and $p,q\in(0,\infty]$,
an arbitrary element of the  Haar wavelet system  belongs to $B^{s,\tau}_{p,q}(\mathbb{R}^n)$ if and only if
${\bf 1}_{[0,1)^n}\in B^{s,\tau}_{p,q}(\mathbb{R}^n)$ which immediately carries over in an upper bound in $s$ (depending on  $\tau, p$ and $q$) for
the validity of those characterizations.
For positive results with respect to Haar wavelet characterizations of Besov-type spaces
we refer to Triebel \cite[Theorem 3.4.1]{t14}, see Proposition
\ref{haarprop} below, and to Sickel et al \cite{syyz}.

To recall the definitions  of Besov, Besov-type, Triebel--Lizorkin, and
Triebel--Lizorkin-type spaces,  let $\varphi_0$, $\varphi\in\mathcal{S}(\mathbb{R}^n)$ be such that
$ \mathcal{F}{\varphi}_0,  \mathcal{F}{\varphi}$ are real-valued, nonnegative, satisfying
\begin{align}\label{e1.0}
	\supp \mathcal{F}{\varphi}_0\subset \{\xi\in\mathbb{R}^n:\,|\xi|\le2\}\quad\mathrm{and}\quad
	|\mathcal{F}{\varphi}_0(\xi)|\ge C>0\hs\mathrm{if}\hs |\xi|\le \frac53
\end{align}
and
\begin{align}\label{e1.1}
	\supp
	\mathcal{F} {\varphi}\subset \left\{\xi\in\mathbb{R}^n:\,\frac12\le|\xi|\le2\right\} \ \ \mathrm{and}\ \
	|\mathcal{F} {\varphi}(\xi)|\ge C>0\ \ \mathrm{if}\ \ \frac35\le|\xi|\le \frac 53,
\end{align}
where $C$ is a positive constant independent of $\varphi_0$ and $\varphi$.
Observe that there exist positive constants $A$ and $B$ such that
\[
A \le
\mathcal{F} {\varphi_0}(\xi) +
\sum_{j=1}^\infty \mathcal{F} {\varphi} (2^{-j}\xi) \le B \quad \mbox{for any}\quad \xi \in \mathbb{R}^n.
\]
In what follows, for any $j\in\mathbb{N}$, we let $\varphi_j(\cdot):=2^{jn}\varphi(2^j\cdot)$.

For any given $j\in\mathbb{Z}$ and $k\in\mathbb{Z}^n$,
denote by $Q_{j,k}$ the \emph{dyadic cube} $2^{-j}([0,\,1)^n+k)$
and $\ell(Q_{j,k})$
its \emph{edge length}. Let
$$\mathcal{Q}:=\left\{Q_{j,k}:\,j\in\mathbb{Z},
\,k\in\mathbb{Z}^n\right\},\quad \mathcal{Q}^\ast:=\left\{Q\in\mathcal{Q}:\ell(Q)\le1\right\},$$
and $j_Q:=-\log_2\ell(Q)$ for any $Q\in\mathcal{Q}$. For any $j\in\mathbb{Z}$, let
$\mathcal{Q}_{j}$ be the collection of all dyadic cubes with edge length $2^{-j}$.

\begin{definition}\label{d1}
	Let $s\in\mathbb{R}$, $\tau\in[0,\infty)$, $p,$ $q \in(0,\infty]$, and $\varphi_0$, $\varphi\in\mathcal{S}(\mathbb{R}^n)$ be
	the same as in \eqref{e1.0} and \eqref{e1.1}, respectively.
\\
{\rm (i)}	
	The \emph{Besov space} $B^{s}_{p,q}(\mathbb{R}^n)$ is defined as the space of all $f\in \mathcal{S}'(\mathbb{R}^n)$ such that
$$\|f\|_{B^{s}_{p,q}(\mathbb{R}^n)}:= \left\{\sum_{j=0}^\infty
2^{js q}\left[\int_{\mathbb{R}^n}
|\varphi_j\ast f(x)|^p\,dx\right]^{\frac qp}\right\}^{\frac1q}<\infty$$
with the usual modifications made if $p=\infty$ and/or $q=\infty$.
\\
{\rm (ii)}	
The \emph{Besov-type space} $B^{s,\tau}_{p,q}(\mathbb{R}^n)$ is defined as the space of all $f\in \mathcal{S}'(\mathbb{R}^n)$ such that
	$$\|f\|_{B^{s,\tau}_{p,q}(\mathbb{R}^n)}:=
\sup_{P\in\mathcal{Q}}\frac1{|P|^{\tau}}\left\{\sum_{j=\max\{j_P,0\}}^\infty
2^{js q}\left[\int_P
|\varphi_j\ast f(x)|^p\,dx\right]^{\frac qp}\right\}^{\frac1q}<\infty$$
with the usual modifications made if $p=\infty$ and/or $q=\infty$.
\end{definition}

\begin{definition}\label{d1b}
	Let $s\in\mathbb{R}$, $\tau\in[0,\infty)$, $p \in(0,\infty)$, $q \in(0,\infty]$, and $\varphi_0$, $\varphi\in\mathcal{S}(\mathbb{R}^n)$ be
	the same as in \eqref{e1.0} and \eqref{e1.1}, respectively.
\\
{\rm (i)}	
	The \emph{Triebel--Lizorkin space} $F^{s}_{p,q}(\mathbb{R}^n)$ is defined as the space of all $f\in \mathcal{S}'(\mathbb{R}^n)$ such that
	$$\|f\|_{F^{s}_{p,q}(\mathbb{R}^n)}:=
\left\{\int_{\mathbb{R}^n)} \left[\sum_{j=0}^\infty
	2^{js q} 	|\varphi_j\ast f(x)|^q \right]^{\frac pq}dx \right\}^{\frac1p}<\infty$$
	with the usual modification made if $q=\infty$.\\
{\rm (ii)}	
	The \emph{Triebel--Lizorkin-type space} $F^{s,\tau}_{p,q}(\mathbb{R}^n)$ is defined as the space of all $f\in \mathcal{S}'(\mathbb{R}^n)$ such that
	$$\|f\|_{F^{s,\tau}_{p,q}(\mathbb{R}^n)}:=
	\sup_{P\in\mathcal{Q}}\frac1{|P|^{\tau}}\left\{\int_P \left[\sum_{j=\max\{j_P,0\}}^\infty
	2^{js q} 	|\varphi_j\ast f(x)|^q \right]^{\frac pq}dx \right\}^{\frac1p}<\infty$$
	with the usual modification made if $q=\infty$.
\end{definition}

It is easily seen that we have $B^{s,0}_{p,q}(\mathbb{R}^n)  = B^{s}_{p,q}(\mathbb{R}^n)$ and
$F^{s,0}_{p,q}(\mathbb{R}^n)  = F^{s}_{p,q}(\mathbb{R}^n)$
with coincidence of the quasi-norms.

If there is no need to distinguish $B^{s,\tau}_{p,q}(\mathbb{R}^n)$ and $F^{s,\tau}_{p,q}(\mathbb{R}^n)$ we often    write $A^{s,\tau}_{p,q}(\mathbb{R}^n)$ instead.
For the reader the following characterization of $A^{s,\tau}_{p,q}(\mathbb{R}^n)$, due to Triebel   \cite[Theorem 3.64]{t14}, may be helpful for the understanding of the philosophy of these spaces. Under some extra conditions on $s,\tau,p$, and $q$, it holds that $f\in \mathcal{S}'(\mathbb{R}^n)$ belongs to $A^{s,\tau}_{p,q}(\mathbb{R}^n)$ if and only if
\begin{equation}\label{new4}
 \|   f  \|^*_{A^{s,\tau}_{p,q} (\mathbb{R}^n)} := \sup_{P \in \mathcal{Q}} \, |P|^{-\tau} \, \|   f  \|_{A^s_{p,q} (P)}<\infty\, .
\end{equation}
I.e., we control the quasi-norm $\|  f  \|_{A^s_{p,q} (P)}$ on all dyadic cubes, hence,
locally and globally. This relates the ``new'' spaces $A^{s,\tau}_{p,q}(\mathbb{R}^n)$ to the ``old'' spaces $A^s_{p,q}(P)$ in a rather transparent way. A slightly weaker  interpretation can be derived from Proposition \ref{wav-type2},
now valid for the full range of the parameters  $s,\tau,p$, and $q$.

\begin{remark}\label{grund}
	\begin{enumerate}
		\item[(i)] It is known that the  spaces $B^{s,\tau}_{p,q}(\mathbb{R}^n)$ and $F^{s,\tau}_{p,q}(\mathbb{R}^n)$ are  quasi-Banach spaces (see \cite[Lemma~2.1]{ysy}).
		
\item[(ii)] We have the monotonicity with respect to $s$ and with respect to $q$, that is,
\[
A^{s_0,\tau}_{p,q_0}(\mathbb{R}^n)\hookrightarrow A^{s_1,\tau}_{p,q_1}(\mathbb{R}^n)\quad \mbox{if}\quad s_0 >s_1
\quad \mbox{and} \quad q_0,\,q_1\in(0,\infty],
\]
as well as
\[
A^{s,\tau}_{p,q_0} (\mathbb{R}^n)\hookrightarrow A^{s,\tau}_{p,q_1}(\mathbb{R}^n)  \quad \mbox{if}\quad q_0 \le q_1\,.
\]
\item[(iii)] Elementary embeddings:
It holds
\[
B^{s,\tau}_{p,\min(p,q)} (\mathbb{R}^n)\hookrightarrow F^{s,\tau}_{p,q}(\mathbb{R}^n)
\hookrightarrow B^{s,\tau}_{p,\max(p,q)}(\mathbb{R}^n),
\]
see, for instance, \cite[p.~39, Proposition~2.1]{ysy}.
\item[(v)] Let  $s\in\mathbb{R}$ and $p\in (0,\infty]$. Then it holds that
\[
F^{s,\tau}_{p,q}(\mathbb{R}^n)= B^{s,\tau}_{p,q}(\mathbb{R}^n) = B^{s+n(\tau-1/p)}_{\infty,\infty}(\mathbb{R}^n)
\]
if either $q\in(0,\infty)$ and $\tau\in(1/p,\infty)$ or
$q=\infty$ and $\tau\in[1/p,\infty)$ (see \cite{yy4}).
In case $s+n(\tau-1/p)>0$, the space $B^{s+n(\tau-1/p)}_{\infty,\infty}(\mathbb{R}^n)$ is a H\"older--Zygmund space
with a transparent description in terms of differences (see, for instance, \cite[Section 2.5.7]{t83}).
\item[(v)] Since $B^{s,\tau}_{p,q}(\mathbb{R}^n)$ and $F^{s,\tau}_{p,q}(\mathbb{R}^n)$ with $\tau>1/p$ are both classical Besov spaces,
we will mainly consider  the situation 		 $\tau\in[0,1/p]$ in this article.
\end{enumerate}
\end{remark}

The article is organized as follows. First, in Section \ref{Main0},
we recall  the characterizations of
$A^{s,\tau}_{p,q}(\mathbb{R}^n)$, respectively,  in terms of
Daubechies wavelets, Haar wavelets,  and differences.
In Section \ref{Main}, we determine the smallest
class $B^{s,\tau}_{p,q}(\mathbb{R}^n)$ containing non-trivial characteristic functions.
Afterwards we derive sufficient conditions on sets $E$ such that ${\bf 1}_E$
has the maximal regularity. The next section, Section \ref{Main4},  is devoted to the study of characteristic functions with low positive regularity.
For weakly exterior thick domains $E$ we are able to derive necessary and sufficient conditions for
the membership of ${\bf 1}_E$ in $F^{s,\tau}_{p,q}(\mathbb{R}^n)$. Here we also investigate the regularity of the characteristic function of the snowflake.

Finally, we make some convention on the notation used in this article.
As usual, $\mathbb{N}$ denotes the \emph{natural numbers}, $\mathbb{N}_0$
the \emph{natural numbers including $0$},
$\mathbb{Z}$ the \emph{integers}, and
$\mathbb{R}$ the \emph{real numbers}.
For any $a\in\mathbb{R}$, $\lfloor a\rfloor$ denotes the largest integer not greater than $a$ and  $ \lceil a \rceil$
the smallest integer not less than $a$.
We also use $\cc$ to denote the \emph{complex numbers} and $\mathbb{R}^n$ the
\emph{$n$-dimensional  Euclidean space}.

All functions are assumed to be complex-valued, that is,
we consider functions
$f: \mathbb{R}^n \to \cc$.
Let $\mathcal{S}(\mathbb{R}^n)$ be the collection of all \emph{Schwartz functions} on $\mathbb{R}^n$ equipped
with the well-known topology determined by a countable family of  norms
and denote by $\mathcal{S}'(\mathbb{R}^n)$ its \emph{topological dual}, namely,
the space of all bounded linear functionals on $\mathcal{S}(\mathbb{R}^n)$
equipped with the weak-$\ast$ topology.
The symbol $\mathcal{F}$ refers to  the \emph{Fourier transform},
$\mathcal{F}^{-1}$ to its \emph{inverse transform},
both defined on $\mathcal{S}'(\mathbb{R}^n)$. Recall that, for any $\varphi\in \mathcal{S}(\mathbb{R}^n)$ and $\xi\in\mathbb{R}^n$,
$$\mathcal{F}\varphi(\xi):=(2\pi)^{-\frac n2}\int_{\mathbb{R}^n} e^{-\iota x\xi}\varphi(x)\,dx\quad
\mbox{and}\quad \mathcal{F}^{-1}\varphi(\xi):=\mathcal{F}\varphi(-\xi),$$ where, for any $x:=(x_1,\ldots,x_n),
\xi:=(\xi_1,\ldots,\xi_n)\in \mathbb{R}^n$, $x\xi:=\sum_{i=1}^nx_i\xi_i$ and $\iota:=\sqrt{-1}$.

All function spaces, which  we consider in this article, are subspaces of $\mathcal{S}'(\mathbb{R}^n)$,
namely  spaces of equivalence classes with respect to
almost everywhere equality.
However, if such an equivalence class  contains a continuous representative,
then usually we work with this representative and call also the equivalence class a continuous function.

Let  $0<p\le u\le\infty$.
Then the \emph{Morrey space $\mathcal{M}^u_{p}(\mathbb{R}^n)$}
is defined as  the collection of all $p$-locally
Lebesgue-integrable functions $f$ on $\mathbb{R}^n$ such that
\begin{equation}\label{morrey7}
	\|f\|_{\mathcal{M}^u_{p}(\mathbb{R}^n)} :=  \sup_{B}
	|B|^{\frac1u-\frac1p}\left[\int_B |f(x)|^p\,dx\right]^{\frac1p}<\infty,
\end{equation}
where the supremum is taken over all balls $B$ in $\mathbb{R}^n$.
Obviously, $\mathcal{M}^p_{p}(\mathbb{R}^n)=L^p(\mathbb{R}^n)$.

The \emph{symbol}  $C $ denotes   a positive constant
which depends
only on the fixed parameters $n$, $s$, $\tau$, $p$, $q$, and probably on auxiliary functions,
unless otherwise stated; its value  may vary from line to line.
The \emph{meaning of $A \ls B$} is
given by that there exists a positive constant $C\in(0,\infty)$ such that
$A \le C \,B$.
The symbol $A \asymp B$ will be used as an abbreviation of
$A \ls B \ls A$.

For any $x\in\mathbb{R}^n$ and $t\in(0,\infty)$, let
$B(x,t) := \{y\in \mathbb{R}^n: \  |x-y|< t\}$. Let ${\bf 0}$ denote the origin of $\mathbb{R}^n$.

Let $E \subset \mathbb{R}^n$. Then $E^\complement$ denotes the complement of $E$.
If  $E$ is measurable, then $|E|$ denotes its Lebesgue measure.
In case $\mathcal{W}$ is a discrete set then $|\mathcal{W}|$ denotes its \emph{cardinality}.

The characteristic function of a set $E \subset \mathbb{R}^n$
is denoted by ${\bf 1}_E$. By $\partial E$ we denote the boundary of $E$.
We  use the convention $F:= \mathbb{R}^n \setminus \overline{E}$.
A domain $\Omega$ is a non-empty  open set in $\mathbb{R}^n$, and a non-trivial domain
is a domain different from $\emptyset$ and $\mathbb{R}^n$.
For a set $E$ we denote by $\mathring{E}$ the set of all inner points of
$E$.

Sometimes  we will work with the $\delta$-neighborhood $\Gamma^\delta$ of a set  $\Gamma$, where $\delta\in(0,\infty)$, namely
\[
\Gamma^\delta :=   \left\{x\in \mathbb{R}^n:   \dist (x, \Gamma)< \delta \right\}.
\]
Mainly we are interested in the $\delta$-neighborhood  $(\partial E)^\delta$ of the boundary of a set $E$.
In case of $2$-dimensional examples we sometimes use $\ell (\partial E)$ to denote the length of the boundary.

The symbols $\dim_H$, $\dim_M$, and $\dim_P$ refer to
the Hausdorff dimension, the Minkowski dimension, and the packing dimension, respectively.
Their definitions can be found, for instance, in Falconer \cite{Fa90,Fa97}.


\section{Besov-type and Triebel--Lizorkin-type spaces\label{s3}}
\label{Main0}

In this section, we recall  the characterizations of
$B^{s,\tau}_{p,q}(\mathbb{R}^n)$ and $F^{s,\tau}_{p,q}(\mathbb{R}^n)$, respectively, in terms of
Daubechies wavelets, Haar wavelets,  and differences.


\subsection{Characterization by wavelets}\label{smoothwavelets}


Wavelet bases in  Besov and Triebel--Lizorkin spaces are a well-developed concept (see, for instance, Meyer \cite{me},
Wojtasczyk \cite{woj}, and Triebel \cite{t06,t08}). Let $\widetilde{\phi}$ be
an orthonormal scaling function on $\mathbb{R}$ with compact support
and of sufficiently high regularity. Let $\widetilde{\psi}$ be one
{\it corresponding orthonormal wavelet}.
Then the tensor product ansatz yields a scaling function $\phi$ and
associated wavelets
$\{\psi_1,\ldots,\psi_{2^n-1}\}$, all defined now on $\mathbb{R}^n$ (see, for instance,
\cite[Proposition 5.2]{woj}).
We suppose
\begin{align*}
\phi\in C^{N_1}(\mathbb{R}^n)\quad\mathrm{and}\quad
\supp\phi\subset[-N_2,\,N_2]^n
\end{align*}
for certain natural numbers $N_1$ and $N_2$. This further implies
\begin{align*}
\psi_i\in C^{N_1}(\mathbb{R}^n)\hs\mathrm{and}\quad
\supp\psi_i\subset[-N_3,\,N_3]^n,\quad\forall\, i\in\{1,\ldots,2^n-1\}
\end{align*}
for some $N_3 \in \mathbb{N}$.
For any $k\in\mathbb{Z}^n$, $j\in\mathbb{N}_0$, and $i\in\{1,\ldots,2^n-1\}$, we shall use the
standard abbreviations in this article: for any $x\in\mathbb{R}^n$,
\begin{equation*}
\phi_{j,k}(x):= 2^{jn/2}\phi(2^jx-k) \quad \mathrm{and} \quad
\psi_{i,j,k}(x):= 2^{jn/2}\psi_i(2^jx-k).
\end{equation*}
Furthermore, it is well
known that
\begin{align}\label{moment}
\int_{\mathbb{R}^n} \psi_{i,j,k}(x)\, x^\gamma\,dx = 0 \quad  \mbox{if}
\quad  |\gamma|\le N_1
\end{align}
(see \cite[Proposition 3.1]{woj}) and
\begin{align}\label{4.21}
\left\{\phi_{0,k}: \ k\in\mathbb{Z}^n\right\}  \cup   \left\{\psi_{i,j,k}:\ k\in\mathbb{Z}^n,\
j\in\mathbb{N}_0,\ i\in\{1,\ldots,2^n-1\}\right\}
\end{align}
forms an {\it orthonormal basis} of $L^2(\mathbb{R}^n)$ (see \cite[Section
3.9]{me} or \cite[Section 3.1]{t06}).
Thus, for any $f\in L^2(\mathbb{R}^n)$,
\begin{align}\label{wavelet}
f=\sum_{k\in\mathbb{Z}^n}\, \lambda_k \, \phi_{0,k}+\sum_{i=1}^{2^n-1} \sum_{j=0}^\infty
\sum_{k\in\mathbb{Z}^n}\, \lambda_{i,j,k}\, \psi_{i,j,k}
\end{align}
converges in $L^2(\mathbb{R}^n)$,
where  $\lambda_k (f) := \langle f,\,\phi_{0,k}\rangle$ and
$\lambda_{i,j,k} (f):= \langle f,\,\psi_{i,j,k}\rangle$ with $\langle\cdot,\cdot\rangle$
denoting the inner product of $L^2(\mathbb{R}^n)$.
For brevity we put
\begin{align*}
\lambda (f) :=  \left\{\lambda_k (f)\right\}_{k\in\mathbb{Z}^n} \cup \left\{\lambda_{i,j,k}(f)\right\}_{i\in\{1,\ldots,2^n-1\}, j\in\mathbb{N}_0, k\in\mathbb{Z}^n} .
\end{align*}
By means of such a wavelet system one can discretize the quasi-norm $\|\cdot\|_{A^{s,\tau}_{p,q}(\mathbb{R}^n)}$.
Therefore we need some sequence spaces (see \cite[Definition 2.2]{ysy}). Let $\cx$ denote the characteristic function of the cube $[0,1)^n$. Recall, for a dyadic cube $P$ we put $j_P:= -\log \ell (P) $. Here and thereafter, we write $\log:=\log_2$ for simplicity.

\begin{definition}\label{dts}
Let $s\in \mathbb{R}$, $\tau\in[0,\infty)$, and $p,$ $q\in(0,\infty]$.
The sequence space
$\sbt$ is defined to be the space of all
 sequences $\lambda= \{\lambda_k \}_{k\in\mathbb{Z}^n} \cup \{\lambda_{i,j,k}\}_{i\in\{1,\ldots,2^n-1\}, j\in\mathbb{N}_0, k\in\mathbb{Z}^n} \subset \cc$  such that $\|\lambda\|_{\sbt}<\infty$, where
\begin{align*}\|\lambda\|_{\sbt}&:=   \sup_{\gfz{P\in\mathcal{Q}}{|P|\ge 1}}\frac1{|P|^{\tau}}
	\left\{\sum_{\{m\in\mathbb{Z}^n:\ Q_{0,m}\subset P\}} 	| \lambda_m |^p\right\}^{\frac 1p}\\
	&\quad +
\sup_{P\in\mathcal{Q}}\frac1{|P|^{\tau}}\left\{\sum_{j=\max\{j_P,0\}}^\infty
2^{j(s+\frac n2-\frac np)q} \sum_{i=1}^{2^n-1}
\left[\sum_{\{m\in\mathbb{Z}^n:\ Q_{j,m}\subset P\}}
|\lambda_{i,j,m}|^p\right]^{\frac qp}\right\}^{\frac 1q}.
\end{align*}
\end{definition}

\begin{definition}\label{dts2}
	Let $s\in \mathbb{R}$, $\tau\in[0,\infty)$,  $p\in(0,\infty)$, and $q\in(0,\infty]$.
	The sequence space
	$\sft$ is defined to be the space of all
	sequences $$\lambda= \left\{\lambda_k \right\}_{k\in\mathbb{Z}^n} \cup \left\{\lambda_{i,j,k}\right\}_{i\in\{1,\ldots,2^n-1\}, j\in\mathbb{N}_0, k\in\mathbb{Z}^n} \subset \cc$$ such that $\|\lambda\|_{\sft}<\infty$, where
	\begin{align*}\|\lambda\|_{\sft} &:=   	\sup_{\gfz{P\in\mathcal{Q}}{|P|\ge 1}}\frac1{|P|^{\tau}}
		\left\{\sum_{\{m\in\mathbb{Z}^n:\ Q_{0,m}\subset P\}} 	| \lambda_m |^p\right\}^{\frac 1p}+
		\sup_{P\in\mathcal{Q}}\frac1{|P|^{\tau}}\left\| \left[
		 \sum_{j=\max\{j_P,0\}}^\infty \sum_{i=1}^{2^n-1}
		2^{j(s+\frac n2)q}\right.\right.\\
		&\qquad\left.\left.\times \sum_{\{m\in\mathbb{Z}^n:\ Q_{j,m}\subset P\}}
		|\lambda_{i,j,m}  \cx (2^j\cdot -m)|^q \right]^{\frac 1q}\right\|_{L^p(P)}
	\end{align*}
with $\mathcal{X}:={\bf 1}_{[0,1)^n}$.
\end{definition}

As a special case of  \cite[Theorem 4.12]{lsuyy} (see also \cite{lsuyy1} and \cite{t14,t20,HT23}), we have the following wavelet characterization.

\begin{proposition}\label{wav-type2}
Let $s\in\mathbb{R}$, $\tau\in[0,\infty)$, and $p,\,q\in(0,\infty]$ ($p<\infty$ for $F$ spaces).
Let $N_1 = N_1 (s,\tau,p,q)\in\mathbb{N}_0$ be large enough.
Let $f\in\mathcal{S}'(\mathbb{R}^n)$. Then
$f\in A^{s,\tau}_{p,q}(\mathbb{R}^n)$ if and only if $f$ can be represented in $\mathcal{S}'(\mathbb{R}^n)$ as in \eqref{wavelet}
such that $\|\lz(f)\|_{{a}^{s,\tau}_{p,q}(\mathbb{R}^n)}<\infty $.
Moreover, $\|f\|_{A^{s,\tau}_{p,q}(\mathbb{R}^n)}$ is equivalent to
$\|\lz(f)\|_{{a}^{s,\tau}_{p,q}(\mathbb{R}^n)}$ with the positive equivalence constants independent of $f$.
\end{proposition}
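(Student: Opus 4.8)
The plan is to deduce Proposition \ref{wav-type2} from the two pillars of the Frazier--Jawerth machinery already developed for the type spaces (see \cite{ysy,lsuyy}): the $\varphi$-transform characterization $\|f\|_{\at}\asymp\|S_\varphi f\|_{\sat}$, where $S_\varphi$ is the analysis operator attached to the system $\{\varphi_j\}_{j}$, and the boundedness of almost diagonal operators on the sequence spaces $\sat$. The first observation is that, once the $L^2$-normalization $2^{jn/2}$ is matched against the weights defining $\|\cdot\|_{\sbt}$ and $\|\cdot\|_{\sft}$, the Daubechies wavelets $\psi_{i,j,k}$ are smooth molecules adapted to $\at$: the hypothesis $\supp\psi_i\subset[-N_3,N_3]^n$ confines $\supp\psi_{i,j,k}$ to a fixed dilate of $Q_{j,k}$, the regularity $\psi_i\in C^{N_1}(\rn)$ supplies the derivative bounds, and the moment conditions \eqref{moment} supply cancellation up to order $N_1$. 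Choosing $N_1=N_1(s,\tau,p,q)$ large enough therefore forces the molecular and almost-diagonal thresholds for the target parameters to hold with room to spare.

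Next I would establish the two inequalities separately. For sufficiency, given $\lambda$ with $\|\lambda\|_{\sat}<\infty$, the smooth molecular synthesis theorem for $\at$ yields that $\sum_{k}\lambda_k\phi_{0,k}+\sum_{i,j,k}\lambda_{i,j,k}\psi_{i,j,k}$ converges in $\cs'(\rn)$ to some $f\in\at$ with $\|f\|_{\at}\lesssim\|\lambda\|_{\sat}$. For necessity, I would write the wavelet coefficient sequence $\lambda(f)=\{\langle f,\psi_{i,j,k}\rangle\}$ as the image of $S_\varphi f$ under the change-of-frame matrix whose entries are the pairings between the wavelets and the $\varphi$-system; the support, smoothness, and cancellation of both families make this matrix almost diagonal, so the boundedness of almost diagonal operators on $\sat$ gives $\|\lambda(f)\|_{\sat}\lesssim\|S_\varphi f\|_{\sat}\asymp\|f\|_{\at}$. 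To close the loop, the representation \eqref{wavelet} itself is verified directly in $\cs'(\rn)$: testing $f$ against any $g\in\cs(\rn)$, one expands $g$ in the orthonormal wavelet basis (a series that converges rapidly in $\cs(\rn)$) and interchanges the absolutely convergent sum with the pairing, which recovers $f=\sum_k\lambda_k(f)\phi_{0,k}+\sum_{i,j,k}\lambda_{i,j,k}(f)\psi_{i,j,k}$ and, combined with the two inequalities, the claimed equivalence $\|f\|_{\at}\asymp\|\lambda(f)\|_{\sat}$.

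The hard part will be the almost diagonal estimate in the presence of the Morrey parameter $\tau$. For $\tau=0$ one is in the classical setting and almost diagonality on $b^{s}_{p,q}$ and $f^{s}_{p,q}$ is the Frazier--Jawerth theorem with a well-understood decay exponent. For $\tau>0$, however, the norm of $\sat$ carries a supremum over all dyadic cubes $P$ weighted by $|P|^{-\tau}$, and one must show that the off-diagonal tails of the change-of-frame matrix do not spoil this supremum; this forces a strictly larger decay rate (hence a larger $N_1$) and a careful bookkeeping of the interaction between the scale $j$ and the two regimes $|P|\ge1$ and $|P|<1$ appearing in Definitions \ref{dts} and \ref{dts2}. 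This is exactly the technical core isolated in \cite[Theorem 4.12]{lsuyy}. A secondary and more routine point is the well-definedness of $\langle f,\psi_{i,j,k}\rangle$ for $f\in\cs'(\rn)$: since $\psi_i$ is compactly supported and $C^{N_1}$, it can be tested against any element of $\at$ once $N_1$ exceeds the order determined by $(s,\tau,p,q)$, so enlarging $N_1$ removes this difficulty as well.
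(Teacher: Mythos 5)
The paper offers no proof of Proposition \ref{wav-type2}: it is stated as a special case of \cite[Theorem 4.12]{lsuyy}, and your outline --- $\varphi$-transform characterization, Daubechies wavelets as smooth molecules, almost diagonality of the change-of-frame matrix on $\sat$, with the Morrey parameter $\tau$ forcing a larger decay exponent and hence a larger $N_1$ --- is precisely the architecture of that cited proof. In substance you and the paper are therefore leaning on the same source, and your identification of the $\tau$-weighted almost diagonal estimate as the technical core is accurate.

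One step in your write-up would fail as stated: to verify the representation \eqref{wavelet} you expand a test function $g\in\mathcal{S}(\mathbb{R}^n)$ in the Daubechies basis and claim the series ``converges rapidly in $\mathcal{S}(\mathbb{R}^n)$''. It cannot: the partial sums are compactly supported functions of regularity only $C^{N_1}$, so they are not Schwartz functions and convergence in $\mathcal{S}(\mathbb{R}^n)$ is not even meaningful. The standard repair (and the one carried out in \cite{lsuyy}) runs the other way: from $\|\lambda(f)\|_{\sat}<\infty$, the size and support of the $\psi_{i,j,k}$, and the embedding $A^{s,\tau}_{p,q}(\mathbb{R}^n)\hookrightarrow B^{s+n\tau-n/p}_{\infty,\infty}(\mathbb{R}^n)$ of Proposition \ref{embc}, one first shows that the series on the right-hand side of \eqref{wavelet} converges in $\mathcal{S}'(\mathbb{R}^n)$, and then identifies its sum with $f$ by pairing both sides against the $\varphi$-system, for which the almost diagonal estimate you already invoked does the work. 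Relatedly, the pairing $\langle f,\psi_{i,j,k}\rangle$ for a general $f\in A^{s,\tau}_{p,q}(\mathbb{R}^n)$ needs the dual-type interpretation recorded in the remark following the proposition; your appeal to ``testing against any element of $\at$ once $N_1$ is large'' is the right idea, but it should be made precise through that duality rather than through a Schwartz-space wavelet expansion of the test function.
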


\begin{remark}
On the interpretation of $\lambda (f)$, we observe that in general the element  $f$ of $B^{s,\tau}_{p,q}(\mathbb{R}^n)$ is not an element of $L^2 (\mathbb{R}^n)$,
it might be a singular distribution. Thus,
$\langle f,\,\phi_{0,m}\rangle$ and $\langle f,\,\psi_{i,j,m}\rangle$ require an interpretation,
which has been done in the proof
of Proposition \ref{wav-type2} (see
\cite[Theorem 4.12]{lsuyy} for all the details).
\end{remark}

Finally we recall the following  embeddings.

\begin{proposition}\label{embc}
	Let $s\in\mathbb{R}$, $\tau\in[0,\infty)$, $p\in(0,\infty)$, and $q \in (0,\infty]$.

\begin{itemize}
\item[{\rm (i)}] One has
	\[
	A^{s +n\tau, {\rm loc}}_{p,q} (\mathbb{R}^n) \subset
	A^{s,\tau}_{p,q} (\mathbb{R}^n) \hookrightarrow B^{s+n\tau- n/p}_{\infty,\infty} (\mathbb{R}^n) \, .
	\]	
\item[{\rm (ii)}]	Let $C(\mathbb{R}^n)$ denote the space of all complex-valued continuous and bounded  functions   on $\mathbb{R}^n$
equipped with the supremum norm.
Then
\[
A^{s,\tau}_{p,q} (\mathbb{R}^n) \hookrightarrow C (\mathbb{R}^n) \quad \Longleftrightarrow \quad
s> n\left( \frac 1p - \tau \right) .
\]	
\end{itemize}
\end{proposition}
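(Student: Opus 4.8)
The plan is to route everything through the wavelet characterization of Proposition \ref{wav-type2}, which identifies $\|f\|_{A^{s,\tau}_{p,q}(\mathbb{R}^n)}$ with $\|\lambda(f)\|_{a^{s,\tau}_{p,q}(\mathbb{R}^n)}$. The right-hand embedding in (i) is the engine of the whole statement: granting it, the sufficiency half of (ii) is immediate, and its failure at low smoothness is what drives the necessity half.

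\textbf{Right-hand embedding in (i).} I first treat $A=B$ and argue on the level of sequence spaces. Fix a wavelet index $(i,j,m)$ and test the $b^{s,\tau}_{p,q}(\mathbb{R}^n)$-quasi-norm on the single dyadic cube $P:=Q_{j,m}$, for which $j_P=j$ and $|P|^{-\tau}=2^{jn\tau}$. Retaining only the summand $j'=j$, the single term $m'=m$, and one value of $i$, one reads off
\[
2^{j(s+n\tau-n/p+n/2)}\,|\lambda_{i,j,m}|\ls\|\lambda\|_{b^{s,\tau}_{p,q}(\mathbb{R}^n)},
\]
uniformly in $(i,j,m)$; testing the low-frequency term on a cube with $|P|=1$ gives $\sup_k|\lambda_k|\ls\|\lambda\|_{b^{s,\tau}_{p,q}(\mathbb{R}^n)}$. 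The left-hand sides are exactly the two pieces of the $b^{s+n\tau-n/p,0}_{\infty,\infty}(\mathbb{R}^n)$-quasi-norm, so Proposition \ref{wav-type2}, applied at both ends, yields $B^{s,\tau}_{p,q}(\mathbb{R}^n)\hookrightarrow B^{s+n\tau-n/p}_{\infty,\infty}(\mathbb{R}^n)$. The case $A=F$ reduces to this by the elementary embedding $F^{s,\tau}_{p,q}(\mathbb{R}^n)\hookrightarrow B^{s,\tau}_{p,\infty}(\mathbb{R}^n)$ of Remark \ref{grund}(iii).

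\textbf{Left inclusion in (i) and sufficiency in (ii).} The left inclusion is a localization statement, which I would take from \cite{ysy}: the coefficients $\lambda_{i,j,m}$ attached to a dyadic cube $P$ with $|P|\le1$ are controlled by a local $A^{s+n\tau}_{p,q}$-norm, while the weight $|P|^{-\tau}=2^{j_Pn\tau}$ is absorbed by the extra factor $2^{jn\tau}$ carried by every scale $j\ge j_P$; cubes with $|P|\ge1$ are treated by summing the contributions of the unit subcubes. Granting (i), the sufficiency direction of (ii) is then clear: if $s>n(1/p-\tau)$ then $s+n\tau-n/p>0$, so $B^{s+n\tau-n/p}_{\infty,\infty}(\mathbb{R}^n)$ is a H\"older--Zygmund space of positive order and hence embeds into $C(\mathbb{R}^n)$; chaining with (i) gives $A^{s,\tau}_{p,q}(\mathbb{R}^n)\hookrightarrow C(\mathbb{R}^n)$.

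\textbf{Necessity in (ii), the main obstacle.} Here I would produce an explicit unbounded element of $A^{s,\tau}_{p,q}(\mathbb{R}^n)$ when $s\le n(1/p-\tau)$. For $\tau>0$ take the lacunary series concentrated at the origin with $\lambda_{1,j,0}:=2^{-j(s+n\tau-n/p+n/2)}$ and all other coefficients zero. Using the nesting $Q_{j,0}\subset Q_{J,0}$ for $j\ge J$, the tail sum over $j\ge J$ of $2^{-jn\tau q}$ exactly balances the factor $2^{Jn\tau}$ coming from $|Q_{J,0}|^{-\tau}$, so $\|\lambda\|_{a^{s,\tau}_{p,q}(\mathbb{R}^n)}<\infty$ and $f\in A^{s,\tau}_{p,q}(\mathbb{R}^n)$ by Proposition \ref{wav-type2}. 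When $s<n(1/p-\tau)$ one has $2^{jn/2}|\lambda_{1,j,0}|=2^{-j(s+n\tau-n/p)}\to\infty$, so $\lambda(f)\notin b^0_{\infty,\infty}(\mathbb{R}^n)$; since $C(\mathbb{R}^n)\hookrightarrow L^\infty(\mathbb{R}^n)\hookrightarrow B^0_{\infty,\infty}(\mathbb{R}^n)$, this forces $f\notin C(\mathbb{R}^n)$. The classical case $\tau=0$ is covered by the cited literature. The genuinely delicate point, which I expect to be the crux of the proof, is the critical line $s=n(1/p-\tau)$: there the coefficients no longer blow up (the single-scale series is merely Weierstrass-type and continuous), so one must refine the construction---spreading mass over many cubes per scale and/or inserting a logarithmic amplification, and tracking the role of $q$---to obtain an unbounded function and thereby close the ``only if'' direction at the endpoint.
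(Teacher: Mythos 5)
Your treatment of part (i) is essentially the paper's own argument: the right-hand embedding is obtained exactly as in the paper by testing the sequence quasi-norm of Proposition \ref{wav-type2} on the single cube $P=Q_{j,m}$ to extract $2^{jn\tau}2^{j(s+n(\frac12-\frac1p))}|\langle f,\psi_{i,j,m}\rangle|\ls\|f\|_{B^{s,\tau}_{p,\infty}(\mathbb{R}^n)}$, and the left inclusion is, as in the paper, a comparison of the wavelet characterizations of $A^{s+n\tau,{\rm loc}}_{p,q}(\mathbb{R}^n)$ and $A^{s,\tau}_{p,q}(\mathbb{R}^n)$. That part is fine.

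For part (ii) the paper does not give a proof at all but cites \cite{YHMSY}; you attempt a self-contained one, and here there is a genuine gap that you yourself flag: the necessity at the critical line $s=n(\frac1p-\tau)$ is never established. Your lacunary series handles only $s<n(\frac1p-\tau)$ (where the single-scale coefficients $2^{jn/2}|\lambda_{1,j,0}|=2^{-j(s+n\tau-n/p)}$ blow up), and at the endpoint that construction produces a bounded Weierstrass-type function, so the ``only if'' direction is open precisely where it is hardest. This is not a routine refinement: for $\tau=0$ the endpoint behaviour genuinely depends on $q$ (classically $B^{n/p}_{p,1}(\mathbb{R}^n)\hookrightarrow C(\mathbb{R}^n)$), so any counterexample at $s=n(\frac1p-\tau)$ that works for all $q\in(0,\infty]$ must exploit $\tau>0$ in an essential way --- e.g.\ by distributing, for each $J$, order $2^{(j-J)n\tau p}$ coefficients of size $2^{-j(s+n/2-n/p)}2^{-jn\tau}$ over the cubes $Q_{j,m}\subset Q_{J,0}$ so that the $\ell^p$-sums saturate the weight $|Q_{J,0}|^{-\tau}$ while the pointwise sum at a common point diverges. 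Without carrying out such a construction (or simply citing \cite{YHMSY} as the paper does), the equivalence in (ii) is not proved.
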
	

\begin{proof}
Part (ii) has been proved in \cite{YHMSY}.  The second embedding in (i) is a direct consequence of the wavelet characterization of
$	A^{s,\tau}_{p,q} (\mathbb{R}^n) $ and  $B^{s+\tau- n/p}_{\infty,\infty} (\mathbb{R}^n)$, respectively,
because
\[
\max_{i=1, \ldots, 2^n-1}\, \sup_{j \in \mathbb{N}_0} \, \sup_{k\in \mathbb{Z}}\, 2^{j n \tau} \,  2^{j (s+ n(\frac 12 - \frac 1p))}
\,| \langle f , \psi_{i,j,k} \rangle|\ls \| f \|_{B^{s,\tau}_{p,\infty}(\mathbb{R}^n)}
\]
holds for any $ f \in B^{s,\tau}_{p,\infty}(\mathbb{R}^n) $.
For the first assertion in (i) one only needs to compare the wavelet characterization  of
$	A^{s + n\tau, {\rm loc}}_{p,q} (\mathbb{R}^n) $ and  $A^{s, \tau}_{p,q} (\mathbb{R}^n)$.
\end{proof}


\subsection{Characterization by  Haar wavelets}\label{haarwavelets}


First, we recall the definition of the Haar system.
Let $\widetilde{\mathcal{X}}:={\bf 1}_{[0,1)}$.
The generator of the Haar system in dimension $1$ is denoted by $\widetilde{h}$, namely
\[
\widetilde{h}(t):=\left\{
\begin{array}{lll}
	1 & \quad &    t\in[0,1/2),
	\\
	-1 & \quad &    t\in [1/2,1),\\
	0  &\quad &\mbox{otherwise}.
\end{array}
\right.
\]
The functions, we are interested in, are just tensor products of these two  functions.
For any given $\varepsilon:= (\varepsilon_1, \, \ldots , \varepsilon_n)$ with
$\varepsilon_i \in \{0,1\}$, define
\begin{align*}
	h_\varepsilon (x) := \left[\prod_{\{i: \, \varepsilon_i=0\}} \widetilde{\mathcal{X}}(x_i)\right]\,
	\left[\prod_{\{i: \, \varepsilon_i=1\}} \widetilde{h}(x_i)\right],
	\ \  \forall\, x=(x_1, \, \ldots , x_n) \in \mathbb{R}^n .
\end{align*}
This results in $2^n$ different functions.
In case $\varepsilon = (0, \, \ldots , 0)$  we
\emph{always  write $\mathcal{X}$ instead of
	$h_{(0,\ldots , 0)}$}, which is just ${\bf 1}_{[0,1)^n}$.
The other $2^n-1$ functions will be enumerated in an appropriate way
and denoted by $\{h_1, \ldots , h_{2^n-1}\}$. These functions
are the generators of the inhomogeneous Haar system in $\mathbb{R}^n$.
In what follows, for any $i\in\{1,\ldots,2^n-1\}$, $j\in\mathbb{N}_0$, and $m\in\mathbb{Z}^n$,
we let
\begin{align*}
	\cx_{j,m}:=2^{jn/2}\cx(2^j\cdot-m),\  \  h_{i,j,m}:=2^{jn/2}h_i(2^j\cdot-m).
\end{align*}
Then the set
\begin{align*}
H:=\left\{\cx_{0,m},\,h_{i,j,m}:\ i\in\left\{1,\ldots,2^n-1\right\},\
j\in\mathbb{N}_0,\ m\in\mathbb{Z}^n\right\}
\end{align*}
forms the well-known orthonormal Haar wavelet system in $\mathbb{R}^n$ (we shall call it
just the \emph{Haar system}).

For a function $f\in L^1_{\rm loc}(\mathbb{R}^n)$, the Haar wavelet expansion is given by
\begin{equation*}
	f=\sum_{m\in\mathbb{Z}^n}\, \langle f , \mathcal{X}_{0,m}  \rangle \, \mathcal{X}_{0,m} + \sum_{i=1}^{2^n-1} \sum_{j=0}^\infty
	\sum_{m\in\mathbb{Z}^n}\, \langle f , h_{i,j,m} \rangle\, h_{i,j,m}.
\end{equation*}
For given $f \in \mathcal{S}'(\mathbb{R}^n)$ we denote by $\delta (f)$ the sequence of all Fourier--Haar coefficients. More exactly we put
\begin{align}\label{koeffh}
	\delta (f) := \{\delta_k (f)\}_{k} \cup \{\delta_{i,j,k}(f)\}_{i,j,k} ,
\end{align}
where  $\delta_k (f) := \langle f,\,\mathcal{X}_{0,k}\rangle$ and
$\delta_{i,j,k} (f):= \langle f,\,h_{i,j,k}\rangle$ with $\langle\cdot,\cdot\rangle$
denoting the inner product of $L^2(\mathbb{R}^n)$.
Triebel \cite[Theorem 3.41]{t14} (see also \cite{HT23}) has proved the following.

\begin{proposition}\label{haarprop}
\begin{itemize}
\item[{\rm (i)}] 	Let $s\in \mathbb{R}$, $p,q\in(0,\infty]$, and $\tau\in[0,1/p)$. 	Then
	the mapping $I:~ f \mapsto \delta (f)$
	is an isomorphism of $B^{s,\tau}_{p,q}(\mathbb{R}^n) $ onto $b^{s,\tau}_{p,q}(\mathbb{R}^n)$	if
	\begin{equation}\label{w001b}
		\max \left\{\frac 1p -1,~ n\left(\frac 1p -1\right)\right\} < s < \min \left\{\frac 1p, 1, n \left(\frac 1p -\tau\right)\right\}.	
	\end{equation}
\item[{\rm (ii)}] Let either
$p,q\in(0,\infty)$ and
\begin{equation*}
	\max \left\{ 0, ~ n\left(\frac 1p -1\right), n\left(\frac 1q -1\right)\right\}
< s < \min\left\{ \frac 1p,\frac 1q, 1, n\left(\frac 1p -\tau\right)\right\}
\end{equation*}	
or $p,q \in(1,\infty)$ and $s=0$ or $p\in(1,\infty)$, $q\in(1,\infty]$, and
\begin{equation*}
	\max \left\{ \frac 1p -1,~ \frac 1q -1 \right\} < s < \min \left\{ \frac 1p, \frac 1q , n\left(\frac 1p
-\tau\right)\right\}.	
\end{equation*}	
	Then
the mapping $I:~ f \mapsto \delta (f)$
is an isomorphism of $F^{s,\tau}_{p,q}(\mathbb{R}^n) $ onto $f^{s,\tau}_{p,q}(\mathbb{R}^n)$.
\end{itemize}
\end{proposition}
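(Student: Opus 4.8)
The plan is to establish the claimed isomorphism by proving two one-sided estimates and then invoking the orthonormality of the Haar system. Writing $S$ for the synthesis map $\lambda \mapsto \sum_m \lambda_m \mathcal{X}_{0,m} + \sum_{i,j,m}\lambda_{i,j,m} h_{i,j,m}$ and $I$ for the analysis map $f \mapsto \delta(f)$, I would show separately that (A) $S$ maps $b^{s,\tau}_{p,q}(\mathbb{R}^n)$ boundedly into $B^{s,\tau}_{p,q}(\mathbb{R}^n)$ (and likewise $f^{s,\tau}_{p,q}(\mathbb{R}^n)$ into $F^{s,\tau}_{p,q}(\mathbb{R}^n)$), and (B) the functionals $f\mapsto \delta_k(f),\ \delta_{i,j,k}(f)$ are well defined on $A^{s,\tau}_{p,q}(\mathbb{R}^n)$ with $\|\delta(f)\|_{a^{s,\tau}_{p,q}(\mathbb{R}^n)} \ls \|f\|_{A^{s,\tau}_{p,q}(\mathbb{R}^n)}$. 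Because the Haar system is an orthonormal basis of $L^2(\mathbb{R}^n)$, on the dense subset $A^{s,\tau}_{p,q}(\mathbb{R}^n)\cap L^2(\mathbb{R}^n)$ one has $S\circ I = \mathrm{id}$ and $I\circ S = \mathrm{id}$; combined with (A) and (B) this upgrades to the isomorphism on all of $A^{s,\tau}_{p,q}(\mathbb{R}^n)$.

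For step (A) the key observation is that, after the normalization built into Definitions \ref{dts} and \ref{dts2}, each $h_{i,j,m}$ is a (nonsmooth) atom supported on the dyadic cube $Q_{j,m}$: it is bounded by $|Q_{j,m}|^{-1/2}=2^{jn/2}$, which is exactly the scaling absorbed by the weight $2^{j(s+n/2-n/p)}$ in the sequence norm, and, since at least one tensor factor of $h_i$ is $\widetilde{h}$, it satisfies the single cancellation condition $\int_{\mathbb{R}^n} h_{i,j,m}(x)\,dx = 0$. I would feed these into the atomic (or smooth-molecular) decomposition theorem for $A^{s,\tau}_{p,q}(\mathbb{R}^n)$. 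This is exactly where the constraints on $s$ enter: the single available vanishing moment of the Haar function forces the lower bounds $s > \max\{1/p-1, n(1/p-1)\}$ (the usual cancellation thresholds for one vanishing moment), while the mere $C^0$-regularity of $h_i$, quantified through its first-order differences, forces $s < 1$; the Morrey parameter $\tau$ is carried along harmlessly because the atomic theorem is formulated directly for the type spaces.

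For step (B) the decisive point is that the hypothesis $s < n(1/p-\tau)$ together with Proposition \ref{embc}(ii) guarantees $A^{s,\tau}_{p,q}(\mathbb{R}^n)\hookrightarrow C(\mathbb{R}^n)$, so that $\langle f, \mathcal{X}_{0,k}\rangle$ and $\langle f, h_{i,j,k}\rangle$ are genuine integrals of a continuous function against compactly supported $L^\infty$ kernels, hence well defined. To bound the resulting sequence I would pass through the smooth wavelet characterization of Proposition \ref{wav-type2}: expanding $f = \sum \lambda(f)\cdot(\text{Daubechies wavelets})$ and inserting this into $\langle f, h_{i,j,k}\rangle$ reduces the estimate to controlling the change-of-basis coefficients $\langle \psi_{i',j',k'}, h_{i,j,k}\rangle$. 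The vanishing moments of the smooth wavelets (see \eqref{moment}) played against the cancellation and localization of the Haar functions yield an almost-diagonal matrix, and applying its boundedness on $b^{s,\tau}_{p,q}(\mathbb{R}^n)$ resp. $f^{s,\tau}_{p,q}(\mathbb{R}^n)$ gives $\|\delta(f)\|_{a^{s,\tau}_{p,q}(\mathbb{R}^n)} \ls \|\lambda(f)\|_{a^{s,\tau}_{p,q}(\mathbb{R}^n)} \asymp \|f\|_{A^{s,\tau}_{p,q}(\mathbb{R}^n)}$; here $s<1/p$ is needed so that the diagonal decay remains summable on the side where the Haar function is the rougher of the two systems.

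The main obstacle I anticipate is step (B), specifically the almost-diagonality estimate for the mixed matrix $\langle \psi_{i',j',k'}, h_{i,j,k}\rangle$ in the presence of the Morrey parameter $\tau$. Because the $a^{s,\tau}_{p,q}(\mathbb{R}^n)$-norms are suprema of locally weighted sums over all dyadic cubes $P$, the off-diagonal decay must be strong enough to survive the rearrangement of these local sums across scales, and the near-critical upper bounds $s<1/p$ and $s<n(1/p-\tau)$ leave little room; verifying the almost-diagonal bounds uniformly in $P$, rather than merely globally as in the classical case $\tau=0$, is the technical heart of the argument. The remaining issues, namely convergence of the Haar expansion in $\mathcal{S}'(\mathbb{R}^n)$ and the density needed to transfer the $L^2$ reconstruction identity, are comparatively routine once (A) and (B) are in hand.
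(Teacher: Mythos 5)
A preliminary remark: the paper does not prove Proposition \ref{haarprop} at all --- it is quoted from Triebel \cite[Theorem 3.41]{t14} --- so there is no in-paper argument to compare yours with, and your proposal has to be judged on its own terms. Its overall architecture (synthesis bounded via a non-smooth atomic decomposition in which the Haar functions are $L^\infty$-normalized atoms with one vanishing moment; analysis bounded by expanding $f$ in smooth wavelets and estimating the mixed Gram matrix) is the standard route to such results and is sound in outline. But one of your supporting arguments is genuinely wrong, and a second step that you dismiss as routine actually fails. The first problem is the well-definedness of $\delta(f)$ in your step (B): you invoke Proposition \ref{embc}(ii) to claim that $s<n(\frac 1p-\tau)$ gives $A^{s,\tau}_{p,q}(\mathbb{R}^n)\hookrightarrow C(\mathbb{R}^n)$, but the equivalence in that proposition goes the other way --- the embedding holds if and only if $s>n(\frac 1p-\tau)$, which is excluded by \eqref{w001b}. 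Indeed these spaces must contain discontinuous functions such as ${\bf 1}_{[0,1)^n}$ (that is the entire subject of the paper), and for $p>1$ the lower bound $\frac 1p-1$ in \eqref{w001b} is negative, so $f$ may even be a singular distribution. The coefficients $\langle f,h_{i,j,k}\rangle$ have to be given meaning by a (local) duality pairing: $h_{i,j,k}$ belongs to $B^{1/p'}_{p',\infty}(\mathbb{R}^n)$, and the condition $s>\frac 1p-1$, i.e. $-s<\frac 1{p'}$, is precisely what makes the pairing of $h_{i,j,k}$ with elements of $B^{s,\tau}_{p,q}(\mathbb{R}^n)$ meaningful. Your proposal assigns no role to this lower bound and replaces the necessary duality argument by an embedding that fails.

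The second gap is the passage from the $L^2$ identities $S\circ I=I\circ S=\mathrm{id}$ to the whole space ``by density''. For $\tau>0$ the spaces $A^{s,\tau}_{p,q}(\mathbb{R}^n)$ are not separable (the remark following the proposition points this out as the reason the Schauder-basis problem is meaningless for them), and $A^{s,\tau}_{p,q}(\mathbb{R}^n)\cap L^2(\mathbb{R}^n)$ is not dense in them; a bounded operator is not determined by its values on a non-dense subspace. The isomorphism therefore has to be established directly --- for instance by showing that every $f$ in the space admits a Haar expansion converging in $\mathcal{S}'(\mathbb{R}^n)$ whose coefficients are recovered by the dual pairing and controlled by the sequence-space norm, and conversely --- so the step you label ``comparatively routine'' is in fact one of the places where the $\tau>0$ theory departs essentially from the classical case $\tau=0$.
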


\begin{remark}
\begin{itemize}
\item[{\rm (i)}]  Clearly, for singular distributions it may happen that $\delta (f)$ is not well defined.
	The statement of the proposition  implies that, if  \eqref{w001b}  is satisfied, then
	$\delta (f)$ is  well defined.
\item[{\rm (ii)}]  There are many articles dealing with the basis problem for the Haar system in
	Besov as well as  Triebel--Lizorkin spaces.
	Let us mention here at least Ropela \cite{r76}, Triebel \cite{t78,t10}, Oswald \cite{Os81,Os18,Os21},
	 Seeger and Ullrich \cite{SU1,SU2}, and Garrigos, Seeger, and Ullrich \cite{GSU,GSU18,GSU21,GSU23}.
	Observe that for $\tau>0$ this problem for the type spaces $A^{s,\tau}_{p,q}(\mathbb{R}^n)$ is meaningless because these spaces are not separable.
\item[{\rm (iii)}]	In the particular case that $n=1$ and $p=q$,  Sickel et al \cite{syyz} have found necessary and sufficient conditions on $s,\tau$, and $p$ such that Proposition \ref{haarprop} remains true. It turns out that the lower bound in
\eqref{w001b} is too restrictive. But this has no influence on our problem here.
\end{itemize}
\end{remark}


\subsection{Characterization by differences}
\label{differ}


For any $M\in\mathbb{N}$, any function $f: \mathbb{R}^n \to \cc$, and any $h, x\in\mathbb{R}^n$, let
\[
\Delta_h^Mf(x):= \sum_{j=0}^M \, (-1)^j\left(\gfz{M}{j}\right)f(x+[M-j]h),
\]
where $ (\gfz{M}{j})$ for any $j\in\{0,\ldots, M\}$ denotes the \emph{binomial coefficients}.
For any $p\in(0,\infty]$, let $L^p(\mathbb{R}^n)$ denote the \emph{Lebesgue space} which consists of all measurable
functions $f$ on $\mathbb{R}^n$ such that
$$\|f\|_{L^p(\mathbb{R}^n)}:=\lf[\int_{\mathbb{R}^n} |f(x)|^p\,dx\right]^{\frac1p}<\infty$$
with the usual modification made when $p=\infty$, and $L_{\rm loc}^p(\mathbb{R}^n)$ the space of all measurable functions
which belong locally to $L^p(\mathbb{R}^n)$.
For any $\tau\in[0, \infty)$, $p\in (0,\infty]$, and $f\in L_{\rm loc}^p(\mathbb{R}^n)$, let
\begin{align*}
\|f\|_{L^p_\tau(\mathbb{R}^n)} := \sup_{\{P\in\mathcal{Q}: |P|\ge1\}}
\frac1{|P|^\tau}
\lf[\dint_P|f(x)|^p\,dx\right]^{\frac1p}
\end{align*}
with the usual modification made when $p=\infty$.
Denote by $L^p_\tau(\mathbb{R}^n)$ the set of all
functions $f$ on $\mathbb{R}^n$ satisfying $\|f\|_{L^p_\tau(\mathbb{R}^n)}<\infty$.
Obviously, for any $p\in(0,\infty]$, $L^p_0(\mathbb{R}^n)=L^p(\mathbb{R}^n)$.
We set
\begin{eqnarray*}
\|f\|^\spadesuit_{B^{s,\tau}_{p,q}(\mathbb{R}^n)} :=\sup_{P\in\mathcal{
Q}}\frac1{|P|^\tau}\left\{\int_0^{2\min\{\ell(P),1\}}
t^{-sq}\sup_{\frac t2\le|h|<t}\left[\int_P
|\Delta_h^M
f(x)|^p\,dx\right]^{\frac qp}\,\frac{dt}{t}\right\}^{\frac 1q}.
\end{eqnarray*}
The following difference characterization was proved  in
\cite[Theorem 4.7]{ysy}. Here we focus on the case   $\tau\in[0,1/p]$.

\begin{proposition}\label{t4.7}
Let  $p,q\in [1, \infty]$ and $M\in\mathbb{N}$.
Let $s\in(0,M)$ and $\tau \in[0,1/p].$
Then $f\in B^{s,\tau}_{p,q}(\mathbb{R}^n)$ if and only
if $f\in L^p_\tau(\mathbb{R}^n)$ and $\|f\|^\spadesuit_{B^{s,\tau}_{p,q}(\mathbb{R}^n)}<\infty$. Furthermore,
$\|f\|_{L^p_\tau(\mathbb{R}^n)}+\|f\|^\spadesuit_{B^{s,\tau}_{p,q}(\mathbb{R}^n)}$ and $\|f\|_{B^{s,\tau}_{p,q}(\mathbb{R}^n)}$ are
equivalent with the positive equivalence constants independent of $f$.
\end{proposition}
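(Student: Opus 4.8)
The plan is to prove the two-sided norm equivalence by passing through the common building blocks $\varphi_j\ast f$: I would bound the difference seminorm $\|f\|^\spadesuit_{B^{s,\tau}_{p,q}(\mathbb{R}^n)}$ from above by the Littlewood--Paley quasi-norm $\|f\|_{B^{s,\tau}_{p,q}(\mathbb{R}^n)}$, and conversely reconstruct each $\varphi_j\ast f$ from higher-order differences of $f$. Throughout, the coupling between the cutoff $t\le 2\min\{\ell(P),1\}$ in $\|f\|^\spadesuit$ and the starting index $j=\max\{j_P,0\}$ in the Littlewood--Paley sum is what makes the two descriptions match scale by scale: differences of length $t\asymp 2^{-j}$ correspond to frequencies of order $2^j$. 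The restriction $\tau\in[0,1/p]$ keeps us in the genuine Morrey regime (cf. Remark \ref{grund}(v)), in which this $L^p$-based difference seminorm is the appropriate one rather than an $L^\infty$-type H\"older--Zygmund norm.

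First I would record the embedding $\|f\|_{L^p_\tau(\mathbb{R}^n)}\ls\|f\|_{B^{s,\tau}_{p,q}(\mathbb{R}^n)}$: since $s>0$, writing $f=\sum_{j\ge0}\varphi_j\ast f$ and applying H\"older in $j$ against the geometric weight $2^{-js}$ controls $\|f\|_{L^p(P)}$ by the Littlewood--Paley sum over any dyadic $P$, and dividing by $|P|^\tau$ gives the $L^p_\tau$-bound. For the forward estimate of the differences, fix a dyadic cube $P$, a value $t\in(0,2\min\{\ell(P),1\})$, and $|h|\in[t/2,t)$, and split $\Delta_h^Mf=\sum_{j\ge0}\Delta_h^M(\varphi_j\ast f)$. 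For the low frequencies $2^jt\le1$ I would use a Taylor-type estimate which, via the band-limitedness of $\varphi_j\ast f$, bounds $|\Delta_h^M(\varphi_j\ast f)(x)|$ by $(2^j|h|)^M$ times a Peetre maximal function of $\varphi_j\ast f$, thereby gaining the factor $(2^jt)^M$; for the high frequencies $2^jt>1$ I would use the crude bound $|\Delta_h^M(\varphi_j\ast f)(x)|\ls\sum_{i=0}^M\binom{M}{i}|\varphi_j\ast f(x+(M-i)h)|$. Integrating over $P$, taking the $\ell^q$-sum with weight $t^{-sq}$ and the $dt/t$-integral, the two regimes are summed by two Hardy-type inequalities: convergence of the high-frequency tail uses $s>0$ and that of the low-frequency tail uses $s<M$. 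This is precisely where both endpoints of the hypothesis $s\in(0,M)$ enter.

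For the reverse inequality $\|f\|_{B^{s,\tau}_{p,q}(\mathbb{R}^n)}\ls\|f\|_{L^p_\tau(\mathbb{R}^n)}+\|f\|^\spadesuit_{B^{s,\tau}_{p,q}(\mathbb{R}^n)}$ I would reconstruct the Littlewood--Paley blocks from differences. Because $\mathcal{F}\varphi$ is supported in the annulus $\{1/2\le|\xi|\le2\}$, the symbol $(e^{\iota y\cdot\xi}-1)^M$ is bounded below there, so one can solve for a kernel $K$ with good decay such that the reproducing identity $\varphi_j\ast f(x)=\int_{\mathbb{R}^n}\Delta_{2^{-j}y}^Mf(x)\,K(y)\,dy$ holds for all $j\ge1$. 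The block $\varphi_0\ast f$ is absorbed into the $L^p_\tau$-term. Substituting the identity into $\int_P|\varphi_j\ast f|^p$, applying Minkowski's integral inequality in $y$, and controlling the translated arguments, I would bound $2^{js}[\int_P|\varphi_j\ast f|^p]^{1/p}$ by $2^{js}\sup_{|h|\asymp2^{-j}}[\int_{\widetilde P}|\Delta_h^Mf|^p]^{1/p}$ on a fixed dilate $\widetilde P$; summing in $j$ and comparing the discrete sum with the dyadic decomposition of the $dt/t$-integral then recovers $\|f\|^\spadesuit$, uniformly in $P$.

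The hard part will be the uniform handling of the cube localization under translations. Unlike the classical case $\tau=0$, the quasi-norm is a supremum over all dyadic cubes and is not translation invariant, so every occurrence of $\Delta_h^Mf(x)=f(x+(M-i)h)-\cdots$ pushes the argument off $P$. The technical heart is therefore a covering step: replace the integral over the translated/dilated region by integrals over a bounded number of dyadic cubes $P_1,\dots,P_N$ of side length $\asymp\ell(P)$, estimate each by the relevant supremum, and use $|P|^\tau\asymp|P_l|^\tau$ to restore the correct Morrey weight. Once this local-to-global transfer is set up, the remaining ingredients — the Taylor and maximal bounds, the reproducing identity, and the two Hardy inequalities — are routine, and the endpoint cases $p=\infty$ (forcing $\tau=0$) or $q=\infty$ follow from the usual supremum modifications.
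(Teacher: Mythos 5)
The paper does not prove this proposition: it is quoted verbatim from \cite[Theorem 4.7]{ysy}, and your outline is essentially the standard argument given there (and in Triebel's Theorems 2.5.10--2.5.12 for $\tau=0$): Peetre maximal functions and the low/high frequency split $(2^jt)^M$ vs.\ crude bounds for the forward estimate, a reproducing identity built from averaged $M$-th differences for the converse, and a bounded-overlap covering by dyadic cubes of comparable side length to restore the weight $|P|^{-\tau}$. One phrase to repair: for a \emph{fixed} $y$ the symbol $(e^{\iota y\cdot\xi}-1)^M$ is \emph{not} bounded below on the annulus (it vanishes whenever $y\perp\xi$); what is actually needed, and what your subsequent sentence implicitly uses, is that the averaged symbol $\int K(y)(e^{\iota y\cdot\xi}-1)^M\,dy$ can be arranged to be nonvanishing on $\supp\mathcal{F}\varphi$ for a suitable kernel $K$, after which one divides by it to obtain the reproducing identity. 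With that correction, and with the rapid decay of $K$ used to absorb the loss $(|P'|/|P|)^{\tau}$ incurred when differences of large step force passage to larger dyadic cubes (the step at which $\tau\le 1/p$ and the $L^p_\tau$ term enter), your plan is sound.
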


In the next result Morrey spaces are used, see \eqref{morrey7}.

\begin{proposition}\label{H1}
Let   $ 0 < p \le u <\infty$ and $ M \in \mathbb{N} $.
Suppose that
\begin{equation*}
		n \max \left \{0, \frac{1}{p} - 1 \right\} < s < M.
\end{equation*}
	Then a function $ f~ \in ~ L^{\min\{p,q\}}_{\rm loc}(\mathbb{R}^n)$ belongs to $ F^{s,\frac 1p - \frac 1u}_{p,\infty}(\mathbb{R}^n) $
	if and only if  $ f \in L^{1}_{\rm loc}(\mathbb{R}^n)$ and
	\[
 \| f \|_{\mathcal{M}^u_p (\mathbb{R}^n)}
 +   \left\|   \sup_{t\in(0, 1)} t^{-s} \,  \left[ t^{-n} \int_{B({\bf 0},t)} |\Delta^{M}_{h}f(\cdot) |^p dh \right]^{\frac 1p}   \right\|_{\mathcal{M}^u_p (\mathbb{R}^n)}< \infty.
\]
\end{proposition}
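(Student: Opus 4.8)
The plan is to reduce the claim to a ball-means-of-differences characterization of a Triebel--Lizorkin--Morrey space and then to transplant the classical Littlewood--Paley argument to the Morrey setting. Two preliminary observations fix the framework. Since the third index equals $\infty$, the local hypothesis reads $f\in L^p_{\rm loc}(\mathbb{R}^n)$, and the Morrey exponent $\tau:=\frac1p-\frac1u$ lies in $[0,\frac1p)$ because $0<p\le u<\infty$. I would then invoke the coincidence
\[
F^{s,\frac1p-\frac1u}_{p,\infty}(\mathbb{R}^n)=\mathcal{E}^s_{u,p,\infty}(\mathbb{R}^n),
\]
where $\mathcal{E}^s_{u,p,\infty}(\mathbb{R}^n)$ is the inhomogeneous Triebel--Lizorkin--Morrey space quasi-normed by $\big\|\sup_{j\in\mathbb{N}_0}2^{js}|\varphi_j\ast f|\big\|_{\mathcal{M}^u_p(\mathbb{R}^n)}$ (see \cite{ysy}). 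Together with the Peetre-maximal-function characterization of this space, namely $\|f\|_{\mathcal{E}^s_{u,p,\infty}(\mathbb{R}^n)}\asymp\big\|\sup_{j}2^{js}\varphi^\ast_{j,a}f\big\|_{\mathcal{M}^u_p(\mathbb{R}^n)}$ for $a$ large enough (here $\varphi^\ast_{j,a}f(x):=\sup_{y}|\varphi_j\ast f(x-y)|(1+2^j|y|)^{-a}$), it then suffices to compare the Morrey-difference quantity with $\|f\|_{\mathcal{E}^s_{u,p,\infty}(\mathbb{R}^n)}$.

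For the necessity direction I would decompose $f=\sum_{j\ge0}\varphi_j\ast f$ and, writing $d^M_t g(x):=(t^{-n}\int_{B(\mathbf{0},t)}|\Delta^M_h g(x)|^p\,dh)^{1/p}$, prove the pointwise bound
\[
d^M_t(\varphi_j\ast f)(x)\ \lesssim\ \min\{(t2^j)^M,\,1\}\,\varphi^\ast_{j,a}f(x).
\]
The factor $(t2^j)^M$ in the low-frequency regime $t2^j\le1$ comes from the Taylor/Bernstein estimate $|\Delta^M_h(\varphi_j\ast f)(x)|\lesssim|h|^M 2^{jM}\varphi^\ast_{j,a}f(x)$, while in the high-frequency regime $t2^j>1$ the crude bound $|\Delta^M_h(\varphi_j\ast f)(x)|\lesssim\varphi^\ast_{j,a}f(x)$ is used. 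Summing the resulting geometric series after multiplication by $t^{-s}$ --- the low-frequency sum converging because $s<M$ and the high-frequency sum because $s>0$ --- yields $\sup_{t\in(0,1)}t^{-s}d^M_t f(x)\lesssim\sup_j2^{js}\varphi^\ast_{j,a}f(x)$ pointwise (for $p<1$ one runs the same argument with the $p$-triangle inequality). Taking $\mathcal{M}^u_p$-norms and applying the Peetre characterization controls the difference term by $\|f\|_{\mathcal{E}^s_{u,p,\infty}}$; the term $\|f\|_{\mathcal{M}^u_p}$ is handled by $\|f\|_{\mathcal{M}^u_p}\le\sum_j2^{-js}\|f\|_{\mathcal{E}^s_{u,p,\infty}}\lesssim\|f\|_{\mathcal{E}^s_{u,p,\infty}}$, again using $s>0$.

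For the sufficiency direction I would reconstruct the building blocks from differences, paralleling the proof of Proposition \ref{t4.7}. For $j\ge1$, since $\mathcal{F}\varphi(2^{-j}\cdot)$ is supported in an annulus away from the origin, a moment-matching argument against the symbol $(e^{\iota h\xi}-1)^M$ produces a representation $\varphi_j\ast f(x)=\int_{\mathbb{R}^n}2^{jn}\kappa(2^jh)\,\Delta^M_h f(x)\,dh$ with $\kappa$ supported in a fixed ball and integrable; Hölder's inequality (respectively the $p$-triangle inequality) then gives $2^{js}|\varphi_j\ast f(x)|\lesssim g(x)$ for each $j\ge1$, where $g(x):=\sup_{t\in(0,1)}t^{-s}d^M_t f(x)$ is the difference density. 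The remaining term $\varphi_0\ast f$ is dominated pointwise by the Hardy--Littlewood maximal function of $f$. Taking $\mathcal{M}^u_p$-norms and using its boundedness on this Morrey space (see below) gives $\|f\|_{\mathcal{E}^s_{u,p,\infty}}\lesssim\|f\|_{\mathcal{M}^u_p}+\|g\|_{\mathcal{M}^u_p}$. The lower bound $s>n\max\{0,\frac1p-1\}$ is what legitimizes the summation and the maximal-function estimates, in particular for $p\le1$.

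The main obstacle, and the only genuinely non-classical point, is the transplantation of the maximal-function machinery to Morrey norms: one must ensure that the Hardy--Littlewood operator --- and, for $p\le1$, its powered version $(M|\cdot|^r)^{1/r}$ with $0<r<p$ chosen so that $a>n/r$ and $s>n(\frac1r-1)$ remain available --- is bounded on $\mathcal{M}^u_p(\mathbb{R}^n)$, a fact that is already built into the Peetre characterization of $\mathcal{E}^s_{u,p,\infty}$. Here the restriction to $q=\infty$ is a real simplification, since no vector-valued Fefferman--Stein inequality is needed, only the scalar maximal bound. A secondary point requiring care is well-definedness on the ``if'' side: one must verify that the finiteness of $\|f\|_{\mathcal{M}^u_p(\mathbb{R}^n)}$ together with $f\in L^1_{\rm loc}(\mathbb{R}^n)$ makes the Littlewood--Paley pieces, the differences, and the reconstruction series meaningful as genuine functions, which is precisely why the hypothesis isolates $f\in L^1_{\rm loc}(\mathbb{R}^n)$ in the case $p<1$.
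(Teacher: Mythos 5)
The paper does not actually prove Proposition \ref{H1}: immediately after the statement it refers to Hovemann \cite{Diss,Ho} and to Hedberg and Netrusov \cite{HN}, so the only meaningful comparison is with the argument in those references. Your outline reconstructs that argument along its standard lines: the identification $F^{s,1/p-1/u}_{p,\infty}(\mathbb{R}^n)=\mathcal{E}^s_{u,p,\infty}(\mathbb{R}^n)$ (valid precisely because the third index is $\infty$), the Peetre maximal characterization in the Morrey-norm setting, the pointwise bound $d^M_t(\varphi_j\ast f)(x)\lesssim\min\{(t2^j)^M,1\}\,\varphi^\ast_{j,a}f(x)$ with the two regimes governed by $s<M$ and $s>0$, and the reconstruction of $\varphi_j\ast f$ from $\Delta^M_hf$ via a compactly supported kernel $\kappa$. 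For $p\ge 1$ this is complete in all essentials and is the route the cited sources take.

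There is, however, one step that fails as written, namely the sufficiency direction for $p<1$. You propose to pass from $\varphi_j\ast f(x)=\int_{\mathbb{R}^n}2^{jn}\kappa(2^jh)\,\Delta^M_hf(x)\,dh$ to $2^{js}|\varphi_j\ast f(x)|\lesssim g(x)$ by ``the $p$-triangle inequality''. That inequality applies to sums, not to integrals: for $0<p<1$ and a normalized measure, Jensen's inequality for the concave function $t\mapsto t^p$ gives $\bigl(\int |F|\,d\mu\bigr)^p\ge\int|F|^p\,d\mu$, i.e.\ the $L^1$-average of $|\Delta^M_hf(x)|$ over $|h|\lesssim 2^{-j}$ \emph{dominates} the $L^p$-average $d^M_{c2^{-j}}f(x)$ rather than being controlled by it, so neither H\"older nor any $p$-triangle inequality closes this step. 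This is exactly where the hypothesis $s>n(1/p-1)$ must be spent: one needs the Hedberg--Netrusov/Triebel device of writing $|\Delta^M_hf|=|\Delta^M_hf|^{w}\,|\Delta^M_hf|^{1-w}$ for suitable $w<1$, absorbing the second factor into a maximal function, and resolving the resulting implicit inequality with the powered operator $\bigl(M|\cdot|^{r}\bigr)^{1/r}$, $0<r<p$, which is bounded on $\mathcal{M}^u_p(\mathbb{R}^n)$ while the plain Hardy--Littlewood operator is not when $p\le1$; the same remark applies to your treatment of the low-frequency term $\varphi_0\ast f$. You do mention the powered maximal operator at the end, but only as something ``built into'' the Peetre characterization, not at the point in the reconstruction where it is actually indispensable.
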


We refer to Hovemann \cite{Diss,Ho}, see also   Hedberg and Netrusov \cite{HN}.
The main difference  between these characterizations consists in the used domain of integration.

\begin{remark}
 In the literature there exist a couple of further characterizations by differences, we refer to
 Drihem \cite{Dr},
Hovemann \cite{Diss,Ho}, Hovemann and Sickel \cite{HS},	and Triebel \cite{t14}.
\end{remark}


\section{Besov-type spaces and the pointwise multiplier problem for Besov spaces}
\label{multi}


As we stated above, this section will motivate  our investigations in this article.
Therefore we need to deal with pointwise multiplication of distributions.
Let
$\phi_0 \in C_{{\rm c}}^\infty (\mathbb{R}^n)$ be a radial and real-valued
function satisfying $0 \le \phi_0 \le 1$ for all $x\in \mathbb{R}^n$ and $\phi_0 (x)= 1$ in a neighborhood of the origin.
Then, for any $f \in \mathcal{S}'(\mathbb{R}^n)$,
\[
S^j f (x) := (2\pi)^{-\frac n2} \, 2^{jn} \int_{\mathbb{R}^n} \cfi \phi\left(2^j(x-y)\right)\, f(y)\, dy \, , \quad x\in \mathbb{R}^n\, , \ j \in \nn_0\, ,
\]
defines a weakly convergent sequence of smooth functions with limit $f$ in $\mathcal{S}'(\mathbb{R}^n)$.
In what follows we are making use of the following definition.

\begin{definition}
For any $f,g\in\mathcal{S}'(\mathbb{R}^n)$, define
	\begin{equation*}
		f\, \cdot \,  g:=\lim_{j \rightarrow\infty}\, S^jf\, \cdot \,  S^jg
	\end{equation*}
	if the limit on the right-hand side
	exists in $\mathcal{S}'(\mathbb{R}^n)$ and is independent of $\phi_0$.
\end{definition}

We further need the following.

\begin{definition}\label{df-mul}
	Let $p,q\in(0,\infty]$ and $s \in\mathbb{R}$.
\begin{itemize}
\item[{\rm (i)}]	
A tempered distribution $f$ is called a {\em pointwise multiplier} for the space $B^s_{p,q} (\mathbb{R}^n)$	if
the product 	$f\, \cdot \, g$ is not only  well defined for all $g \in B^s_{p,q} (\mathbb{R}^n)$ but also satisfies
$	f\, \cdot \, g  \in  B^s_{p,q} (\mathbb{R}^n) $.
\item[{\rm (ii)}]
	The \emph{pointwise multiplier space}
	$M(B^{s}_{p,q}(\mathbb{R}^n))$ of $B^{s}_{p,q}(\mathbb{R}^n)$ is the collection of all pointwise multipliers of
	$ B^s_{p,q} (\mathbb{R}^n)$ equipped with
	the following quasi-norm:
	\begin{equation*}
		\left\|f\right\|_{M(B^{s}_{p,q}(\mathbb{R}^n))}:=\sup_{\{g\in B^{s}_{p,q}(\mathbb{R}^n),\,g\neq\theta\}}
		\frac{\left\|fg\right\|_{B^{s}_{p,q}(\mathbb{R}^n)}}{\left\|g\right
\|_{B^{s}_{p,q}(\mathbb{R}^n)}},
	\end{equation*}
	where $\theta$ denotes the zero element of $B^{s}_{p,q}(\mathbb{R}^n)$.
\end{itemize}
\end{definition}

There are two results in the theory of pointwise multipliers which are of interest for us.  The first one can be found in \cite{LSYY24},
the second on in \cite{Gu2}. We need  further notation.
In what follows, for any $p\in(0,\infty]$, let $\sigma_p := n \, \max (0, \frac 1p -1 )$.

Let $\psi$ be a non-negative smooth function on  $\mathbb{R}^n$ with compact support such that
\begin{equation*}
	\sum_{\ell\in\mathbb{Z}^n}\, \psi(x-\ell) = 1,\, \quad x\in \mathbb{R}^n.
\end{equation*}
The \emph{uniform Besov space} $B^{s}_{p,q,{\rm unif}}(\mathbb{R}^n)$
is defined to be the set of all $f\in\mathcal{S}'(\mathbb{R}^n)$ such that
\[
\|f\|_{B^{s}_{p,q,{\rm unif}}(\mathbb{R}^n)}:=\sup_{\ell\in\mathbb{Z}^n}\left\|\psi(\cdot-\ell) f(\cdot)\right\|_{B^{s}_{p,q}(\mathbb{R}^n)}<\infty.
\]
The \emph{uniform Besov-type space} $B^{s,\tau}_{p,q,{\rm unif}}(\mathbb{R}^n)$
is defined to be the set of all $f\in\mathcal{S}'(\mathbb{R}^n)$ such that
\[
\|f\|_{B^{s,\tau}_{p,q,{\rm unif}}(\mathbb{R}^n)}:=\sup_{\ell\in\mathbb{Z}^n}\left\|\psi(\cdot-\ell) f(\cdot)\right\|_{B^{s,\tau}_{p,q}(\mathbb{R}^n)}<\infty.
\]
Let $b\in \mathbb{R}$. The
	\emph{Besov space with only logarithmic smoothness},
	$\mathcal{B}^{0,b}_{\infty,\infty}(\mathbb{R}^n)$,
	is defined to be the set of all $f\in\mathcal{S}'(\mathbb{R}^n)$ such that
	\[
	\|f\|_{\mathcal{B}^{0,b}_{\infty,\infty}(\mathbb{R}^n)}:=
	\sup_{k\in\mathbb{N}_0} \left\{(1+k)^b
	\left\|S_kf\right\|_{L^\infty(\mathbb{R}^n)}\right\}<\infty.
	\]
In the following proposition we collect characterizations of $M(B^s_{p,p}(\rn))$ in case $0 < p \le 1$, see  \cite{LSYY24}.

\begin{proposition}\label{FAchar}
	Let $p\in(0,1]$ and  $s\in(-\infty,\frac np)$.
	\begin{enumerate}
		\item[{\rm(i)}] If $s\in(\sigma_p,\frac np)$,
		then
		\begin{align*}
			M\left(B^s_{p,p}(\mathbb{R}^n)\right)
			=L^\infty(\mathbb{R}^n)\cap B^{s,\frac1p-\frac sn}_{p,p,{\rm unif}}(\mathbb{R}^n).
		\end{align*}
		\item[{\rm(ii)}] If $s=\sigma_p$, then
		\begin{align*}
			M\left(B^{s}_{p,p}(\mathbb{R}^n)\right)
			=L^\infty(\mathbb{R}^n)\cap B^{s,\frac1p-\frac sn}_{p,p,{\rm unif}}(\mathbb{R}^n)
			\cap \mathcal{B}^{0,\frac{1}{p}}_{\infty,\infty}(\mathbb{R}^n).
		\end{align*}
		\item[{\rm(iii)}] If $p=1$ and $s\in(-\infty,0)$, then
		\begin{align*}
			M\left(B^{s}_{1,1}(\mathbb{R}^n)\right)=B^{-s}_{\infty,\infty}(\mathbb{R}^n).
		\end{align*}
		\item[{\rm(iv)}] If $p\in(0,1)$ and $s\in[\frac12\sigma_p,\sigma_p)$,
		then
		\begin{align*}
			M\left(B^{s}_{p,p}(\mathbb{R}^n)\right)=
			B^{s,\frac1p-\frac sn}_{p,p,{\rm unif}}(\mathbb{R}^n)\cap
			B^{\sigma_p-s}_{\infty,\infty}(\mathbb{R}^n).
		\end{align*}
		\item[{\rm(v)}] If $p\in(0,1)$ and $s\in(-\infty,\frac12\sigma_p)$,
		then
		\begin{align*}
			M\left(B^{s}_{p,p}(\mathbb{R}^n)\right)=B^{\sigma_p-s}_{\infty,\infty}(\mathbb{R}^n).
		\end{align*}
	\end{enumerate}
	All the above equalities hold in the sense of equivalent quasi-norms.
\end{proposition}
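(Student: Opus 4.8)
The plan is to prove the five identities by establishing two inclusions in each case, with a single engine—a paraproduct decomposition of $f\cdot g$ analysed at the level of wavelet coefficients via Proposition \ref{wav-type2}—doing most of the work, the five regimes of $s$ differing only in which pieces dominate. A preliminary reduction makes the uniform structure transparent: since pointwise multiplication essentially commutes with the smooth partition of unity $\{\psi(\cdot-\ell)\}_{\ell\in\mathbb{Z}^n}$ used to define the $\mathrm{unif}$-spaces, one shows that $\|f\|_{M(B^s_{p,p}(\mathbb{R}^n))}$ is equivalent to the supremum over $\ell$ of the local multiplier norms of $\psi(\cdot-\ell)f$. This both explains the appearance of the $\mathrm{unif}$-subscript on the target spaces and allows me to reduce the global estimates to estimates on a fixed unit cube.

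For sufficiency I would decompose $f\cdot g=\Pi_1(f,g)+\Pi_2(f,g)+\Pi_3(f,g)$ into the low--high, high--low and resonance paraproducts built from $S^{j}f$, $\varphi_j\ast g$, and the analogous pieces. The low--high term $\Pi_1$ obeys $\|\Pi_1(f,g)\|_{B^s_{p,p}(\mathbb{R}^n)}\ls \|f\|_{L^\infty(\mathbb{R}^n)}\|g\|_{B^s_{p,p}(\mathbb{R}^n)}$, and this $L^\infty$ factor is appropriate exactly when $s\ge\sigma_p$ (cases (i) and (ii)); for $s<\sigma_p$ the $L^\infty$ bound no longer suffices and is replaced by one coming from the duality $B^s_{p,p}(\mathbb{R}^n)'\cong B^{\sigma_p-s}_{\infty,\infty}(\mathbb{R}^n)$, which is precisely the classical Besov factor appearing in (iii)--(v). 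The high--low term $\Pi_2$ is where the regularity of $f$ enters, and estimating it against $\|g\|_{B^s_{p,p}(\mathbb{R}^n)}$ produces exactly the condition $f\in B^{s,1/p-s/n}_{p,p,{\rm unif}}(\mathbb{R}^n)$; the Morrey exponent $\tau=1/p-s/n$ emerges from dilation bookkeeping—testing against a rescaled bump $g_\lambda=\lambda^{-n/p}g_0(\,\cdot/\lambda)$ and matching the resulting powers of $\lambda$ against $|Q_\lambda|^\tau$ forces this value. I would carry out all three estimates after passing to wavelet coefficients, turning the bilinear product into a sum over pairs of indices and invoking the sequence norm $b^{s,\tau}_{p,p}(\mathbb{R}^n)$ of Definition \ref{dts}.

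For necessity I would read off the required membership of $f$ by inserting explicit test functions $g$. Translating a fixed smooth bump over the lattice yields uniform local $B^s_{p,p}$-control of $f$, hence $f\in B^{s,1/p-s/n}_{p,p,{\rm unif}}(\mathbb{R}^n)$; multiplying by functions equal to $1$ on an expanding family of cubes (renormalised in $B^s_{p,p}(\mathbb{R}^n)$) extracts the pointwise bound $f\in L^\infty(\mathbb{R}^n)$ in the upper range $s\ge\sigma_p$; and pairing $f$ against near-extremal elements of $B^s_{p,p}(\mathbb{R}^n)$ produces the $B^{\sigma_p-s}_{\infty,\infty}(\mathbb{R}^n)$ component in cases (iii)--(v), through the same duality used for sufficiency.

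The genuine difficulties, and where I expect the real work to concentrate, are twofold. First, the resonance paraproduct $\Pi_3$ in the quasi-Banach range $p\le 1$: the failure of the triangle inequality forces one to sum the contributions of many frequency pairs using the sharp atomic/molecular estimates valid for $p\le 1$, and this summation is borderline precisely at the threshold values $s=\sigma_p$ and $s=\tfrac12\sigma_p$—which is why the characterization changes form there and the $B^{\sigma_p-s}_{\infty,\infty}(\mathbb{R}^n)$ factor switches on. Second, and hardest, is the endpoint $s=\sigma_p$ of case (ii): showing that $\mathcal{B}^{0,1/p}_{\infty,\infty}(\mathbb{R}^n)$ is neither too large nor too small requires, for sufficiency, a careful count of the logarithmically many scales that genuinely resonate, and for necessity, the construction of a sequence of test functions whose products saturate the logarithmic bound, thereby excluding any space with a stronger power of the logarithm.
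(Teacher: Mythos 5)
The first thing to say is that the paper does not prove Proposition \ref{FAchar} at all: it is explicitly recalled from the reference \cite{LSYY24} (``In the following proposition we collect characterizations of $M(B^s_{p,p}(\mathbb{R}^n))$ in case $0<p\le 1$, see \cite{LSYY24}''), so there is no in-paper argument against which your proposal can be checked. What you have written is a plan whose general shape — reduction to uniform local estimates via the partition of unity, a three-term paraproduct decomposition analysed on wavelet coefficients, duality $(B^s_{p,p}(\mathbb{R}^n))'\cong B^{\sigma_p-s}_{\infty,\infty}(\mathbb{R}^n)$ for the low range of $s$, and rescaled bumps to extract the Morrey exponent $\tau=\frac1p-\frac sn$ — is consistent with the wavelet/Fourier-analytic methodology of the cited source, and each of these ingredients is individually standard. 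But as it stands it is a roadmap, not a proof: every estimate that actually decides the statement is announced rather than carried out, and you yourself flag the resonance term for $p\le1$ and the endpoint $s=\sigma_p$ as the places ``where the real work concentrates'' without doing that work.

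Two concrete points deserve correction even at the level of the plan. First, your explanation of the threshold $s=\frac12\sigma_p$ as a ``borderline summation'' in the resonance paraproduct misidentifies the mechanism: the reason the Besov-type condition disappears in case (v) is an absorption phenomenon — for $s\le\frac12\sigma_p$ one has $\sigma_p-s\ge s$, and the H\"older--Zygmund condition $f\in B^{\sigma_p-s}_{\infty,\infty}(\mathbb{R}^n)$ already implies $f\in B^{s,\frac1p-\frac sn}_{p,p,{\rm unif}}(\mathbb{R}^n)$ (and, in case (iv), the two conditions are genuinely independent only because $\sigma_p-s<s$ fails there); without isolating this embedding your cases (iv) and (v) cannot be separated. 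Second, the necessity of $f\in L^\infty(\mathbb{R}^n)$ in cases (i) and (ii) does not follow from ``functions equal to $1$ on expanding cubes'': since $s<\frac np$, elements of $B^s_{p,p}(\mathbb{R}^n)$ need not be bounded, and the standard argument instead tests against bumps $\varphi(\lambda(\cdot-x_0))$ concentrated at Lebesgue points of $f$ and compares the two sides as $\lambda\to\infty$; moreover for $s=\sigma_p$ this only yields $L^\infty$ together with the logarithmic condition, and the sharpness of $\mathcal{B}^{0,\frac1p}_{\infty,\infty}(\mathbb{R}^n)$ in case (ii) requires an extremal construction you have not indicated. For the actual proofs of all five cases you should consult \cite{LSYY24}.
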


In case $p=1$ there is a number of further descriptions of   $M(B^s_{1,1}(\mathbb{R}^n))$.
Maz'ya and  Shaposhnikova \cite{MS81} have been the first who have found a characterization of
 $M(B^s_{1,1}(\mathbb{R}^n))$ for $s>0$, see  also \cite[Chapter~5]{MS09}, but this differs from the given one in Proposition  \ref{FAchar}. Further characterizations have been found by Netrusov \cite{Ne92}, Sickel \cite{s99b},
 and Triebel \cite{t03}, all different from the given one in Proposition  \ref{FAchar}.

Now we turn to a first consequence of  Proposition \ref{FAchar}. Since the H\"older--Zygmund spaces $B^\lambda_{\infty, \infty} (\mathbb{R}^n)$, $\lambda >0$, contain only equivalence classes of locally integrable functions which contain a continuous function,   Proposition  \ref{FAchar} implies the following.

 \begin{corollary}
 Let either $p=1$ and $s \in (-\infty,0]$	or   $p\in(0,1)$ and $s\in(-\infty,\sigma_p)$.
 Let $E \subset \mathbb{R}^n$ be a measurable set such that $\cx_E \in 	M(B^{s}_{p,p}(\mathbb{R}^n))$.
 Then $|E|=0$.
 \end{corollary}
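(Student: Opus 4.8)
The plan is to read off $M(B^s_{p,p}(\mathbb{R}^n))$ from Proposition \ref{FAchar} in each of the two parameter ranges and to exploit that in every case this multiplier space is too small to contain a nontrivial characteristic function. I would split according to the items of Proposition \ref{FAchar}. In the generic cases this is soft: for $p=1$ and $s<0$ item (iii) gives $M(B^s_{1,1}(\mathbb{R}^n))=B^{-s}_{\infty,\infty}(\mathbb{R}^n)$ with $-s>0$; for $p\in(0,1)$ and $s\in[\tfrac12\sigma_p,\sigma_p)$ item (iv) gives $M\subseteq B^{\sigma_p-s}_{\infty,\infty}(\mathbb{R}^n)$; and for $p\in(0,1)$, $s<\tfrac12\sigma_p$ item (v) gives $M=B^{\sigma_p-s}_{\infty,\infty}(\mathbb{R}^n)$, the exponent $\sigma_p-s$ being positive in both. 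Hence here $M\hookrightarrow B^\lambda_{\infty,\infty}(\mathbb{R}^n)$ for some $\lambda>0$, and, as recalled just before the statement, every element of such a H\"older--Zygmund space has a continuous bounded representative. Thus ${\bf 1}_E$ agrees a.e.\ with a continuous $g$; since $g^{-1}(\mathbb{R}\setminus\{0,1\})$ is open and of measure zero it is empty, so $g(\mathbb{R}^n)\subseteq\{0,1\}$, and connectedness of $\mathbb{R}^n$ forces $g\equiv 0$ or $g\equiv 1$, i.e.\ $|E|=0$ or $|E^\complement|=0$. Discarding the trivial full-measure alternative $E=\mathbb{R}^n$ yields $|E|=0$.

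The remaining case is the only delicate one: $p=1$, $s=0=\sigma_1$, where item (ii) gives $M(B^0_{1,1}(\mathbb{R}^n))=L^\infty(\mathbb{R}^n)\cap B^{0,1}_{1,1,{\rm unif}}(\mathbb{R}^n)\cap\mathcal{B}^{0,1}_{\infty,\infty}(\mathbb{R}^n)$, an intersection that embeds into no H\"older--Zygmund space of positive order, so the continuity argument breaks down. Here I would use only ${\bf 1}_E\in\mathcal{B}^{0,1}_{\infty,\infty}(\mathbb{R}^n)$, which forces the building blocks in its definition to satisfy $\|S_k{\bf 1}_E\|_{L^\infty(\mathbb{R}^n)}\ls(1+k)^{-1}\to 0$. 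The main obstacle is to show that for a nontrivial $E$ (with $|E|>0$ and $|E^\complement|>0$) these blocks do \emph{not} decay. The plan is a scaling/blow-up argument at a measure-theoretic boundary point, comparing with the one-dimensional model ${\bf 1}_{[0,\infty)}$: a direct computation there gives $\|S_k{\bf 1}_{[0,\infty)}\|_{L^\infty}=c_0>0$ for every $k$, whence $\limsup_k\|S_k{\bf 1}_E\|_{L^\infty(\mathbb{R}^n)}>0$, contradicting the required decay. Therefore $E$ is trivial and $|E|=0$.

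Finally, I would remark that, because $B^\lambda_{\infty,\infty}(\mathbb{R}^n)\hookrightarrow\mathcal{B}^{0,1}_{\infty,\infty}(\mathbb{R}^n)$ for every $\lambda>0$ (exponential decay of the blocks beats the logarithmic weight), all four cases in fact reduce to the single implication that ${\bf 1}_E\in\mathcal{B}^{0,1}_{\infty,\infty}(\mathbb{R}^n)$ forces $|E|=0$. Thus the genuine content of the corollary is the non-decay of the Littlewood--Paley blocks of a nontrivial characteristic function, while the H\"older--Zygmund continuity argument is merely a convenient shortcut in those cases where a positive amount of classical smoothness is available.
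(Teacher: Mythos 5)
Your treatment of the generic cases agrees with the paper's own argument: Proposition \ref{FAchar}(iii)--(v) places $M(B^s_{p,p}(\mathbb{R}^n))$ inside a H\"older--Zygmund space $B^{\lambda}_{\infty,\infty}(\mathbb{R}^n)$ with $\lambda>0$, whose elements admit continuous representatives, and connectedness of $\mathbb{R}^n$ then forces ${\bf 1}_E$ to be a.e.\ constant. (You are also right that one must discard the alternative $|E^\complement|=0$, which the statement leaves implicit.)

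The gap is in the case $p=1$, $s=0$, which you correctly single out as the only delicate one but then merely sketch. The paper does not prove this case either: it invokes the identity $M(B^0_{1,1}(\mathbb{R}^n))=M(B^0_{\infty,\infty}(\mathbb{R}^n))$ from \cite{lsyy23} and then cites Koch and Sickel \cite[Proposition 18]{KS02}, so the entire content of this case is outsourced to a nontrivial external result. Your proposed replacement --- show that $\|S_k{\bf 1}_E\|_{L^\infty}$ cannot decay for nontrivial $E$ by ``blowing up at a measure-theoretic boundary point and comparing with ${\bf 1}_{[0,\infty)}$'' --- is a plan rather than a proof, and as formulated it would fail. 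For a general measurable set the points of intermediate Lebesgue density form a null set, and blow-ups of $E$ at such points need not converge to a half-space (that is De Giorgi's theorem, available only for sets of finite perimeter); they need not converge to a characteristic function at all. The naive telescoping $|S^Kf(x)-S^kf(x)|\le\sum_{j=k}^{K}\|\varphi_j\ast f\|_{L^\infty(\mathbb{R}^n)}$ is also useless here, because the hypothesis only gives $\|\varphi_j\ast f\|_{L^\infty(\mathbb{R}^n)}\lesssim (1+j)^{-1}$, which is not summable --- this is exactly why $\mathcal{B}^{0,1}_{\infty,\infty}(\mathbb{R}^n)$ contains discontinuous functions and why the continuity argument of the generic cases breaks down. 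Thus the ``non-decay of the blocks of a nontrivial characteristic function,'' which your final paragraph presents as the single core fact subsuming all cases, is precisely the assertion that still needs a genuine argument (one workable route: the square-summability of $(1+j)^{-1}$ can be used to show ${\bf 1}_E\in{\rm VMO}$, and ${\bf 1}_E\in{\rm VMO}$ forces $E$ to be trivial via a connectedness argument on local densities). As it stands, your proposal leaves the hard half of the corollary unproved.
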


 \begin{proof}
 Only the case 	$p=1$ and $s=0$ can not be proved by using Proposition  \ref{FAchar} and the argument before this corollary.
 But because of $M(B^0_{1,1}(\mathbb{R}^n)) = M(B^0_{\infty,\infty}(\mathbb{R}^n))$ (see \cite[Theorem 4.1]{lsyy23}),  this has been proved in Koch and Sickel \cite[Proposition 18]{KS02}.
 \end{proof}

There is a second consequence.

 \begin{corollary}\label{Gulidual}
 	Let $s\in[0,n)$, $q\in(0, \infty]$, and $p\in(1,\infty)$.
If $f \in M(B^{s}_{1,1}(\mathbb{R}^n))$, then $f \in  M(B^{t}_{p,q}(\mathbb{R}^n))$
for any $t \in (0,s/p)$.
\end{corollary}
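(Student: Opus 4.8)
The plan is to combine the description of $M(B^s_{1,1}(\rn))$ from Proposition~\ref{FAchar} with a sufficient multiplier condition for $M(B^t_{p,q}(\rn))$ when $p\in(1,\infty)$, the bridge between the two being a Sobolev-type embedding of the associated Besov-type spaces. If $s=0$, then $(0,s/p)=\emptyset$ and there is nothing to prove, so we may assume $s\in(0,n)$. Since $\sigma_1=0$, Proposition~\ref{FAchar}(i) (with $p=1$) gives
\[
f\in M\big(B^s_{1,1}(\rn)\big)=L^\infty(\rn)\cap B^{s,1-s/n}_{1,1,{\rm unif}}(\rn).
\]
On the target side, for $p\in(1,\infty)$ and $0<t<n/p$, the inclusion
\[
L^\infty(\rn)\cap B^{t,1/p-t/n}_{p,q,{\rm unif}}(\rn)\subset M\big(B^t_{p,q}(\rn)\big)
\]
holds by the sufficiency part of Gulisashvili's multiplier theorem \cite{Gu2}; since $t<s/p<n/p$, the constraint $t<n/p$ is automatic. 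Hence it suffices to prove that $f\in L^\infty(\rn)\cap B^{t,1/p-t/n}_{p,q,{\rm unif}}(\rn)$.

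The membership $f\in L^\infty(\rn)$ is already contained in the first display, so everything reduces to the embedding
\[
B^{s,1-s/n}_{1,1}(\rn)\hookrightarrow B^{t,1/p-t/n}_{p,q}(\rn),\qquad t\in(0,s/p),
\]
from which the corresponding embedding of the uniform spaces follows by localizing with $\psi(\cdot-\ell)$ and taking the supremum over $\ell\in\mathbb{Z}^n$. Writing $\tau_0:=1-s/n$ and $\tau_1:=1/p-t/n$, both spaces have differential dimension zero, namely $s+n\tau_0-n=0=t+n\tau_1-n/p$; in particular, by Proposition~\ref{embc}(i), both embed into $B^0_{\infty,\infty}(\rn)$, which furnishes the uniform coefficient bound used below.

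I would establish the embedding at the level of wavelet coefficients via Proposition~\ref{wav-type2}, i.e. I would show $\|\lambda(f)\|_{b^{t,\tau_1}_{p,q}(\rn)}\ls\|\lambda(f)\|_{b^{s,\tau_0}_{1,1}(\rn)}=:L$. Fix a dyadic cube $P$ with $j_P\ge0$ and set $b_{i,j}:=\big(\sum_{Q_{j,m}\subset P}|\lambda_{i,j,m}|^p\big)^{1/p}$. Two facts enter. Evaluating the source norm at the singleton subcube $Q_{j,m}$ gives the $B^0_{\infty,\infty}$-type bound $\sum_i|\lambda_{i,j,m}|\ls L\,2^{-jn/2}$, where $n\tau_0=n-s$ is used decisively; evaluating it at $P$ controls $\sum_{j\ge j_P}2^{j(s+n/2-n)}\sum_i\sum_{Q_{j,m}\subset P}|\lambda_{i,j,m}|\ls L\,|P|^{\tau_0}$. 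Interpolating the $\ell^p$-norm between these $\ell^\infty$- and $\ell^1$-bounds (i.e. $\|c\|_{\ell^p}^p\le\|c\|_{\ell^\infty}^{p-1}\|c\|_{\ell^1}$) and then summing in $j$ by means of $\|\cdot\|_{\ell^q}\le\|\cdot\|_{\ell^1}$ (and Hölder's inequality when $q<p$), I would bound the contribution $T_P$ of $P$ to $\|\lambda(f)\|_{b^{t,\tau_1}_{p,q}(\rn)}$ by
\[
T_P^q\ls L^q\,|P|^{-\tau_1 q+\tau_0 q/p}\,2^{\,j_P\frac qp(pt-s)}.
\]
Since $2^{j_P}=|P|^{-1/n}$, the exponent of $|P|$ equals $\frac qn\big(t-\tfrac sp\big)+\frac{q}{np}(s-pt)=0$, so $T_P\ls L$ uniformly in $P$, which is the desired estimate.

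The heart of the matter—and the only real obstacle—is this final summation in $j$. The hypothesis enters through the sign of the exponent $\frac qp(pt-s)$: it is negative precisely when $t<s/p$, and only then does $\sum_{j\ge j_P}$ behave like a convergent geometric series contributing the gain $2^{j_P\frac qp(pt-s)}=|P|^{\frac{q}{np}(s-pt)}$. This gain is exactly what cancels the mismatch between the Morrey normalizations $|P|^{-\tau_1}$ and $|P|^{\tau_0/p}$; if instead $t\ge s/p$ the gain disappears and the estimate (in fact the embedding itself) breaks down on small cubes. The low-frequency term of the sequence norms and the case $j_P<0$ are handled by the same scheme and do not affect the threshold $t<s/p$.
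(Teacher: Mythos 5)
Your reduction to the embedding $B^{s,1-s/n}_{1,1}(\rn)\hookrightarrow B^{t,1/p-t/n}_{p,q}(\rn)$ for $t<s/p$ is carried out correctly at the wavelet level (the interpolation of the $\ell^p$-norm between the $\ell^\infty$-bound from singleton cubes and the $\ell^1$-bound from $P$ does produce the exact cancellation of the Morrey exponents), and that part of the argument is sound and genuinely different from anything in the paper. The gap is in the step you treat as a black box: the inclusion $L^\infty(\rn)\cap B^{t,1/p-t/n}_{p,q,{\rm unif}}(\rn)\subset M(B^t_{p,q}(\rn))$ for all $t\in(0,n/p)$ and all $q$. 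This is not what Gulisashvili's theorem says. As recorded in Proposition \ref{Guli} and Theorem \ref{Guli3}, Gulisashvili characterizes the multiplier space for the \emph{pair} $(B^t_{p,1}(\rn),B^t_{p,\infty}(\rn))$ --- a strictly weaker property than membership in $M(B^t_{p,q}(\rn))$, since the source there carries the smallest and the target the largest fine index --- and only under the restriction $t\le 1/p$. To pass from the pair to $M(B^t_{p,q}(\rn))$ one still has to interpolate (Corollary \ref{Guli4}), which costs an $\varepsilon$ of smoothness (harmless, since $t$ ranges over an open interval) but, more importantly, keeps the ceiling $t\le 1/p$. Your route therefore proves the corollary only for $t<\min\{s/p,1/p\}$; when $n\ge 2$ and $s\in(1,n)$ the range $t\in[1/p,s/p)$ is left uncovered, and no result quoted in the paper supplies the missing sufficient condition there.

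The paper avoids this issue entirely by never passing through a function-space description of the target multiplier class: from $f\in M(B^s_{1,1}(\rn))$ one extracts only $f\in L^\infty(\rn)$, hence $f\in M(L^{p_1}(\rn))$, and then interpolates the multiplication operator $g\mapsto f\cdot g$ itself --- complex interpolation of $B^s_{1,1}(\rn)$ with $L^{p_1}(\rn)$ yields $M(F^{(1-\theta)s}_{p,r}(\rn))$ with $(1-\theta)s\uparrow s/p$ as $p_1\to\infty$, and a subsequent real interpolation with $L^p(\rn)$ yields $M(B^t_{p,q}(\rn))$ for every $t<s/p$, with no $1/p$ barrier. If you want to keep your embedding-based approach, you would need to prove (or correctly cite) the sufficiency of the uniform Besov-type condition for $M(B^t_{p,q}(\rn))$ in the whole range $t<n/p$, which is a substantially stronger statement than anything established in Section \ref{multi}.
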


\begin{proof}
	Our assumption  $f \in M(B^{s}_{1,1}(\mathbb{R}^n))$ implies $f\in L^\infty (\mathbb{R}^n)$,
due to Proposition  \ref{FAchar}.	Hence,  for any $p$,
$f\in M(L^p (\mathbb{R}^n))$. Notice that the operator $g \mapsto f \, \cdot \, g$ is linear.
Let $ \theta, \theta'\in(0, 1)$,   $r\in(0,\infty]$, and $ p_1\in(1, \infty)$ be such that
\[
\frac 1p := 1-\theta  + \frac{\theta}{p_1} \qquad \mbox{and}\qquad \frac{1}{r} :=  1-\theta  + \frac{\theta}{2}.
\]	
Complex interpolation yields
\[
\left[B^{s}_{1,1}(\mathbb{R}^n), L^{p_1} (\mathbb{R}^n)\right]_\theta = F^{(1-\theta)s}_{p,r} (\mathbb{R}^n)\,
\]	
(see \cite{FJ90}), and therefore $f \in M(F^{(1-\theta)s}_{p,r} (\mathbb{R}^n))$.
Now we continue with real interpolation
\[
\left(F^{(1-\theta)s}_{p,r}(\mathbb{R}^n), L^{p} (\mathbb{R}^n)\right)_{\theta',q} = B^{(1-\theta')(1-\theta)s}_{p,q} (\mathbb{R}^n)\, ,
\]	
see, e.g., \cite[Section 2.4.2]{t83},
which implies $f \in M(B^{(1-\theta')(1-\theta)s}_{p,q} (\mathbb{R}^n))$. Letting $p_1 \to \infty$ and $\theta, \theta' \to 0$ the claim then follows.
\end{proof}

Further consequences of Proposition \ref{FAchar}  will be discussed in Sections \ref{Main} and \ref{Main4}.
The second result,  we would like to recall, is due to  Gulisashvili \cite{Gu2} and deals with the case $1 <p< \infty$.

\begin{definition}\label{df-mul2}
	Let $p_1, p_2, q_1, q_2 \in(0,\infty]$ and $s_1,s_2 \in\mathbb{R}$.
	\begin{itemize}
\item[{\rm (i)}]	
	A tempered distribution $f$ is called a {\em pointwise multiplier} for the pair
	$(B^{s_1}_{p_1,q_1} (\mathbb{R}^n), B^{s_2}_{p_2,q_2} (\mathbb{R}^n))$	if
	the product 	$f\, \cdot \, g$ is not only  well defined for all $g \in B^{s_1}_{p_1,q_1} (\mathbb{R}^n)$ but also satisfies
	$	f\, \cdot \, g  \in  B^{s_2}_{p_2,q_2} (\mathbb{R}^n) $.
	\item[{\rm (ii)}] 	
	The \emph{pointwise multiplier space}
	$M(B^{s_1}_{p_1,q_1} (\mathbb{R}^n), B^{s_2}_{p_2,q_2} (\mathbb{R}^n))$  is the collection of all pointwise multipliers with respect to the pair $\big(B^{s_1}_{p_1,q_1} (\mathbb{R}^n), B^{s_2}_{p_2,q_2} (\mathbb{R}^n)\big)$
	 equipped with
	the following quasi-norm:
	\begin{equation*}
		\left\|f\right\|_{M\big(B^{s_1}_{p_1,q_1} (\mathbb{R}^n), B^{s_2}_{p_2,q_2} (\mathbb{R}^n)\big)}:=\sup_{\{g\in B^{s_1}_{p_2,q_2}(\mathbb{R}^n),\,g\neq\theta\}}
		\frac{\left\|f\, \cdot \, g\right\|_{B^{s_2}_{p_2,q_2}(\mathbb{R}^n)}}{\left\|g\right\|_{
B^{s_1}_{p_1,q_2}(\mathbb{R}^n)}}\, ,
	\end{equation*}
		where $\theta$ denotes the zero element of $B^{s_1}_{p_1,q_1}(\mathbb{R}^n)$.
\end{itemize}
\end{definition}

\begin{proposition}\label{Guli}
	Let $p\in(1,\infty)$ and  $s\in(0,\frac 1p]$. Then $f \in M(B^{s}_{p,1} (\mathbb{R}^n), B^{s}_{p,\infty} (\mathbb{R}^n))$
	if and only if $f \in L^\infty (\mathbb{R}^n)$ and
\[
A (f):= \sup_{h \in \mathbb{R}^n \setminus \{ \bf{0}\}} \,|h|^{-sp} \sup_{r\in(0, 1)} r^{ps-n} \sup_{x\in \mathbb{R}^n} \, \int_{B(x,r)} \, |f(y+h)-f(y)|^p dy < \infty\, .
\]	
Moreover,
\[
	\left\|f\right\|_{M\big(B^{s}_{p,1} (\mathbb{R}^n), B^{s}_{p,\infty} (\mathbb{R}^n)\big)}~ \sim \, \|f \|_{L^\infty (\mathbb{R}^n)} + [A (f)]^{\frac 1p}\,,
\]	
where the   positive equivalent constants are independent of $f$.
\end{proposition}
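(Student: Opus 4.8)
The plan is to reduce everything to the first-order difference characterization of Proposition \ref{t4.7} (applied with $\tau=0$ and $M=1$, which is admissible since $p\in(1,\infty)$ forces $s\le 1/p<1$). This gives $\|u\|_{B^s_{p,\infty}(\rn)}\sim\|u\|_{L^p(\rn)}+\sup_{0<|h|\le2}|h|^{-s}\|\Delta^1_h u\|_{L^p(\rn)}$ and $\|u\|_{B^s_{p,1}(\rn)}\sim\|u\|_{L^p(\rn)}+\int_{|h|\le2}|h|^{-s}\|\Delta^1_h u\|_{L^p(\rn)}\,\frac{dh}{|h|^n}$. The algebraic identity $\Delta^1_h(fg)(x)=(\Delta^1_h f)(x)\,g(x+h)+f(x)\,(\Delta^1_h g)(x)$ then splits every estimate into a \emph{regular} term, governed by $\|f\|_{L^\infty(\rn)}$, and a \emph{singular} term $g\cdot\Delta^1_h f$, governed by $A(f)$. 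I would first record $f\in L^\infty(\rn)$ with $\|f\|_{L^\infty(\rn)}\lesssim\|f\|_M$ as a separate input (the one place where I follow Gulisashvili \cite{Gu2}, cf. \cite{MS09}), and then prove the two-sided bound $\|f\|_M\sim\|f\|_{L^\infty(\rn)}+[A(f)]^{1/p}$ under the standing assumption $f\in L^\infty(\rn)$.

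For sufficiency, fix $g\in B^s_{p,1}(\rn)$ and $|h|\le 2$. The regular term is immediate: $|h|^{-s}\|f(\cdot+h)\Delta^1_h g\|_{L^p(\rn)}\le\|f\|_{L^\infty(\rn)}\,|h|^{-s}\|\Delta^1_h g\|_{L^p(\rn)}\lesssim\|f\|_{L^\infty(\rn)}\|g\|_{B^s_{p,\infty}(\rn)}\lesssim\|f\|_{L^\infty(\rn)}\|g\|_{B^s_{p,1}(\rn)}$, using the monotonicity in $q$ from Remark \ref{grund}. The heart of the matter is the singular term, for which I would prove the trace-type inequality $\int_{\rn}|g(y)|^p\,|\Delta^1_h f(y)|^p\,dy\lesssim A(f)\,|h|^{sp}\,\|g\|_{B^s_{p,1}(\rn)}^p$, i.e. $|h|^{-s}\|g\,\Delta^1_h f\|_{L^p(\rn)}\lesssim[A(f)]^{1/p}\|g\|_{B^s_{p,1}(\rn)}$. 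The point is that $A(f)<\infty$ forces the measure $d\mu_h:=|\Delta^1_h f|^p\,dy$ to obey the growth $\mu_h(B(x,r))\le A(f)|h|^{sp}r^{\,n-sp}$ for all balls (for $r\le1$ by definition, and for $r>1$ by covering with unit balls and using $f\in L^\infty(\rn)$), so with $d:=n-sp\ge0$ the claim is the endpoint embedding $B^s_{p,1}(\rn)\hookrightarrow L^p(\mu_h)$ for a measure of $d$-growth. I would prove this by writing $g=\sum_{j\ge0}\varphi_j\ast g$; for each band-limited piece the Nikol'skij (Plancherel--P\'olya) inequality gives $|(\varphi_j\ast g)(y)|^p\lesssim 2^{jn}\int_{\rn}|(\varphi_j\ast g)(z)|^p(1+2^j|y-z|)^{-N}\,dz$, whence integrating $d\mu_h$ and summing the geometric series $\sum_k2^{-kN}\mu_h(B(z,2^{k-j}))\lesssim A(f)|h|^{sp}2^{-jd}$ (for $N>d$) yields $\int_{\rn}|\varphi_j\ast g|^p\,d\mu_h\lesssim A(f)|h|^{sp}2^{jsp}\|\varphi_j\ast g\|_{L^p(\rn)}^p$, since $n-d=sp$. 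The triangle inequality in $L^p(\mu_h)$ (here $p\ge1$ is essential) together with the $q=1$ summation $\sum_j2^{js}\|\varphi_j\ast g\|_{L^p(\rn)}\lesssim\|g\|_{B^s_{p,1}(\rn)}$ closes the estimate. Combining both terms with $\|fg\|_{L^p(\rn)}\le\|f\|_{L^\infty(\rn)}\|g\|_{L^p(\rn)}$ gives $\|f\|_M\lesssim\|f\|_{L^\infty(\rn)}+[A(f)]^{1/p}$.

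For necessity of $A(f)$, I would test $|h|^{-s}\|\Delta^1_h(fg)\|_{L^p(\rn)}\lesssim\|f\|_M\|g\|_{B^s_{p,1}(\rn)}$ against plateau functions $g=\phi_{x_0,r}:=\phi((\cdot-x_0)/r)$, where $\phi\in C^\infty_{\mathrm c}(\rn)$ equals $1$ on $B(\mathbf 0,2)$ and is supported in $B(\mathbf 0,4)$, so that $\|\phi_{x_0,r}\|_{B^s_{p,1}(\rn)}\sim r^{n/p-s}$ for $r\le1$. For $|h|\le r$ the increment $\Delta^1_h g$ vanishes and $g(\cdot+h)\equiv1$ on $B(x_0,r)$, so $\Delta^1_h(fg)=\Delta^1_h f$ there; hence $\int_{B(x_0,r)}|\Delta^1_h f|^p\lesssim|h|^{sp}r^{\,n-sp}\|f\|_M^p$, which is exactly the part of $A(f)$ with $|h|\le r$. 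For $|h|>r$ one estimates $\int_{B(x_0,r)}|\Delta^1_h f|^p\le 2^p\|f\|_{L^\infty(\rn)}^p r^n$ directly and checks that the weights $|h|^{-sp}r^{ps-n}$ render this contribution $\lesssim\|f\|_{L^\infty(\rn)}^p$; together these give $A(f)\lesssim\|f\|_M^p+\|f\|_{L^\infty(\rn)}^p$.

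The main obstacle is twofold. The conceptual core is the endpoint trace inequality $B^s_{p,1}(\rn)\hookrightarrow L^p(\mu_h)$: it is precisely here that the two non-matching secondary indices of the pair ($q=1$ on the source, $q=\infty$ on the target) enter in an essential way, and the borderline case $d=n-sp=0$ (possible for $n=1$, $s=1/p$) must be inspected separately. The more technical obstacle is the necessity of $f\in L^\infty(\rn)$: the regularity extracted from plateau tests only places $f$ in a Triebel--Lizorkin-type space sitting exactly at the threshold $s=n(1/p-\tau)$ of the embedding into $C(\rn)$ of Proposition \ref{embc}, so boundedness cannot be read off from it and must instead be obtained by bounding the target norm of $f\cdot\phi_{x_0,r}$ from below through its difference seminorm at increment $|h|\sim r$ at Lebesgue points of $f$; controlling the oscillation of $f$ in this lower bound is the delicate step, which I would carry out following \cite{Gu2}.
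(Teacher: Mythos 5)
The paper does not actually prove Proposition \ref{Guli}: it is quoted as a known theorem of Gulisashvili \cite{Gu2}, so there is no in-paper argument to compare yours against line by line. Judged on its own terms, the architecture of your proposal is sound and is essentially the classical route. The sufficiency part is correct: the Leibniz splitting $\Delta^1_h(fg)=(\Delta^1_h f)\,g(\cdot+h)+f\,\Delta^1_h g$, the observation that $A(f)<\infty$ makes $d\mu_h:=|\Delta^1_h f|^p\,dy$ a measure of $(n-sp)$-growth with total constant $A(f)|h|^{sp}$, and the endpoint trace embedding $B^s_{p,1}(\mathbb{R}^n)\hookrightarrow L^p(\mu_h)$ proved via the Plancherel--P\'olya/Nikol'skij pointwise bound all go through, including the borderline case $n=sp$ (the same annular summation gives $\int|\varphi_j\ast g|^p\,d\mu_h\lesssim A(f)|h|^{sp}2^{jsp}\|\varphi_j\ast g\|^p_{L^p}$ there as well). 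The necessity of $A(f)$ via plateau functions, with the split into $|h|\le r$ (where $\Delta^1_h(f\phi_{x_0,r})=\Delta^1_h f$ on $B(x_0,r)$) and $|h|>r$ (handled trivially by $\|f\|_{L^\infty(\mathbb{R}^n)}$), is also correct.

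The one genuine gap is the step you yourself flag: the inclusion $M(B^{s}_{p,1}(\mathbb{R}^n),B^{s}_{p,\infty}(\mathbb{R}^n))\subset L^\infty(\mathbb{R}^n)$. This cannot be waved through. The plateau tests only yield $\bigl(\,|B(x_0,r)|^{-1}\int_{B(x_0,r)}|f|^p\bigr)^{1/p}\lesssim\|f\|_M\,r^{-s}$ and a uniform local BMO-type oscillation bound, neither of which forces essential boundedness (the dyadic telescoping of ball averages diverges); and the spectral-radius/algebra trick that gives $M(A)\subset L^\infty$ for self-multipliers is unavailable here because the source and target spaces differ, so $M(B^{s}_{p,1},B^{s}_{p,\infty})$ is not an algebra. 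Moreover your necessity bound $A(f)\lesssim\|f\|_M^p+\|f\|^p_{L^\infty(\mathbb{R}^n)}$ genuinely uses $\|f\|_{L^\infty(\mathbb{R}^n)}$ for the regime $|h|>r$, so the whole ``only if'' direction hinges on this step. Deferring it to \cite{Gu2} is legitimate given that the paper itself treats the entire proposition as a citation, but you should be aware that this is where the real content of Gulisashvili's theorem lies, and as written your proposal does not constitute an independent proof of the full equivalence.
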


Proposition \ref{Guli} is a little bit surprising in the sense that the description of the multiplier spaces
$M(B^{s}_{p,1} (\mathbb{R}^n), B^{s}_{p,\infty} (\mathbb{R}^n))$ is much more simple than the description of the
spaces $M(B^{s}_{p,p} (\mathbb{R}^n))$, for which we refer to \cite[Theorem~4.1.1]{MS09}.
For our purposes we need a reformulation of Proposition \ref{Guli}. This will be based on the next lemma.
Recall that $L^p_\tau (\rn)$  has been defined in Subsection \ref{differ}.

\begin{lemma}\label{Guli2}
	Let $p\in [1,\infty)$, $u \in [p,\infty)$, and  $s\in(0,1)$. We put $\tau := \frac 1p -\frac 1u$.
	Then $B^{s,\tau}_{p,\infty} (\mathbb{R}^n)$
is the collection of all $f \in \cm^u_p (\mathbb{R}^n)$ such that	
	\[
B(f):= \sup_{|h | < 1} \, |h|^{-s}\, \|f(\, \cdot \, +h)-f(\, \cdot\, )\|_{\cm^u_p (\mathbb{R}^n)} < \infty\, .
	\]	
	Moreover,
	\[
	\left\|f\right\|_{B^{s,\tau}_{p,\infty} (\mathbb{R}^n)} \sim \|f \|_{L^p_\tau  (\mathbb{R}^n)} + B (f)\, ,
	\]	
where the positive equivalent constants are independent of $f$.
\end{lemma}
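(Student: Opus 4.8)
The plan is to deduce everything from the difference characterization in Proposition \ref{t4.7}. Since $s\in(0,1)$ we may take $M=1$ there, so for $q=\infty$ Proposition \ref{t4.7} gives $\|f\|_{B^{s,\tau}_{p,\infty}(\mathbb{R}^n)}\sim\|f\|_{L^p_\tau(\mathbb{R}^n)}+\|f\|^\spadesuit_{B^{s,\tau}_{p,\infty}(\mathbb{R}^n)}$. It therefore suffices to establish two facts: that $\|f\|_{L^p_\tau(\mathbb{R}^n)}+\|f\|^\spadesuit_{B^{s,\tau}_{p,\infty}(\mathbb{R}^n)}$ is comparable to $\|f\|_{L^p_\tau(\mathbb{R}^n)}+B(f)$, and that, once $B(f)<\infty$, membership in $L^p_\tau(\mathbb{R}^n)$ is equivalent to membership in $\mathcal{M}^u_p(\mathbb{R}^n)$ with comparable (semi)norms. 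Throughout I use that every ball is contained in a bounded number of dyadic cubes of comparable side length and conversely, so that (recall $\tfrac1u-\tfrac1p=-\tau$) one has $\|g\|_{\mathcal{M}^u_p(\mathbb{R}^n)}\sim\sup_{P\in\mathcal{Q}}|P|^{-\tau}(\int_P|g|^p)^{1/p}$, the supremum now over \emph{all} dyadic cubes.

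For the equivalence of the base norms, $\|f\|_{L^p_\tau(\mathbb{R}^n)}\le\|f\|_{\mathcal{M}^u_p(\mathbb{R}^n)}$ is immediate. For the converse I would use the elementary identity, valid for any $\rho>0$,
\[
f(x)=\frac{1}{|B({\bf 0},\rho)|}\int_{B({\bf 0},\rho)}f(x+\eta)\,d\eta-\frac{1}{|B({\bf 0},\rho)|}\int_{B({\bf 0},\rho)}\Delta^1_\eta f(x)\,d\eta,
\]
with $\rho:=1/2$. Fixing a ball $B=B(x_0,r)$ with $r<1$ (the case $r\ge1$ being controlled directly by $\|f\|_{L^p_\tau(\mathbb{R}^n)}$), the first term is pointwise $\ls\|f\|_{L^p_\tau(\mathbb{R}^n)}$ on $B$ since the relevant averages live on the bounded region $B(x_0,3/2)$, while the second term is estimated by Minkowski's integral inequality and the definition of $B(f)$, giving $|B|^{-\tau}(\int_B|\cdots|^p)^{1/p}\ls B(f)\int_{B({\bf 0},1/2)}|\eta|^s\,d\eta\ls B(f)$. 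Adding these yields $\|f\|_{\mathcal{M}^u_p(\mathbb{R}^n)}\ls\|f\|_{L^p_\tau(\mathbb{R}^n)}+B(f)$.

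The comparison of $\|f\|^\spadesuit$ and $B(f)$ splits according to the size of $|h|$. Enclosing each dyadic cube $P$ in a comparable ball shows at once that the part of $\|f\|^\spadesuit$ coming from increments with $|h|<1$ (where $t\sim|h|$) is $\ls B(f)$, while the residual range $1\le|h|<2$ forces $\ell(P)\ge1$ and is absorbed into $\|f\|_{L^p_\tau(\mathbb{R}^n)}$ by the triangle inequality; hence $\|f\|^\spadesuit\ls\|f\|_{L^p_\tau(\mathbb{R}^n)}+B(f)$, one half of the required equivalence. The reverse inequality $B(f)\ls\|f\|_{B^{s,\tau}_{p,\infty}(\mathbb{R}^n)}$ is the genuine difficulty. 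Writing $B(f)$ through dyadic cubes, the contribution of cubes $P$ with $\ell(P)\ge|h|$ is handled exactly as before, by enclosing $P$ in a comparable ball and reading off $\|f\|^\spadesuit$.

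The main obstacle is the contribution of cubes $P$ with $\ell(P)<|h|$, i.e.\ increments much larger than the cube on which the difference is measured. Here the crude bound through a cube $P'\supset P$ with $\ell(P')\sim|h|$ loses a factor $(|h|/\ell(P))^{n\tau}$, and splitting $h$ into $\sim|h|/\ell(P)$ equal steps loses $(|h|/\ell(P))^{1-s}$; neither is affordable. I would resolve this by a Littlewood--Paley (equivalently wavelet, via Proposition \ref{wav-type2}) decomposition $f=\sum_j f_j$ at the threshold scale $2^{-J}\sim|h|$: for the coarse block $j<J$ the mean value theorem gives an extra factor $|h|2^{j}$ per frequency, and for the fine block $j\ge J$ one uses that the high-frequency tail of a $B^{s,\tau}_{p,\infty}(\mathbb{R}^n)$-function is small; estimating each block in $\mathcal{M}^u_p(\mathbb{R}^n)$ uniformly over all ball radii by $\ls 2^{-js}\|f\|_{B^{s,\tau}_{p,\infty}(\mathbb{R}^n)}$ and summing the two geometric series --- both dominated by their endpoint $j\sim J$ precisely because $0<s<1$ --- produces the gain $|h|^s$. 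The point is that the Morrey norm of each frequency block is controlled at \emph{every} scale by the wavelet-coefficient (sequence-space) norm, which is exactly what repairs the loss incurred on small cubes. Collecting the four estimates and invoking Proposition \ref{t4.7} then yields $\|f\|_{B^{s,\tau}_{p,\infty}(\mathbb{R}^n)}\sim\|f\|_{L^p_\tau(\mathbb{R}^n)}+B(f)$ together with the claimed description of the space, and I expect the coarse/fine block estimate just sketched to be the only laborious step.
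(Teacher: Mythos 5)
Your proposal is correct and, at the decisive step, coincides with the paper's own proof: the hard inequality $B(f)\lesssim\|f\|_{B^{s,\tau}_{p,\infty}(\mathbb{R}^n)}$ is obtained there exactly by the Littlewood--Paley splitting you sketch, via the estimate $\sup_{|h|<2^{-k}}\|\Delta^1_h f_j\|_{\mathcal{M}^u_p(\mathbb{R}^n)}\lesssim\min(1,2^{j-k})\|f_j^*\|_{\mathcal{M}^u_p(\mathbb{R}^n)}$ (a Bernstein/mean-value gain for $j<k$, a trivial bound for $j\ge k$) combined with the boundedness of the Peetre maximal function on Morrey spaces and the geometric summation that uses $0<s<1$, while the reverse inequality is read off from Proposition \ref{t4.7} with $M=1$ just as you do. The only divergence is peripheral: where you establish $\|f\|_{\mathcal{M}^u_p(\mathbb{R}^n)}\lesssim\|f\|_{L^p_\tau(\mathbb{R}^n)}+B(f)$ by the averaging identity, the paper instead quotes the known embeddings $B^{s,\tau}_{p,\infty}(\mathbb{R}^n)\hookrightarrow\mathcal{M}^u_p(\mathbb{R}^n)\hookrightarrow L^p_\tau(\mathbb{R}^n)$; your elementary argument is a correct and more self-contained alternative for that point.
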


\begin{proof}
There are various publications on characterizations of Besov-type spaces
 by differences, see Subsection \ref{differ} for some.
In all these references only ball means of differences are considered. Therefore we will sketch a proof.
In some sense we will follow Triebel's arguments in the proofs of Theorems 2.5.10 and 2.5.12 in \cite{t83}.
\\
{\em Step 1.} First we shall prove $B(f) \ls \|f\|_{B^{s,\tau}_{p,\infty}(\mathbb{R}^n)}$.
By $\{\phi_j\}_{j\in\mathbb{N}_0}$ we denote a smooth dyadic decomposition of unity.
For simplicity we may start the construction with a radial nonnegative function $\phi_0 \in C_{{\rm c}}^\infty (\mathbb{R}^n)$ such that
$\phi_0 (x) = 1$ if $|x|\le 1$ and define
\[
\phi_1 (x):= \phi_0 (x/2) - \phi_0 (x) \quad \mbox{and} \quad
\phi_j (x) := \phi_1 (2^{-j+1}x) \, , \quad x\in \mathbb{R}^n, ~ j \in \nn\, .
\]
Then, for any $j \ge 1$, $\phi_j$ has support in a dyadic annulus and $\sum_{j=0}^\infty \phi_j (x) =1 $ for any $x\in\mathbb{R}^n$.
Put $f_j := \cfi [\phi_j  \, \cf f  ]$ for any $j \in \nn_0$.
We need the associated Peetre maximal functions
defined as
\[
f_j^* (x):= \sup_{z \in \mathbb{R}^n} \,
\frac{ |f_j(x-z)|}{(1+2^j|z|)^a} \, , \quad x \in \mathbb{R}^n\, .
\]
Here $f \in \cs'(\mathbb{R}^n)$, $j \in \nn_0$, and $a>0$ will be chosen later on.
Then it follows that
\begin{equation}\label{novy1}
\sup_{|h| < 2^{-k}}\, \|\Delta^1_h 	f_j\|_{\cm^u_p (\mathbb{R}^n)} \ls \min(1,2^{j-k}) \, \|f_j^*\|_{\cm^u_p (\mathbb{R}^n)} \, , \quad x \in \mathbb{R}^n\, ,\ k\in\mathbb{N}_0,
\end{equation}
similarly as in the formulas (6)-(8) in the page 102 of \cite{t83}.
This is the main step in proving the claimed inequality. To finish this estimate one can continue as in Step 1 of the proof of
Theorem 2.5.12  in \cite{t83}.
The boundedness of the Peetre maximal function in Morrey spaces  (see \cite{yy3}), completes the proof of the claimed inequality.

It still needs to show $\|f \|_{L_\tau^p (\mathbb{R}^n)} \ls \|f\|_{B^{s,\tau}_{p,\infty}(\mathbb{R}^n)}$. Indeed,
the continuous embedding $B^{s,\tau}_{p,\infty}(\mathbb{R}^n) \hookrightarrow \mathcal{M}^u_p (\mathbb{R}^n)  \hookrightarrow L^p_\tau (\mathbb{R}^n)$, with $\tau = \frac 1p - \frac 1u$, is well known,
we refer to \cite[Section 2.2]{ysy} in case $p\in(1,\infty)$. For $p=1$ and $s\in(0,\infty)$ one may use
\[
B^{s,\tau}_{1,\infty}(\mathbb{R}^n) \hookrightarrow F^{0,\tau}_{1,1}(\mathbb{R}^n) \hookrightarrow \mathcal{M}^u_1 (\mathbb{R}^n)\hookrightarrow L^1_\tau (\mathbb{R}^n)\, , \quad \tau = 1 - \frac 1u \, .
\]
{\em Step 2.}
We shall prove  the inverse
inequality  $ \|f\|_{B^{s,\tau}_{p,\infty}(\mathbb{R}^n)}\ls \|f \|_{L^p_\tau (\mathbb{R}^n)} + B (f)$.
Employing Proposition \ref{t4.7} with $M=1$, we obtain
\begin{align*}
\|f\|_{B^{s,\tau}_{p,\infty}(\mathbb{R}^n)} & \sim 	\|f\|_{L^p_\tau(\mathbb{R}^n)}+ \|f\|^\spadesuit_{B^{s,\tau}_{p,\infty}(\mathbb{R}^n)}
\\
 & \ls 	\|f \|_{L^p_\tau (\mathbb{R}^n)} + \sup_{P\in\mathcal{Q}} \, \frac1{|P|^\tau} \, \sup_{0 < t< 2\min\{\ell(P),1\}}
 	t^{-s}\, \sup_{\frac t2\le|h|<t}\left[\int_P
 	|\Delta_h^1 f(x)|^p\,dx\right]^{\frac 1p}
 \\
 & \ls 	\|f \|_{L^p_\tau (\mathbb{R}^n)} + \sup_{P\in\mathcal{Q}} \, \frac1{|P|^\tau} \, \sup_{0 < |h|< 2}
|h|^{-s}\, \left[\int_P |\Delta_h^1 f(x)|^p\,dx\right]^{\frac 1p}
\\
&\ls
 \|f \|_{L^p_\tau (\mathbb{R}^n)} + B (f)
\end{align*}
as claimed.
Altogether we complete the proof of Lemma \ref{Guli2}.
\end{proof}

Now we are able to reformulate Proposition \ref{Guli}.
As a consequence of Lemma \ref{Guli2} and under the same restrictions as there  we obtain
\[
\left\|f\right\|_{B^{s,\tau}_{p,\infty,{\rm unif}} (\mathbb{R}^n)}  \sim   \sup_{Q\in \mathcal{Q}, |Q|= 1}
|Q|^{-\tau} \, \|f \|_{L^p (Q)} +
\sup_{0<|h | < 1} \, |h|^{-s}\, \sup_{Q\in \mathcal{Q}, |Q|\le 1} \, |Q|^{-\tau}\,
\| \Delta ^1_h f\|_{L^p (Q)} < \infty
\, .
\]	
Observing $ \|f \|_{L^p_\tau (\mathbb{R}^n)} \le  \|f \|_{L^\infty (\mathbb{R}^n)}$ and
\[
\sup_{|h|>1} \,|h|^{-s} \sup_{r\in(0,1)} r^{s-n/p} \sup_{x\in \mathbb{R}^n}\, \left\{\int_{B(x,r)} \, |f(y+h)-f(y)|^p dy \right\}^{1/p} \le 2 \, \|f\|_{L^\infty (\mathbb{R}^n)}\,,
\]
we finally arrive at the following conclusion.

\begin{theorem}\label{Guli3}
	Let $p\in(1,\infty)$ and  $s\in(0,\frac 1p]$. Let $\tau:= \frac 1p - \frac sn$ and $u:=n/s$.
	Then $f \in M(B^{s}_{p,1} (\mathbb{R}^n), B^{s}_{p,\infty} (\mathbb{R}^n))$
if and only if $f \in L^\infty (\mathbb{R}^n) \cap  B^{s,\tau}_{p,\infty,{\rm unif}} (\mathbb{R}^n)$.
Moreover,
\[
\left\|f\right\|_{M(B^{s}_{p,1} (\mathbb{R}^n), B^{s}_{p,\infty} (\mathbb{R}^n))} \sim \, \|f \|_{L^\infty (\mathbb{R}^n)} +
\sup_{0<|h | < 1} \, |h|^{-s}\, \|f(\, \cdot \, +h)-f(\, \cdot\, )\|_{\cm^u_p (\mathbb{R}^n)} \, ,
\]	
where the   positive equivalent constants are independent of $f$.
\end{theorem}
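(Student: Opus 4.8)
The plan is to chain together Proposition \ref{Guli}, Lemma \ref{Guli2}, and the two elementary observations displayed just above, so that the somewhat opaque quantity $A(f)$ coming from Proposition \ref{Guli} gets reorganized into a difference seminorm measured in the Morrey metric. By Proposition \ref{Guli} we already know that $f\in M(B^s_{p,1}(\rn),B^s_{p,\infty}(\rn))$ if and only if $f\in L^\infty(\rn)$ and $A(f)<\infty$, together with the equivalence $\|f\|_{M(\cdots)}\sim\|f\|_{L^\infty(\rn)}+[A(f)]^{1/p}$. Hence it suffices to show that, under the standing assumptions $p\in(1,\infty)$, $s\in(0,1/p]$, $\tau=1/p-s/n$, and $u=n/s$, one has
\[
\|f\|_{L^\infty(\rn)}+[A(f)]^{1/p}\sim \|f\|_{L^\infty(\rn)}+\sup_{0<|h|<1}|h|^{-s}\,\|f(\cdot+h)-f(\cdot)\|_{\cm^u_p(\rn)},
\]
and that the right-hand side is in turn comparable to $\|f\|_{L^\infty(\rn)}+\|f\|_{B^{s,\tau}_{p,\infty,\mathrm{unif}}(\rn)}$.

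First I would check the parameter bookkeeping that makes Lemma \ref{Guli2} applicable: since $p>1$ we have $s\le 1/p<1$, while $s\le 1/p$ forces $u=n/s\ge np\ge p$, and $\tau=1/p-s/n=1/p-1/u$; thus $(p,u,s,\tau)$ lie in the range of Lemma \ref{Guli2}. Next I would rewrite $[A(f)]^{1/p}$ by pulling the $p$-th root inside the suprema, so that it reads $\sup_{h\ne{\bf 0}}|h|^{-s}\sup_{r\in(0,1)}\sup_x r^{s-n/p}(\int_{B(x,r)}|\Delta^1_hf|^p)^{1/p}$. Because $u=n/s$ gives $|B(x,r)|^{1/u-1/p}\sim r^{s-n/p}$, the inner double supremum over $x$ and $r\in(0,1)$ is exactly the Morrey quantity $\|\Delta^1_hf\|_{\cm^u_p(\rn)}$ computed over balls of radius less than one. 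I would then split the outer supremum over $h$ into $0<|h|\le 1$ and $|h|>1$: the second piece is absorbed into $\|f\|_{L^\infty(\rn)}$ by the second displayed observation, while the first piece is precisely the desired difference seminorm (restricted to balls of radius $<1$, which is all the uniform structure sees).

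Finally I would identify the resulting expression with the uniform Besov-type norm. By Lemma \ref{Guli2} in its localized form, namely the reformulation of $\|\cdot\|_{B^{s,\tau}_{p,\infty,\mathrm{unif}}(\rn)}$ displayed just before the theorem, this norm is comparable to the unit-cube term $\sup_{|Q|=1}|Q|^{-\tau}\|f\|_{L^p(Q)}$ plus the difference term $\sup_{0<|h|<1}|h|^{-s}\sup_{|Q|\le 1}|Q|^{-\tau}\|\Delta^1_hf\|_{L^p(Q)}$. The first observation shows the unit-cube term is dominated by $\|f\|_{L^\infty(\rn)}$, and the difference term is comparable to $\sup_{0<|h|<1}|h|^{-s}\|\Delta^1_hf\|_{\cm^u_p(\rn)}$ over sets of size $<1$ by the standard equivalence between dyadic cubes and balls of comparable diameter. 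Collecting the three comparabilities then yields both the stated characterization and the norm equivalence.

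The main obstacle I anticipate is the careful matching of scales: one must verify that restricting the Morrey difference quantity to balls of radius (equivalently, cubes of side) less than one is genuinely harmless, i.e., that the contributions of large balls and of large shifts $|h|>1$ are all subsumed by the single term $\|f\|_{L^\infty(\rn)}$. This is exactly the role played by the two observations, and it is also why the uniform space $B^{s,\tau}_{p,\infty,\mathrm{unif}}(\rn)$, rather than the global space $B^{s,\tau}_{p,\infty}(\rn)$, is the correct target here: the multiplier condition only constrains the local, small-scale oscillation of $f$, with global boundedness supplied separately by $f\in L^\infty(\rn)$.
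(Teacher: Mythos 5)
Your proposal is correct and follows essentially the same route as the paper: the paper also obtains Theorem \ref{Guli3} by combining Proposition \ref{Guli} with Lemma \ref{Guli2} (in its localized, uniform form) and the same two observations that the $L^p_\tau$ term and the contribution of shifts $|h|>1$ are absorbed by $\|f\|_{L^\infty(\mathbb{R}^n)}$. Your explicit attention to the scale-matching between $A(f)$ (radii $r<1$) and the Morrey difference seminorm is a worthwhile elaboration of a point the paper passes over quickly, but it is not a different argument.
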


\begin{corollary}\label{Guli4}
	Let $p\in(1,\infty)$, $q \in (0,\infty]$, and  $s\in(0,\frac 1p]$. Let $\tilde{p} \in [p,\infty)$ and
	 $\tilde{s} \in (0, \frac {1}{\tilde{p}})$. If $f \in M(B^{s}_{p,1} (\mathbb{R}^n), B^{s}_{p,\infty} (\mathbb{R}^n))$, then
 $f \in M(B^{\tilde{s}}_{\tilde{p},q} (\mathbb{R}^n))$.
\end{corollary}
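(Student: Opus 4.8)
The plan is to convert the hypothesis into a concrete difference--Morrey condition by means of Theorem~\ref{Guli3}, and then to move the three indices $(p,1,\infty)$ of the pair $\big(B^{s}_{p,1}(\rn),B^{s}_{p,\infty}(\rn)\big)$ over to the indices $(\tilde p,q,q)$ of the single space $B^{\tilde s}_{\tilde p,q}(\rn)$ in two independent steps: first raising the integrability $p\mapsto\tilde p$ using only $f\in L^\infty(\rn)$, and then repairing the fine index (from the pair $(1,\infty)$ to a single $q$) by real interpolation against $L^{\tilde p}(\rn)$. By Theorem~\ref{Guli3}, the assumption $f\in M\big(B^{s}_{p,1}(\rn),B^{s}_{p,\infty}(\rn)\big)$ is equivalent to $f\in L^\infty(\rn)\cap B^{s,\tau}_{p,\infty,{\rm unif}}(\rn)$ with $\tau=\frac1p-\frac sn$, and, by the uniform reformulation of Lemma~\ref{Guli2} recorded just before Theorem~\ref{Guli3}, this says precisely that $f\in L^\infty(\rn)$ and
\[
\sup_{0<|h|<1}|h|^{-s}\,\sup_{\{Q\in\mathcal{Q}:\,|Q|\le1\}}|Q|^{-\tau}\,\big\|\Delta^1_h f\big\|_{L^p(Q)}<\infty ,
\]
where $\tau=\frac1p-\frac1u$ with $u=\frac ns$, so that the inner supremum is a local Morrey quasi-norm.

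Next I would raise the integrability. Put $s^*:=\frac p{\tilde p}s$, $u^*:=\frac{\tilde p}p u=\frac n{s^*}$, and $\tilde\tau:=\frac1{\tilde p}-\frac{s^*}n=\frac p{\tilde p}\tau$. Since $\|\Delta^1_h f\|_{L^\infty(\rn)}\le2\|f\|_{L^\infty(\rn)}$, Hölder's inequality gives, for every $Q\in\mathcal{Q}$ with $|Q|\le1$,
\[
|Q|^{-\tilde\tau}\big\|\Delta^1_h f\big\|_{L^{\tilde p}(Q)}\lesssim\|f\|_{L^\infty(\rn)}^{1-p/\tilde p}\,|Q|^{\,\tau p/\tilde p-\tilde\tau}\Big(|Q|^{-\tau}\big\|\Delta^1_h f\big\|_{L^p(Q)}\Big)^{p/\tilde p},
\]
and the surplus power is $|Q|^{\,\tau p/\tilde p-\tilde\tau}=|Q|^{0}=1$ by the choice of $\tilde\tau$. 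Taking both suprema and inserting the difference bound from the first step yields $\sup_{0<|h|<1}|h|^{-s^*}\sup_{|Q|\le1}|Q|^{-\tilde\tau}\|\Delta^1_h f\|_{L^{\tilde p}(Q)}\lesssim\|f\|_{L^\infty(\rn)}$; together with the trivial bound on the $|Q|=1$ term this gives $f\in L^\infty(\rn)\cap B^{s^*,\tilde\tau}_{\tilde p,\infty,{\rm unif}}(\rn)$. As $\tilde p>1$, $s^*\in(0,\frac1{\tilde p}]$, and $\tilde\tau=\frac1{\tilde p}-\frac{s^*}n$, the converse direction of Theorem~\ref{Guli3}, now applied with $(\tilde p,s^*)$, produces $f\in M\big(B^{s^*}_{\tilde p,1}(\rn),B^{s^*}_{\tilde p,\infty}(\rn)\big)$.

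Finally I would fix the fine index. The multiplication operator $T_f\colon g\mapsto f\cdot g$ is linear and bounded both $B^{s^*}_{\tilde p,1}(\rn)\to B^{s^*}_{\tilde p,\infty}(\rn)$ (just shown) and $L^{\tilde p}(\rn)\to L^{\tilde p}(\rn)$ (because $f\in L^\infty(\rn)$). Since the smoothness levels $s^*$ and $0$ differ, the fine indices at the $s^*$-endpoint are immaterial in the real method, so for $\theta\in(0,1)$ and $q\in(0,\infty]$
\[
\big(B^{s^*}_{\tilde p,1}(\rn),L^{\tilde p}(\rn)\big)_{\theta,q}=\big(B^{s^*}_{\tilde p,\infty}(\rn),L^{\tilde p}(\rn)\big)_{\theta,q}=B^{(1-\theta)s^*}_{\tilde p,q}(\rn)
\]
(see, e.g., \cite[Section~2.4.2]{t83}). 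The interpolation property of the real method then gives $T_f\colon B^{(1-\theta)s^*}_{\tilde p,q}(\rn)\to B^{(1-\theta)s^*}_{\tilde p,q}(\rn)$, i.e. $f\in M\big(B^{(1-\theta)s^*}_{\tilde p,q}(\rn)\big)$, for every $\theta\in(0,1)$. As $(1-\theta)s^*$ runs through $\big(0,\frac p{\tilde p}s\big)$, this yields $f\in M\big(B^{\tilde s}_{\tilde p,q}(\rn)\big)$ for all $\tilde s$ in that range; in the principal case $s=\frac1p$ the threshold $\frac p{\tilde p}s$ equals $\frac1{\tilde p}$, which is exactly the asserted interval $\tilde s\in\big(0,\frac1{\tilde p}\big)$.

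The step I expect to be the real obstacle is the last one: turning a bound for the pair $\big(B^{s^*}_{\tilde p,1},B^{s^*}_{\tilde p,\infty}\big)$---which carries an unavoidable loss from the fine index $1$ to $\infty$---into a genuine multiplier estimate on a single space $B^{\tilde s}_{\tilde p,q}$. This forces $\tilde s$ to lie \emph{strictly} below the critical value $\frac p{\tilde p}s$, which is precisely why the conclusion is an open condition and why the hypothesis is phrased on an open interval; raising the integrability in the middle step, by contrast, costs exactly the factor $p/\tilde p$ in the smoothness and nothing in the fine index.
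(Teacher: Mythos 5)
Your argument is correct, and its overall architecture (Theorem~\ref{Guli3} to enter, real interpolation against $L^{\tilde p}(\rn)$ to exit) matches the paper's; but the middle step, where the integrability is raised from $p$ to $\tilde p$, is genuinely different. The paper first settles the case $\tilde p=p$ by real interpolation, thereby obtaining $f\in M(B^{(1-\theta)s}_{p,p}(\rn))$, and then raises $p$ to $\tilde p$ by \emph{complex} interpolation of the multiplication operator between $B^{\sigma}_{p,p}(\rn)=F^{\sigma}_{p,p}(\rn)$ and $L^{p_1}(\rn)$, exactly as in the proof of Corollary~\ref{Gulidual}, before a final round of real interpolation. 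You instead stay entirely on the ``coordinate'' side: you rewrite the hypothesis via Lemma~\ref{Guli2} as a Morrey-type difference bound, apply H\"older's inequality $\|\Delta^1_hf\|_{L^{\tilde p}(Q)}\le(2\|f\|_{L^\infty(\rn)})^{1-p/\tilde p}\|\Delta^1_hf\|_{L^p(Q)}^{p/\tilde p}$ together with the exact cancellation $\tilde\tau=\tau p/\tilde p$ and $|h|^{-s^*}=(|h|^{-s})^{p/\tilde p}$, and then invoke the \emph{converse} direction of Theorem~\ref{Guli3} at the new parameters $(\tilde p,s^*)$ to re-enter the multiplier world. This buys you an elementary, self-contained passage $p\mapsto\tilde p$ that avoids the complex interpolation formula $[B^{s}_{p,p}(\rn),L^{p_1}(\rn)]_\theta=F^{(1-\theta)s}_{p,r}(\rn)$ and the limit $p_1\to\infty$, at the price of using the full ``if and only if'' strength of Theorem~\ref{Guli3} twice. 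Your closing remark about the range is also accurate and worth keeping: your argument yields $\tilde s\in(0,\tfrac{p}{\tilde p}s)$, which exhausts the stated interval $(0,\tfrac1{\tilde p})$ only when $s=\tfrac1p$; the paper's own proof produces exactly the same range (its Step~1 already gives only $\tilde s<s$ when $\tilde p=p$), so this is a limitation of the statement as proved in the paper, not a defect introduced by your approach.
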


\begin{proof}
{\em Step 1.}	We consider the case $p=\tilde{p}$.
By Theorem \ref{Guli3}, we have $ M(B^{s}_{p,1} (\mathbb{R}^n), B^{s}_{p,\infty} (\mathbb{R}^n)) \hookrightarrow L^\infty (\mathbb{R}^n)$. Hence, if $ f\in  M(B^{s}_{p,1} (\mathbb{R}^n), B^{s}_{p,\infty} (\mathbb{R}^n))$, then
	 $f \in M( L^p (\mathbb{R}^n))$. This allows us to use the real interpolation. Let $\theta \in (0,1)$. It follows
\[
\left(B^{{s}}_{p,1}(\mathbb{R}^n), L^{p} (\mathbb{R}^n)\right)_{\theta,q} = B^{(1-\theta){s}}_{p,q} (\mathbb{R}^n)\quad \mbox{and}\quad
\left(B^{{s}}_{p,\infty}(\mathbb{R}^n), L^{p} (\mathbb{R}^n)\right)_{\theta,q} = B^{(1-\theta){s}}_{p,q} (\mathbb{R}^n)
\]	
see, e.g., \cite[Section 2.4.2, Theorem 2 and Remark 4]{t78b}. The claim in this case is now a consequence of standard properties of the real interpolation method.
\\
{\em Step 2.} Because   $f \in  M(B^{s}_{p,1} (\mathbb{R}^n), B^{s}_{p,\infty} (\mathbb{R}^n))$ implies $f\in  M(B^{(1-\theta){s}}_{p,p} (\mathbb{R}^n))$ and $f\in M(L^r(\mathbb{R}^n))$ for any $r\in(1,\infty)$, we can now  continue with the complex and the real interpolation as in the proof of Corollary \ref{Gulidual}.
\end{proof}

Let $E$ denote a domain in $\mathbb{R}^n$.
In case that the space $ A^s_{p,q} (E)$ is defined by restriction, as done, e.g.,  in Triebel's books,
there is a simple relation between ${\bf 1}_E \in  M(A^s_{p,q} (\mathbb{R}^n)) $ and  another problem, namely the extension by zero.
We put  $T_E: ~ g \mapsto {\bf 1}_E \, \cdot \, g$ with domain of definition given by a space $A^s_{p,q}(\mathbb{R}^n)$, $A \in \{B,F\}$.
Let $X$ be a quasi-Banach space. By $\cl (X)$ we denote the class of all continuous linear operators mapping $X$
into $X$. More generally, if also $Y$ is a quasi-Banach space,
we  denote by    $\cl (X,Y)$ the class of all continuous linear operators mapping $X$
into $Y$.
Of course, $T_E \in \cl (A^s_{p,q}(\mathbb{R}^n))$ if and only if $\cx_E \in M(A^s_{p,q}(\mathbb{R}^n))$ and the corresponding norms are equal.
By
\[
\ext_E f(x) := \left\{ \begin{array}{lll}
	f(x) &\quad & \mbox{if}\quad x\in E,\\
	0 && \mbox{otherwise}\, ,
\end{array}\right.
\]
 we denote the extension by zero.

\begin{proposition}
	Let $p\in[1, \infty)$, $ q\in[1,\infty]$ and $s\in(0, 1/p]$. Let $A \in \{B,F\}$.
	Then
	${\bf 1}_E \in  M(A^s_{p,q} (\mathbb{R}^n)) $ if and only if
	$T_E \in \cl (A^s_{p,q} (\mathbb{R}^n))$
	if and only if $\ext_E \in \cl (A^s_{p,q} (E),A^s_{p,q} (\mathbb{R}^n))$.
	Moreover, it holds
	\[
	\| \,  {\bf 1}_E\, \|_{M (A^s_{p,q} (\mathbb{R}^n))} =	\| \, T_E \, \|_{\cl (A^s_{p,q} (\mathbb{R}^n))}
= \|\,  \ext_E \, \|_{\cl(A^s_{p,q} (E), A^s_{p,q} (\mathbb{R}^n))}\, .
	\]
\end{proposition}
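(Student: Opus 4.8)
The plan is to reduce all three assertions to the single algebraic identity $T_E = \ext_E \circ R_E$, where $R_E:\ A^s_{p,q}(\rn) \to A^s_{p,q}(E)$ denotes the restriction $g \mapsto g|_E$, and then to read off the norm equalities from the fact that $R_E$ is a metric surjection, which is built into the definition of $A^s_{p,q}(E)$ by restriction.

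First I would dispose of the equivalence ${\bf 1}_E \in M(A^s_{p,q}(\rn)) \Leftrightarrow T_E \in \cl(A^s_{p,q}(\rn))$ together with the equality $\|{\bf 1}_E\|_{M(A^s_{p,q}(\rn))} = \|T_E\|_{\cl(A^s_{p,q}(\rn))}$. This is immediate from Definition \ref{df-mul}: the map $g \mapsto {\bf 1}_E \cdot g$ is linear, so membership of ${\bf 1}_E$ in $M(A^s_{p,q}(\rn))$ says exactly that this map carries $A^s_{p,q}(\rn)$ boundedly into itself, while the multiplier quasi-norm is by construction the supremum defining the operator norm of $T_E$. This is the observation already recorded just before the proposition.

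The heart of the proof is the identity $T_E = \ext_E \circ R_E$. Since $s>0$ and $p\in[1,\infty)$, one has the standard embedding $A^s_{p,q}(\rn)\hookrightarrow L^p(\rn)$, so every $g\in A^s_{p,q}(\rn)$ is a locally integrable function; a routine approximation argument, using that $S^jg\to g$ in $L^p(\rn)$ while $S^j{\bf 1}_E$ stays bounded in $L^\infty(\rn)$ and converges almost everywhere, shows that the product ${\bf 1}_E\cdot g$ taken in the sense of the Definition coincides with the naive pointwise product ${\bf 1}_E(\cdot)\,g(\cdot)$. This pointwise product is precisely the function equal to $g|_E$ on $E$ and to $0$ on $\rn\setminus E$, namely $\ext_E(R_E g)$. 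Hence $T_E g = \ext_E(g|_E)$ for every $g\in A^s_{p,q}(\rn)$.

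It then remains to compare norms. By the definition of $A^s_{p,q}(E)$ by restriction, $R_E$ is a metric surjection: $\|R_E g\|_{A^s_{p,q}(E)}\le \|g\|_{A^s_{p,q}(\rn)}$, and for each $f\in A^s_{p,q}(E)$ and each $\varepsilon>0$ there is $g\in A^s_{p,q}(\rn)$ with $g|_E=f$ and $\|g\|_{A^s_{p,q}(\rn)}\le (1+\varepsilon)\|f\|_{A^s_{p,q}(E)}$. The bound $\|T_E\|\le \|\ext_E\|$ follows at once from $T_E=\ext_E\circ R_E$ and $\|R_E\|\le 1$. For the reverse, given $f$ and a near-optimal extension $g$ as above, the identity of the previous paragraph gives $\ext_E f = \ext_E(g|_E)=T_E g$, so that $\|\ext_E f\|_{A^s_{p,q}(\rn)}=\|T_E g\|_{A^s_{p,q}(\rn)}\le \|T_E\|\,(1+\varepsilon)\,\|f\|_{A^s_{p,q}(E)}$; letting $\varepsilon\to0$ yields $\|\ext_E\|\le\|T_E\|$. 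Therefore $\|T_E\|=\|\ext_E\|$, and in particular $T_E$ is bounded if and only if $\ext_E$ is bounded, which closes the remaining equivalence and delivers all claimed norm identities. The only genuinely delicate point is the identification of the distributional product with the pointwise one; everything else is soft functional analysis resting on the metric-surjection property of $R_E$.
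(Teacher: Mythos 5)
Your proof is correct and follows exactly the route the paper has in mind: the paper states this proposition without a written proof, having already remarked that the first equivalence is definitional and that the link to $\ext_E$ rests on $A^s_{p,q}(E)$ being defined by restriction, which is precisely your factorization $T_E=\ext_E\circ R_E$ combined with the metric-surjection property of $R_E$. Your identification of the distributional product with the pointwise product via the embedding $A^s_{p,q}(\mathbb{R}^n)\hookrightarrow L^p(\mathbb{R}^n)$ for $s>0$, $p\in[1,\infty)$ supplies the one detail that genuinely needs checking and that the paper leaves implicit.
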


Extension by zero has been studied by Besov \cite{Be98} for domains satisfying a flexible horn condition.
But he was using a different definition for the spaces on domains. So the results in our situation might be different.
However, it would be of some interest if the
results of Besov \cite{Be98} carry over to the present situation.

\begin{remark}\label{remhalf}
The pointwise multiplier problem with respect to characteristic functions has some history.
In case of  the characteristic function of the half space the investigations started in 1961 at latest, see
Lions and Magenes \cite{LM1} ($B^s_{p,p}(\mathbb{R}^n)$). Later also Shamir \cite{Sh} ($H^s_p (\mathbb{R}^n)$),  Strichartz \cite{Str-67} ($H^s_p (\mathbb{R}^n)$),
Triebel \cite[Lemma 2.10.2]{t78b}, \cite[2.8.5]{t83}
($B^s_{p,q}(\mathbb{R}^n)$, $F^s_{p,q}(\mathbb{R}^n)$), Gulisashvili \cite{Gu1,Gu2}, Maz'ya and Shaposhnikova
\cite{MS09},
Franke \cite{Fra} ($F^s_{p,q}(\mathbb{R}^n)$), Frazier and Jawerth \cite{FJ90}, and Runst and Sickel \cite[4.6.3]{RS} have contributed.
More general characteristic functions have been considered by  Gulisashvili \cite{Gu1,Gu2},
Frazier, Jawerth \cite{FJ90},
 Runst, Sickel \cite[4.6.3]{RS}, Triebel \cite{t03,t06} ($d$-sets), Faraco and Rogers \cite{FR} (quasi-balls), Schneider, Vybiral \cite{SV} ($d$-sets) and Sickel \cite{s99b,Si21,Si23}.
\end{remark}

\noindent
{\bf Summary:} \
For  $p=1$	we have found a characterization of the
 pointwise multiplier space
 $M(B^s_{1,1}(\rn))$ which involves uniform
Besov-type spaces.
For any $p \in (1,\infty)$	we only have the chain of embeddings
\[
M\left(B^s_{p,p}(\rn)\right) \hookrightarrow M\left(B^s_{p,1}(\rn),B^s_{p,\infty}(\rn)\right) =
 L^\infty (\mathbb{R}^n) \cap  B^{s,\tau}_{p,\infty,{\rm unif}} (\mathbb{R}^n)
\hookrightarrow M\left(B^{\tilde{s}}_{p,p}(\rn)\right)\, ,
\]
where $0 < \tilde{s} < s \le  \frac 1p$.
Therefore, the investigation of the regularity of characteristic functions in uniform Besov-type spaces
is essentially equivalent   to an understanding of their pointwise multiplier properties.


\section{Characteristic functions with maximal regularity}
\label{Main}


In this section, we determine the smallest class $B^{s,\tau}_{p,q}(\mathbb{R}^n)$ containing non-trivial characteristic functions and then derive some
sufficient conditions on sets $E$ such that ${\bf 1}_E$
has the maximal regularity. To be precise, in Subsection \ref{Main1},
we recall
some related known results on  characteristic functions with maximal regularity
in the classical Besov and Triebel--Lizorkin spaces. In Subsection \ref{Main2},
we introduce the concept of characteristic functions of maximal regularity in Besov-type spaces with $\tau\in(0,\infty)$
and discuss some necessary conditions on such maximal regularity.
In Subsection \ref{Main3}, we prove that the
characteristic functions of elementary Lipschitz domains are of maximal regularity in Besov-type spaces. Finally, in
Subsection \ref{Main4}, we show that the characteristic functions of certain spiral type domains are not of maximal regularity.


\subsection{Characteristic functions with maximal regularity in the framework of the classical spaces}
\label{Main1}


First we recall some well-known results concerning the maximal regularity of those functions in case
when $\tau =0$.

\begin{proposition}\label{gul}
	Let $p,q\in[1,\infty)$.  Then there exists {\bf no} measurable subset
	$E \subset \mathbb{R}^n$ having positive Lebesgue measure  such that  $ {\bf 1}_E \in B^{1/p}_{p,q} (\mathbb{R}^n)$.
\end{proposition}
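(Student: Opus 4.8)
The plan is to argue by contradiction, using the difference characterization of Proposition \ref{t4.7} with the Morrey parameter $\tau=0$. Fix $p,q\in[1,\infty)$ and suppose $E\subset\mathbb{R}^n$ is measurable with $|E|>0$ and ${\bf 1}_E\in B^{1/p}_{p,q}(\mathbb{R}^n)=B^{1/p,0}_{p,q}(\mathbb{R}^n)$. Choosing $M=2$ (so that $s=1/p\in(0,M)$ for every $p\ge1$), Proposition \ref{t4.7} first forces ${\bf 1}_E\in L^p_0(\mathbb{R}^n)=L^p(\mathbb{R}^n)$, whence $|E|<\infty$; thus $0<|E|<\infty$ and, in particular, ${\bf 1}_E$ is not almost everywhere constant. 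It also yields $\|{\bf 1}_E\|^\spadesuit_{B^{1/p}_{p,q}(\mathbb{R}^n)}<\infty$. Since $\tau=0$, taking in the definition of $\|\cdot\|^\spadesuit$ the supremum over dyadic cubes $P$ that exhaust $\mathbb{R}^n$ (the inner integrals increase, so monotone convergence applies, up to a harmless dimensional constant coming from the anchoring of dyadic cubes) gives the lower bound
\[
\|{\bf 1}_E\|^\spadesuit_{B^{1/p}_{p,q}(\mathbb{R}^n)}\gtrsim\left\{\int_0^2 t^{-q/p}\sup_{t/2\le|h|<t}\left[\int_{\mathbb{R}^n}|\Delta_h^2{\bf 1}_E(x)|^p\,dx\right]^{q/p}\frac{dt}{t}\right\}^{1/q}.
\]
The task is therefore to show that the right-hand side is infinite.

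Next I would record an elementary pointwise estimate. For $x,h\in\mathbb{R}^n$ one has $|\Delta_h^2{\bf 1}_E(x)|\ge1$ whenever the three values ${\bf 1}_E(x)$, ${\bf 1}_E(x+h)$, ${\bf 1}_E(x+2h)$ are not all equal, and this certainly happens when ${\bf 1}_E(x)\neq{\bf 1}_E(x+h)$. Since $p\ge1$, integrating over $\mathbb{R}^n$ yields
\[
\int_{\mathbb{R}^n}|\Delta_h^2{\bf 1}_E(x)|^p\,dx\ge\left|\left\{x\in\mathbb{R}^n:\ {\bf 1}_E(x)\neq{\bf 1}_E(x+h)\right\}\right|=V(h),
\]
where $V(h):=\|{\bf 1}_E(\cdot+h)-{\bf 1}_E\|_{L^1(\mathbb{R}^n)}=|E\triangle(E-h)|$ and $E-h:=\{x:\ x+h\in E\}$. (For the first-order difference one has equality here, which is why the same argument works verbatim when $p>1$ with $M=1$.) Combining this with the previous display, it suffices to prove that, with $\omega(t):=\sup_{t/2\le|h|<t}V(h)$,
\[
\int_0^2 t^{-q/p}\,\omega(t)^{q/p}\,\frac{dt}{t}=\infty.
\]

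The heart of the proof is a subadditivity argument showing $\omega(t)\gtrsim t$ near $t=0$. By the translation invariance of the $L^1$-norm, $V(h_1+h_2)\le V(h_1)+V(h_2)$, and hence $V(kh)\le k\,V(h)$ for every $k\in\mathbb{N}$. Because ${\bf 1}_E$ is not a.e. constant there is some $h_0\neq{\bf 0}$ with $c_0:=V(h_0)/|h_0|>0$. Applying the inequality to $h=h_0/k$ gives $V(h_0/k)\ge V(h_0)/k=c_0\,|h_0/k|$. For $t\in(|h_0|/k,\,2|h_0|/k]$ the vector $h_0/k$ lies in the range $t/2\le|h|<t$ and satisfies $|h_0/k|\ge t/2$, so
\[
\omega(t)\ge V(h_0/k)\ge c_0\,|h_0/k|\ge\frac{c_0}{2}\,t.
\]
As $k$ ranges over $\mathbb{N}$ these intervals overlap and their union is all of $(0,2|h_0|]$, whence $\omega(t)\ge(c_0/2)\,t$ for every $t\in(0,\delta]$ with $\delta:=\min\{2,2|h_0|\}$. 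Consequently
\[
\int_0^2 t^{-q/p}\,\omega(t)^{q/p}\,\frac{dt}{t}\ge\left(\frac{c_0}{2}\right)^{q/p}\int_0^\delta\frac{dt}{t}=\infty,
\]
contradicting $\|{\bf 1}_E\|^\spadesuit_{B^{1/p}_{p,q}(\mathbb{R}^n)}<\infty$. This would prove the proposition.

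The genuinely delicate point is the final divergence, and it hinges on two features of the endpoint case: the scale-invariant weight $dt/t$, which has infinite mass near $t=0$, and the hypothesis $q<\infty$, which keeps the exponent $q/p$ finite so that the constant $(c_0/2)^{q/p}$ stays positive. For $q=\infty$ the corresponding seminorm only sees the bounded ratio $\omega(t)/t$ and no contradiction arises, in agreement with the fact that ${\bf 1}_Q,{\bf 1}_B\in B^{1/p}_{p,\infty}(\mathbb{R}^n)$. The only technical loose end is the passage from the supremum over dyadic cubes in $\|\cdot\|^\spadesuit$ to the global $L^p(\mathbb{R}^n)$-norm of differences; since dyadic cubes cannot be centered at the origin, I would split $\mathbb{R}^n$ into its $2^n$ coordinate orthants, each exhausted by a nested sequence of dyadic cubes, and keep the orthant carrying the largest share of the difference mass, losing only a fixed factor $2^{-n/p}$.
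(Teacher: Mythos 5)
Your argument is correct. Note that the paper does not actually prove Proposition \ref{gul}: it only cites Gulisashvili and Sickel for it, so your proposal supplies a self-contained proof rather than a variant of one given in the text. The route you take is the classical one underlying those references: reduce, via the difference characterization (Proposition \ref{t4.7} with $\tau=0$, $M=2$), to the global quantity $\int_0^2 t^{-q/p}\,\omega(t)^{q/p}\,dt/t$ with $\omega(t)=\sup_{t/2\le|h|<t}|E\triangle(E-h)|$; then use subadditivity of $h\mapsto V(h)=|E\triangle(E-h)|$ to get $V(h_0/k)\ge V(h_0)/k$, hence $\omega(t)\gtrsim t$ near $t=0$ for any non-a.e.-constant indicator, which makes the integral diverge precisely because $q<\infty$. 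All the steps check out: the second-order difference of an indicator has modulus at least $1$ wherever ${\bf 1}_E(x)\neq{\bf 1}_E(x+h)$; the existence of $h_0$ with $V(h_0)>0$ follows since $0<|E|<\infty$ rules out ${\bf 1}_E$ being a.e.\ constant (mollify and use $|{\bf 1}_E\ast\phi(x+h)-{\bf 1}_E\ast\phi(x)|\le\|\phi\|_\infty V(h)$ if you want this explicit); the intervals $(|h_0|/k,2|h_0|/k]$ do cover $(0,2|h_0|]$ since $2/(k+1)\ge 1/k$; and the passage from dyadic cubes to $\mathbb{R}^n$ via the $2^n$ nested orthant exhaustions loses only a fixed constant, with the interchange of the sup over $h$ and the monotone limit over $P$ being harmless for increasing families. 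Your closing remark correctly identifies why the argument breaks for $q=\infty$, consistent with Propositions \ref{charact1} and \ref{prima0}.
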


We refer to Gulisashvili \cite{Gu1,Gu2} and Sickel \cite{Si21} for the above proposition.
Hence, a positive result  within Besov spaces $B^{1/p}_{p,\infty}(\mathbb{R}^n)$ is best possible.
Because of the continuous embedding $F^{1/p}_{p,\infty}(\mathbb{R}^n)
\hookrightarrow B^{1/p}_{p,\infty}(\mathbb{R}^n)$, it makes sense to ask for membership of ${\bf 1}_E$ in
$F^{1/p}_{p,\infty}(\mathbb{R}^n)$. Here is the negative answer, see Sickel \cite{Si23}.

\begin{proposition}\label{max}
Let  $p\in[1,\infty)$.
Then there is {\bf no} set $E \subset \mathbb{R}^n$ having positive Lebesgue measure such that
${\bf 1}_E \in F^{1/p}_{p,\infty}(\mathbb{R}^n)$.
\end{proposition}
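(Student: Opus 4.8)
The plan is to argue by contradiction and to reduce the assertion to a purely geometric divergence statement about the Lebesgue measure of collars of $E$. Suppose $0<|E|$ and ${\bf 1}_E\in F^{1/p}_{p,\infty}(\mathbb{R}^n)$. Proposition \ref{H1}, applied with $u=p$ (so that the Morrey parameter is $\frac1p-\frac1u=0$ and $\mathcal{M}^p_p(\mathbb{R}^n)=L^p(\mathbb{R}^n)$), forces in particular ${\bf 1}_E\in L^p(\mathbb{R}^n)$, hence $|E|<\infty$; thus $0<|E|<\infty$. Taking $M=1$ in the case $p\in(1,\infty)$ (so that $s=\frac1p<1=M$; the endpoint $p=1$, $s=1$, requires $M=2$ and is commented on at the end), the same proposition shows that ${\bf 1}_E\in F^{1/p}_{p,\infty}(\mathbb{R}^n)$ is \emph{equivalent} to
\[
\int_{\mathbb{R}^n}\Psi(x)\,dx<\infty,\qquad \Psi(x):=\sup_{0<t<1}\ t^{-n-1}\int_{B({\bf 0},t)}\big|\Delta_h^1{\bf 1}_E(x)\big|\,dh,
\]
where I used $|\Delta_h^1{\bf 1}_E|^p=|\Delta_h^1{\bf 1}_E|\in\{0,1\}$. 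I will show that this integral is in fact $+\infty$ whenever $0<|E|<\infty$, which is the desired contradiction.

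The next step is to rewrite $\Psi$ through ball densities. Put $\omega_n:=|B({\bf 0},1)|$ and $\theta(x,t):=|B(x,t)\cap E|/|B(x,t)|$. For $x\notin E$ one has ${\bf 1}_E(x)=0$, so $\int_{B({\bf 0},t)}|\Delta_h^1{\bf 1}_E(x)|\,dh=|B(x,t)\cap E|$ and therefore $\Psi(x)=\omega_n\sup_{0<t<1}\theta(x,t)/t$. Fix a threshold $\delta_0\in(0,1)$ and, for $x\notin E$, set $\kappa(x):=\inf\{t\in(0,1):\theta(x,t)\ge\delta_0\}$ (with $\kappa(x):=1$ if this set is empty). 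Since almost every $x\notin E$ is a density point of $E^\complement$, we have $\theta(x,t)\to0$ as $t\to0$, hence $\kappa(x)>0$, and by continuity of $t\mapsto\theta(x,t)$ we get $\theta(x,\kappa(x))\ge\delta_0$ whenever $\kappa(x)<1$. Thus $\Psi(x)\ge\omega_n\delta_0/\kappa(x)$ there, so by the layer-cake formula
\[
\int_{\mathbb{R}^n}\Psi(x)\,dx\ \ge\ \omega_n\delta_0\int_{\{x\notin E:\,\kappa(x)<1\}}\frac{dx}{\kappa(x)}\ =\ \omega_n\delta_0\int_0^\infty\big|\{x\notin E:\kappa(x)<1/\lambda\}\big|\,d\lambda.
\]
Because $\{x\notin E:\kappa(x)<\mu\}\supset C_{\mu'}:=\{x\notin E:\theta(x,\mu')\ge\delta_0\}$ for every $\mu'<\mu$, everything now reduces to the \emph{collar estimate} $|C_\mu|\gtrsim\mu$ for all sufficiently small $\mu\in(0,1)$: indeed this yields $|\{x\notin E:\kappa(x)<1/\lambda\}|\gtrsim1/\lambda$ for large $\lambda$, whence the last integral diverges like $\int^\infty\lambda^{-1}\,d\lambda=+\infty$.

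Establishing the collar estimate is the main obstacle. The quantity $|C_\mu|$ is essentially the measure of the outer collar of $E$ of width comparable to $\mu$, and the point is that a set of positive finite measure possesses a genuinely $(n-1)$-dimensional essential boundary, so this collar cannot be thinner than order $\mu$. A clean route is to first localize: by the Lebesgue density theorem there are balls almost filled by $E$ and balls almost filled by $E^\complement$, so by moving the centre continuously one finds a ball $B$ in which both $|E\cap B|$ and $|E^\complement\cap B|$ are bounded below by a fixed fraction of $|B|$. The relative isoperimetric inequality then gives a strictly positive (possibly infinite) relative perimeter of $E$ in $B$; combined with the Fubini identity $\int_{E^\complement}\theta(\cdot,\mu)\,dx=|\{(x,y)\in E^\complement\times E:|x-y|<\mu\}|/(\omega_n\mu^n)\gtrsim\mu$ this forces $|C_\mu\cap B|\gtrsim\mu$. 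The delicate part here is that for very irregular boundaries (infinite perimeter, or thin ``tendrils'' of $E$) the low-density collar $\{0<\theta(\cdot,\mu)<\delta_0\}$ can be large while contributing little to $\int_{E^\complement}\theta(\cdot,\mu)$, so a careless splitting fails; this is circumvented by working with the volume density $\theta$ rather than with the metric distance to $E$, by localizing to the good ball $B$, and, in the finite-perimeter case, by appealing to De Giorgi's structure theorem (blow-ups of the reduced boundary are half-spaces, for which the computation $\int_{\text{tube}}\mathrm{dist}(x,\partial E)^{-1}\,dx=\infty$ is explicit). Finally, the case $p=1$ follows along exactly the same lines with $M=2$: for $x\notin E$ one has $\Delta_h^2{\bf 1}_E(x)={\bf 1}_E(x+2h)-2{\bf 1}_E(x+h)$, which is again supported, and bounded below in the same averaged sense, on the collar of $E$, so the identical density/collar argument applies and produces the same logarithmic divergence.
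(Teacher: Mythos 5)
The paper itself does not prove this proposition; it is quoted from Sickel \cite{Si23}, so there is no internal argument to compare with, and your proposal has to stand on its own. Its skeleton is sound as far as it goes: the application of Proposition \ref{H1} with $u=p$, the identity $|\Delta_h^1{\bf 1}_E|^p=|\Delta_h^1{\bf 1}_E|$, the formula $\Psi(x)=\omega_n\sup_{t}\theta(x,t)/t$ for $x\notin E$, the bound $\Psi\ge\omega_n\delta_0/\kappa$, and the layer-cake reduction to the collar estimate $|C_\mu|\gtrsim\mu$ are all correct. The auxiliary claim $\int_{E^c}\theta(\cdot,\mu)\,dx\gtrsim\mu$ is also true (via the standard equivalence of the averaged and the supremum first-order $L^1$-modulus together with $\omega_1({\bf 1}_E,t)\gtrsim t$ for a non-constant indicator), although you do not justify it.

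The genuine gap is exactly at the point you yourself flag as ``delicate'': the collar estimate is never actually proved. From $\int_{E^c}\theta(\cdot,\mu)\,dx\gtrsim\mu$ one cannot deduce $|C_\mu|\gtrsim\mu$, because the low-density set $\{x\in E^\complement:\,0<\theta(x,\mu)<\delta_0\}$ may carry essentially all of that integral (its measure need only be of order $\mu/\delta_0$, which is compatible with it being a fixed set of positive measure, e.g.\ for fat Cantor-type sets), and localizing to a good ball does not remove this obstruction. The De Giorgi blow-up argument you invoke can indeed be turned into a proof in the \emph{finite-perimeter} case ($\mathcal{H}^{n-1}(\partial^*E)=\per E>0$ by the isoperimetric inequality, a Vitali-type selection of $\sim\mu^{1-n}$ separated points of $\partial^*E$, each contributing $\gtrsim\mu^{n}$ to $C_\mu$), but it says nothing when $\per E=\infty$; for such sets a one-sided collar bound with a \emph{fixed} $\delta_0$ is far from obvious, and the natural coarea/relative-isoperimetric argument only yields $|\{\delta_0\le\theta(\cdot,\mu)\le1-\delta_0\}|\gtrsim\mu$ for $\delta_0\lesssim\mu$, which is useless in your layer-cake step. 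The clean repair, which you miss, is that the infinite-perimeter case needs none of this machinery: if $\per E=\infty$, then ${\bf 1}_E\notin B^{1/p}_{p,\infty}(\mathbb{R}^n)$ by Proposition \ref{prima0}, and $F^{1/p}_{p,\infty}(\mathbb{R}^n)\hookrightarrow B^{1/p}_{p,\infty}(\mathbb{R}^n)$ by Remark \ref{grund}(iii), so the conclusion is immediate. With that case disposed of, your scheme only has to be executed for sets of finite perimeter, where the rectifiability argument (written out in full) does close it. As submitted, however, the central lemma of your proof is asserted rather than proved.
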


This means, considering both scales simultaneously, $B^{1/p}_{p,\infty}(\mathbb{R}^n)$ is
the best possible for a nontrivial characteristic function to belong to.
To formulate the next result we need more notation.


Recall that a locally integrable function $f:~\mathbb{R}^n\to \mathbb{R}$ is \emph{of bounded variation}, denoted by $f\in BV(\mathbb{R}^n)$,
if its first order partial derivatives (in the distributional sense) are bounded Borel measures. For any $f\in BV(\mathbb{R}^n)$, let $\|f\|_{BV(\mathbb{R}^n)}:=\sum_{j=1}^n  |\frac{\partial f}{\partial x_j}|$, where $|\frac{\partial f}{\partial x_j}|$ denotes the total variation of the measure $\frac{\partial f}{\partial x_j}$.
The space $BV (\mathbb{R}^n)\cap L^1 (\mathbb{R}^n)$ is endowed with the following norm
$$
 \| f \|_{BV (\mathbb{R}^n)\cap L^1 (\mathbb{R}^n)}:= \|f\|_{BV(\mathbb{R}^n)}  + \|f\|_{L^1 (\mathbb{R}^n)}.$$
Then the \emph{perimeter} of a Lebesgue measurable  set $E\subset \mathbb{R}^n$ is the quantity
\[
\per E := \|{\bf 1}_E\|_{BV(\mathbb{R}^n)}\, ,
\]
see, e.g., Evans and Gariepy \cite[Chapter~5]{EG}, Burago and Zalgaller \cite[Chapter 14]{BZ}, or Ziemer \cite[Chapter~5]{Zie}.

Next we recall the definition of the space
${\rm Lip}\,(1,1)(\mathbb{R}^n)$. For any $t\in(0,\infty)$, let $\omega_1 (f,t)$   denote the
first-order modulus
of smoothness of $f$ with respect to $L^1 (\mathbb{R}^n)$, namely
\[
\omega_1 (f,t):= \sup_{\{h\in\mathbb{R}^n:\, |h|<t\}}\, \int_{\mathbb{R}^n} \, |f(x+h)-f(x)|\, dx.
\]
The \emph{space   ${\rm Lip}\,(1,1)(\mathbb{R}^n)$} is defined to be the collection of all $f \in L^1 (\mathbb{R}^n)$ such that $$\sup_{t\in(0,\infty)} t^{-1}\, \omega_1 (f,t) < \infty.$$
The norm is given by
\[
 \|  f  \|_{{\rm Lip}\,(1,1)(\mathbb{R}^n)}:= \| f \|_{L^1 (\mathbb{R}^n)}+ \sup_{t\in(0,\infty)} t^{-1}\, \omega_1 (f,t).
\]
A well-known result by Hardy and Littlewood is $BV(\mathbb{R}) \cap L^1 (\mathbb{R})={\rm Lip}\,(1,1)(\mathbb{R})$.
The generalization to the case $n>1$ has been proved by Gulisashvili \cite{Gu1}.
For a set $E \subset \mathbb{R}^n$ and the shift $h \in \mathbb{R}^n$, we put
\begin{equation}\label{w040}
 E(h)=\left\{x\in E:~ ~ x+h \in E^\complement \right\}
 \end{equation}
 and
 \begin{equation} \label{w041}
 F(h):=\left\{x\in E^\complement:~ ~ x+h \in E \right\}.
\end{equation}

The proof of the following proposition, containing if and only if characterizations,  can be found in
Gulisashvili \cite{Gu1} and Sickel \cite{Si21}.

\begin{proposition}\label{prima0}
	Let $E \subset \mathbb{R}^n$ and $0< |E|<\infty$.
	Then  the following assertions are mutually equivalent:
	\begin{itemize}
		\item[{\rm(i)}]  $\per E < \infty $;
		\item[{\rm(ii)}]   $\sup_{\{h\in\mathbb{R}^n: |h|<1\}}\,  |h|^{-1}\, \{|E(h)| + |F(h)|\} < \infty$;
		\item[{\rm(iii)}]   ${\bf 1}_E \in BV (\mathbb{R}^n)$;
		\item[{\rm(iv)}]   ${\bf 1}_E \in {\rm Lip} (1,1) (\mathbb{R}^n)$;
		\item[{\rm(v)}]   ${\bf 1}_E \in B^{1/p_0}_{p_0,\infty} (\mathbb{R}^n)$ for some  $p_0\in[1,\infty)$;
		\item[{\rm(vi)}]
		${\bf 1}_E \in B^{1/p}_{p,\infty} (\mathbb{R}^n)$ for any  $p\in[1,\infty)$.
	\end{itemize}
\end{proposition}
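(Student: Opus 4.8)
The plan is to funnel every condition through the elementary identity
\begin{equation*}
\|\Delta_h^1 {\bf 1}_E\|_{L^p(\rn)}^p = |E(h)| + |F(h)| = |E \triangle (E+h)|, \qquad p\in[1,\infty),
\end{equation*}
which holds because the function $\Delta_h^1 {\bf 1}_E = {\bf 1}_E(\cdot+h)-{\bf 1}_E$ equals $+1$ precisely on $F(h)$, equals $-1$ precisely on $E(h)$, and vanishes elsewhere; in particular the right-hand side does not depend on $p$. Taking the supremum over $|h|<t$ turns this into the modulus identity $\omega_1({\bf 1}_E,t)=\sup_{|h|<t}\{|E(h)|+|F(h)|\}$, which is the bridge between (ii) and (iv).

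First I would settle the non-Besov equivalences. Since $\per E=\|{\bf 1}_E\|_{BV(\rn)}$ by definition, (i) $\Leftrightarrow$ (iii) is immediate, and as $0<|E|<\infty$ forces ${\bf 1}_E\in L^1(\rn)$, the Hardy--Littlewood/Gulisashvili identity $BV(\rn)\cap L^1(\rn)={\rm Lip}\,(1,1)(\rn)$ gives (iii) $\Leftrightarrow$ (iv). For (iv) $\Leftrightarrow$ (ii) I would combine the modulus identity with the a priori bound $|E(h)|+|F(h)|\le 2|E|$: the latter makes $t^{-1}\omega_1({\bf 1}_E,t)\le 2|E|$ for all $t\ge 1$, so the supremum over $t\in(0,\infty)$ in the definition of ${\rm Lip}\,(1,1)$ is finite if and only if its restriction to $t\in(0,1)$ is, and by monotonicity of $\omega_1$ in $t$ the latter is exactly condition (ii).

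Next I turn to the Besov scale. For $p\in(1,\infty)$ the smoothness $s=1/p$ lies in $(0,1)$, so Proposition \ref{t4.7} applies with $M=1$, $q=\infty$, and $\tau=0$, yielding
\begin{equation*}
\|{\bf 1}_E\|_{B^{1/p}_{p,\infty}(\rn)} \sim \|{\bf 1}_E\|_{L^p(\rn)} + \sup_{0<|h|<1}|h|^{-1/p}\,\|\Delta_h^1 {\bf 1}_E\|_{L^p(\rn)}.
\end{equation*}
By the identity above the last term equals $\big(\sup_{0<|h|<1}|h|^{-1}\{|E(h)|+|F(h)|\}\big)^{1/p}$, so for each such $p$ membership in $B^{1/p}_{p,\infty}(\rn)$ is equivalent to (ii). This at once yields (ii) $\Rightarrow$ (vi) for $p\in(1,\infty)$ and (v) $\Rightarrow$ (ii) whenever the witnessing exponent $p_0$ exceeds $1$; the implication (vi) $\Rightarrow$ (v) is trivial.

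The step I expect to be the main obstacle is the endpoint $p=1$, where $s=1$ forces $M\ge 2$ in Proposition \ref{t4.7}, and second-order differences are in general \emph{not} controlled by first-order ones (the Marchaud inequality fails to close at this boundary smoothness). Here I would use that ${\bf 1}_E$ is two-valued: whenever $\Delta_h^1 {\bf 1}_E(x)\ne 0$ the three numbers ${\bf 1}_E(x),{\bf 1}_E(x+h),{\bf 1}_E(x+2h)$ are not all equal, which forces the integer $\Delta_h^2 {\bf 1}_E(x)$ to satisfy $|\Delta_h^2 {\bf 1}_E(x)|\ge 1$; together with the trivial bound $\|\Delta_h^2 {\bf 1}_E\|_{L^1}\le 2\|\Delta_h^1 {\bf 1}_E\|_{L^1}$ this gives
\begin{equation*}
\|\Delta_h^1 {\bf 1}_E\|_{L^1(\rn)} \le \|\Delta_h^2 {\bf 1}_E\|_{L^1(\rn)} \le 2\,\|\Delta_h^1 {\bf 1}_E\|_{L^1(\rn)}.
\end{equation*}
Applying Proposition \ref{t4.7} with $p=1$, $q=\infty$, $s=1$, $M=2$, $\tau=0$ and transferring through this equivalence, ${\bf 1}_E\in B^{1}_{1,\infty}(\rn)$ is again equivalent to (ii). This supplies the missing cases (ii) $\Rightarrow$ (vi) for $p=1$ and (v) $\Rightarrow$ (ii) for $p_0=1$, closing the chain of all six equivalences.
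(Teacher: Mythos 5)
Your proof is correct, but note that the paper does not actually prove Proposition \ref{prima0}: it is quoted from Gulisashvili and Sickel, so there is no in-paper argument to compare against line by line. Your route is a legitimate self-contained derivation from two ingredients the paper does record, namely the difference characterization of Proposition \ref{t4.7} (with $\tau=0$) and the identity $BV(\mathbb{R}^n)\cap L^1(\mathbb{R}^n)={\rm Lip}\,(1,1)(\mathbb{R}^n)$. The load-bearing observations all check out: the $p$-independence $\|\Delta_h^1{\bf 1}_E\|_{L^p}^p=|E(h)|+|F(h)|$, the reduction of the ${\rm Lip}\,(1,1)$ supremum to $t\in(0,1)$ via the a priori bound $|E(h)|+|F(h)|\le 2|E|$, and in particular your treatment of the endpoint $p=1$. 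There the pointwise inequality $|\Delta_h^1{\bf 1}_E(x)|\le|\Delta_h^2{\bf 1}_E(x)|$ is valid precisely because ${\bf 1}_E$ is $\{0,1\}$-valued (one checks that $\Delta_h^2{\bf 1}_E(x)=0$ forces ${\bf 1}_E(x)={\bf 1}_E(x+h)={\bf 1}_E(x+2h)$), and combined with $\|\Delta_h^2{\bf 1}_E\|_{L^1}\le 2\|\Delta_h^1{\bf 1}_E\|_{L^1}$ it gives the two-sided comparison you need to run Proposition \ref{t4.7} with $M=2$. This is essentially the same device the authors deploy in the proof of Corollary \ref{t4.8} when passing between first- and second-order differences of characteristic functions, though your pointwise domination is stated a bit more sharply than the estimates used there. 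The only stylistic caveat is that your argument leans on the cited $BV\cap L^1={\rm Lip}\,(1,1)$ theorem for the link (iii)$\Leftrightarrow$(iv); since the paper itself records that fact immediately before the proposition, this is an acceptable external input rather than a gap.
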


It is interesting to notice that characteristic functions of maximal regularity cannot distinguish between Besov spaces  $B^{1/p}_{p,\infty}(\mathbb{R}^n)$ with different $p$.
Except for $n=1$ there is no embedding relation of these Besov spaces $B^{1/p}_{p,\infty}(\mathbb{R}^n)$, $p\in[1,\infty)$. Hence, there is no optimal space in a certain sense.
For $n=1$ the smallest space is given by $BV (\mathbb{R}) \cap L^1 (\mathbb{R})$.
In \cite{Si21,Si23} one can find a large number of examples of sets with
associated characteristic functions of maximal regularity. Let us only mention
here that  the characteristic functions of cubes and balls belong to the
collection of characteristic functions of maximal regularity.


\subsection{A first example}
\label{Main2}


Now we turn to the regularity within type spaces.
The example of the characteristic function $\cx$ of the cube $[0,1)^n$ has been treated in  \cite{ysy20},
which can be extended to the following general case.

\begin{proposition}\label{charact1}
	Let $Q$ be a cube in $\mathbb{R}^n$ and let ${\bf 1}_Q$ denote its characteristic function.
	Let $s \in \mathbb{R}$ and $p$, $q\in (0,\infty]$.
	\begin{enumerate}
		\item[{\rm(i)}] Let $\tau \in(1/p,\infty)$. Then
		${\bf 1}_Q \in B^{s,\tau}_{p,q}(\mathbb{R}^n)$ if and only if
		$s \le n (1/p - \tau )$.
		
		\item[{\rm(ii)}] Let $\tau \in [0, 1/p]$. Then
		${\bf 1}_Q \in B^{s,\tau}_{p,q}(\mathbb{R}^n)$ if and only if
		either
		\begin{align*}
			s = \frac 1p,\,  \qquad q=\infty, \quad \mbox{and}\quad  s \le n \left(\frac 1p - \tau \right)
		\end{align*}
		or
		\begin{align*}
			s < \frac 1p,\,  \qquad q\in(0, \infty], \quad  \mbox{and}
			\quad s \le n \left(\frac 1p - \tau \right).
		\end{align*}
		\end{enumerate}
\end{proposition}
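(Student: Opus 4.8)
For part (i), since $\tau\in(1/p,\infty)$, the identification in Remark \ref{grund} gives $B^{s,\tau}_{p,q}(\mathbb{R}^n)=B^{s+n(\tau-1/p)}_{\infty,\infty}(\mathbb{R}^n)$ for every $q$ (the hypothesis $\tau>1/p$ covers both the cases $q<\infty$ and $q=\infty$). Hence it suffices to establish the elementary fact that ${\bf 1}_Q\in B^\sigma_{\infty,\infty}(\mathbb{R}^n)$ if and only if $\sigma\le0$, applied with $\sigma=s+n(\tau-1/p)$; note $\sigma\le0\Leftrightarrow s\le n(1/p-\tau)$. For $\sigma\le0$ membership is immediate from ${\bf 1}_Q\in L^\infty(\mathbb{R}^n)\hookrightarrow B^0_{\infty,\infty}(\mathbb{R}^n)$. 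For $\sigma>0$ I would use the difference description of the H\"older--Zygmund space: near each face of $\partial Q$ exactly one of the two points $x,x+h$ lies in $Q$, so $\|\Delta^1_h{\bf 1}_Q\|_{L^\infty(\mathbb{R}^n)}=1$ for all sufficiently small $h$, whence $\sup_{|h|<1}|h|^{-\sigma}\|\Delta^1_h{\bf 1}_Q\|_{L^\infty(\mathbb{R}^n)}=\infty$ and ${\bf 1}_Q\notin B^\sigma_{\infty,\infty}(\mathbb{R}^n)$.

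For part (ii) the natural engine is the smooth (Daubechies) wavelet characterization, Proposition \ref{wav-type2} with $N_1=N_1(s,\tau,p,q)$ large, because it handles the full range $p,q\in(0,\infty]$ and $s\in\mathbb{R}$ asked for in the statement, whereas the difference characterization of Proposition \ref{t4.7} is restricted to $p,q\ge1$ and $s>0$. By translation and dilation invariance of membership one may take $Q=[0,1)^n$ (the case treated in \cite{ysy20}), but the coefficient estimates below hold verbatim for an arbitrary axis-parallel cube. The key observation is that $\lambda_{i,j,k}({\bf 1}_Q)=\int_Q\psi_{i,j,k}$ vanishes whenever $\supp\psi_{i,j,k}$ lies entirely inside or entirely outside $Q$, by the moment condition \eqref{moment} with $\gamma=0$. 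Thus only wavelets whose support meets $\partial Q$ contribute: at scale $j$ there are $\asymp 2^{j(n-1)}$ such indices (a codimension-one sheet along $\partial Q$), and for each $|\lambda_{i,j,k}|\lesssim\|\psi_{i,j,k}\|_{L^1(\mathbb{R}^n)}\asymp 2^{-jn/2}$. Near a flat face, the tensor-product structure together with $\int\widetilde\psi=0$ and $\int\widetilde\phi\neq0$ forces exactly one of the $2^n-1$ types to survive, of size $\asymp 2^{-jn/2}$; and since $t\mapsto\int_t^\infty\widetilde\psi$ is continuous, compactly supported and not identically zero, a fixed positive proportion of these satisfy $|\lambda_{i,j,k}|\gtrsim 2^{-jn/2}$. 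These matching two-sided bounds yield both sufficiency and necessity.

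I would then evaluate $\|\lambda({\bf 1}_Q)\|_{b^{s,\tau}_{p,q}(\mathbb{R}^n)}$. The scaling part is harmless (finitely many bounded coefficients $\lambda_m$, and $|P|^{-\tau}\le1$ on $|P|\ge1$), so the constraint comes entirely from the oscillation part. The supremum there is governed by dyadic cubes $P$ of side $2^{-j_0}$, $j_0\ge0$, straddling a single face of $\partial Q$; for such $P$ there are $\asymp 2^{(j-j_0)(n-1)}$ contributing cubes $Q_{j,m}\subset P$ at each scale $j\ge j_0$, so that, for the surviving type $i$,
\[
2^{j(s+\frac n2-\frac np)q}\left[\sum_{\{m\in\mathbb{Z}^n:\,Q_{j,m}\subset P\}}|\lambda_{i,j,m}|^p\right]^{\frac qp}\asymp 2^{-j_0(n-1)\frac qp}\,2^{jq(s-\frac1p)}.
\]
Summing over $j\ge j_0$ and multiplying by $|P|^{-\tau}=2^{j_0 n\tau}$ gives $\asymp 2^{j_0(s-n(1/p-\tau))}$ when $s<1/p$, so $\sup_{j_0\ge0}$ is finite exactly when $s\le n(1/p-\tau)$. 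When $s=1/p$ the inner $j$-sum is $\sum_{j\ge j_0}1=\infty$ for $q<\infty$, but for $q=\infty$ it reduces to a supremum giving $\asymp 2^{j_0(n\tau-(n-1)/p)}$, finite over $j_0$ iff $n\tau\le(n-1)/p$, i.e. $s=1/p\le n(1/p-\tau)$; and $s>1/p$ always diverges. This reproduces precisely the stated dichotomy: $s<1/p$ with any $q$, or $s=1/p$ with $q=\infty$, always under $s\le n(1/p-\tau)$.

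The main obstacle is the \emph{uniform} control of the supremum over all dyadic cubes $P$, namely showing that face-straddling cubes dominate and that, as $j_0\to\infty$, the single geometric factor $2^{j_0(s-n(1/p-\tau))}$ captures the extremal behavior: cubes meeting only edges or corners involve codimension $\ge2$ (hence $\asymp 2^{(j-j_0)(n-2)}$ or fewer contributing wavelets) and are subdominant, while large cubes are suppressed by the small weight $|P|^{-\tau}$. The second delicate point is the sharp two-sided estimate $|\lambda_{i,j,k}|\asymp 2^{-jn/2}$ near a flat face; the lower bound, which drives necessity, requires the half-space model computation and the nonvanishing of $\int_t^\infty\widetilde\psi$ on a set of positions of positive proportion. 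Finally, the critical values $s=1/p$ (separating $q=\infty$ from $q<\infty$) and the boundary $s=n(1/p-\tau)$ must be settled by retaining the geometric series exactly rather than merely up to constants.
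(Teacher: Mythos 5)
The paper does not prove Proposition \ref{charact1} in-text: it is obtained by citing \cite[Theorem 2.1]{ysy20} for $Q=[0,1)^n$ and asserting that the argument extends to general cubes (the same citation is used for the ball in Proposition \ref{charact2}). Your reconstruction is nevertheless essentially the ``right'' proof and is consistent with the machinery the paper does display. Part (i) via the coincidence $B^{s,\tau}_{p,q}(\mathbb{R}^n)=B^{s+n(\tau-1/p)}_{\infty,\infty}(\mathbb{R}^n)$ from Remark \ref{grund} and the elementary fact that ${\bf 1}_Q$ lies in $B^\sigma_{\infty,\infty}(\mathbb{R}^n)$ exactly for $\sigma\le 0$ is clean and correct (for $\sigma\ge 1$ just use monotonicity in $\sigma$ to reduce to $\sigma\in(0,1)$, where first differences characterize the space). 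For part (ii), your wavelet computation is precisely the one the authors run in the proof of Theorem \ref{Lipschitz1}: the moment condition \eqref{moment} kills interior and exterior coefficients, the count $\asymp 2^{(j-j_0)(n-1)}$ of boundary cubes inside $P$ is their \eqref{w014}, and the bound $|\lambda_{i,j,k}|\lesssim 2^{-jn/2}$ is their \eqref{eqxx}; your bookkeeping of the resulting geometric series, including the splitting $s<1/p$ versus $s=1/p$, $q=\infty$, is correct and reproduces the stated conditions.

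The one soft spot is the two-sided coefficient estimate driving necessity. The function $t\mapsto\int_t^\infty\widetilde\psi$ being continuous, compactly supported and nontrivial does not by itself guarantee that it is nonzero at the finitely many \emph{integer} offsets that actually occur (the Haar generator is a cautionary example, excluded here only by the smoothness hypothesis on $\widetilde\psi$). This is fixable: if all fine-scale coefficients vanished near an interior point of a face, ${\bf 1}_Q$ would agree a.e.\ with a $C^{N_1}$ function there, which is absurd; combined with the exact translation/dilation structure of the half-space model this yields $\max_k|\lambda_{i,j,k}|\asymp 2^{-jn/2}$ for a positive proportion of face positions. Alternatively, and closer to how the paper organizes things, the entire necessity half can be outsourced: $s>n(\frac 1p-\tau)$ is excluded by Proposition \ref{embc}(ii) (no continuous representative), while $s>1/p$, and $s=1/p$ with $q<\infty$, are excluded by Lemma \ref{support2} together with the classical $\tau=0$ result of Proposition \ref{gul} --- exactly the route of Theorem \ref{wichtig10}. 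With either repair your argument is complete.
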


The cube is not a $C^\infty$ domain. So it is not clear whether the associated characteristic function
has the maximal regularity. But, as mentioned above, under the restriction $\tau=0$, it has.
For that reason, as an alternative example, we look for the characteristic function of a ball $B$  which  obviously is  the characteristic function of a $C^\infty$ domain.

\begin{proposition}\label{charact2}
Let $B$ be a ball  in $\mathbb{R}^n$ and let ${\bf 1}_B$ denote its characteristic function.
	Let $s \in \mathbb{R}$, $p$, $q\in (0,\infty]$, and $\tau \in [0,\infty)$.
	Then
	\[
	{\bf 1}_B \in B^{s,\tau}_{p,q}(\mathbb{R}^n) \quad \Longleftrightarrow \quad 	{\bf 1}_Q \in B^{s,\tau}_{p,q}(\mathbb{R}^n),
	\]
where $Q$ is any cube in $\mathbb{R}^n$.
\end{proposition}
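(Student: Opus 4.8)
The plan is to prove the equivalence directly through the wavelet characterization of Proposition \ref{wav-type2}, comparing the wavelet coefficients of ${\bf 1}_B$ and ${\bf 1}_Q$ scale by scale; combined with Proposition \ref{charact1} this simultaneously pins down the exact admissible range of $(s,\tau,p,q)$ for the ball. As the assertion is symmetric in $B$ and $Q$, it suffices to produce a two-sided comparison of the sequence norms $\|\lambda({\bf 1}_B)\|_{b^{s,\tau}_{p,q}(\mathbb{R}^n)}$ and $\|\lambda({\bf 1}_Q)\|_{b^{s,\tau}_{p,q}(\mathbb{R}^n)}$. First I would dispose of the \emph{H\"older--Zygmund regime}: if $\tau>1/p$, or $\tau=1/p$ with $q=\infty$, then by Remark \ref{grund} (the identification with a H\"older--Zygmund space) the space $B^{s,\tau}_{p,q}(\mathbb{R}^n)$ coincides with $B^{s+n(\tau-1/p)}_{\infty,\infty}(\mathbb{R}^n)$. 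A nontrivial characteristic function of a bounded domain is bounded but not continuous, hence it lies in $B^\lambda_{\infty,\infty}(\mathbb{R}^n)$ if and only if $\lambda\le0$; since the threshold $\lambda=s+n(\tau-1/p)\le0$ is the same for ${\bf 1}_B$ and ${\bf 1}_Q$, the equivalence is immediate in this regime.

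The substance lies in the remaining range $\tau\in[0,1/p)$ together with the endpoint $\tau=1/p$, $q<\infty$. Here I would take the smooth wavelet system of Subsection \ref{smoothwavelets} with $N_1$ large (as permitted by Proposition \ref{wav-type2}) and exploit the vanishing-moment relation \eqref{moment}: for $E\in\{B,Q\}$ one has $\lambda_{i,j,m}({\bf 1}_E)=\langle{\bf 1}_E,\psi_{i,j,m}\rangle=\int_E\psi_{i,j,m}$, which vanishes unless $\supp\psi_{i,j,m}$ meets $\partial E$. Rescaling to unit scale gives $|\lambda_{i,j,m}({\bf 1}_E)|\ls 2^{-jn/2}$ and, crucially, for each dyadic cube $Q_{j,m}$ whose fixed dilate meets $\partial E$ there is at least one index $i$ with $|\lambda_{i,j,m}({\bf 1}_E)|\asymp 2^{-jn/2}$, because near such a point $\partial E$ is, after rescaling, a nearly flat Lipschitz graph against which the one-dimensional wavelet factor does not cancel. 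Consequently, for every $P\in\mathcal{Q}$ and every $j\ge\max\{j_P,0\}$,
\[
\sum_{i=1}^{2^n-1}\sum_{\{m:\,Q_{j,m}\subset P\}}|\lambda_{i,j,m}({\bf 1}_E)|^p \asymp 2^{-jnp/2}\,N_j^E(P),
\]
where $N_j^E(P):=\#\{m:\,Q_{j,m}\subset P,\ c\,Q_{j,m}\cap\partial E\ne\emptyset\}$ counts the boundary cubes (a $c$-dilate being fixed). The coarse-scale terms with $|P|\ge1$ involving $\langle{\bf 1}_E,\phi_{0,m}\rangle$ are harmless and finite for both sets. Inserting these estimates into Definition \ref{dts} thus reduces the whole problem to comparing the \emph{boundary-cube counts} $N_j^B(P)$ and $N_j^Q(P)$, uniformly in $P$ and $j$.

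The geometric heart is then the comparison $N_j^B(P)\asymp N_j^Q(P)$ in the scaling sense required by the $|P|^{-\tau}$-weighted, $\ell^q$-in-$j$, $\ell^p$-in-$m$ norm. Both $\partial B$ (a sphere) and $\partial Q$ (the boundary of a cube) are compact $(n-1)$-Ahlfors-regular Lipschitz surfaces, so for a dyadic cube $P$ of side $2^{-j_P}$ positioned to see the boundary, and for $j\ge j_P$, each count is $\asymp 2^{(j-j_P)(n-1)}$; the two geometries produce the same extremal scaling. I would verify that the only genuine differences are lower-order: the curvature of the sphere displaces a boundary cube by only $O(2^{-j})$ and does not change the order of the count, while the lower-dimensional strata of $\partial Q$ (its edges and corners) meet at most $O(2^{(j-j_P)(n-2)})$ cubes and are negligible against the $(n-1)$-dimensional faces. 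Taking the $|P|^{-\tau}$-weighted supremum then makes the two norms finite simultaneously, which is the claimed equivalence. (For $s<0$ one may alternatively absorb the statement into the soft embedding of compactly supported bounded functions, but it is in any case covered by the comparison above.)

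The main obstacle is precisely the \emph{uniformity} of this count comparison over all positions and all scales of $P$, as forced by the supremum $\sup_{P\in\mathcal{Q}}|P|^{-\tau}(\cdots)$: as $\tau\uparrow 1/p$ the extremal cubes $P$ shrink onto the boundary, and one must ensure that the sphere, despite its curvature and the absence of exactly flat pieces, still presents at every scale and location a family of boundary cubes matching, up to multiplicative constants, those lying on the flat faces of $Q$, and that the corners of $Q$ never create a strictly larger weighted contribution. Establishing this scale- and location-uniform two-sided bound on $N_j^B(P)$ versus $N_j^Q(P)$, together with the non-cancellation lower bound $|\lambda_{i,j,m}|\asymp 2^{-jn/2}$ that powers the reverse implication, is where the real work resides; the remainder is bookkeeping with Definition \ref{dts} and Proposition \ref{wav-type2}.
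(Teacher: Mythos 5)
Your overall architecture --- reduce everything to the wavelet characterization of Proposition \ref{wav-type2}, dispose of the regime $\tau>1/p$ (and $\tau=1/p$, $q=\infty$) via the identification with a H\"older--Zygmund space, and in the main range compare $2^{-jn/2}$-normalized coefficients against counts of boundary cubes --- is sound, and for the sufficiency direction it is essentially the mechanism the paper itself uses (Theorem \ref{Lipschitz1} and Corollary \ref{Lipschitz3} already give the maximal regularity of ${\bf 1}_B$, hence ${\bf 1}_Q\in B^{s,\tau}_{p,q}\Rightarrow{\bf 1}_B\in B^{s,\tau}_{p,q}$ in the Banach range). The paper's own proof of this proposition is only a one-line reference to the cube computation in \cite{ysy20} ``with obvious modifications'', so a self-contained argument is welcome in principle.

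The genuine gap is in your non-cancellation claim: it is false that \emph{every} dyadic cube $Q_{j,m}$ whose fixed dilate meets $\partial E$ carries a coefficient $|\lambda_{i,j,m}({\bf 1}_E)|\asymp 2^{-jn/2}$ for some $i$. If $\partial E$ only clips a corner of $\supp\psi_{i,j,m}$, then $|E\cap\supp\psi_{i,j,m}|=\varepsilon\,2^{-jn}$ with $\varepsilon$ arbitrarily small and the coefficient is $O(\varepsilon\,2^{-jn/2})$; and even when the boundary passes through the middle of the support, the relevant one-dimensional factor is a primitive of $\widetilde{\psi}$, which has interior zeros, so the coefficient can be small for unfavourable lattice offsets. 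What your reverse implication actually requires is that a \emph{positive proportion} of the boundary cubes inside each extremal $P$ carry coefficients $\gtrsim 2^{-jn/2}$, and this needs a real argument (non-triviality of the primitive of the wavelet together with an averaging over positions, or an $\ell^2$-type lower bound $\sum_{i,m}|\lambda_{i,j,m}|^2\gtrsim 2^{-j}\,\mathcal{H}^{n-1}(\partial E\cap P)$ combined with the count upper bound and H\"older's inequality); none of this is supplied, and the phrase ``nearly flat Lipschitz graph against which the wavelet does not cancel'' is exactly the step that fails cube by cube. Note also that in the Banach range $p,q\in[1,\infty]$ the entire necessity half can be obtained with no wavelet lower bound at all, from Proposition \ref{embc}(ii), Theorem \ref{wichtig10}, Proposition \ref{gul}, Lemma \ref{support2} and monotonicity in $s$ and $q$; the lower-bound machinery is unavoidable only for $p<1$ or $q<1$, which is precisely where your sketch is least justified.
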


\begin{proof}
	The claim follows by  obvious modifications of the arguments used in
the proof of  \cite[Theorem 2.1]{ysy20}; we omit the details.
\end{proof}

\begin{remark}
Haroske and   Triebel \cite{HT23} have used a different method to
describe the regularity of $\mathcal{X}:={\mathbf 1}_{[0,1)^n}$ in the framework of Morrey smoothness spaces.
\end{remark}

For a moment we  turn to general necessary conditions.
Here we concentrate on $\tau \in(0,\infty)$.

\begin{theorem}\label{wichtig10}
Let $E \subset \mathbb{R}^n$ be a bounded nontrivial  measurable set.
 Let  $s\in \mathbb{R}$, $p \in [1,\infty]$ ($p<\infty$ in the case of $F^{s,\tau}_{p,q}(\mathbb{R}^n)$), and $\tau \in (0,\infty)$.
\begin{itemize}
		\item[{\rm(i)}]  Then  ${\bf 1}_E \not\in B^{1/p, \tau}_{p,q} (\mathbb{R}^n)$ for all $q\in(0,\infty)$.
\item[{\rm (ii)}]	Then  ${\bf 1}_E \not\in F^{1/p, \tau}_{p,q} (\mathbb{R}^n)$ for all $q\in [1,\infty]$.
\item[{\rm (iii)}] Assume that $s> n(\frac 1p - \tau)$.
Then  ${\bf 1}_E \not\in B^{s, \tau}_{p,q} (\mathbb{R}^n)$ for all $q\in [1,\infty]$.
\item[{\rm (iv)}]  Assume that $s> n(\frac 1p - \tau)$.
Then  ${\bf 1}_E \not\in F^{s, \tau}_{p,q} (\mathbb{R}^n)$ for all $q\in [1,\infty]$.
\end{itemize}
\end{theorem}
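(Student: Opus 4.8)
The plan is to separate the four claims according to whether the continuity embedding of Proposition~\ref{embc}(ii) is available, and to treat the critical line $s=1/p$ by hand.

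\emph{Parts (iii) and (iv).} If $s>n(\frac1p-\tau)$, then Proposition~\ref{embc}(ii) yields $A^{s,\tau}_{p,q}(\mathbb{R}^n)\hookrightarrow C(\mathbb{R}^n)$ for $A\in\{B,F\}$, so it is enough to note that a bounded nontrivial characteristic function admits no continuous representative. Indeed, if a continuous $g$ satisfied $g={\bf 1}_E$ almost everywhere, then the open set $\{x:g(x)\notin\{0,1\}\}$ would be null, hence empty, so $g$ would be $\{0,1\}$-valued and, by the connectedness of $\mathbb{R}^n$, constant; this forces $|E|\in\{0,\infty\}$, contradicting $0<|E|<\infty$. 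Hence ${\bf 1}_E\notin A^{s,\tau}_{p,q}(\mathbb{R}^n)$. The same observation settles the endpoint $s=\frac1p$ in (i) and (ii) whenever $\tau>\frac{n-1}{np}$ (i.e.\ $\frac1p>n(\frac1p-\tau)$), as well as the whole case $p=\infty$ of (i), where $s=0>-n\tau$.

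\emph{Part (i), remaining range.} It remains to consider $p\in[1,\infty)$ and $\tau\in(0,\frac{n-1}{np}]\subset(0,\frac1p]$, so the difference characterization of Proposition~\ref{t4.7} applies; by the $q$-monotonicity in Remark~\ref{grund}(ii) it suffices to treat $q\in[1,\infty)$, the range $q\in(0,1)$ following from $B^{1/p,\tau}_{p,q}\hookrightarrow B^{1/p,\tau}_{p,1}$. Assume first $p\in(1,\infty)$ and use $M=1$. Fix one dyadic cube $P$ so large that $E$ together with its small translates lies inside $P$; then $\int_P|\Delta_h^1{\bf 1}_E|^p\,dx=\|\Delta_h^1{\bf 1}_E\|_{L^p(\mathbb{R}^n)}^p=|E\triangle(E-h)|$ for all small $h$. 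Discretizing the $t$-integral in Proposition~\ref{t4.7} and passing from the annular suprema $\sup_{t/2\le|h|<t}$ to the monotone modulus of smoothness (a standard equivalence), the difference seminorm dominates a fixed positive multiple of $\big(\sum_{k}[2^{k/p}\,\omega_1({\bf 1}_E,2^{-k})_{L^p}]^q\big)^{1/q}$. Since $\|\Delta_h^1{\bf 1}_E\|_{L^p}^p=\|\Delta_h^1{\bf 1}_E\|_{L^1}$ is $p$-independent, the bracketed terms tend to $0$ only if $\|\Delta_h^1{\bf 1}_E\|_{L^1(\mathbb{R}^n)}=o(|h|)$, which forces the distributional gradient of ${\bf 1}_E$ to vanish, i.e.\ ${\bf 1}_E$ is a.e.\ constant --- excluded. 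Thus the series diverges for every finite $q$ and ${\bf 1}_E\notin B^{1/p,\tau}_{p,q}(\mathbb{R}^n)$.

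\emph{Part (ii), remaining range, and the case $p=1$ of (i).} For (ii) with $\tau\in(0,\frac{n-1}{np}]$, the $q$-monotonicity $F^{1/p,\tau}_{p,q}\hookrightarrow F^{1/p,\tau}_{p,\infty}$ reduces everything to $q=\infty$; this endpoint is strictly stronger than (i), since $F^{1/p,\tau}_{p,\infty}\hookrightarrow B^{1/p,\tau}_{p,\infty}$ and the latter does contain ${\bf 1}_Q$ by Proposition~\ref{charact1}, so it is the $\tau>0$ analogue of Proposition~\ref{max}. Applying Proposition~\ref{H1} with $u:=(\frac1p-\tau)^{-1}\in(p,np]$, $s=\frac1p$, and $M$ large, one must show that
\[
 G(x):=\sup_{t\in(0,1)}t^{-1/p}\left[t^{-n}\int_{B({\bf 0},t)}\big|\Delta_h^M{\bf 1}_E(x)\big|^p\,dh\right]^{1/p}
\]
does not belong to $\mathcal{M}^u_p(\mathbb{R}^n)$: for $x$ near the essential boundary of $E$ the inner average stays bounded below as $t\to0$, so $G$ is large on a boundary layer, and estimating the measure of $\{G>\lambda\}$ against the Morrey gauge $|B|^{1/u-1/p}$ forces $\|G\|_{\mathcal{M}^u_p}=\infty$, along the lines of the proof of Proposition~\ref{max}. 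For (i) with $p=1$ (only relevant when $n\ge2$) the exponent is $s=1$, so one must work with $M=2$; the clean ``$o(|h|)\Rightarrow$ constant'' step is then unavailable and must be replaced by the second-order (or wavelet) analysis underlying Proposition~\ref{max}.

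\emph{Main obstacle.} The embedding and monotonicity reductions are routine; the real difficulty is the \emph{uniform} boundary-layer lower bound, valid for every bounded $E$ with $0<|E|<\infty$ and \emph{no} regularity (no perimeter control) assumed. For Part (i) with $p\in(1,\infty)$ this is furnished cleanly by the classical fact that a nonconstant $L^1$-function cannot have first-order modulus $o(|h|)$. The genuinely hard points are the $q=\infty$ Triebel--Lizorkin endpoint of Part (ii) and the case $p=1$ of Part (i): both require quantifying how a generic, possibly fractal, boundary inflates the ball-mean maximal function in the Morrey norm, which is the Morrey-type extension of the mechanism behind Proposition~\ref{max}.
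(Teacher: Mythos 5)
Your treatment of (iii) and (iv) via Proposition~\ref{embc}(ii) is exactly the paper's argument, and your direct difference computation for (i) with $p\in(1,\infty)$ is a correct (if somewhat laborious) re-derivation of Gulisashvili's classical mechanism inside the type-space seminorm. But the proposal has a genuine gap precisely where you flag the ``genuinely hard points'': the Triebel--Lizorkin case (ii) for $\tau\in(0,\tfrac{n-1}{np}]$ and the case $p=1$ of (i). There you only sketch a program (``estimating the measure of $\{G>\lambda\}$ against the Morrey gauge \dots along the lines of the proof of Proposition~\ref{max}'') and explicitly admit that carrying it out would require new quantitative boundary-layer analysis in the Morrey norm. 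That analysis is never supplied, so these cases are not proved.

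The idea you are missing is Lemma~\ref{support2}: for a compactly supported function $f\in L^\infty(\mathbb{R}^n)$, membership in $A^{s,\tau_1}_{p,q}(\mathbb{R}^n)$ with $\tau_1>\tau_0\ge 0$ \emph{implies} membership in $A^{s,\tau_0}_{p,q}(\mathbb{R}^n)$, in particular in the classical space $A^{s}_{p,q}(\mathbb{R}^n)$. (This follows from the wavelet characterization together with Lemma~\ref{support}, which shows that for such $f$ only cubes $P$ with $|P|\le 1$ matter in the supremum, and on those $|P|^{-\tau_0}\le|P|^{-\tau_1}$.) Hence ${\bf 1}_E\in B^{1/p,\tau}_{p,q}(\mathbb{R}^n)$ with $\tau>0$ would force ${\bf 1}_E\in B^{1/p}_{p,q}(\mathbb{R}^n)$, contradicting Proposition~\ref{gul}, and ${\bf 1}_E\in F^{1/p,\tau}_{p,q}(\mathbb{R}^n)$ would force ${\bf 1}_E\in F^{1/p}_{p,q}(\mathbb{R}^n)$, contradicting Proposition~\ref{max}. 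No Morrey-type extension of the hard classical analysis is needed: the $\tau>0$ statements reduce verbatim to the known $\tau=0$ results, which covers all of (i) and (ii) at once, including the $q=\infty$ Triebel--Lizorkin endpoint and $p=1$. Your plan implicitly treats the $\tau>0$ spaces as locally larger than their classical counterparts, whereas for compactly supported bounded functions the monotonicity goes the other way.
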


The proof of Theorem \ref{wichtig10} will be given at the end of this subsection.
Based on this theorem and Propositions \ref{charact1} and \ref{charact2},
we introduce the following notion.

\begin{definition}\label{MAX}
Let  $s\in \mathbb{R}$, $p \in [1,\infty]$, and $\tau \in (0,\infty)$.
Let $E \subset \mathbb{R}^n$ be a nontrivial bounded and measurable set. The function ${\bf 1}_E$ is called a \emph{characteristic function of maximal regularity} if it satisfies the following   conditions:
\begin{itemize}
\item when $\tau\in(0,  \frac{n-1}{np}]$, ${\bf 1}_E \in B^{1/p,\tau}_{p,\infty} (\mathbb{R}^n)$;
\item when $\tau\in(\frac{n-1}{np}, \infty)$,   ${\bf 1}_E \in B^{s,\tau}_{p,q} (\mathbb{R}^n)$ with  $s=n   (\frac 1p - \tau )$ and all $q \in [1,\infty]$;
\end{itemize}
In such a case, for brevity, we shall also  say that $E$ has the maximal regularity.
\end{definition}

For short, ${\bf 1}_E$ is of maximal regularity if
\[
{\bf 1}_E \in B^{s,\tau}_{p,q}(\mathbb{R}^n) \quad \Longleftrightarrow \quad 	{\bf 1}_Q \in B^{s,\tau}_{p,q}(\mathbb{R}^n)
\]
for all admissible $s,\tau, p,q$.
Observe that, by Remark \ref{grund}(iii), ${\bf 1}_E \in B^{s,\tau}_{p,q} (\mathbb{R}^n)$ for all $q \in [1,\infty]$
implies ${\bf 1}_E \in F^{s,\tau}_{p,q} (\mathbb{R}^n)$ for all $q \in [1,\infty]$.
Below we will give an example of a domain $E$ such that
${\bf 1}_E  \in B^{1/p,0}_{p,\infty}(\mathbb{R}^n)$ for all $p \in [1,\infty)$,
but
${\bf 1}_E  \not\in B^{1/p,\tau}_{p,\infty}(\mathbb{R}^n)$ for all   $p \in [1,\infty)$ and all $\tau \in(0,\infty)$.
This shows that in this case the maximal regularity has a much stronger version
than  the classical case  where $
B^{1/p,0}_{p,\infty}(\mathbb{R}^n)$ for all $p  \in [1,\infty)$ is the best possible.

In view of Definition \ref{MAX}, it is of interest to understand counterparts
of the equivalence of assertions (iv) and (v) in Proposition \ref{prima0} in the more
general $\tau$-spaces.
First we have the following result, which is a simple consequence of Proposition  \ref{t4.7}.

\begin{corollary}\label{t4.8}
	Let $p\in [1, \infty]$,  $q=\infty$, $s\in(0,1/p ]$, and $\tau \in[0,1/p].$
	Let $E$ be a bounded measurable set in $\mathbb{R}^n$. Then
	${\bf 1}_E \in B^{s,\tau}_{p,\infty}(\mathbb{R}^n)$ holds if and only if
	${\bf 1}_E \in B^{sp,\tau p}_{1,\infty}(\mathbb{R}^n)$.
\end{corollary}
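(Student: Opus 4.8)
The plan is to transfer both memberships to the difference characterization of Proposition~\ref{t4.7} and then exploit that, for a characteristic function, raising the first-order difference to the power $p$ changes nothing. First I would record that, since $E$ is bounded, ${\bf 1}_E$ is bounded with compact support, so that ${\bf 1}_E\in L^p_\tau(\mathbb{R}^n)$ and ${\bf 1}_E\in L^1_{\tau p}(\mathbb{R}^n)$ hold automatically; in fact
\[
\|{\bf 1}_E\|_{L^1_{\tau p}(\mathbb{R}^n)}=\sup_{\{P\in\mathcal{Q}:\,|P|\ge1\}}\frac{|E\cap P|}{|P|^{\tau p}}=\left(\sup_{\{P\in\mathcal{Q}:\,|P|\ge1\}}\frac{|E\cap P|^{1/p}}{|P|^{\tau}}\right)^{p}=\left(\|{\bf 1}_E\|_{L^p_\tau(\mathbb{R}^n)}\right)^{p}<\infty ,
\]
using that $t\mapsto t^{p}$ commutes with the supremum and that $|E\cap P|\le|E|<\infty$. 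Hence the membership of ${\bf 1}_E$ in either space is governed solely by the corresponding $\spadesuit$-seminorm.

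The key pointwise fact is that $\Delta_h^1{\bf 1}_E(x)={\bf 1}_E(x+h)-{\bf 1}_E(x)$ takes only the values $-1,0,1$, so that $|\Delta_h^1{\bf 1}_E(x)|^p=|\Delta_h^1{\bf 1}_E(x)|$ for every $p\in(0,\infty)$ and all $x,h$. When $s<1/p$ one has simultaneously $s<1$ and $sp<1$, so Proposition~\ref{t4.7} applies with $M=1$ both to $B^{s,\tau}_{p,\infty}(\mathbb{R}^n)$ (here $\tau\le1/p$) and to $B^{sp,\tau p}_{1,\infty}(\mathbb{R}^n)$ (here $\tau p\le1=1/1$). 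I would then pull the exponent $1/p$ through every supremum in the definition of the $\spadesuit$-seminorm, using $|P|^{-\tau p}=(|P|^{-\tau})^{p}$, $t^{-sp}=(t^{-s})^{p}$, the inner identity $[\int_P|\Delta_h^1{\bf 1}_E|^p\,dx]^{1/p}=[\int_P|\Delta_h^1{\bf 1}_E|\,dx]^{1/p}$, and the fact that $t\mapsto t^{1/p}$ commutes with $\sup_P$, $\sup_t$, and $\sup_{h}$. This produces the exact identity
\[
\|{\bf 1}_E\|^\spadesuit_{B^{sp,\tau p}_{1,\infty}(\mathbb{R}^n)}=\left(\|{\bf 1}_E\|^\spadesuit_{B^{s,\tau}_{p,\infty}(\mathbb{R}^n)}\right)^{p},
\]
where one uses that the upper integration limit $2\min\{\ell(P),1\}$ is the same for both spaces. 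Together with the first paragraph and Proposition~\ref{t4.7}, this shows that the two seminorms are finite simultaneously, which is the asserted equivalence for $s<1/p$.

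The main subtlety, and the only real obstacle, is the borderline case $s=1/p$ (the case $p=1$ being trivial, since then $sp=s$ and $\tau p=\tau$, so the two spaces literally coincide). For $p>1$ and $s=1/p$ one has $sp=1$, so the difference characterization of $B^{1,\tau p}_{1,\infty}(\mathbb{R}^n)$ in Proposition~\ref{t4.7} forces $M=2$, and the clean identity is no longer available because $\Delta_h^2{\bf 1}_E$ takes values in $\{-2,-1,0,1,2\}$. However, for $v\in\{0,1,2\}$ and $p\ge1$ one has $v\le v^{p}\le 2^{p-1}v$, whence
\[
\int_P|\Delta_h^2{\bf 1}_E(x)|\,dx\le\int_P|\Delta_h^2{\bf 1}_E(x)|^p\,dx\le 2^{p-1}\int_P|\Delta_h^2{\bf 1}_E(x)|\,dx .
\]
Rerunning the same supremum-commuting argument with $M=2$, which is admissible for the whole range $s\in(0,1/p]$ since then $s<2$ and $sp\le1<2$, now gives the two-sided estimate $\|{\bf 1}_E\|^\spadesuit_{B^{sp,\tau p}_{1,\infty}(\mathbb{R}^n)}\asymp(\|{\bf 1}_E\|^\spadesuit_{B^{s,\tau}_{p,\infty}(\mathbb{R}^n)})^{p}$ with constants depending only on $p$. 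Since only finiteness is needed for the stated equivalence, this settles the borderline case and in fact yields a uniform proof over all $s\in(0,1/p]$.
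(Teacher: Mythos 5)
Your proof is correct and follows essentially the same route as the paper: both reduce the statement to the difference characterization of Proposition \ref{t4.7} and exploit that $|\Delta_h^1 {\bf 1}_E|\in\{0,1\}$ (so the $p$-th power is harmless) for $s<1/p$, and that $|\Delta_h^2 {\bf 1}_E|\in\{0,1,2\}$ gives a two-sided comparison in the borderline case $s=1/p$. The only (harmless) deviation is that you treat the borderline case symmetrically with $M=2$ on both sides, whereas the paper's forward direction there passes from first-order to second-order differences; your variant is, if anything, slightly cleaner and, as you note, works uniformly for all $s\in(0,1/p]$.
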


\begin{proof}	First we prove
${\bf 1}_E \in B^{1/p,\tau}_{p,\infty}(\mathbb{R}^n)$ implies
	${\bf 1}_E \in B^{1,\tau p}_{1,\infty}(\mathbb{R}^n)$ by two steps.\\
	{\em	Step 1.} Let $s\in(0,1)$ and $M=1$. Then it is enough to use the identity
	\begin{align*}
		& \dsup_{P\in\mathcal{Q}}\frac1{|P|^\tau} \sup_{0 < t < 2\min\{\ell(P),1\}}
		\, t^{-s} \, \sup_{\frac t2\le|h|<t}\left[\int_P|\Delta_h^1 {\bf 1}_E(x)|^p\,dx\right]^{\frac 1p}
		\\
		& \quad =   \left[\sup_{P\in\mathcal{Q}}\frac1{|P|^{\tau p}} \, \sup_{0 < t < 2\min\{\ell(P),1\}}
		\, t^{-sp} \, \sup_{\frac t2\le|h|<t}\int_P
		|\Delta_h^1 {\bf 1}_E(x)|\,dx\right]^{\frac 1p},
	\end{align*}	
	where we have applied $|\Delta_h^1 {\bf 1}_E(x)| \in \{0,1\}$ for any $x\in\mathbb{R}^n$ and therefore
	\begin{equation}\label{diff-eq}
	\int_P|\Delta_h^1 {\bf 1}_E(x)|^p\,dx = \int_P|\Delta_h^1 {\bf 1}_E(x)|\,dx.
	\end{equation}	
	{\em	Step 2.} Let  $s=1/p$ for some $p \in (1,\infty)$. To characterize this space
	$B^{1,\tau p}_{1,\infty}(\mathbb{R}^n)$,  we need second order differences (see Proposition \ref{t4.7}).
	The above argument works as well but the interpretation as parts of the corresponding norms is no longer true.
	However, if
	\[
	\sup_{P\in\mathcal{Q}}\frac1{|P|^\tau}  \, \sup_{0 < t < 2\min\{\ell(P),1\}}
	\, t^{-\frac1p} \, \sup_{\frac t2\le|h|<t}\left[\int_P|\Delta_h^1 {\bf 1}_E(x)|^p\,dx\right]^{\frac 1p} <\infty,
	\]
	then, by \eqref{diff-eq} again, we find that
	\[
	\left[	\sup_{P\in\mathcal{Q}}\frac1{|P|^{\tau p}} \, \sup_{0 < t < 2\min\{\ell(P),1\}}
	\, t^{-1} \, \sup_{\frac t2\le|h|<t}\dint_P
	|\Delta_h^1 {\bf 1}_E(x)|\,dx\right]^{\frac 1p} < \infty
	\]
	and therefore
	\[
	\left[	\sup_{P\in\mathcal{Q}}\frac1{|P|^{\tau p}} \, \sup_{0 < t < 2\min\{\ell(P),1\}}
	\, t^{-1} \, \sup_{\frac t2\le|h|<t}\int_P
	|\Delta_h^2 {\bf 1}_E(x)|\,dx\right]^{\frac1p} < \infty.
	\]
	This proves that	${\bf 1}_E \in B^{1/p,\tau}_{p,\infty}(\mathbb{R}^n)$ implies
	${\bf 1}_E \in B^{1,\tau p}_{1,\infty}(\mathbb{R}^n)$.
	
	Now we assume
	${\bf 1}_E \in B^{1,\tau p}_{1,\infty}(\mathbb{R}^n)$. Then
	\[
	\left[	\sup_{P\in\mathcal{Q}}\frac1{|P|^{\tau p}} \, \sup_{0 < t < 2\min\{\ell(P),1\}}
	\, t^{-1} \, \sup_{\frac t2\le|h|<t}\int_P
	|\Delta_h^2 {\bf 1}_E(x)|\,dx\right]^{\frac 1p} < \infty
	\]
	follows. Obviously $|\Delta_h^2 {\bf 1}_E(x)|\in \{0,1,2\}$ for any $x\in\mathbb{R}^n$. This implies
	\[
	|\Delta_h^2 {\bf 1}_E(x)| \le |\Delta_h^2 {\bf 1}_E(x)|^p \le 2^p\, |\Delta_h^2 {\bf 1}_E(x)|
	\]
	and therefore
	\[
	\left[	\sup_{P\in\mathcal{Q}}\frac1{|P|^{\tau p}} \, \sup_{0 < t < 2\min\{\ell(P),1\}}
	\, t^{-1} \, \sup_{\frac t2\le|h|<t}\int_P
	|\Delta_h^2 {\bf 1}_E(x)|^p \,dx\right]^{\frac 1p} < \infty.
	\]
	For this situation we can apply Proposition \ref{t4.7} with $M=2$ and conclude that
	${\bf 1}_E \in B^{1/p,\tau}_{p,\infty}(\mathbb{R}^n)$. This finishes the proof of Corollary \ref{t4.8}.
\end{proof}

\begin{remark}
 Let us discuss consequences for characteristic functions.
If $ p\in[1, \infty)$ and $\tau\in(0,\frac{n-1}{n}]$,
then ${\bf 1}_E \in B^{1,\tau}_{1,\infty}(\mathbb{R}^n)$ implies
 ${\bf 1}_E \in B^{1/p,\tau/p}_{p,\infty}(\mathbb{R}^n)$.
If $\tau\in(\frac{n-1}{n}, 1)$ and $s= n(1-\tau)$, then
${\bf 1}_E \in B^{s,\tau}_{1,\infty}(\mathbb{R}^n)$ implies
 ${\bf 1}_E \in B^{s/p,\tau/p}_{p,\infty}(\mathbb{R}^n)$.
 Hence, in both  cases it is enough to check the maximal regularity for $p=1$.
\end{remark}

The proof  of Theorem \ref{wichtig10}(i)
 will be a simple consequence of the following two observations.
 We shall use the notation from Proposition \ref{wav-type2}.

\begin{lemma}\label{support}
Let $f \in L^\infty(\mathbb{R}^n)$	be compactly supported.
\begin{itemize}
\item[{\rm (i)}] Let $s\in \mathbb{R}$, $p,q \in [1,\infty]$, and $\tau \in (0,\infty)$.
Then
$\|\lz(f)\|_{{b}^{s,\tau}_{p,q}(\mathbb{R}^n)}<\infty $ if and only if
\[
\sup_{\{P\in\mathcal{Q}: |P|\le 1\}}\frac1{|P|^{\tau}}\left\{\sum_{j=\max\{j_P,0\}}^\infty
2^{j(s+\frac n2-\frac np)q} \sum_{i=1}^{2^n-1}
\left[\sum_{\{m\in\mathbb{Z}^n:\ Q_{j,m}\subset P\}}
|\lambda_{i,j,m}  (f)|^p\right]^{\frac qp}\right\}^{\frac 1q}<\infty.
\]
\item[{\rm (ii)}]  Let $s\in \mathbb{R}$, $p \in [1,\infty)$, $q \in [1,\infty]$, and $\tau \in (0,\infty)$.
Then
$\|\lz(f)\|_{{f}^{s,\tau}_{p,q}(\mathbb{R}^n)}<\infty $ if and only if
\begin{align*}
&\sup_{\{P\in\mathcal{Q}: |P|\le 1\}}\frac1{|P|^{\tau}}\left\| \, \left[
\sum_{j=\max\{j_P,0\}}^\infty \sum_{i=1}^{2^n-1}
2^{j(s+\frac n2)q} \right.\right.\\
&\quad\left.\left.\times \sum_{\{m\in\mathbb{Z}^n:\ Q_{j,m}\subset P\}}
|\lambda_{i,j,m}(f) \, \cx (2^j\, \cdot\, -m)|^q \right]^{\frac 1q}\right\|_{L^p(P)} <\infty .
\end{align*}
\end{itemize}
\end{lemma}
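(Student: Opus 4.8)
The plan is as follows. The ``only if'' direction is immediate, since the displayed quantity is literally one of the two suprema entering the definition of $\|\lambda(f)\|_{\sbt}$ (resp. $\|\lambda(f)\|_{\sft}$) in Definition \ref{dts} (resp. Definition \ref{dts2}), with the additional constraint $|P|\le 1$. For the reverse implication I would assume that this restricted supremum, call it $M$, is finite and then show that the two remaining pieces of the full quasi-norm are automatically finite: the low-frequency term built from $\{\lambda_m(f)\}_m$, and the high-frequency supremum over cubes with $|P|>1$. The decisive structural fact I would isolate at the outset is that the compact support of $f$ forces only finitely many coefficients to survive. Indeed $\lambda_{i,j,m}(f)=\langle f,\psi_{i,j,m}\rangle$ vanishes unless $\supp\psi_{i,j,m}\cap\supp f\neq\emptyset$, and for $j\in\mathbb{N}_0$ one has $\supp\psi_{i,j,m}\subset 2^{-j}([-N_3,N_3]^n+m)$, which for $Q_{j,m}\subset R$, $R\in\mathcal{Q}_0$, lies in a fixed ($j$-independent) dilate of $R$. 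Hence there is a finite number $N_0=N_0(\supp f,N_3)$ of unit cubes $R\in\mathcal{Q}_0$ such that some $Q_{j,m}\subset R$ carries a nonzero coefficient; I call these the \emph{active} unit cubes. Likewise $\lambda_m(f)=\langle f,\phi_{0,m}\rangle$ is nonzero for only finitely many $m$, each bounded by $\|f\|_{L^\infty(\mathbb{R}^n)}\,\|\phi\|_{L^1(\mathbb{R}^n)}$.

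The low-frequency term (common to both the $B$- and $F$-cases) is then disposed of directly: since $\sum_m|\lambda_m(f)|^p<\infty$ and $|P|^{-\tau}\le 1$ whenever $|P|\ge1$ (here $\tau>0$ is used), one gets $\sup_{\{P\in\mathcal{Q}:\,|P|\ge1\}}|P|^{-\tau}(\sum_{Q_{0,m}\subset P}|\lambda_m(f)|^p)^{1/p}\le(\sum_m|\lambda_m(f)|^p)^{1/p}<\infty$. For the high-frequency piece I would use that every $P$ with $|P|>1$ is the disjoint union of its $|P|$ unit subcubes $R\in\mathcal{Q}_0$, $R\subset P$, and that for such $P$ one has $\max\{j_P,0\}=0=\max\{j_R,0\}$, so the $j$-summations start at the same index. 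In the $F$-case this is especially clean: writing $G_P$ for the integrand of the $L^p(P)$-norm in Definition \ref{dts2}, any $Q_{j,m}$ with $j\ge0$ that meets a point $x\in R$ and is contained in $P$ is automatically contained in $R$, so $G_P$ coincides pointwise on $R$ with $G_R$; hence $\|G_P\|_{L^p(P)}^p=\sum_{R\subset P}\|G_R\|_{L^p(R)}^p$. In the $B$-case, writing $S(P)$ for the braced expression in Definition \ref{dts}, the inner sums split as $\sum_{Q_{j,m}\subset P}|\lambda_{i,j,m}(f)|^p=\sum_{R\subset P}\sum_{Q_{j,m}\subset R}|\lambda_{i,j,m}(f)|^p$, and, since for each fixed $i,j$ at most $N_0$ of the summands over $R$ are nonzero, the elementary inequality $(\sum_{k=1}^{N}a_k)^{q/p}\le\max\{1,N^{q/p-1}\}\sum_k a_k^{q/p}$ yields $[S(P)]^q\le C(N_0)\sum_{R\subset P}[S(R)]^q$.

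I would close both cases with the active-cube count. At most $N_0$ of the unit cubes $R\subset P$ contribute, and each contributing $R$ satisfies $\|G_R\|_{L^p(R)}\le M$ (resp. $S(R)\le M$) because $|R|=1\le1$. Hence $\|G_P\|_{L^p(P)}\le N_0^{1/p}M$ and $S(P)\le(C(N_0)N_0)^{1/q}M$, and dividing by $|P|^\tau\ge1$ preserves these bounds uniformly in $P$ with $|P|>1$. Combining the three estimates gives finiteness of the full quasi-norm, which is the claim. The modifications for $p=\infty$ (in part (i)) or $q=\infty$ are the standard replacement of the $\ell^p$- and $\ell^q$-sums by suprema over the same, finitely many nonzero, index sets.

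The step I expect to be the main obstacle — or rather the one requiring the essential idea rather than bookkeeping — is the passage from large cubes to unit cubes without the parasitic factor $|P|^{1/q-\tau}$ that a crude partition of $P$ into its $|P|$ unit subcubes would produce. This is precisely the place where the finiteness of the number $N_0$ of active unit cubes, a consequence of the compact support of $f$, must be invoked in place of the trivial count $|P|$; everything else is a routine decomposition of the quasi-norms over the disjoint unit subcubes of $P$.
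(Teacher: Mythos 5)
Your proposal is correct and follows essentially the same route as the paper's proof: both isolate the finitely many ``active'' unit cubes forced by the compact supports of $f$ and of the wavelet generators, decompose a large dyadic cube $P$ into its unit subcubes so that only a bounded number contribute, bound each contribution by the restricted supremum over cubes with $|P|\le 1$, and dispose of the low-frequency term via $f\in L^\infty(\mathbb{R}^n)$ with compact support. Your write-up is merely more detailed (the paper only sketches the $F$-case and the splitting inequality), but there is no substantive difference in the argument.
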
 	
	
\begin{proof}
	We fix a  wavelet system 	which is admissible for $A^{s,\tau}_{p,q}(\mathbb{R}^n)$ in the sense of
	Proposition \ref{wav-type2}. By
the compactness of the supports of the generators $\phi$ and $\psi_{i}$,
	 we find   that $|\lambda_{i,j,m}  (f)|>0$ implies that
\[
Q_{j,m} \subset \bigcup_{\ell = 1}^N Q_{0,k_{\ell}} =: \Omega
\]
for an appropriate number $N$ and a  fixed set $\{k_{1}, \ldots , k_{N}\} \subset \mathbb{Z}^n$.
If $P$ is a dyadic cube with $|P|>1$, then
\begin{align*}
& \left\| \left[
\sum_{j=0}^\infty \sum_{i=1}^{2^n-1}
2^{j(s+\frac n2)q} \sum_{\{m\in\mathbb{Z}^n:\ Q_{j,m}\subset P\}}
|\lambda_{i,j,m}(f) \, \mathcal{X} (2^j\cdot -m)|^q \right]^{\frac 1q}\right\|_{L^p(P)}
\\
&\quad\le   \sum_{\{\ell\in\mathbb{Z}^n:\ Q_{0,\ell}\subset P \cap \Omega\}} \left\|\left[
\sum_{j=0}^\infty  \sum_{i=1}^{2^n-1}
2^{j(s+\frac n2)q}\right.\right.\\
&\quad\quad\left.\left.\times \sum_{\{m\in\mathbb{Z}^n:\ Q_{j,m}\subset Q_{0,\ell}\}}
|\lambda_{i,j,m}(f) \, \mathcal{X}(2^j \cdot -m)|^q \right]^{\frac 1q}\right\|_{L^p(Q_{0,\ell})}.
\end{align*}
On the other hand, since  $f \in L^\infty(\mathbb{R}^n)$ and $\supp f$ is compact, we conclude that
\[
	\sup_{\{P\in\mathcal{Q}:~ |P|\ge 1\}}\frac1{|P|^{\tau}}
\left\{\sum_{\{m\in\mathbb{Z}^n:\ Q_{0,m}\subset P\}} 	| \lambda_m (f) |^p\right\}^{\frac 1p} < \infty.
\]
From these, the claim in the $F$-case follows. Similarly one can deal with the $B$-case.
\end{proof}

\begin{lemma}\label{support2}
Let $f \in L^\infty(\mathbb{R}^n)$	be compactly supported.
 Let $s\in \mathbb{R}$, $q \in [1,\infty]$, and $\tau_0,\tau_1 \in [0,\infty)$ such that   	
$\tau_0 < \tau_1$.

\begin{itemize}
\item[{\rm(i)}] 	 Let  $p \in [1,\infty]$.
 If $f \in  B^{s,\tau_1}_{p,q}(\mathbb{R}^n)$,
 then $f \in B^{s,\tau_0}_{p,q}(\mathbb{R}^n)$ follows, in particular one has $f \in B^s_{p,q}(\mathbb{R}^n)$.

\item[{\rm(ii)}]   Let  $p \in [1,\infty)$.
 If $f \in  F^{s,\tau_1}_{p,q}(\mathbb{R}^n)$,
 then $f \in F^{s,\tau_0}_{p,q}(\mathbb{R}^n)$ follows, in particular one has $f \in F^s_{p,q}(\mathbb{R}^n)$.
\end{itemize}
 \end{lemma}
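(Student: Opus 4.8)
The plan is to pass to the wavelet side via Proposition \ref{wav-type2} and to exploit the elementary fact that, for dyadic cubes of edge length at most $1$, the localization factor $|P|^{-\tau}$ is increasing in $\tau$. First I would fix an orthonormal wavelet system whose regularity $N_1$ is large enough to be admissible, in the sense of Proposition \ref{wav-type2}, simultaneously for the parameters $(s,\tau_0,p,q)$ and $(s,\tau_1,p,q)$; this is possible since one is always free to increase $N_1$. Then, by Proposition \ref{wav-type2}, it suffices to deduce $\|\lz(f)\|_{{a}^{s,\tau_0}_{p,q}(\rn)}<\fz$ from the hypothesis $\|\lz(f)\|_{{a}^{s,\tau_1}_{p,q}(\rn)}<\fz$, where $a\in\{b,f\}$ according to $A\in\{B,F\}$.

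For each $\tau\in[0,\fz)$ I would write
\[
S_\tau := \sup_{\{P\in\mathcal{Q}:\ |P|\le 1\}}\frac1{|P|^{\tau}}\,\Lambda(P),
\]
where $\Lambda(P)$ denotes the $\tau$-independent high-frequency inner expression occurring in Definition \ref{dts} (respectively Definition \ref{dts2}). Since $f\in L^\infty(\rn)$ has compact support, only finitely many of the coefficients $\lz_m(f)=\langle f,\phi_{0,m}\rangle$ are nonzero and each satisfies $|\lz_m(f)|\ls\|f\|_{L^\infty(\rn)}$, so $L:=\{\sum_{m\in\zz^n}|\lz_m(f)|^p\}^{1/p}<\fz$. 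The key point is that the reduction carried out in the proof of Lemma \ref{support}—decomposing any dyadic cube $P$ with $|P|\ge1$ into the at most $N$ unit cubes meeting the fixed compact set where the relevant wavelets are supported, together with the bound $|P|^{-\tau}\le1$ for $|P|\ge1$—yields, for every $\tau\in[0,\fz)$, the single inequality
\[
\|\lz(f)\|_{{a}^{s,\tau}_{p,q}(\rn)}\ls L+S_\tau .
\]
I would emphasize that this direction uses only $\tau\ge0$ and hence persists at the endpoint $\tau=0$.

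Finally I would combine these facts. For $0<|P|\le1$ and $\tau_0<\tau_1$ one has $|P|^{-\tau_0}\le|P|^{-\tau_1}$, whence $S_{\tau_0}\le S_{\tau_1}$; moreover $S_{\tau_1}$ is part of the full norm, so $S_{\tau_1}\le\|\lz(f)\|_{{a}^{s,\tau_1}_{p,q}(\rn)}<\fz$. Therefore
\[
\|\lz(f)\|_{{a}^{s,\tau_0}_{p,q}(\rn)}\ls L+S_{\tau_0}\le L+S_{\tau_1}<\fz ,
\]
and Proposition \ref{wav-type2} gives $f\in B^{s,\tau_0}_{p,q}(\rn)$ in case (i) and $f\in F^{s,\tau_0}_{p,q}(\rn)$ in case (ii); taking $\tau_0=0$ and recalling $A^{s,0}_{p,q}(\rn)=A^s_{p,q}(\rn)$ yields the two ``in particular'' assertions. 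The main obstacle is precisely the endpoint $\tau_0=0$: since Lemma \ref{support} is stated only for $\tau>0$, one cannot simply quote it there, and the argument instead relies on checking that the reduction estimate $\|\lz(f)\|_{{a}^{s,\tau}_{p,q}(\rn)}\ls L+S_\tau$ remains valid at $\tau=0$, which it does because the two ingredients it uses—finiteness of $L$ and the bound $|P|^{-\tau}\le1$ for $|P|\ge1$—hold for all $\tau\ge0$.
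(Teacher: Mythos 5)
Your proposal is correct and follows essentially the same route as the paper: the paper's proof also invokes the wavelet characterization (Proposition \ref{wav-type2}) together with the reduction to small dyadic cubes from Lemma \ref{support} and the monotonicity of $|P|^{-\tau}$ in $\tau$ for $|P|\le 1$, reading off the case $\tau=0$ directly from Definitions \ref{dts} and \ref{dts2}. Your explicit verification that the reduction estimate survives at the endpoint $\tau_0=0$ is a welcome clarification of a point the paper leaves implicit, but it does not change the argument.
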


\begin{proof}
This follows immediately from the wavelet characterization
of $B^{s,\tau}_{p,q}(\mathbb{R}^n)$ and $ B^s_{p,q}(\mathbb{R}^n)$ and similarly of $F^{s,\tau}_{p,q}(\mathbb{R}^n)$ and $ F^s_{p,q}(\mathbb{R}^n)$, see Definitions  \ref{dts} and \ref{dts2} with $\tau =0$, and Lemma \ref{support}.
\end{proof}

\begin{proof}[Proof of Theorem \ref{wichtig10}]
{\em Step 1.} Proofs of (i) and (ii). These assertions are immediate consequences of Lemma \ref{support2} 	and Propositions \ref{gul} and \ref{max}.

{\em Step 2.}	Proofs of (iii) and (iv).
These assertions are direct consequences of Proposition \ref{embc} because ${\bf 1}_E$ is not continuous
(more exactly   the equivalence class of ${\bf 1}_E$ has no continuous representative).
\end{proof}

\begin{remark}
By means of Lemma \ref{support2}  we are now able to compare the results for
$\tau =0$, the classical case, and those with $\tau >0$. It follows that
Propositions \ref{charact1} and  \ref{charact2} are improvements of the assertions that
${\bf 1}_Q, {\bf 1}_B \in B^{1/p}_{p,\infty} (\mathbb{R}^n)$ for any $p\in[1,\infty]$.
\end{remark}

Now we turn to consequences for pointwise multipliers. As usual, we define $q'$ by $\frac 1q + \frac 1{q'}=1$ for any $q\in[1,\infty].$

\begin{theorem}\label{multi1}
Let $E \subset \mathbb{R}^n$, $|E|>0$,  be a set of maximal regularity.
\begin{itemize}
\item[{\rm (i)}] Let $p\in(1,\infty)$.
Then ${\bf 1}_E   \in M(B^{1/p}_{p,1} (\mathbb{R}^n), B^{1/p}_{p,\infty} (\mathbb{R}^n))$.
\item[{\rm (ii)}] Let $p=1$, $q\in(0, \infty]$, and $s \in (0,1)$.	Then ${\bf 1}_E  \in M(B^{s}_{1,q} (\mathbb{R}^n))$.
\item[{\rm (iii)}] Let $p\in(1,\infty)$, $q\in(0, \infty]$, and $s \in (-1/p',1/p)$.
Then ${\bf 1}_E \in   M(B^{s}_{p,q} (\mathbb{R}^n))$.
\item[{\rm (iv)}] Let $p=1$.	Then ${\bf 1}_E  \not\in M(B^{1}_{1,1} (\mathbb{R}^n))$.
\end{itemize}
\end{theorem}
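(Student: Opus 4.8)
The plan is to obtain parts (i)--(iii) by reading off the appropriate type-space membership from the hypothesis of maximal regularity and inserting it into the multiplier descriptions of Section \ref{multi}, whereas (iv) will be a short contradiction based on Proposition \ref{gul}. For (i) I would specialise Theorem \ref{Guli3} to the endpoint $s=1/p$, where $\tau=\frac1p-\frac1{np}=\frac{n-1}{np}$ and $u=np$; thus it suffices to check ${\bf 1}_E\in L^\infty(\mathbb{R}^n)\cap B^{1/p,(n-1)/(np)}_{p,\infty,{\rm unif}}(\mathbb{R}^n)$. The $L^\infty$ part is trivial, and since $\tau=\frac{n-1}{np}$ is the right endpoint of the first regime $(0,\frac{n-1}{np}]$ of Definition \ref{MAX}, maximal regularity gives ${\bf 1}_E\in B^{1/p,(n-1)/(np)}_{p,\infty}(\mathbb{R}^n)$. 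To pass to the uniform space I would use that the functions $\psi(\cdot-\ell)$, $\ell\in\mathbb{Z}^n$, are uniformly bounded pointwise multipliers of $B^{s,\tau}_{p,\infty}(\mathbb{R}^n)$, giving the embedding $B^{s,\tau}_{p,\infty}(\mathbb{R}^n)\hookrightarrow B^{s,\tau}_{p,\infty,{\rm unif}}(\mathbb{R}^n)$; then Theorem \ref{Guli3} applies.

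For the positive range $s\in(0,1/p)$ of (iii) I would apply Corollary \ref{Guli4} with $\tilde p=p$ to the conclusion of (i), which already delivers ${\bf 1}_E\in M(B^s_{p,q}(\mathbb{R}^n))$ for every $q\in(0,\infty]$. For the negative range $s\in(-1/p',0)$ I would first apply (i) with $p'$ in place of $p$ (legitimate because maximal regularity is assumed for all admissible parameters and $p'\in(1,\infty)$) and then Corollary \ref{Guli4}, obtaining ${\bf 1}_E\in M(B^\sigma_{p',2}(\mathbb{R}^n))$ for $\sigma\in(0,1/p')$. Since multiplication by the real function ${\bf 1}_E$ is self-adjoint and $(B^\sigma_{p',2}(\mathbb{R}^n))'=B^{-\sigma}_{p,2}(\mathbb{R}^n)$, duality transfers boundedness to the dual space, giving ${\bf 1}_E\in M(B^{-\sigma}_{p,2}(\mathbb{R}^n))$ and hence the negative interval. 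Finally, because $T_E$ is one fixed linear operator, real interpolation $(B^{s_0}_{p,2}(\mathbb{R}^n),B^{s_1}_{p,2}(\mathbb{R}^n))_{\theta,q}=B^{s_\theta}_{p,q}(\mathbb{R}^n)$ between one negative $s_0$ and one positive $s_1$ fills in $s=0$ and produces every target index $q\in(0,\infty]$.

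Part (ii) follows the same template with $p=1$: for $s\in(0,1)$ one has $\tau=1-\frac sn\in(\frac{n-1}{n},1)$, which lies in the second regime of Definition \ref{MAX}, so maximal regularity gives ${\bf 1}_E\in B^{s,1-s/n}_{1,1}(\mathbb{R}^n)$ (note $s=n(1-\tau)$); together with ${\bf 1}_E\in L^\infty(\mathbb{R}^n)$ and the uniform embedding, Proposition \ref{FAchar}(i) yields ${\bf 1}_E\in M(B^s_{1,1}(\mathbb{R}^n))$ for each $s\in(0,1)$, and real interpolation in the second index upgrades this to all $q\in(0,\infty]$. For (iv) I would argue by contradiction: since $E$ is bounded, pick $g\in C_{\rm c}^\infty(\mathbb{R}^n)$ with $g\equiv1$ on a neighbourhood of $\overline{E}$; then $g\in B^1_{1,1}(\mathbb{R}^n)$ and ${\bf 1}_E\cdot g={\bf 1}_E$, so ${\bf 1}_E\in M(B^1_{1,1}(\mathbb{R}^n))$ would force ${\bf 1}_E\in B^1_{1,1}(\mathbb{R}^n)$, contradicting Proposition \ref{gul} with $p=q=1$ since $|E|>0$.

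The step I expect to cause the most trouble is the negative-smoothness part of (iii): one must ensure that the product ${\bf 1}_E\cdot g$ is well defined for the distributions $g\in B^{-\sigma}_{p,q}(\mathbb{R}^n)$ in the sense fixed in Section \ref{multi}, that the duality $(B^\sigma_{p',q'})'=B^{-\sigma}_{p,q}$ and the self-adjointness of $T_E$ are invoked only in the parameter ranges where they are valid, and that the ensuing real interpolation genuinely covers $s=0$ and the full range of $q$. The remaining parts are essentially bookkeeping, once the correct value of $\tau$ in Definition \ref{MAX} has been matched to the Morrey parameter appearing in Theorem \ref{Guli3} and Proposition \ref{FAchar}.
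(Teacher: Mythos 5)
Your proposal is correct, and parts (i) and (ii) follow the paper's own route: read off the membership ${\bf 1}_E \in B^{1/p,\frac{n-1}{np}}_{p,\infty}(\mathbb{R}^n)$, resp.\ ${\bf 1}_E \in B^{s,1-s/n}_{1,1}(\mathbb{R}^n)$, from Definition \ref{MAX} and feed it into Theorem \ref{Guli3}, resp.\ Proposition \ref{FAchar}(i), with the uniform embedding supplied by the compact support of $\psi$. The differences are in (iii) and (iv). For the positive range of (iii) you go through Corollary \ref{Guli4} applied to (i), whereas the paper goes through (ii) and Corollary \ref{Gulidual}; both are one-line reductions and deliver the same statement. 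For the negative range the paper simply cites the duality identity $M(B^s_{p,q}(\mathbb{R}^n))=M(B^{-s}_{p',q'}(\mathbb{R}^n))$ from \eqref{dual1}, while you re-derive it by hand via self-adjointness of $T_E$ and the duality $(B^\sigma_{p',2}(\mathbb{R}^n))'=B^{-\sigma}_{p,2}(\mathbb{R}^n)$; this is morally the same argument, but the identification of the adjoint operator with the product in the sense of Section \ref{multi} is exactly the content of the cited duality principle, so in a final write-up you should invoke \eqref{dual1} rather than reprove it. The treatment of $s=0$ and general $q$ by real interpolation is identical to the paper's. In (iv) your argument is genuinely different and more elementary: the paper deduces ${\bf 1}_E\notin M(B^1_{1,1}(\mathbb{R}^n))$ from ${\bf 1}_E\notin B^{1,\tau}_{1,1,{\rm unif}}(\mathbb{R}^n)$ (Theorem \ref{wichtig10}) combined with the characterization of $M(B^1_{1,1}(\mathbb{R}^n))$ in Proposition \ref{FAchar}, whereas you test against a cutoff $g\in C_{\rm c}^\infty(\mathbb{R}^n)$ equal to $1$ near $\overline{E}$ (legitimate since $E$ is bounded by Definition \ref{MAX}) to force ${\bf 1}_E={\bf 1}_E\cdot g\in B^1_{1,1}(\mathbb{R}^n)$, contradicting Proposition \ref{gul}. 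Your version avoids the endpoint issue that Proposition \ref{FAchar}(i) requires $s<n/p$, which fails for $s=1$, $p=1$, $n=1$, so it is arguably the cleaner argument here.
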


\begin{proof}
	Part (i) is a consequence of Theorem \ref{Guli3} and of ${\bf 1}_E \in B^{1/p, \tau}_{p,\infty} (\mathbb{R}^n)$, $\tau = \frac{n-1}{np}$.
Part (ii) follows from $ {\bf 1}_E  \in B^{s, \tau}_{1,1} (\mathbb{R}^n)$, $s= n ( 1- \tau)$, $\tau>1 -\frac 1n$ and
Proposition \ref{FAchar}(i).	
Concerning (iii) we first argue by using (ii) and Corollary \ref{Gulidual}. This covers all cases with $s>0$.
Next we shall use the formula
\begin{equation}\label{dual1}
M\left(B^s_{p,q}(\mathbb{R}^n)\right)
=M\left(B^{-s}_{p',q'}(\mathbb{R}^n)\right),
\end{equation}
where $p,q\in[1,\infty]$, $s\in\mathbb{R}$, see \cite[Theorem 4.1]{lsyy23}.
The result for Besov spaces with $s=0$ can be derived by real interpolation between
 $B^s_{p,q}(\mathbb{R}^n)$, $s>0$, and $B^{\tilde{s}}_{p,q}(\mathbb{R}^n)$, $\tilde{s}<0$.
 Finally, if $s<0$ and $q\in(0,1)$, we also use real interpolation between  $B^s_{p,1}(\mathbb{R}^n)$, $s>0$, and $B^{\tilde{s}}_{p,1}(\mathbb{R}^n)$, $\tilde{s}<0$.
Part (iv) follows from ${\bf 1}_E \not \in B^{1, \tau}_{1,1,{\rm unif}} (\mathbb{R}^n)$, see Theorem \ref{wichtig10}.
\end{proof}	

\begin{remark}
In case of ${\bf 1}_Q$  Theorem \ref{multi1} is known, also in case of the characteristic function of the half space ${\bf 1}_{\rr^n_+}$, see
Remark \ref{remhalf}. In this case one even knows that the result is in {\em if and only if} form, see \cite[Theorem~4.6.3/1]{RS}.
The  novelty in Theorem \ref{multi1} consists in the generality with respect to $E$.
\end{remark}


\subsection{Examples of domains with maximal regularity}
\label{Main3}


Interesting examples are given by  elementary Lipschitz domains.
Concerning these domains we shall make use of the following definition, see Burenkov \cite[Section 4.3]{Bu}.
In this definition  we shall apply the notation $x= (x',x_n),$
$x'=(x_1, \ldots, x_{n-1}) \in \mathbb{R}^{n-1}$, $x_n \in \mathbb{R}$.

\begin{definition}\label{deflip}
	Let integer $n\ge 2$.
	An open bounded set $E\subset \mathbb{R}^n$ is called an \emph{elementary Lipschitz domain}
	if there exist a function $\varphi$ and  numbers $0 < D_1 \le D_2 < \infty$, $a_1, \ldots a_n$, $b_1, \ldots , b_{n-1}$, and
$L$ such that
	\begin{itemize}
		\item[(i)]
	  $\diam (E) \le D_2$;
		\item[(ii)]
	  $W:= \{{x'} \in \mathbb{R}^{n-1}: \: a_i < x_i < b_i, \: i=1, \ldots , n-1\}$;
		\item[(iii)]
			$E = \{ x \in \mathbb{R}^n: ~ a_n< x_n< \varphi(x'), \: x' \in W\}$;
		\item[(iv)]
		 $a_n + D_1 \le \varphi (x'),  \forall\, x' \in W$;
		\item[(v)]
		 $|\varphi (x') - \varphi (y')| \le L \,
		|x' - y'|, \forall\, x',   y' \in W$.
	\end{itemize}
\end{definition}
The number $L$ in Definition \ref{deflip}(v) is called the \emph{Lipschitz constant} of the function $\varphi$.

\begin{theorem}\label{Lipschitz1}
Characteristic functions of elementary Lipschitz domains  have
the maximal regularity in the sense of Definition \ref{MAX}.
\end{theorem}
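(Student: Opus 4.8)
The plan is to verify directly the two membership conditions that constitute maximal regularity in Definition \ref{MAX}: that ${\bf 1}_E\in B^{1/p,\tau}_{p,\infty}(\mathbb{R}^n)$ when $\tau\in(0,\frac{n-1}{np}]$, and that ${\bf 1}_E\in B^{s,\tau}_{p,q}(\mathbb{R}^n)$ with $s=n(1/p-\tau)$ for every $q\in[1,\infty]$ when $\tau\in(\frac{n-1}{np},1/p]$. By the monotonicity $B^{s,\tau}_{p,1}\hookrightarrow B^{s,\tau}_{p,q}$ from Remark \ref{grund}(ii), in the second range it suffices to treat $q=1$; the endpoint $\tau=1/p$, where $s=0$, is covered by the same computation. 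Since $E$ is bounded, ${\bf 1}_E\in L^\infty(\mathbb{R}^n)\cap L^2(\mathbb{R}^n)$ has compact support, so its expansion \eqref{wavelet} is the genuine $L^2$-expansion and, by Lemma \ref{support}, the criterion of Proposition \ref{wav-type2} reduces to controlling the high-frequency part, i.e. the supremum over dyadic cubes $P$ with $|P|\le1$.

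I would work entirely with the wavelet characterization. The two inputs are: (a) a coefficient bound, namely $|\lambda_{i,j,m}({\bf 1}_E)|=|\langle{\bf 1}_E,\psi_{i,j,m}\rangle|\le\|\psi_i\|_{L^1(\mathbb{R}^n)}\,2^{-jn/2}\lesssim 2^{-jn/2}$, together with the fact that, by the vanishing moment \eqref{moment}, $\lambda_{i,j,m}({\bf 1}_E)=0$ unless $\supp\psi_{i,j,m}$ meets $\partial E$; and (b) a counting estimate for the nonzero coefficients. For (b) the key geometric input is that the boundary of an elementary Lipschitz domain is a finite union of Lipschitz graphs (the top graph $x_n=\varphi(x')$, the bottom face, and the lateral faces of the box $W$), so that its $\delta$-neighborhood obeys the local Minkowski-content bound
\[
\bigl|(\partial E)^\delta\cap P\bigr|\lesssim \ell(P)^{n-1}\,\delta,\qquad 0<\delta\le\ell(P),
\]
with an implicit constant depending only on $n$, the Lipschitz constant $L$, and the size parameters of $E$. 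Since each relevant $Q_{j,m}\subset P$ lies within distance $\lesssim 2^{-j}$ of $\partial E$, and these cubes are disjoint of volume $2^{-jn}$, dividing the neighborhood volume (taken with $\delta\asymp 2^{-j}$) by $2^{-jn}$ gives
\[
\#\bigl\{m:\ Q_{j,m}\subset P,\ \supp\psi_{i,j,m}\cap\partial E\neq\emptyset\bigr\}\lesssim 2^{(j-j_P)(n-1)},\qquad j\ge j_P .
\]

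With these two facts the computation is mechanical. Inserting the coefficient bound and the count into the $b^{s,\tau}_{p,q}(\mathbb{R}^n)$ quasi-norm from Lemma \ref{support} (using $|P|^{-\tau}=2^{j_P n\tau}$), the inner $\ell^p$-sum over $m$ at scale $j$ contributes $\lesssim 2^{(j-j_P)(n-1)/p}\,2^{-jn/2}$; after multiplying by $2^{j(s+n/2-n/p)}$ the $j$-dependence collapses to $2^{j(s-1/p)}\,2^{-j_P(n-1)/p}$. Summing (or taking the supremum) over $j\ge j_P$ and multiplying by $2^{j_P n\tau}$ produces, for $s<1/p$ with any $q$ or for $s=1/p$ with $q=\infty$, the bound $\sup_{P:\,|P|\le1}2^{\,j_P(s-n/p+n\tau)}$. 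This is finite precisely when $s\le n(1/p-\tau)$: in the first range $s=1/p\le n(1/p-\tau)$ is equivalent to $\tau\le\frac{n-1}{np}$, and in the second range the choice $s=n(1/p-\tau)$ makes the exponent vanish, so the supremum equals a constant for every admissible $q$. These are precisely the conditions of Definition \ref{MAX}; together with Theorem \ref{wichtig10} and Proposition \ref{charact1} they also show that ${\bf 1}_E$ lies in exactly the same spaces as ${\bf 1}_Q$.

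I expect the only genuine difficulty to be the geometric neighborhood estimate, which is where the Lipschitz hypothesis, and only that hypothesis, is used: one must decompose $\partial E$ into its finitely many Lipschitz pieces and check the $(n-1)$-content bound uniformly in $P$ and $\delta$, in particular for cubes $P$ that meet several faces of $E$ near an edge or corner of the box $W$. Once $|(\partial E)^\delta\cap P|\lesssim\ell(P)^{n-1}\delta$ is secured, the remainder is the routine summation above; an entirely parallel argument through the difference characterization (Proposition \ref{t4.7}), based on $\int_P|\Delta_h^M{\bf 1}_E|^p\,dx\lesssim\bigl|(\partial E)^{M|h|}\cap P\bigr|$ and the reduction to $p=1$ in the Remark following Corollary \ref{t4.8}, would yield the same conclusion.
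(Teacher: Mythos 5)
Your proposal is correct and follows essentially the same route as the paper: the wavelet characterization via Lemma \ref{support}, the vanishing-moment observation that only coefficients near $\partial E$ survive, the coefficient bound $|\lambda_{i,j,m}({\bf 1}_E)|\lesssim 2^{-jn/2}$, and the counting estimate $\#\{m:Q_{j,m}\subset P,\ \supp\psi_{i,j,m}\cap\partial E\neq\emptyset\}\lesssim 2^{(j-j_P)(n-1)}$, followed by the same summation yielding $2^{j_P(s+n\tau-n/p)}$. The only (cosmetic) difference is that you derive the count from the local Minkowski bound $|(\partial E)^\delta\cap P|\lesssim\ell(P)^{n-1}\delta$, whereas the paper obtains it directly by splitting the base $W_P$ into $2^{(j-j_P)(n-1)}$ subcubes and using the Lipschitz oscillation of $\varphi$ on each; both rest on the same graph geometry.
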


\begin{proof}
	We will apply wavelet arguments. Let $E$ denote our  elementary Lipschitz domain.
	Based on Lemma \ref{support} it will be enough to deal  with
	\[
	\sup_{\{P\in\mathcal{Q}: |P|\le 1\}}\frac1{|P|^{\tau}}\left\{\sum_{j=\max\{j_P,0\}}^\infty
	2^{j(s+\frac n2-\frac np)q} \sum_{i=1}^{2^n-1}
	\left[\sum_{\{m\in\mathbb{Z}^n:\ Q_{j,m}\subset P\}}
	|\lambda_{i,j,m}  ({\bf 1}_E)|^p\right]^{\frac qp}\right\}^{\frac 1q}.
	\]
Because of the moment conditions of $\psi_{i,j,k}$ in \eqref{moment},
		we conclude that
		$\langle {\bf 1}_E, \psi_{i,j,k}\rangle=0$ if either $\supp \psi_{i,j,k}
		\subset \overline{E}$ or
		$\supp \psi_{i,j,k}  \cap \overline{E}= \emptyset$.
For any $j\in\mathbb{N}_0$, we then define
		\[
		\Omega_j:= \{r\in \mathbb{Z}^n: \
		\exists\ i\in\{1,\ldots,2^n-1\}\ \mbox{such that}\,
		\supp \psi_{i,j,r} \cap  \partial E \neq \emptyset\}.
		\]
Assume now that $P\in\cq$ with $|P|\le 1$ and $P \cap \partial E \neq \emptyset$.
We may write
		$P:= Q_{\ell,m}$ with $\ell \in \mathbb{N}_0$ and $m \in\mathbb{Z}^n$.
We claim that
		\begin{equation}\label{w014}
		\left|\Omega_j \cap \{r \in \mathbb{Z}^n:~ Q_{j,r} \subset P\}\right|\ls  2^{(j-\ell)(n-1)}, \quad \forall\,j \in\{\ell,\ell+1,\ldots\},
		\end{equation}
with the implicit positive constants independent of $P$.
Therefore we consider the description of the elementary Lipschitz domain as given in Definition		
\ref{deflip}. By $L_\varphi$ we denote  the Lipschitz constant of the function $\varphi$, see
Definition \ref{deflip}(v).
As a consequence of this definition we find that $P \cap E$ is an elementary Lipschitz domain as well with a boundary, given by parts of hyperplanes and parts of the graph of $\varphi$, see the following  pictures  for various possibilities.

\smallskip

\begin{minipage}{0.4\textwidth}
	\includegraphics[height=4.5cm]{{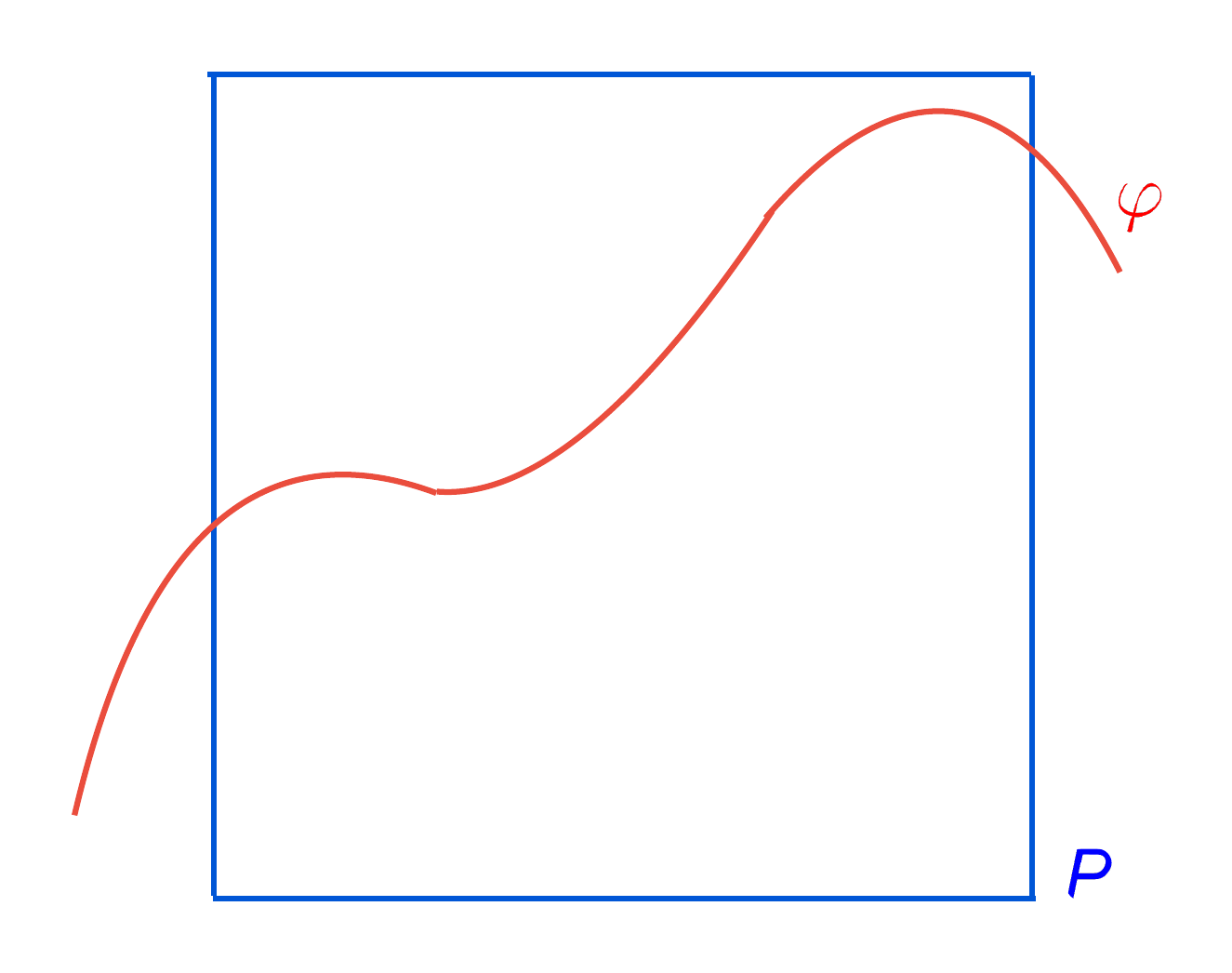}}
\end{minipage}\hfill
\begin{minipage}{0.4\textwidth}
	\includegraphics[height=5.4cm]{{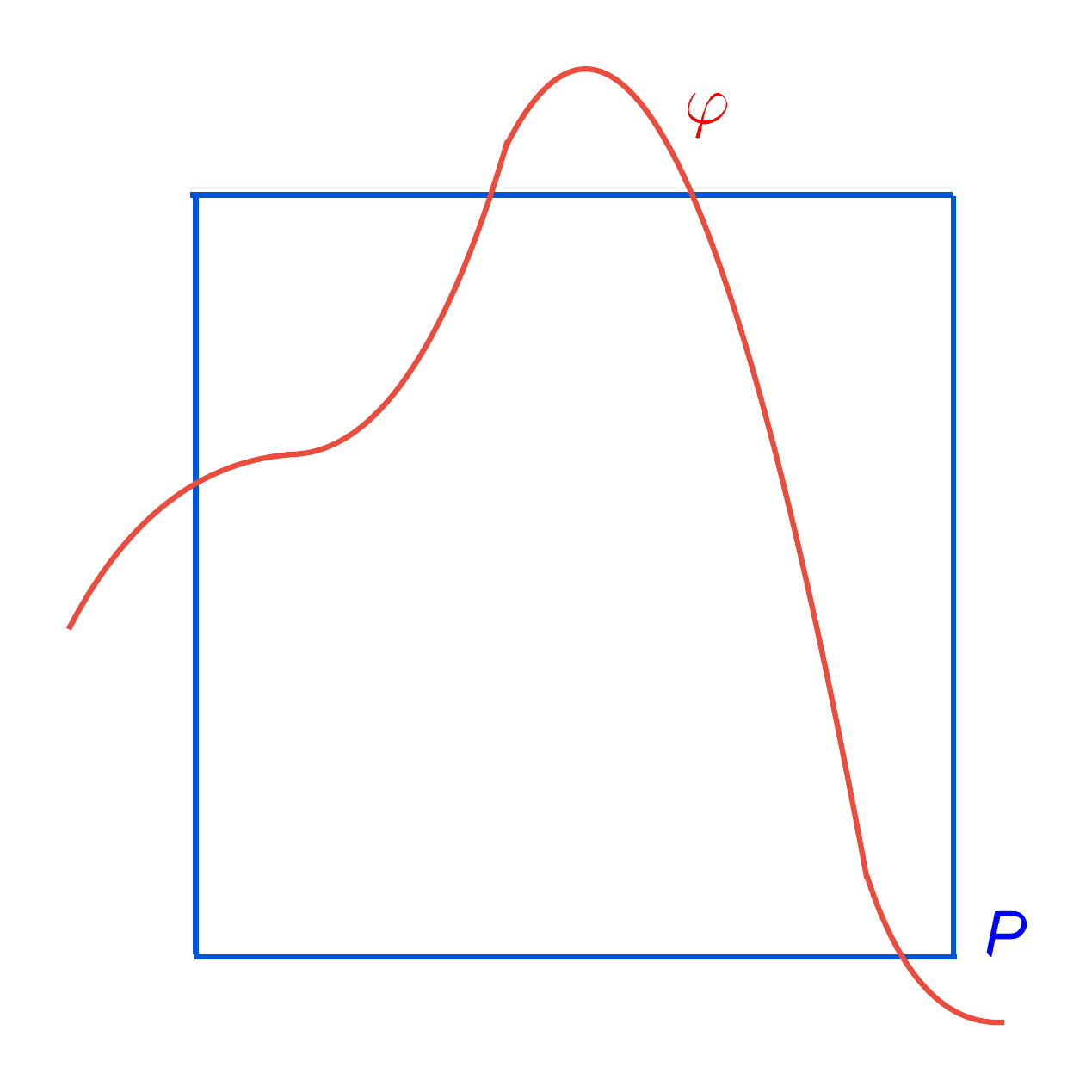}}
\end{minipage}

\smallskip

We only consider some sort of a standard situation for sufficiently small dyadic cubes
(all other situations can be discussed with the same sort of arguments)
\[
P \cap E =\left\{ x \in \mathbb{R}^n: ~    x' \in W_P,\  A_{P,\ell,m} < x_n < \min \{\varphi(x'),\ 2^{-\ell}(m_n+1)\}\right\},
\]
where $A_{P,\ell,m}$ is a constant depending on $P$, $\ell$, $m$, and $a_n$ and
\[
W_P:= \left\{{x'} \in \mathbb{R}^{n-1}:\   2^{-\ell}m_i < x_i < 2^{-\ell} (m_i + 1),  \ i=1, \ldots , n-1\right\}.
\]
We split $W_P$ into $2^{(j-\ell)(n-1)}$ dyadic subcubes of level $j$, denoted by $\widetilde{Q}_{j,t}$ with some  $t\in\mathbb{Z}^{n-1}$. These  cubes    are not all of interest.
We fix one of these cubes, $\widetilde{Q}_{j,t} \subset W_P$. Then we select
points $x^0, x^1$ such that
\[
\varphi (x^0) = \max \left\{\varphi (x'):~ x' \in \widetilde{Q}_{j,t}\right\}\quad \mbox{and}\quad
\varphi (x^1) = \min \left\{\varphi (x'):~ x' \in \widetilde{Q}_{j,t}\right\}.
\]
It follows
\[
|\varphi (x^1)- \varphi (x^0)|\le  \sqrt{n-1} L_\varphi \, 2^{-j}.
\]
To cover the set $\{ x \in \mathbb{R}^n: ~ x' \in \widetilde{Q}_{j,t}, ~ \varphi (x^1)\le x_n \le \varphi (x^0)\}$ by dyadic cubes in $\mathbb{R}^n$ of level $j$, we need at most $\lceil\sqrt{n-1}(L_\varphi + 2)\rceil$ of those cubes.
Doing that for each cube $\widetilde{Q}_{j,t}$ we obtain \eqref{w014}.
Next we observe that
\begin{align}\label{eqxx}
	|\langle {\bf 1}_E, \psi_{i,j,k} \rangle| \le 2^{jn/2} \int_{E} | \psi_{i} (2^j x -k)| \, dx
	\ls  2^{-jn/2}.
\end{align}
Applying \eqref{w014} and \eqref{eqxx}, for $P= Q_{\ell,m}$, we find that

\begin{align*}
&\frac1{|P|^\tau}\left\{\sum_{j=\ell}^\infty 2^{j(s+\frac n2)q}\sum_{i=1}^{2^n-1}
\lf[\sum_{\{k\in\mathbb{Z}^n:~Q_{j,k} \subset P\}}2^{-jn}|\langle {\bf 1}_E, \psi_{i,j,k}\rangle|^p\right]^{\frac qp}\right\}^{\frac 1q}
\\
&\quad \ls  2^{\ell n\tau}\, 	\left\{\sum_{j=\ell}^\infty 2^{j(s-\frac np)q} \,  2^{(j-\ell)(n-1)\frac qp}\right\}^{\frac 1q}
\ls  2^{\ell[ n\tau - \frac{n-1}p]}\, 	
\left\{\sum_{j=\ell}^\infty 2^{j(s - \frac1p)q}\right\}^{\frac 1q}.
\end{align*}
If either  $s=1/p$ and $q= \infty$ or $s<1/p$ and $q$ arbitrary,  we conclude that
\begin{align*}
\frac1{|P|^\tau}\left\{\sum_{j=\max\{j_P,0\}}^\infty2^{j(s+\frac n2)q}\sum_{i=1}^{2^n-1}
\lf[\sum_{\{k\in\mathbb{Z}^n:~Q_{j,k} \subset P\}}2^{-jn}|\langle {\bf 1}_E,
\psi_{i,j,k}\rangle|^p\right]^{\frac qp}\right\}^{\frac 1q}
\ls 2^{\ell (s+ n\tau - \frac np)},
\end{align*}
which is uniformly bounded in $\ell$ for $s+ n\tau - n/p\le 0$. This finishes the proof of Theorem \ref{Lipschitz1}.
\end{proof}

Of  central importance for the proof of Theorem \ref{Lipschitz1} is the inequality \eqref{w014}.
The following corollary is immediate.

\begin{corollary}
Let $E$ be a bounded domain. For any $j \in \mathbb{N}_0$, define	
\[
\Omega_j:= \left\{r\in \mathbb{Z}^n: \
\exists\ i\in\{1,\ldots,2^n-1\}\ \mbox{such that}\,
\supp \psi_{i,j,r} \cap  \partial E \neq \emptyset\right\}.
\]
Suppose that, for any dyadic cube   $P:= Q_{\ell,m}$  with $\ell \in \mathbb{N}_0$ and $m \in\mathbb{Z}^n$,
	\begin{equation}\label{w014c}
		\left|\Omega_j \cap \{r \in \mathbb{Z}^n:~ Q_{j,r} \subset P\}\right|\ls  2^{(j-\ell)(n-1)}, \quad \forall\ j \in\{\ell,\ell+1,\ldots\},
	\end{equation}
	with the implicit positive constant  independent of $P$. Then ${\bf 1}_E$ has the maximal regularity.
\end{corollary}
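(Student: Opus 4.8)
The plan is to retrace the proof of Theorem \ref{Lipschitz1} and to notice that the Lipschitz structure of $E$ was used there at a single point only, namely to produce the counting estimate \eqref{w014}. Since the hypothesis \eqref{w014c} is literally this estimate, every remaining step transfers without change to an arbitrary bounded domain. First I would record that, $E$ being bounded, ${\bf 1}_E \in L^\infty(\rn)$ has compact support, so Lemma \ref{support}(i) applies: via the wavelet characterization in Proposition \ref{wav-type2}, the claim that ${\bf 1}_E$ has maximal regularity reduces to showing that, for the admissible parameters listed in Definition \ref{MAX},
\[
\sup_{\{P\in\cq:\, |P|\le 1\}}\frac1{|P|^{\tau}}\left\{\sum_{j=\max\{j_P,0\}}^\infty 2^{j(s+\frac n2-\frac np)q}\sum_{i=1}^{2^n-1}\left[\sum_{\{k\in\zz^n:\ Q_{j,k}\subset P\}}|\lz_{i,j,k}({\bf 1}_E)|^p\right]^{\frac qp}\right\}^{\frac1q}<\infty .
\]

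Next I would isolate the two facts that hold for any bounded measurable set and require no geometry. By the moment conditions \eqref{moment}, $\langle {\bf 1}_E,\psi_{i,j,k}\rangle=0$ whenever $\supp\psi_{i,j,k}$ is contained in $\overline{E}$ or is disjoint from $\overline{E}$; hence, at each level $j$, the only indices contributing to the inner sum are those $r\in\Omega_j$, which is exactly the set estimated in \eqref{w014c}. Moreover the crude bound \eqref{eqxx}, $|\langle {\bf 1}_E,\psi_{i,j,k}\rangle|\ls 2^{-jn/2}$, follows solely from ${\bf 1}_E\in L^\infty(\rn)$ and the compact support of $\psi_i$. Neither statement uses the Lipschitz assumption, so both carry over verbatim.

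Finally I would combine \eqref{w014c} with \eqref{eqxx} inside the sum exactly as in the last display of the proof of Theorem \ref{Lipschitz1}: writing $P=Q_{\ell,m}$ with $\ell\in\nn_0$, the number of nonzero terms at level $j$ is $\ls 2^{(j-\ell)(n-1)}$ and each is $\ls 2^{-jn}\,2^{-jnp/2}$, whence
\[
\frac1{|P|^\tau}\left\{\sum_{j=\ell}^\infty 2^{j(s+\frac n2)q}\sum_{i=1}^{2^n-1}\left[\sum_{\{k\in\zz^n:\ Q_{j,k}\subset P\}}2^{-jn}|\langle {\bf 1}_E,\psi_{i,j,k}\rangle|^p\right]^{\frac qp}\right\}^{\frac1q}\ls 2^{\ell(s+n\tau-\frac np)},
\]
which is uniformly bounded in $\ell$ precisely when $s+n\tau-\frac np\le0$, the borderline $s=\frac1p$ forcing $q=\infty$. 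These are exactly the ranges in Definition \ref{MAX}, and by Remark \ref{grund}(iii) the corresponding $F$-memberships follow. I do not expect a genuine obstacle: the whole geometric content has been packaged into \eqref{w014c}, so the only thing to verify is the routine claim that \eqref{w014} was indeed the sole place where the Lipschitz hypothesis entered the proof of Theorem \ref{Lipschitz1}, which makes the corollary immediate.
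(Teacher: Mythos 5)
Your proposal is correct and matches the paper's intent exactly: the paper states this corollary as ``immediate'' from the proof of Theorem \ref{Lipschitz1}, precisely because the Lipschitz hypothesis entered that proof only through the counting estimate \eqref{w014}, which the corollary now assumes as \eqref{w014c}. Your retracing of the argument --- vanishing moments plus the crude coefficient bound \eqref{eqxx} combined with the counting hypothesis --- is exactly the intended justification.
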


Besov-type spaces  and Triebel--Lizorkin-type spaces are invariant under rotations, translations, and reflections. In connection with Theorem \ref{Lipschitz1} this leads to the following.

\begin{corollary}\label{Lipschitz3}
	Let $E$ be a set such that $\overline{E} $  can be written as the union of the closures of
	a finite number  of pairwise disjoint domains $E_1, \ldots , E_N$ satisfying that, for any $j\in\{1, \ldots , N\}$,
	  $E_j$ is the image of an elementary Lipschitz domain
under a finite number of rotations, translations, and reflections.
	Then ${\bf 1}_E$ has the maximal regularity.
\end{corollary}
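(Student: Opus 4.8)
The plan is to deduce the corollary from the already-proved Theorem \ref{Lipschitz1} by exploiting two facts: the additivity of characteristic functions under the given disjoint decomposition, and the invariance of the spaces $B^{s,\tau}_{p,q}(\mathbb{R}^n)$ under rotations, translations, and reflections (stated in the paragraph preceding the corollary). Since Definition \ref{MAX} records maximal regularity purely as a list of \emph{positive} membership assertions, and since $B^{s,\tau}_{p,q}(\mathbb{R}^n)$ is a genuine normed vector space for the parameter range under consideration ($p,q\in[1,\infty]$), it will suffice to show that each summand ${\bf 1}_{E_i}$ lies in the relevant maximal spaces and then add. The accompanying necessity part (the non-membership in larger spaces) is automatic: $E$ is a bounded nontrivial measurable set, so Theorem \ref{wichtig10} applies and shows these memberships are best possible, which also matches the ``short version'' of maximal regularity.

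First I would record the pointwise identity ${\bf 1}_E = \sum_{i=1}^N {\bf 1}_{E_i}$ in $L^p(\mathbb{R}^n)$. From $\bigcup_{i=1}^N E_i \subseteq E \subseteq \overline{E} = \bigcup_{i=1}^N \overline{E_i}$, the exceptional set $\overline{E}\setminus \bigcup_{i=1}^N E_i$ is contained in $\bigcup_{i=1}^N \partial E_i$. Each $\partial E_i$ is the image under a rigid motion of the boundary of an elementary Lipschitz domain, hence consists of an $(n-1)$-dimensional Lipschitz graph together with finitely many flat faces; such a set has Lebesgue measure zero. Thus $|\overline{E}\setminus \bigcup_{i=1}^N E_i|=0$, and since the $E_i$ are pairwise disjoint, ${\bf 1}_E = \sum_{i=1}^N {\bf 1}_{E_i}$ almost everywhere.

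Next, for each $i$ I would write $E_i = \rho_i(\widetilde{E_i})$, where $\widetilde{E_i}$ is an elementary Lipschitz domain and $\rho_i$ is a finite composition of rotations, translations, and reflections. By Theorem \ref{Lipschitz1}, ${\bf 1}_{\widetilde{E_i}}$ has the maximal regularity, i.e.\ ${\bf 1}_{\widetilde{E_i}} \in B^{1/p,\tau}_{p,\infty}(\mathbb{R}^n)$ for $\tau\in(0,\frac{n-1}{np}]$ and ${\bf 1}_{\widetilde{E_i}} \in B^{s,\tau}_{p,q}(\mathbb{R}^n)$ with $s=n(\frac1p-\tau)$ for all $q\in[1,\infty]$ when $\tau\in(\frac{n-1}{np},\infty)$. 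Since ${\bf 1}_{E_i} = {\bf 1}_{\widetilde{E_i}}\circ \rho_i^{-1}$ and the spaces $B^{s,\tau}_{p,q}(\mathbb{R}^n)$ are invariant under $\rho_i$, the same memberships hold for ${\bf 1}_{E_i}$. Adding over $i$ and invoking the triangle inequality for the norm of $B^{s,\tau}_{p,q}(\mathbb{R}^n)$ (valid as $p,q\ge 1$), the pointwise identity yields that ${\bf 1}_E$ satisfies both conditions of Definition \ref{MAX} for all admissible $p$ and $\tau$; hence ${\bf 1}_E$ has the maximal regularity.

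The only genuinely delicate ingredient is the rotation invariance of the type spaces: translation and reflection invariance are immediate, but rotation invariance of $B^{s,\tau}_{p,q}(\mathbb{R}^n)$ is the non-obvious fact that makes it legitimate to reduce general $E_i$ to \emph{axis-parallel} elementary Lipschitz domains, and I would simply cite the statement asserted just before the corollary. As an alternative that avoids invariance, one could instead verify the sufficient counting condition \eqref{w014c} directly for $E$, using $\partial E \subseteq \bigcup_{i=1}^N \partial E_i$ to get $\Omega_j^E \subseteq \bigcup_{i=1}^N \Omega_j^{E_i}$ so that the cardinalities add; the drawback is that one would then have to re-establish the covering estimate \eqref{w014} for Lipschitz graphs in arbitrary orientation relative to the fixed dyadic grid, so the invariance-based argument is the cleaner route.
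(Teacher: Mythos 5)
Your proposal is correct and follows essentially the same route the paper takes: the paper offers no separate proof of Corollary \ref{Lipschitz3} beyond invoking the invariance of $B^{s,\tau}_{p,q}(\mathbb{R}^n)$ under rotations, translations, and reflections together with Theorem \ref{Lipschitz1}, and your additivity argument (via ${\bf 1}_E=\sum_{i}{\bf 1}_{E_i}$ a.e., the boundaries being null sets, and the triangle inequality for $p,q\in[1,\infty]$) is exactly the intended reasoning, spelled out in more detail than the paper bothers to give.
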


\begin{remark}
 Observe that for a set $E$ as in Corollary \ref{Lipschitz3} we have $|\partial E|=0$.
 The function spaces under consideration are distribution spaces, that is, for a regular
 distribution we do not distinguish functions, which coincide almost everywhere. Hence,
 for any measurable set  $E$ satisfying $|\partial E| =0$ we have
 ${\bf 1}_E = {\bf 1}_{\overline{E}} = {\bf 1}_{\mathring{E}}$.
\end{remark}

Corollary \ref{Lipschitz3}  makes clear that a set  $E$, not having maximal regularity,  must have a rather complicated boundary.
Before we   turn  to those cases we supplement Theorem \ref{Lipschitz1}
by a further sufficient condition.

\begin{theorem}\label{Lipschitz2}
Let $E$ be a bounded set and $p\in[1,\infty)$. Suppose that there exists a positive
constant $c$ such that  for all dyadic cubes $P$ satisfying
$|P|\le 1$ it holds
\begin{equation}\label{geo1}
\sup_{0 <t<1} \sup_{\{h\in\mathbb{R}^n:\, t/2<|h|<t\}} \left\| \Delta_h^1 {\bf 1}_E \right\|_{L^1 (P)}\le c\, t \, [\ell (P)]^{n-1}.
\end{equation}
\begin{itemize}
\item[{\rm(i)}]  If $\tau \in(0,\frac{n-1}{np}]$, then ${\bf 1}_E$ belongs to $B^{1/p,\tau}_{p,\infty} (\mathbb{R}^n)$.

\item[\rm{(ii)}] If $  \tau\in(\frac{n-1}{np},\frac 1p)$, then ${\bf 1}_E$ belongs to $B^{s,\tau}_{p,q} (\mathbb{R}^n)$ with $s:= n(\frac 1p - \tau)$ and  all $q\in [1,\infty]$.
\end{itemize}
\end{theorem}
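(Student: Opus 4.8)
The plan is to deduce both assertions from the difference characterization in Proposition~\ref{t4.7}, exploiting throughout that ${\bf 1}_E$ is $\{0,1\}$-valued. First I would dispose of the $L^p_\tau$-part: since $E$ is bounded,
\[
\|{\bf 1}_E\|_{L^p_\tau(\mathbb{R}^n)}=\sup_{\{P\in\mathcal{Q}:\,|P|\ge1\}}\frac{|E\cap P|^{1/p}}{|P|^\tau}\le|E|^{1/p}<\infty ,
\]
because $|P|^\tau\ge1$ when $|P|\ge1$. It then remains to bound $\|{\bf 1}_E\|^\spadesuit_{B^{s,\tau}_{p,q}(\mathbb{R}^n)}$. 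I would take the order $M=1$ whenever $s<1$, which covers all of case~(ii) (there $s=n(\frac1p-\tau)\in(0,\frac1p)\subset(0,1)$) and case~(i) when $p>1$; the single endpoint $p=1$, $s=1$ of~(i) needs $M=2$ and is treated separately. The key simplification is the pointwise identity $|\Delta_h^1{\bf 1}_E(x)|^p=|\Delta_h^1{\bf 1}_E(x)|$, exactly as in \eqref{diff-eq}, which converts the inner $L^p$-integral into the $L^1$-quantity controlled by hypothesis \eqref{geo1}.

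For a dyadic cube $P$ with $|P|=[\ell(P)]^n\le1$ I would then insert the bound $\sup_{t/2\le|h|<t}\|\Delta_h^1{\bf 1}_E\|_{L^1(P)}\lesssim t\,[\ell(P)]^{n-1}$ (the content of \eqref{geo1}) into the $\spadesuit$-expression, arriving, up to constants, at
\[
[\ell(P)]^{-n\tau}\,[\ell(P)]^{(n-1)/p}\left\{\int_0^{2\ell(P)}t^{q(\frac1p-s)}\,\frac{dt}{t}\right\}^{1/q},
\]
with the obvious $\sup$-modification when $q=\infty$. In case~(ii) one has $\frac1p-s>0$, so the integral converges and is $\asymp[\ell(P)]^{\frac1p-s}$; collecting exponents yields $[\ell(P)]^{-n\tau+n/p-s}=1$ by the choice $s=n(\frac1p-\tau)$, a bound uniform in $P$ and in $q\in[1,\infty]$. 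In case~(i) with $q=\infty$ and $s=\frac1p$ the $t$-supremum gives instead the factor $[\ell(P)]^{-n\tau+(n-1)/p}$, whose exponent is nonnegative precisely when $\tau\le\frac{n-1}{np}$; since $\ell(P)\le1$ this is again uniformly bounded.

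It remains to handle the endpoint $p=1$, $s=1$ of~(i), where $M=2$ is forced. Writing $\Delta_h^2 f=\Delta_h^1(\Delta_h^1 f)$ gives $\|\Delta_h^2{\bf 1}_E\|_{L^1(P)}\le\|\Delta_h^1{\bf 1}_E\|_{L^1(P)}+\|\Delta_h^1{\bf 1}_E\|_{L^1(P+h)}$; since $|h|<t<2\ell(P)$, the shifted cube $P+h$ meets only $O(1)$ dyadic cubes of side $\ell(P)$, and applying \eqref{geo1} to each of them restores $\|\Delta_h^2{\bf 1}_E\|_{L^1(P)}\lesssim t\,[\ell(P)]^{n-1}$, so the computation above goes through verbatim. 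The cubes with $|P|>1$ are harmless: there $\min\{\ell(P),1\}=1$ and $|P|^\tau\ge1$, and summing \eqref{geo1} over the finitely many unit cubes meeting the bounded set $\overline{E}\cup(\overline{E}+Mh)$ yields $\|\Delta_h^M{\bf 1}_E\|_{L^1(\mathbb{R}^n)}\lesssim t$, whence the corresponding $t$-integral (or supremum) is finite and uniformly bounded. Combining all ranges of $P$ shows $\|{\bf 1}_E\|^\spadesuit_{B^{s,\tau}_{p,q}(\mathbb{R}^n)}<\infty$, which finishes the proof.

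I expect the main obstacle to be the endpoint $p=1$, $s=1$ in~(i): one must pass to second-order differences, yet hypothesis \eqref{geo1} only controls first-order differences, so the reduction $\Delta_h^2=\Delta_h^1\circ\Delta_h^1$ together with the covering of the non-dyadic shifted cube $P+h$ is the step that needs genuine care. The remaining work is exponent bookkeeping, where the only delicate point is verifying that the sign of $-n\tau+(n-1)/p$ (case~(i)) and the vanishing of $-n\tau+n/p-s$ (case~(ii)) match exactly the stated ranges of $\tau$ and the value $s=n(\frac1p-\tau)$.
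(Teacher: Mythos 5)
Your proof is correct and follows essentially the same route as the paper: the difference characterization of Proposition \ref{t4.7} with $M=1$, the identity $|\Delta_h^1{\bf 1}_E|^p=|\Delta_h^1{\bf 1}_E|$ to reduce to the $L^1$-quantity in \eqref{geo1}, the same exponent bookkeeping in both cases, and the reduction $|\Delta_h^2{\bf 1}_E(x)|\le|\Delta_h^1{\bf 1}_E(x+h)|+|\Delta_h^1{\bf 1}_E(x)|$ at the endpoint $p=1$, $s=1$. You are in fact slightly more careful than the paper in covering the shifted cube $P+h$ by $O(1)$ dyadic cubes and in disposing of the $L^p_\tau$-term and the cubes with $|P|>1$, details the paper passes over silently.
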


\begin{proof}
We use the characterization by differences  given in Proposition \ref{t4.7}. For that reason we limit ourselves to $s>0$.

{\em Step 1.} 	Let $p\in(1, \infty)$  and  $\tau \in(0, \frac{n-1}{np}]$.
In this case, since $E$ is bounded,  by
 Proposition \ref{t4.7}  with $0< s=1/p< 1$, $q=\infty$, and $M=1$,  we only need to prove
\begin{align*}
\|f\|^\spadesuit_{B^{\frac1p,\tau}_{p,\infty} (\mathbb{R}^n) }&:=\sup_{P\in\mathcal{
				Q}} \, \frac1{|P|^\tau} \, \sup_{0 < t< 2\min\{\ell(P),1\}}
		t^{-s}\sup_{\frac t2\le|h|<t}\left[\int_P
		|{\bf 1}_E(x+h)-{\bf 1}_E (x) |^p\,dx\right]^{\frac 1p} < \infty.
\end{align*}
	Clearly, we only need to consider those dyadic   cubes $P$ such that $\inf_{x \in E} {\rm dist}\, (x, P) < 1$.
 Essentially it will be enough to consider cubes such that $\ell (P) \le 1$.
	For those cubes we define
$$
		E(P,h):= \left\{x \in E \cap P: ~ x+h \not\in E \right\}
$$
and
$$F(P,h) :=  \left\{x \in P: ~x \not\in E, ~x+h \in E \right\}.
$$
	Because of
	\[
	\dint_P
	|{\bf 1}_E(x+h)-{\bf 1}_E (x) |^p\,dx = |	E(P,h)|+  |	F(P,h)|
	\]
	it follows
	\[
	\|f\|^\spadesuit_{B^{\frac 1p,\tau}_{p,\infty} (\mathbb{R}^n)}  \ls \sup_{P \in \mathcal{Q},\, |P| \le 1} \frac{1}{\vert P \vert^{\tau}}    \sup_{0 < t < 2 \ell (P)} \,  t^{-1/p}\,   \sup_{\frac t2< |h| <t}\left[|E(P,h)|+  |F(P,h)| \right]^{\frac{1}{p}}.
	\]
	Now applying \eqref{geo1} we derive
	\[
	\|f\|^\spadesuit_{B^{1/p,\tau}_{p,\infty} (\mathbb{R}^n)}  \ls \sup_{\{P \in \mathcal{Q},\, |P|\le 1\}} \frac{1}{[\ell (P)]^{n\tau}}      [\ell(P)]^{(n-1)/p}   < \infty
	\]
	because of  $0 < n\tau \le \frac{n-1}{p}$.

If $p=1$, $s=1$, and $\tau\in(0,\frac{n-1}n]$,  then it is enough to observe that the second-order difference can be estimated from above by two first-order differences: for any $x,h\in\mathbb{R}^n$,
\[
\left|\Delta_h^2{\bf 1}_E (x)\right| \le  \left|\Delta_h^1{\bf 1}_E (x+h)\right| + \left|\Delta_h^1{\bf 1}_E (x)\right|.
\]
	This allows to reduce the estimate again to first-order differences and we can proceed as above.
	No further modifications are needed. Hence, (i) is proved.	
	
{\em Step 2.} Let $p\in[1, \infty)$ and $\tau\in(\frac{n-1}{np}, \frac 1p)$. Clearly,
in this case, $0 < s= n(\frac 1p - \tau) < 1$
	for all admissible $p$. Arguing as above we find
	\begin{align*}
		\|f\|^\spadesuit_{B^{s,\tau}_{p,q} (\mathbb{R}^n)} & \ls
		\sup_{\{P \in \mathcal{Q}, |P| \le 1\} } \frac{1}{\vert P \vert^{\tau}}
		\left\{\int_0^{2\ell(P)} t^{-sq}\sup_{\frac t2\le|h|<t}\left[|	E(P,h)|+  |	F(P,h)| \right]^{\frac qp}\,\frac{dt}{t}\right\}^{\frac 1q}\\
		& \ls
		\sup_{\{P \in \mathcal{Q}, | P| \le 1\} } \frac{1}{\vert P \vert^{\tau}}
		\left\{\int_0^{2\ell(P)} t^{(\frac 1p-s)q}\, [\ell(P)]^{(n-1)\frac qp}\,\frac{dt}{t}\right\}^{\frac 1q}\\
		& \ls
		\sup_{\{P \in \mathcal{Q}, |P| \le 1\} } \frac{1}{[\ell (P)]^{n\tau}} [\ell (P)]^{\frac{n-1}p}\,
		\left\{\int_0^{2\ell(P)} t^{(n\tau- \frac{n-1}{p})q}\,\frac{dt}{t}\right\}^{\frac 1q}.
	\end{align*}
	Since $n\tau > \frac{n-1}{p}$ we conclude
$$
		\|f\|^\spadesuit_{B^{s,\tau}_{p,q} (\mathbb{R}^n)} \ls
		\sup_{\{P \in \mathcal{Q}, |P | \le 1\}} \frac{[\ell(P)]^{\frac{n-1}p} \,[\ell (P)]^{n\tau- \frac{n-1}{p}} }{\vert \ell (P) \vert^{n\tau}} \ls 1.
$$
This proves (ii) and hence Theorem \ref{Lipschitz2}.
\end{proof}

In some sense the condition \eqref{geo1} is the counterpart of \eqref{w014c}.
It is an open problem whether these conditions are also necessary for the maximal regularity.
 That would pave the way for a full extension of Proposition \ref{prima0}.

Below we list some concrete examples of domains whose characteristic functions have the maximal regularity.


\smallskip

\centerline{\bf Concrete examples}

\smallskip

\begin{minipage}{0.46\textwidth}
	\begin{center}\includegraphics[height=4.5cm]{{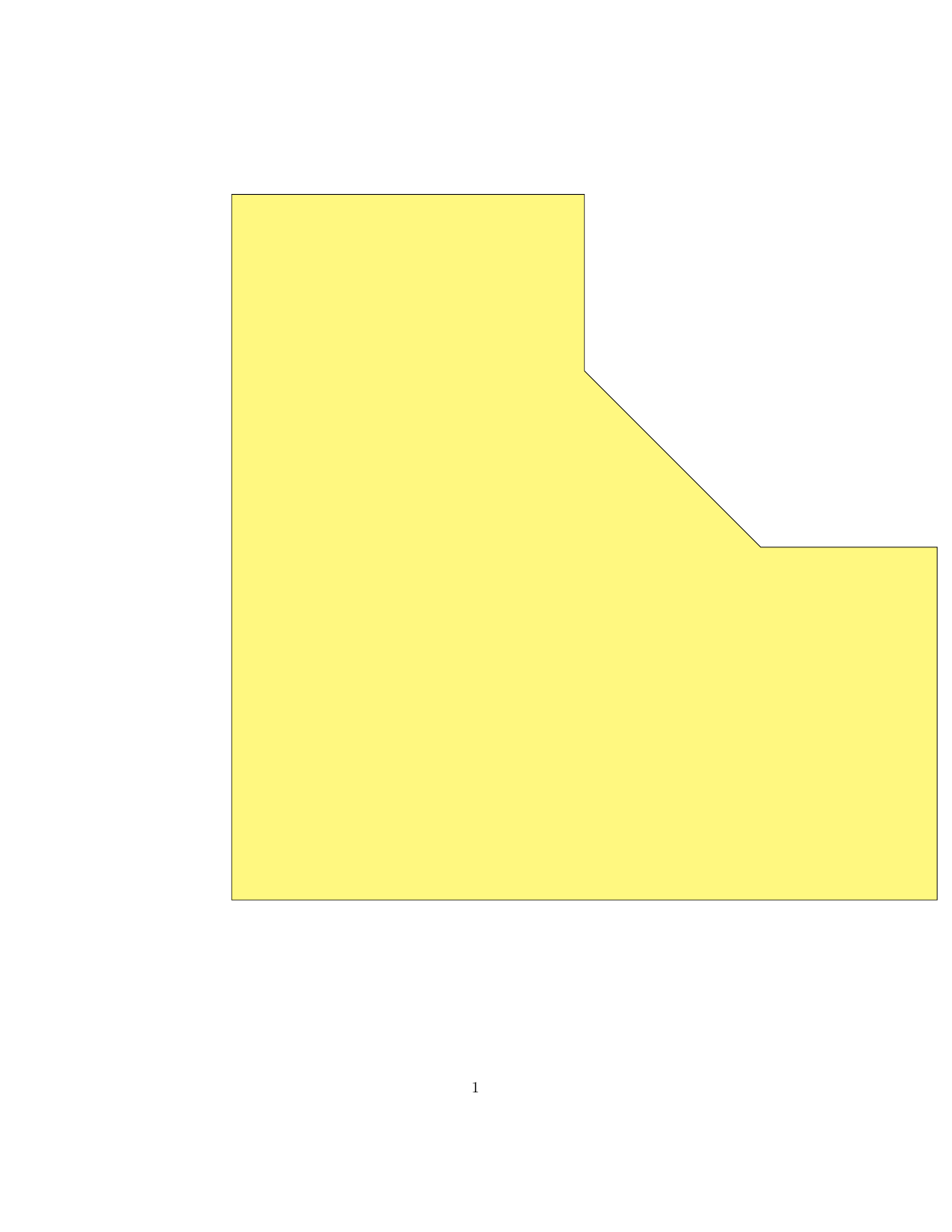}}\end{center}
\end{minipage}\hfill
\begin{minipage}{0.46\textwidth}
	\includegraphics[height=4.5cm]{{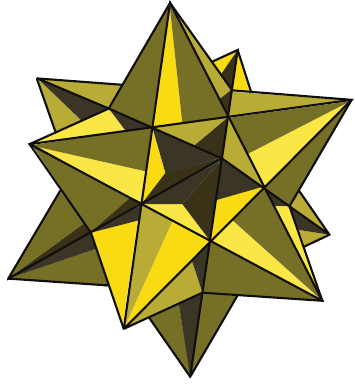}}
\end{minipage}

\smallskip

In the above two pictures, the domain with the polygonal boundary  on the left  is an elementary Lipschitz domain,  the icosahedron on the right   not, but it is a domain for which  Corollary \ref{Lipschitz3} applies.

The Lipschitz regularity of the boundary is not necessary for the maximal regularity of the associated  characteristic function. Here are two  examples.
First we  take  the domain $A \subset \mathbb{R}^2$ with boundary $\partial A$ given by the astroid.
Recall  that the  determining functional equation of the astroid is given by
\begin{equation}\label{rot1}
	x^{\frac 23} + y^{\frac 23}=1,\quad \forall\, x,y\in \mathbb{R} .
\end{equation}
Obviously, by \eqref{rot1}, the boundary $\partial A$ has the H\"older regularity $\alpha =2/3 $  and is
therefore not Lipschitz (in four isolated points).

\begin{minipage}{0.4\textwidth}
	\includegraphics[height=5.7cm]{{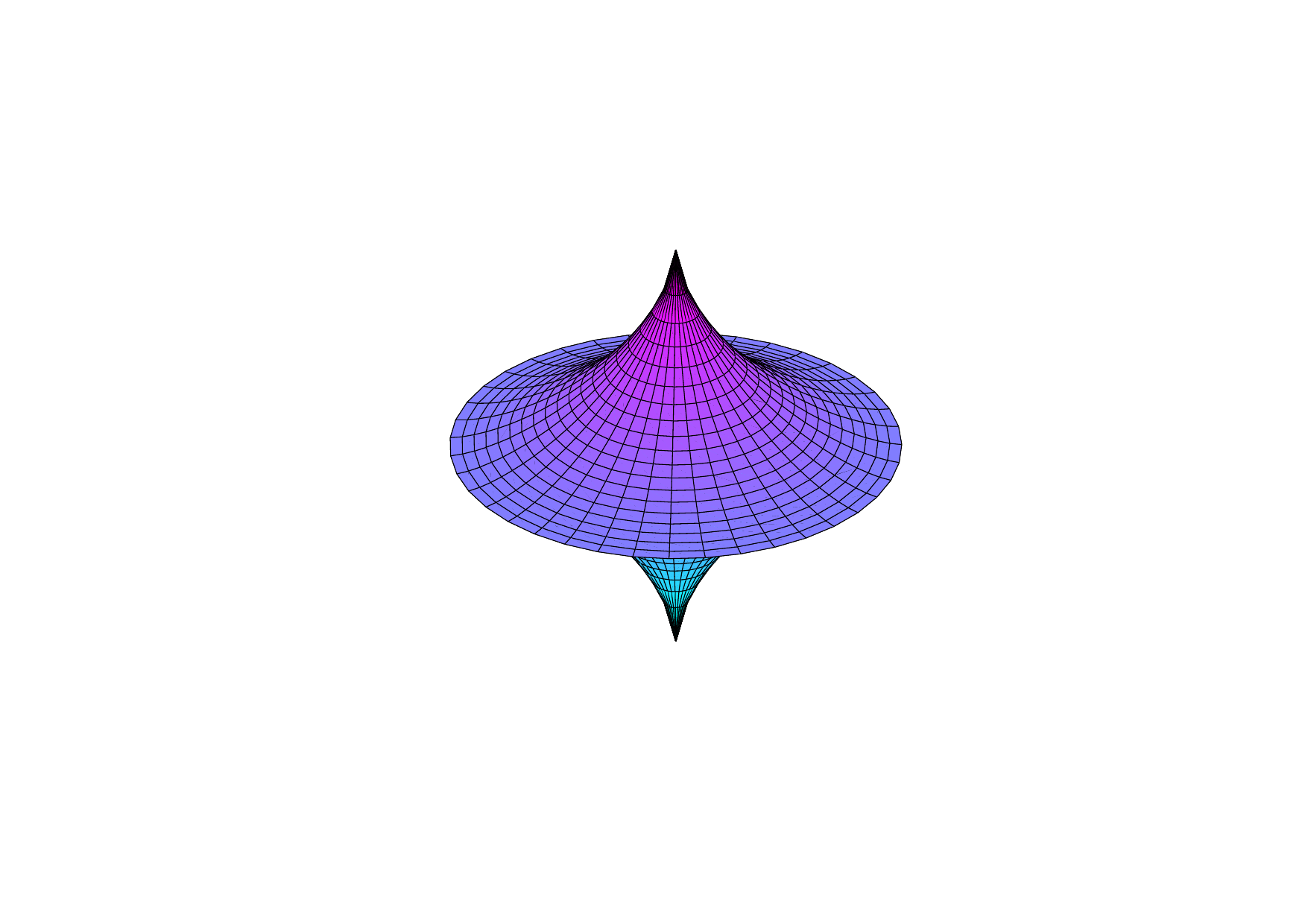}}
\end{minipage}\hfill
\begin{minipage}{0.4\textwidth}
	Now we consider the rotation of this curve around the $y$-axis resulting in the domain  $A_{{\rm rot}} \subset \mathbb{R}^3$ (see the left picture).
	Obviously the boundary $\partial A_{{\rm rot}}$ is  not Lipschitz in the north and the south poles
	and on the equator, so in infinitely many points. However, we can argue as follows.
Since we have in some sense a simple boundary it is not difficult to apply the same arguments as in the proof of Theorem \ref{Lipschitz1}.
\end{minipage}

Indeed, for any $j\in\mathbb{N}_0$, define
\[
\Omega_j:= \left\{r\in \mathbb{Z}^n: \
\exists\ i\in\{1,\ldots,2^n-1\}\ \mbox{such that}\,
\supp \psi_{i,j,r} \cap  \partial A_{{\rm rot}} \neq \emptyset\right\}.
\]
Assume now that $P\in\mathcal{Q}$ with $|P|\le 1$ and $P \cap \partial A_{{\rm rot}} \neq \emptyset$. We may write
$P:= Q_{\ell,m}$ with $\ell \in \mathbb{N}_0$ and $m \in\mathbb{Z}^n$.
Then we claim that
\begin{equation*}
	\left|\Omega_j \cap \{r \in \mathbb{Z}^n:~ Q_{j,r} \subset P\}\right|\ls  2^{(j-\ell)(n-1)}, \qquad \forall\,j \in\{\ell,\ell+1,\ldots\},
\end{equation*}
with the implicit  positive  constant independent of $P$, comparing with \eqref{w014}.
A proof can be given similar to that of \eqref{w014} in case of  elementary Lipschitz domains.
This further implies that ${\bf 1}_{ A_{{\rm rot}}}$ has the maximal regularity.

The next example is even simpler. Let $\varepsilon \in (0,1)$.
We define
\[
E_\varepsilon:= \left\{(x,y)\in \mathbb{R}^2: ~ -1 < x < 1, ~~ |x|^\varepsilon < y < 1 \right\}.
\]
See the following picture.

\smallskip

\begin{minipage}{0.56\textwidth}
	\includegraphics[width=6cm]{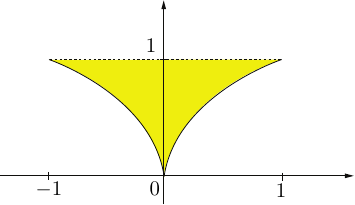}
\end{minipage}\hfill\begin{minipage}{0.4\textwidth}
The domain $E_\varepsilon$ has a boundary with
the H\"older regularity $\alpha = \varepsilon$.
So the H\"older regularity can be arbitrarily small.
However, $E_\varepsilon$ has the
maximal regularity. This can be shown by using the same
arguments as in case of $ A_{{\rm rot}}$.
\end{minipage}

\smallskip


\subsection{The characteristic function of a spiral type domain}
\label{Main10}


When searching for extremal functions in $A^{s,\tau}_{p,q}(\mathbb{R}^n)$ the characterization of $A^{s,\tau}_{p,q}(\mathbb{R}^n)$ given in
\eqref{new4} might be helpful. Since we are interested in the local situation two cases remain:
\begin{itemize}
	\item[(i)] $\|   f  \|^*_{A^{s,\tau}_{p,q} (\mathbb{R}^n)} = \, |P_0|^{-\tau} \, \|   f  \|_{A^s_{p,q} (P_0)}$ for one dyadic cube $P_0$;
		\item[(ii)] $\|   f  \|^*_{A^{s,\tau}_{p,q} (\mathbb{R}^n)} = \lim_{\ell \to \infty}\, |P_\ell|^{-\tau} \, \|   f  \|_{A^s_{p,q} (P_\ell)}$ for an appropriate sequence $\{P_\ell\}_{\ell\in\mathbb{N}}$ of  dyadic cubes such that $\lim_{\ell \to \infty} |P_\ell| =0$.
\end{itemize}
In  case (i) the decision whether  $\|   f  \|^*_{A^{s,\tau}_{p,q} (\mathbb{R}^n)} < \infty$ or not does not depend on $\tau$.
So we are back in the classical situation of $A^{s}_{p,q} (\mathbb{R}^n)$.
Clearly, in (ii) the decision whether  $\|   f  \|^*_{A^{s,\tau}_{p,q} (\mathbb{R}^n)} < \infty$ or not will depend on $\tau$.
This is the interesting situation for us.
Next, some calculations show that in \eqref{new4} we may replace the collection of dyadic cubes $\mathcal{Q}$
by all cubes having edges parallel to the axes. Then we will end up with an equivalent quantity.
By working with this quantity,
to simplify the construction of our extremal functions, we may think on a situation where we additionally have $\bigcap_{\ell} P_\ell = \{x^0\}$ for some $x^0 \in \mathbb{R}^n$.
This means, we will look for functions with exactly one singular point.
 Since our function of interest should be a characteristic function
 we look for a domain such that the boundary is $C^\infty$ except one point.
For us this motivates to deal with the so-called spiral type domains. We will pick  up this idea  here in this subsection
and later again in Subsection \ref{Main9}.

More exactly, we consider a domain lying between two spirals (red and blue) with endpoints in the origin.
 The domain $E$,  defined in polar coordinates $(r,\varphi)$, is given by
\[
E:= \left\{(r,\varphi):\  \frac{1}{(\varphi + \pi) \, \log^2 (\varphi+\pi)} \le r <
\frac 1{\varphi\, \log^2 \varphi}, \  2\pi \le \varphi < \infty\right\}.
\]

\smallskip

\begin{minipage}{0.4\textwidth}
	\includegraphics[width=6cm]{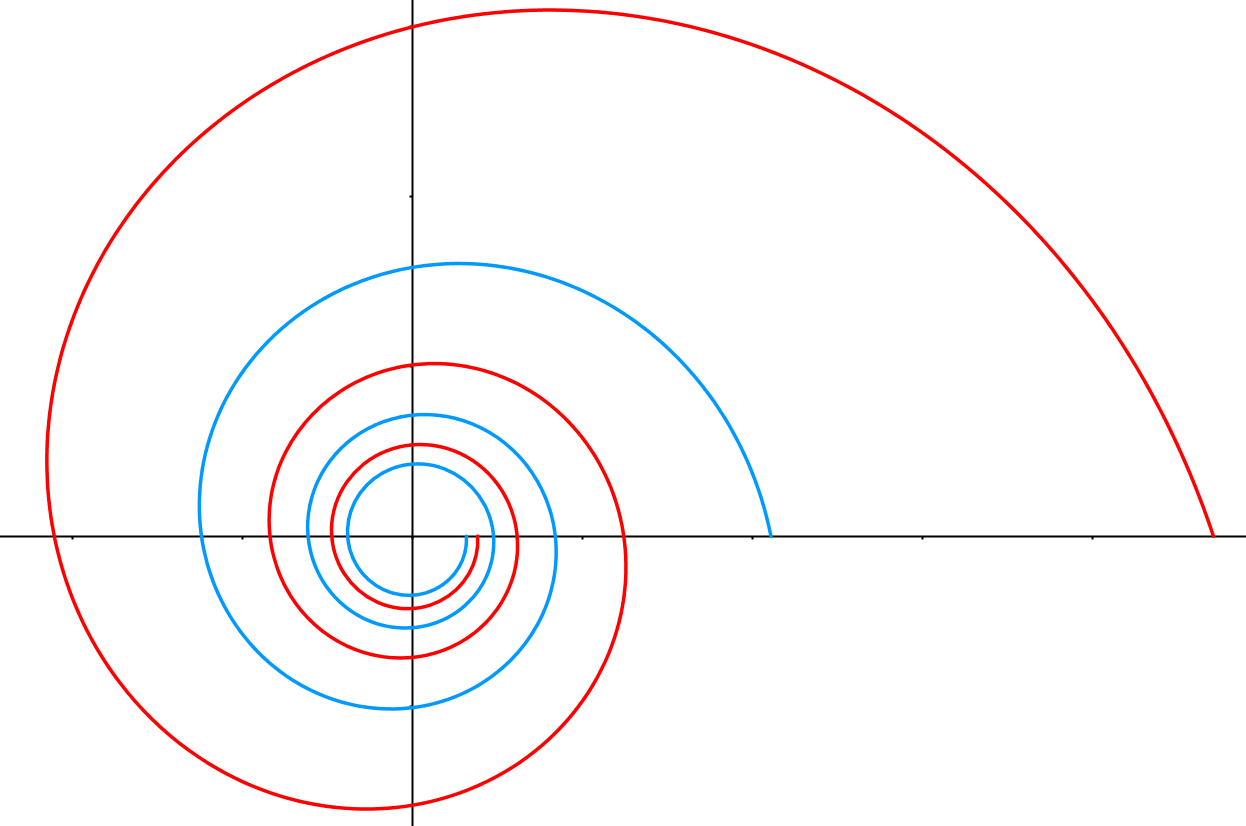}
\end{minipage}\hfill
\begin{minipage}{0.4\textwidth}
In the left picture only
a part of the domain $E$ is shown. Using the notation
of the proof of Theorem \ref{spirale0}, in particular formula \eqref{w044},
it represents the set $\bigcup_{i=1}^3E_i$.
The power of the Logarithm is chosen to be $2$
because this guarantees that the length of the boundary $\partial E$ is finite.
\end{minipage}

\smallskip

We will calculate the smoothness of the characteristic function ${\bf 1}_E$.
First we will deal with the classical situation, namely $\tau =0$.
Here we will work with the $\delta$-neighborhood of the boundary $\partial E$, namely
\[
 (\partial E)^\delta =  \left\{x\in \mathbb{R}^2: ~ \dist (x, \partial E)< \delta \right\}, \quad \forall\,\delta\in(0,\infty).
\]

\begin{theorem}\label{spirale0}
The characteristic function ${\bf 1}_E$ of this spiral-type domain $E$ belongs to
$ B^{1/p}_{p, \infty} (\mathbb{R}^2)$ for all $p \in [1,\infty]$.
\end{theorem}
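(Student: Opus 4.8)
The plan is to reduce the statement to a purely geometric property of the domain and then invoke Proposition~\ref{prima0}. Since $E$ is contained in the disk $B({\bf 0}, (2\pi\log^2(2\pi))^{-1})$ and plainly has positive area, we have $0<|E|<\infty$, so that proposition applies. By the equivalence of assertions (i) and (vi) there, it suffices to prove $\per E<\infty$, i.e. that ${\bf 1}_E\in BV(\mathbb{R}^2)$; this yields ${\bf 1}_E\in B^{1/p}_{p,\infty}(\mathbb{R}^2)$ simultaneously for every $p\in[1,\infty)$. The endpoint $p=\infty$ is separate and trivial: $B^{1/p}_{p,\infty}(\mathbb{R}^2)=B^{0}_{\infty,\infty}(\mathbb{R}^2)$ contains $L^\infty(\mathbb{R}^2)$, because $\sup_j\|\varphi_j\ast g\|_{L^\infty}\lesssim\|g\|_{L^\infty}$, and ${\bf 1}_E\in L^\infty(\mathbb{R}^2)$.

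To estimate $\per E$ I would bound it by the total length of the boundary. The set $\partial E$ consists of the two bounding spirals, together with one radial segment at $\varphi=2\pi$ joining them and the single accumulation point ${\bf 0}$; the point contributes nothing to $\mathcal{H}^1$ and the segment has finite length. For the outer spiral $r=f(\varphi)$ with $f(\varphi)=(\varphi\log^2\varphi)^{-1}$ the arc length in polar coordinates is $\int_{2\pi}^\infty\sqrt{f(\varphi)^2+f'(\varphi)^2}\,d\varphi$. A direct computation gives $|f'(\varphi)|\le C\varphi^{-1}f(\varphi)$, so $\sqrt{f^2+f'^2}\le 2f$ for $\varphi\ge 2\pi$, whence the length is at most $2\int_{2\pi}^\infty\frac{d\varphi}{\varphi\log^2\varphi}=\frac{2}{\log(2\pi)}<\infty$; the inner spiral is handled identically. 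Consequently $\mathcal{H}^1(\partial E)<\infty$, which by Federer's criterion (see, e.g., \cite{EG} or \cite{Zie}) forces $E$ to have finite perimeter with $\per E\le\mathcal{H}^1(\partial E)$. This is exactly condition (i) of Proposition~\ref{prima0}, and the theorem follows.

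The one substantive point, and the main obstacle, is the convergence of this length integral, which is precisely why the exponent $2$ on the logarithm is imposed: with $\log^1$ one would meet $\int^\infty\frac{d\varphi}{\varphi\log\varphi}=\log\log\varphi$, which diverges, and ${\bf 1}_E$ would then fail to lie in $BV$. Note that all infinitely many windings are already accounted for by integrating over $\varphi\in[2\pi,\infty)$, so no separate treatment of the accumulation at the origin is needed for the perimeter bound. An alternative and more robust route --- the one that will generalize to the $\tau>0$ and Triebel--Lizorkin settings --- is to establish the sharper neighborhood estimate $|(\partial E)^\delta|\lesssim\delta$ for $\delta\in(0,1)$ and then apply the difference characterization of Proposition~\ref{t4.7} with $\tau=0$, using that $|\Delta_h^1{\bf 1}_E|\in\{0,1\}$ gives $\|\Delta_h^1{\bf 1}_E\|_{L^p}^p=|\{x:\Delta_h^1{\bf 1}_E(x)\neq0\}|\le|(\partial E)^{|h|}|$, hence $t^{-1/p}\sup_{|h|<t}\|\Delta_h^1{\bf 1}_E\|_{L^p}\lesssim1$ (with second-order differences and $|\Delta_h^2{\bf 1}_E(x)|\le|\Delta_h^1{\bf 1}_E(x+h)|+|\Delta_h^1{\bf 1}_E(x)|$ when $p=1$, $s=1$, as in Corollary~\ref{t4.8}). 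For that route the genuine difficulty reappears near the origin, where consecutive windings are separated only by $\sim\varphi^{-2}\log^{-2}\varphi$; there the windings within radius $\sim\sqrt\delta$ merge into a core of area $O(\delta)$, while the well-separated outer windings contribute a tube of area $\lesssim\delta\,\mathcal{H}^1(\partial E)$, so $|(\partial E)^\delta|\lesssim\delta$ as required.
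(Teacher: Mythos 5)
Your proof is correct, but it follows a different route from the paper's primary argument. The paper's main proof establishes the neighborhood bound $|(\partial E)^\delta|\lesssim\delta$ directly: it decomposes $E$ into the windings $E_k$, estimates $|(\partial E_k)^{|h|}|\lesssim |h|\,[k\log^2(k+1)]^{-1}$ for the windings of radius larger than $|h|$, absorbs the remaining windings into the disk $B({\bf 0},|h|)$ of area $O(|h|^2)$, and then applies Proposition \ref{prima0}(ii) via $E(h)\cup F(h)\subset(\partial E)^{|h|}$. Your primary argument --- finite arc length of the two spirals, hence $\mathcal{H}^1(\partial E)<\infty$, hence $\per E<\infty$ by Federer's criterion, hence Proposition \ref{prima0}(i) --- is essentially the alternative proof the authors record in the remark following the theorem, except that they reach $\per E<\infty$ by exhibiting polygonal approximations $\Omega_j$ with uniformly bounded perimeter rather than by citing Federer's criterion; your version is the more economical of the two, and your arc-length computation ($|f'|\le C\varphi^{-1}f$, so the length is controlled by $\int_{2\pi}^\infty(\varphi\log^2\varphi)^{-1}\,d\varphi<\infty$) is sound, with the correct observation that the exponent $2$ on the logarithm is exactly what makes this integral converge. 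Two small points in your favour: you explicitly dispose of the endpoint $p=\infty$ via $L^\infty(\mathbb{R}^2)\hookrightarrow B^0_{\infty,\infty}(\mathbb{R}^2)$, which Proposition \ref{prima0} does not cover and which the paper leaves implicit; and your sketched ``alternative route'' through $|(\partial E)^\delta|\lesssim\delta$ is in substance the paper's main proof, which, as you anticipate, is the version whose method carries over to Theorem \ref{spirale1} and to the Triebel--Lizorkin-type setting, whereas the perimeter argument does not see the Morrey parameter $\tau$ at all.
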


\begin{proof}
For any $k \in \mathbb{N}$, let
\begin{equation}\label{w044}
E_k := \left\{(r,\varphi):~ \frac{1}{(\varphi + \pi) \, \log^2 (\varphi+\pi)} \le r <
\frac 1{\varphi\, \log^2 \varphi} , \  2\pi k \le \varphi < 2\pi (k+1) \right\}.
\end{equation}
Observe that, for any $k\in\mathbb{N}$, the width of $E_k$ satisfies $\sim [k\, \log (k+1)]^{-2}$ and therefore
\begin{equation}\label{vol}
	|E_k| \sim \frac{1}{ k^3\, \log^4 (k+1)}.
\end{equation}
Now we estimate the volume of the sets $E(h)$ and $F(h)$ defined in
\eqref{w040} and \eqref{w041}, respectively.
To simplify the argument we concentrate on the shifts
$|h|= 2^{-j}$ for any $j\in \mathbb{N}$.
If $|h|$ is smaller than the width of $E_k$ it is obvious
that $(\partial E_k)^{|h|}$ satisfies
\[
  \left|(\partial E_k)^{|h|}\right|\ls |h|\, \frac{1}{k\, \log^2 (k+1)}.
\]
For any $j \in \mathbb{N}$, define
\[
 \mathcal{E}_j := \bigcup_{\gfz{k \in \mathbb{N}}{k \, \log^2 (k+1) < 2^j}} E_k.
\]
Then we find that
\[
 \left|( \partial \mathcal{E}_j)^{2^{-j}}\right|\le 2^{-j}\,  \sum_{\gfz{k \in \mathbb{N}}{k \, \log^2 (k+1) < 2^j}} \, \frac{1}{k\, \log^2 (k+1)} .
\]
Next we observe that there exists some natural number $j_0$ such that, for any $j \ge j_0$,
\[
\frac{2^j}{j^2} \, \log^2 \left(\frac{2^j}{j^2} \right) \sim 2^j.
\]
Hence
\[
 \sum_{\gfz{k \in \mathbb{N}}{k \, \log^2 (k+1) < 2^j}} \, \frac{1}{k\, \log^2 (k+1)}
 \ls \sum_{k=j_0}^{c2^j/j^2} \, \frac{1}{k\, \log^2(k+1)}
\]
for an appropriate positive constant $c$. But
\[
\sum_{k=j_0}^{c2^j/j^2} \, \frac{1}{k\, \log^2 (k+1)} \ls 1.
\]
Taking into account that $\sum_{k=1}^{j_0} \, \frac{1}{k\, \log^2 (k+1)}= C < \infty$ this further  implies
\begin{equation}\label{w042}
\left|(\partial \mathcal{E}_j)^{2^{-j}}\right|\ls 2^{-j}, \quad\forall j \in \mathbb{N}.
\end{equation}
If $k\in \mathbb{N}$ is chosen such that $k \, \log^2 (k+1) \ge 2^j$ we observe that
\[
E_k \subset B\left({\bf 0}, [2\pi k \log^2 (2\pi k)]^{-1}\right) \subset B({\bf 0}, 2^{-j}).
\]
Obviously this leads to the estimate
\[
 \left|\bigcup_{\gfz{k \in \mathbb{N}}{k \, \log^2 (k+1) \ge  2^j}} E_k\right| \ls |B(\textbf{0}, 2^{-j})|\ls 2^{-2j}.
\]
Together with \eqref{w042} this yields
\[
\left|( \partial E)^{2^{-j}}\right|\ls 2^{-j}, \quad\forall j \in \mathbb{N}.
\]
By the monotonicity of $|(\partial E)^{\delta}|$ on $\delta$, we obtain
\begin{equation*}
\left|( \partial E)^{\delta}\right|\ls \delta, \quad\forall\, \delta\in(0,\infty).
\end{equation*}
Since $E(h) \cup F(h) \subset ( \partial E)^{\delta}$ whenever $|h|= \delta$,
an application of Proposition \ref{prima0} yields the desired conclusion of Theorem \ref{spirale0}.
 \end{proof}

\begin{remark}
 We will give an  alternative proof of Theorem \ref{spirale0}.
For any $s\in(0,\infty)$, let ${\mathcal{H}}^{s}$ denote the $s$-dimensional Hausdorff measure.
 We will use   the identity
  \[
  \per E = \inf \left[ \liminf_{j\to \infty}\, {\mathcal{H}}^{n-1} (\partial M_j) \right],
  \]
  where the infimum  is taken  over all sequences $\{M_j\}_j$ of sets with a smooth boundary (or polyhedra) such that
  \[
  \lim_{j\to \infty} \left\| {\bf 1}_E - {\bf 1}_{M_j}\right\|_{L^1 (K)}=0
  \]
 for any compact set $K\subset\mathbb{R}^n$,  see, for instance, \cite[Sections 14.1 and 14.6]{BZ}.
Observe that the length $\ell (\partial E)$ of the boundary of $E$ and the measure of $E$ are finite.
	Now it is easy to construct a sequence $\{\Omega_j\}_{j\in \mathbb{N}}$ of domains with
polygonal boundary such that
\[
\lim_{j\to \infty} \, \| {\bf 1}_E - {\bf 1}_{\Omega_j}\|_{L^1(\mathbb{R}^2)} = 0
\]	
and
\[
\sup_{j \in \mathbb{N}} \, \ell (\partial \Omega_j) \ls 1.
\]
This implies $\per E < \infty$. Then one can  apply Proposition \ref{prima0}
to obtain the desired conclusion of Theorem \ref{spirale0}.
\end{remark}

Theorem \ref{spirale0} combined with Proposition  \ref{prima0} shows that ${\bf 1}_E$ has
 the maximal regularity within the classical Besov and Triebel--Lizorkin spaces.
Now we show that this is no longer true within type spaces, namely in the sense of Definition \ref{MAX}.

\begin{theorem}\label{spirale1}
Let $p \in [1,\infty)$, $\tau \in [0,1/p]$, and $s\in(0,1/p]$.
Then ${\bf 1}_E \in B^{s,\tau}_{p, \infty} (\mathbb{R}^2)$ if and only if $\tau + s \le 1/p$.
\end{theorem}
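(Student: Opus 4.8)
The plan is to use the difference characterization of $B^{s,\tau}_{p,\infty}(\mathbb{R}^2)$ from Proposition \ref{t4.7} with $M=1$ (and $M=2$ in the boundary case $p=1$, $s=1$), exactly as in the proof of Theorem \ref{Lipschitz2}. Since $|\Delta_h^1 {\bf 1}_E(x)|\in\{0,1\}$, for any cube $P$ and any $|h|<t$ one has $\int_P|\Delta_h^1{\bf 1}_E|^p\,dx=|E(P,h)|+|F(P,h)|$, where $E(P,h):=\{x\in E\cap P:~x+h\notin E\}$ and $F(P,h):=\{x\in P\setminus E:~x+h\in E\}$; these are disjoint subsets of $(\partial E)^t\cap P$, so the whole problem reduces to estimating $|(\partial E)^t\cap P|$ from above (for sufficiency) and $|E(P,h)|+|F(P,h)|$ from below for one good $h$ (for necessity), uniformly over dyadic $P$ with $\ell(P)=2^{-\ell}\le1$ and $t<2^{-\ell}$. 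Two geometric facts drive everything. First, at distance $r$ from the origin the radial spacing between consecutive windings of $\partial E$ is comparable to the width of $E_k$ with $r_k\sim r$, namely $w(r)\sim r^2\log^2(1/r)$; consequently the $t$-neighborhood of the smooth part of the spiral has measure summing to $\lesssim t$ over all windings (this is just the finiteness of $\ell(\partial E)$, cf. Theorem \ref{spirale0}). Second, there is an inner mixing radius $\rho_t\sim t^{1/2}/\log(1/t)$ determined by $w(\rho_t)\sim t$, below which every shift of length $\sim t$ crosses windings; here $E$ occupies a fixed proportion of each ball, $|E\cap B({\bf 0},R)|\sim R^2$, as follows from \eqref{vol}.

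For the sufficiency I would show $\|{\bf 1}_E\|^\spadesuit_{B^{s,\tau}_{p,\infty}(\mathbb{R}^2)}<\infty$ when $\tau+s\le1/p$ (membership in $L^p_\tau$ being trivial since $E$ is bounded). Splitting $(\partial E)^t\cap P$ into its inner-ball contribution $\lesssim\min(\rho_t^2,\ell(P)^2)$ and its smooth-arc contribution $\lesssim t\cdot(\text{arc length in }P)$, a short case analysis in the two regimes $t\le\ell(P)^2$ and $\ell(P)^2<t<\ell(P)$ shows that for the cube centered at the origin the ratio $|P|^{-\tau}t^{-s}[|(\partial E)^t\cap P|]^{1/p}$ is, up to logarithms, $\lesssim 2^{2\ell(\tau+s-1/p)}$, with the extremal balance at $t\sim\ell(P)^2=2^{-2\ell}$. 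For cubes at distance $d\gtrsim\ell(P)$ from the origin the same bookkeeping, now using $w(d)\sim d^2$ and counting $\max(1,\ell(P)/w(d))$ windings through $P$, again produces the worst bound $\sim 2^{2\ell(\tau+s-1/p)}$ at $d\sim\ell(P)$ and $t\sim d^2$, every other sub-case being governed by the weaker condition $\tau+s/2\le1/p$. Hence all ratios are uniformly bounded exactly when $\tau+s\le1/p$.

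For the necessity I would assume ${\bf 1}_E\in B^{s,\tau}_{p,\infty}(\mathbb{R}^2)$ and test $\|{\bf 1}_E\|^\spadesuit$ on the cube $P_\ell$ centered at the origin with $\ell(P_\ell)=2^{-\ell}$ and on a shift $h_\ell$ with $|h_\ell|=t\sim 2^{-2\ell}$, so that $\rho_t\sim 2^{-\ell}/\ell$ and $B({\bf 0},\rho_t)\subset P_\ell$. The key lower bound is $|E(P_\ell,h_\ell)|+|F(P_\ell,h_\ell)|\gtrsim\rho_t^2\sim 2^{-2\ell}/\ell^2$ for a suitable $h_\ell$: inside $B({\bf 0},\rho_t)$ the function ${\bf 1}_E$ is, in the radial variable, a union of intervals of length $\sim w\sim t$ separated by gaps of length $\sim t$, so a radial shift of length comparable to $w$ moves a fixed proportion of $E\cap B({\bf 0},\rho_t)$ into the complement; combined with $|E\cap B({\bf 0},\rho_t)|\sim\rho_t^2$ this yields the claim. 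Plugging in gives $|P_\ell|^{-\tau}t^{-s}[|E(P_\ell,h_\ell)|+|F(P_\ell,h_\ell)|]^{1/p}\gtrsim 2^{2\ell(\tau+s-1/p)}\ell^{-2/p}$, which tends to $\infty$ whenever $\tau+s>1/p$, contradicting membership.

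The main obstacle is precisely this necessity lower bound, i.e. making rigorous that a single shift $h_\ell$ of the correct length and direction desynchronizes a fixed proportion of the windings inside $B({\bf 0},\rho_t)$; I expect to handle it either by choosing $h_\ell$ radial and quantifying the phase mismatch winding by winding, or by an averaging argument over $|h|\in[t/2,t)$ via Fubini to extract one good $h_\ell$. The remaining points are routine: the boundary value $s=1/p$ of the sufficiency forces $\tau=0$ and is already contained in Theorem \ref{spirale0} together with the $\tau$-monotonicity of Remark \ref{grund}; the borderline case $p=1$, $s=1$ (where Proposition \ref{t4.7} needs $M=2$) is treated by the same inner-ball estimates after replacing first by second order differences, using $|\Delta_h^2{\bf 1}_E|\le|\Delta_h^1{\bf 1}_E(\cdot+h)|+|\Delta_h^1{\bf 1}_E|$ for the upper bound; and all logarithmic factors are harmless, being polynomial in $\ell$ while the decisive terms are exponential in $\ell$.
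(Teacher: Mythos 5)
Your proposal is correct in substance and shares the paper's skeleton (the difference characterization of Proposition \ref{t4.7}, the reduction of $\int_P|\Delta^1_h{\bf 1}_E|^p\,dx$ to $|E(P,h)|+|F(P,h)|$, the concentration on cubes near the origin, and the identification of $t\sim\ell(P)^2$ as the critical scale), but both key estimates are obtained by a genuinely different mechanism. For the sufficiency, the paper runs a winding-by-winding case analysis on the sets $E_k$ (trivial bound $|E_k(P,h)|\lesssim|E_k|$ when the width of $E_k$ is below $|h|$, boundary-layer bound $\sim|h|/(k\log^2 k)$ otherwise) and sums the resulting series; you instead use only the containment $E(P,h)\cup F(P,h)\subset(\partial E)^{|h|}\cap P$ together with $|(\partial E)^\delta|\lesssim\delta$ (already established in Theorem \ref{spirale0}) and the trivial bound $|(\partial E)^\delta\cap P|\le|P|$, which gives $\min(|P|,Ct)$ and hence the same extremal exponent $2^{2\ell(\tau+s-1/p)}$ with less bookkeeping; your treatment of off-center cubes at distance $d$ from the origin is in fact more explicit than the paper's remark that such cubes see a Lipschitz boundary. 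For the necessity, the paper extracts the lower bound from the \emph{partially resolved} windings whose width exceeds $|h|$ (radii $\gtrsim t^{1/2}/\log(1/t)$), summing $|h|\sum_k 1/(k\log^2 k)$ over $c2^\ell/\ell^2\le k\le 2^{j/2}/j$ at $j=2\ell$ to get $2^{-2\ell}\log\ell/\ell^2$; you extract it from the \emph{fully mixed} core $B({\bf 0},\rho_t)$ where the windings are thinner than $|h|$, obtaining $\rho_t^2\sim 2^{-2\ell}/\ell^2$ by averaging over $|h|\in[t/2,t)$. The two lower bounds differ only by a $\log\ell$ factor, which is irrelevant against the exponential factor $2^{2\ell(\tau+s-1/p)}$, so either route closes the proof.

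The one step you rightly flag as the main obstacle — desynchronizing a fixed proportion of $E\cap B({\bf 0},\rho_t)$ — does go through by your Fubini route, but requires a small amount of constant-chasing: since $w(\rho_t)\sim t$ by definition, a disk of radius $t/8$ centered at a point at radius close to $\rho_t$ could sit inside a single winding, so you should restrict the averaging to $x\in E\cap B({\bf 0},c\rho_t)$ for a small fixed $c$, where the local widths and gaps are $\ll t$ and every annulus $\{t/2<|y-x|<t\}$ meets $E^\complement$ in measure $\gtrsim t^2$; combined with $|E\cap B({\bf 0},c\rho_t)|\gtrsim\rho_t^2$ (which does follow from \eqref{vol} by summation) this yields one good $h_\ell$ with $|E(P_\ell,h_\ell)|\gtrsim\rho_t^2$. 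Your handling of the endpoints ($s=1/p$ forcing $\tau=0$ and reducing to Theorem \ref{spirale0}; the $p=1$, $s=1$ case via second differences, exactly as in Corollary \ref{t4.8}) matches the paper's.
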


\begin{proof}
We proceed as in the proof of Theorem \ref{Lipschitz2} and use the characterization by differences  given in Proposition \ref{t4.7}. Hence we can work with $s>0$  only. The proof is divided into three steps.
Step 1 will be devoted to the sufficiency part of the theorem in case $p=1$.
In Step 2 we will investigate the necessity part in case $p=1$.
Finally, Step 3 will be devoted to the case  $p\in(1,\infty)$.
\\
{\em Step 1.} In this step, we want to show the finiteness of
\begin{eqnarray}\label{XXX}
	{\rm I}:=	\sup_{P\in\mathcal{Q}} \, \frac1{|P|^\tau} \, \sup_{0 < t< 2\min\{\ell(P),1\}}
	t^{-s}\sup_{\frac t2\le|h|<t} \int_P 		|{\bf 1}_E(x+h)-{\bf 1}_E (x)|\,dx
\end{eqnarray}
when $\tau + s \le 1$. To this end, we use the notation of the proof of Theorem \ref{spirale0}, in particular, the sets $E_k$, $k \in \mathbb{N}$, are defined  in \eqref{w044}.
In addition we notice that there exists a positive constant $c$ such that, for any $\ell \in\mathbb{N}$,
\[
|E_k \cap Q_{\ell,\mathbf{0}}| =0 \quad \mbox{if}\quad k \le c\,\frac{ 2^{\ell}}{\ell ^2}.
\]
Essentially this remains true if we replace $Q_{\ell,\mathbf{0}}$ by $\{x \in \mathbb{R}^2:~ {\rm dist}\,(x,Q_{\ell,\textbf{0}}) <2^{-\ell} \}$, that is,
\begin{equation}\label{w030b}
	\left|E_k \cap \left\{x \in \mathbb{R}^2:~ {\rm dist}\,(x,Q_{\ell,\mathbf{0}}) <2^{-\ell} \right\}\right| = 0 \quad \mbox{if}\quad k \le C\, \frac{2^{\ell}}{\ell^2}
\end{equation}
for some positive constant $C$.
Clearly, to estimate ${\rm I}$, we only need to consider   cubes $P$ such that $\inf_{x \in E} \dist (x, P) < 1$.
In addition we observe that, whenever $\textbf{0} \not\in \overline {P}$, then $P\cap E$ is a set with a Lipschitz boundary. These observations, together with the definition of $E$,
further imply that we only need to deal with the dyadic cubes $Q_{\ell,{\bf0}}$ with integer $\ell\in\mathbb{N}$.	
As above we shall use the notation
$$E(P,h): = \left\{x \in E \cap P: ~ x+h \not\in E \right\}$$
and
$$F(P,h):= \left\{x \in P: ~x \not\in E, ~x+h \in E \right\}.$$
It follows
\begin{align*}
	& \dint_P
	|{\bf 1_E}(x+h)-{\bf 1}_E (x) |\,dx \\
&\quad= |	E(P,h)|+  |	F(P,h)|\nonumber
	\\
	\nonumber
	&\quad = \sum_{k=1}^\infty \left( |\{x \in P: ~ x \in E_k \, , x+h \not\in E\}| +
	|\{x \in P: ~ x+h \in E_k \, , x \not\in E\}|\right)
	\\
	& \quad= \sum_{k=1}^\infty \left(|E_k (P,h) | + |F_k (P,h)| \right),\nonumber
\end{align*}
where, for any $k \in \mathbb{N}$,
\begin{align*}
	{E}_k(P,h) & : = \left\{x \in E_k \cap P: ~ x+h \not\in E \right\}
\end{align*}
and
\begin{align*}
	F_k(P,h) & : =  \left\{x \in P: ~x \not\in E, ~x+h \in E_k \right\}.
\end{align*}	
Now we fix $\ell\in\mathbb{N}$ and put $P = Q_{\ell,{\bf0}}.$ Let $h\in\mathbb{R}^n$ with
$|h| \in [2^{-j}, 2^{-j+1}]$ for some $j \ge \ell$.
Clearly, this and \eqref{w030b} yield
\begin{equation}\label{w034}
	\left\| \Delta_h^1 \mathbf{1}_E \right\|_{L^1(P)} = \sum_{k=C\, 2^{\ell}/\ell^2}^\infty \left(  |	E_k(P,h)|+  |	 F_k(P,h)|\right),
\end{equation}	
where $C$ is the same as in \eqref{w030b}.
We have to consider two cases: (a) the width of $E_k$ is less than $|h|$;
(b) the width of $E_k$ is larger than or equal to $|h|$.\\
{\bf Case (a)}. Observe that the width of $E_k$ is comparable to $[k \log (k+1)]^{-2}$.
Hence, in this case, we only need to consider
$k\gtrsim \frac{2^{j/2}}{j}.$
We will apply the trivial estimate
\begin{equation}\label{w031}
	|	E_k(P,h)|  \ls |E_k| \ls  \frac{1}{ k^3\, \log^4 (k+1)},
\end{equation}
see \eqref{vol}.
Similarly we obtain
\begin{equation}\label{w031b}
	|	F_k(P,h)|  \ls |E_k|\ls  \frac{1}{ k^3\, \log^4 (k+1)}\, .
\end{equation}
\\
{\bf Case (b)}. In this case,  we only need to consider $C \, \frac{2^{\ell}}{\ell^2} \le k \ls \frac{2^{j/2}}{j}.$
Observe that between $E_k$ and $E_{k+1}$ there is a strip of almost the same size as $E_k$  but belonging to $E^\complement$.
Hence, in this situation we have	
\begin{equation}\label{w032}
	|	E_k(P,h)| \sim |h| \, \frac 1{k \log^2 (k+1)}\, .
\end{equation}	
Clearly, by the same argument we obtain	
\begin{equation}\label{w033}
	|	F_k(P,h)| \sim |h| \, \frac 1{k \log^2 (k+1)}\, .
\end{equation}

In case $\ell \le j < 2\ell$	we insert \eqref{w031} and \eqref{w031b} into \eqref{w034} and conclude that
\begin{equation}\label{w035}
	\left\| \, \Delta_h^1 {\bf 1}_E \,\right\|_{L^1(P)} \ls \sum_{k=C\, 2^{\ell}/\ell^2}^\infty \frac{1}{ k^3\, \log^4 (k+1)}
\ls  2^{-2\ell}.
\end{equation}	
Here we have used
\[
\int_{a}^\infty 	\frac{1}{x^3 \log^4(x+1)}\, dx\ls \frac{1}{\log^4 (a+1)}\, \int_{a}^\infty 	
\frac{1}{x^3}\, dx  \ls \frac{1}{a^2\, \log^4 (a+1)}\, .
\]
In case of $j\ge 2\ell$ we insert \eqref{w031}  through \eqref{w033} into \eqref{w034}  and find that
\begin{align}\label{w036a}
	\left\| \, \Delta_h^1 {\bf 1}_E \,\right\|_{L^1(P)} & \ls \sum_{k=C\, 2^{\ell}/\ell^2}^{2^{j/2}/j}
	\frac{|h|}{k \, \log^2 (k+1)}  + \sum_{k=2^{j/2}/j}^\infty \frac{1}{k^3 \log^4(k+1)}\nonumber
	\\
	& \ls 2^{-j}   + \frac{1}{j^2  \,  2^{j}}  \ls  2^{-j}\sim |h|.
\end{align}	
From \eqref{w035} we deduce that
\begin{equation}\label{w038}
	\sup_{2^{-2\ell} < t< 2^{-\ell + 1}}\,
	t^{-s}\sup_{t/2 <|h|<t} \, \dint_P
	|{\bf 1}_E(x+h)-{\bf 1}_E (x) |\,dx \ls  2^{2\ell (s-1)}
\end{equation}
because of $s >0$.
Furthermore, \eqref{w036a} yields
\begin{equation}\label{w037b}
	\sup_{0 < t\le  2^{-2\ell}}\, 	t^{-s} \sup_{t/2 <|h|<t} \, \int_P |{\bf 1}_E(x+h)-{\bf 1}_E (x) |\,dx
	\ls  2^{2\ell(s-1)}
\end{equation}
because of $s\le 1$.
Combining \eqref{w038} and \eqref{w037b}, we obtain
\[
{\rm I}\ls \sup_{\ell\in \mathbb{N}} \, 2^{2 \ell \tau}  2^{2\ell(s-1)} .
\]
Consequently  $\tau + s-1 \le 0$ is sufficient for ${\bf 1}_E \in B^{s,\tau}_{1,\infty}(\mathbb{R}^2)$.
\\
{\em Step 2.} We show necessity of  $\tau + s-1 \le 0$ when $p=1$.
Therefore we assume that  $\tau + s-1 >0$.
Let $|h|= 2^{-j}$ be such that $C \, \frac{2^{\ell}}{\ell^2} \le k \ls \frac{2^{j/2}}{j}.$
Based on \eqref{w034}, \eqref{w032}, and \eqref{w033}, we conclude that
\[
\left\| \, \Delta_h^1 {\bf 1}_E \,\right\|_{L^1(P)} \ge
\sum_{k=C\, 2^{\ell}/\ell^2}^{2^{j/2}/j} [|	E_k(P,h)|+|F_k(P,h)|]\gtrsim 2^{-j} \, \sum_{k=C\, 2^{\ell}/\ell^2}^{2^{j/2}/j} \frac{1}{ k\, \log^2 (k+1)}.
\]
Taking $j= 2\ell$ we obtain
\[
\sum_{k=C\, 2^{\ell}/\ell^2}^{2^{\ell}/(2\ell)} \frac{1}{ k\, \log^2 (k+1)} \gtrsim \frac{1}{\ell^2}\,
\sum_{k=C\, 2^{\ell}/\ell^2}^{2^{\ell}/(2\ell)} \frac{1}{k} \gtrsim \frac{1}{\ell^2}\, \log \ell\, .
\]
For that reason we find that
\begin{align*}
	{\rm I}
	&\gtrsim  	\sup_{\ell\in\mathbb{N}} \, 2^{2\ell\tau} 2^{2\ell (s-1)} \frac{\log \ell}{\ell^2}.
\end{align*}
If $\tau +s -1 >0$ it follows that the right-hand side of the above inequality is infinity.
\\
{\em Step 3.} Let $p\in (1,\infty)$. Then Corollary \ref{t4.8} yields that ${\bf 1}_E \in B^{sp,\tau p}_{1, \infty} (\mathbb{R}^2) $
if and only if ${\bf 1}_E \in B^{s,\tau }_{p, \infty} (\mathbb{R}^2) $.
Observing that $\tau p +sp -1 \le 0$ if and only if $ \tau   + s - \frac 1p \le 0$,
we then complete the proof of Theorem \ref{spirale1}.
\end{proof}
	
\begin{remark}
 Notice that $s=1/p$ is possible only in case $\tau =0$. For $\tau >0$ there is a serious restriction.
 This shows that this domain is not of maximal regularity.
 \end{remark}

Finally, let us have a look onto the rather interesting pointwise multiplier properties of ${\bf 1}_E$.

\begin{theorem}\label{multi2}
Let $p\in[1,\infty)$, $q\in(0,\infty]$ and $s\in \rr\setminus\{0\}$. Then  the function ${\bf 1}_E$ does not belong to $M(B^s_{p,q}(\mathbb{R}^2))$.
\end{theorem}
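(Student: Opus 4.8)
The plan is to show that if ${\bf 1}_E$ were a pointwise multiplier, it would be forced into a Besov-type space that Theorem \ref{spirale1} forbids. I would first reduce everything to the case $s>0$. By the duality relation \eqref{dual1}, $M(B^s_{p,q}(\mathbb{R}^2))=M(B^{-s}_{p',q'}(\mathbb{R}^2))$ (with the convention $q'=\infty$ when $q\le 1$), so for $s<0$ it is enough to rule out ${\bf 1}_E\in M(B^{-s}_{p',q'}(\mathbb{R}^2))$ with $-s>0$ and $p'\in(1,\infty]$. When $p\in(1,\infty)$ (so $p'<\infty$) this is exactly the positive-smoothness statement applied to the parameters $(p',q',-s)$. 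When $p=1$ (so $p'=\infty$) I would argue directly: picking $g\in C_{\rm c}^\infty(\mathbb{R}^2)$ with $g\equiv 1$ on the bounded set $\overline{E}$, the multiplier property gives ${\bf 1}_E\cdot g={\bf 1}_E\in B^{-s}_{\infty,q'}(\mathbb{R}^2)$; but for $-s>0$ this is a H\"older--Zygmund space of continuous functions, contradicting the jump of ${\bf 1}_E$. Thus the whole theorem reduces to proving ${\bf 1}_E\notin M(B^s_{p,q}(\mathbb{R}^2))$ for $p\in[1,\infty)$, $q\in(0,\infty]$ and $s>0$.

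For $s>1/p$ the same cutoff trick closes the case at once: if ${\bf 1}_E\in M(B^s_{p,q}(\mathbb{R}^2))$, then choosing $g\in C_{\rm c}^\infty(\mathbb{R}^2)$ with $g\equiv 1$ on $\overline{E}$ yields ${\bf 1}_E={\bf 1}_E\cdot g\in B^s_{p,q}(\mathbb{R}^2)$, hence ${\bf 1}_E\in B^{1/p}_{p,1}(\mathbb{R}^2)$ by the elementary embedding for strictly larger smoothness, which contradicts Proposition \ref{gul}. The same argument also disposes of $s=1/p$ with $q<\infty$.

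The substantial case is $0<s\le 1/p$, where ${\bf 1}_E$ actually belongs to $B^s_{p,q}(\mathbb{R}^2)$ for $s<1/p$, so mere membership is useless and I must extract the Morrey regularity encoded in the multiplier property. I would test against dilated bumps: fix $\chi\in C_{\rm c}^\infty(\mathbb{R}^2)$ with $\chi\equiv 1$ on $[0,1]^2$, and for a dyadic cube $P=Q_{j,m}$ ($j\in\mathbb{N}_0$) with lower corner $x_P$ set $g_P:=\chi(2^j(\cdot-x_P))\in B^s_{p,q}(\mathbb{R}^2)$, for which the standard dilation estimate gives $\|g_P\|_{B^s_{p,q}(\mathbb{R}^2)}\ls 2^{j(s-2/p)}$. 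Since $g_P\equiv 1$ on $P$ and the restriction operator onto $P$ has norm at most $1$, the multiplier hypothesis yields
\[
\|{\bf 1}_E\|_{B^s_{p,q}(P)}\le \|{\bf 1}_E\, g_P\|_{B^s_{p,q}(\mathbb{R}^2)}\le \|{\bf 1}_E\|_{M}\,\|g_P\|_{B^s_{p,q}(\mathbb{R}^2)}\ls \|{\bf 1}_E\|_{M}\,2^{j(s-2/p)}.
\]
With $\tau:=\tfrac1p-\tfrac s2$ one has $|P|^{-\tau}=2^{2j\tau}=2^{j(2/p-s)}$, so the exponents cancel and $\sup_{P}|P|^{-\tau}\|{\bf 1}_E\|_{B^s_{p,\infty}(P)}\ls\|{\bf 1}_E\|_{M}<\infty$. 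By the characterization \eqref{new4} (equivalently Proposition \ref{wav-type2}) this would place ${\bf 1}_E$ in $B^{s,\tau}_{p,\infty}(\mathbb{R}^2)$. But $s+\tau=\tfrac1p+\tfrac s2>\tfrac1p$, so Theorem \ref{spirale1} gives ${\bf 1}_E\notin B^{s,\tau}_{p,\infty}(\mathbb{R}^2)$, a contradiction; this covers in particular the remaining case $s=1/p$, $q=\infty$. When $p\in(1,\infty)$ and $q\ge 1$ one may instead use the cleaner chain $M(B^s_{p,q})\hookrightarrow M(B^s_{p,1},B^s_{p,\infty})=L^\infty\cap B^{s,\tau}_{p,\infty,{\rm unif}}$ from Theorem \ref{Guli3}.

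I expect the main obstacle to be the localization step: verifying that $g_P$ has the stated Besov norm uniformly in $P$, and, more delicately, that the finiteness of $\sup_P|P|^{-\tau}\|{\bf 1}_E\|_{B^s_{p,\infty}(P)}$ genuinely contradicts Theorem \ref{spirale1}. The cleanest way to close this safely is to bypass \eqref{new4} altogether: feed the local bound into the difference characterization of Proposition \ref{t4.7} and then confront it with the explicit lower bounds computed in Step 2 of the proof of Theorem \ref{spirale1} on the cubes $Q_{\ell,{\bf 0}}$, where the accumulation of the spiral at the origin forces the local norm to grow at a rate incompatible with the displayed estimate. A secondary bookkeeping point is ensuring that the duality reduction for $s<0$ legitimately covers $q\in(0,1)$, for which the convention $q'=\infty$ in \eqref{dual1} must be justified.
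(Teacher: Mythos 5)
Your proposal is correct, but in the core case $0<s\le 1/p$ it reaches the contradiction by a genuinely different mechanism than the paper. The paper's proof is essentially an assembly of earlier results: from Theorem \ref{spirale1} it reads off ${\bf 1}_E\notin B^{s,\frac1p-\frac s2}_{p,\infty}(\mathbb{R}^2)$, hence (since $E$ is bounded) not in the uniform version either, concludes ${\bf 1}_E\notin M(B^s_{1,1}(\mathbb{R}^2))$ via Proposition \ref{FAchar} and ${\bf 1}_E\notin M(B^s_{p,1}(\mathbb{R}^2),B^s_{p,\infty}(\mathbb{R}^2))$ via Theorem \ref{Guli3}, and then removes the restrictions on $q$ and on $s\le 1/p$ by real interpolation with $L^p(\mathbb{R}^2)$; the case $s<0$ is dispatched by \eqref{dual1}. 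You instead manufacture the Morrey-type bound directly by testing the multiplier against the dilated bumps $g_P$, which is more self-contained --- it needs neither Gulisashvili's characterization nor the $p\le1$ multiplier theorem, and it covers all $q\in(0,\infty]$ in one stroke without interpolation --- at the price of the dilation estimate and the localization bookkeeping you correctly flag; your proposed bypass of \eqref{new4} through Proposition \ref{t4.7} and the explicit lower bounds on the cubes $Q_{\ell,{\bf 0}}$ in Step 2 of the proof of Theorem \ref{spirale1} does close the argument, since for $\tau=\frac1p-\frac s2$ those lower bounds grow like $2^{\ell s}\log\ell/\ell^2$ while your upper bound is uniform. Your treatment of the peripheral cases ($s>1/p$ via Proposition \ref{gul}; $p=1$, $s<0$ via the cutoff and the continuity of positive-order H\"older--Zygmund functions; $q<1$ in the duality step) is in fact more careful than the paper's Step 2, which silently sends $p=1$ to $p'=\infty$. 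Two small points to tighten: take $\chi\equiv1$ on a sufficiently large neighborhood of $[0,1]^2$ so that $\Delta^1_h({\bf 1}_Eg_P)=\Delta^1_h{\bf 1}_E$ on $P$ for all $|h|<2\ell(P)$, and route the endpoint $p=1$, $s=1$ through Corollary \ref{t4.8} or second differences, since Proposition \ref{t4.7} with $M=1$ requires $s<1$.
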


\begin{proof}
	{\em Step 1.}
Let $p \in [1,\infty)$, $\tau \in [0,1/p]$, and $s\in(0,1/p]$. By Theorem \ref{spirale1} we know that
 ${\bf 1}_E \in B^{s,\tau}_{p, \infty} (\mathbb{R}^2)$ implies $\tau + s \le 1/p$.	
 Hence ${\bf 1}_E \not\in B^{s,\frac 1p - \frac s2}_{p, \infty} (\mathbb{R}^2)$ and furthermore
 ${\bf 1}_E \not\in B^{s,\frac 1p - \frac s2}_{p, \infty,{\rm unif}} (\mathbb{R}^2)$.
This  shows
\[
{\bf 1}_E \not \in M\left(B^s_{1,1}(\mathbb{R}^2)\right) \quad \mbox{and}\quad {\bf 1}_E \not \in M\left(B^s_{p,1}(\mathbb{R}^2), B^s_{p,\infty}(\mathbb{R}^2)\right)\, , \quad s\in(0,1/p]
\]
see Proposition \ref{FAchar} and Theorem \ref{Guli3}. If we assume that ${\bf 1}_E  \in M(B^s_{p,q}(\mathbb{R}^2))$ for some $s>0$, $p \in [1,\infty)$ and $q \in (0,\infty]$, then by real interpolation with $L^p(\mathbb{R}^2)$ we arrive at a contradiction.
\\
	{\em Step 2.} Let $s<0.$ If we assume that   ${\bf 1}_E  \in M(B^s_{p,q}(\mathbb{R}^n))$, then \eqref{dual1} yields
	a contradiction to Step 1. This finishes the proof of Theorem \ref{multi2}.
\end{proof}


\section{Characteristic functions with low regularity}
\label{Main4}


Here in this section we shall deal with smoothness $s\in (0,1/p)$.
Just for  technical reasons we have to consider Besov-type
and Triebel-Lizorkin-type spaces separately.


\subsection{Sufficient conditions in case $0 < s<1/p$. Triebel--Lizorkin-type spaces}\label{suff}


Taking inspiration from the known results in case $\tau =0$ we try to find sufficient conditions in terms of
the measure of the $\delta$-neighborhoods of the boundary of $E$.
We shall employ the characterization of $F^{s,\tau}_{p,q}(\mathbb{R}^n)$ by Haar wavelets in Proposition \ref{haarprop}.
Recall, if $p\in[1,\infty)$,  $q=1$, and $ s\in(0, 1/p)$, then
$\|\delta (f) \|_{f^{s,\tau}_{p,q}(\mathbb{R}^n)}\sim  \|f \|_{F^{s,\tau}_{p,q}(\mathbb{R}^n)}$ if
\[
0 < s < \min \left\{ \frac 1p, n \left(\frac 1p -\tau\right)\right\},
\]
where $\delta(f)$ is the same as  in \eqref{koeffh}.

\begin{theorem}\label{ok1}
Let $p\in[1,\infty)$, $\tau\in(0,1/p)$,   and   $0 < s < \min \{ \frac 1p, n (\frac 1p -\tau)\}$.
Let $E \subset \mathbb{R}^n$ be a bounded domain. Then
	\[
	\sup_{\{P \in \mathcal{Q},\, |P|\le 1\}} \, \frac{1}{|P|^\tau} \left[\int_{0}^{\sqrt{n} \ell (P)} \delta^{-sp}\,  |(\partial E)^{\delta} \cap P|\,
	\frac{d\delta}{\delta} \right]^{\frac1p}< \infty
	\]	
	implies ${\bf 1}_E \in F^{s,\tau}_{p,1}(\mathbb{R}^n)$.
\end{theorem}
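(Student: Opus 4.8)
The plan is to verify the hypothesis forces membership in $F^{s,\tau}_{p,1}(\rn)$ through the Haar wavelet characterization. Under the stated restrictions on $s,\tau,p$ (with $q=1$) the conditions of Proposition \ref{haarprop}(ii) are met, so $\|{\bf 1}_E\|_{F^{s,\tau}_{p,1}(\rn)}\asymp\|\delta({\bf 1}_E)\|_{f^{s,\tau}_{p,1}(\rn)}$ once the right-hand side is shown finite. Since ${\bf 1}_E\in L^\infty(\rn)$ has compact support, Lemma \ref{support}(ii) reduces the task to bounding, uniformly over dyadic cubes $P$ with $|P|\le1$, the quantity $\frac{1}{|P|^\tau}\|g_P\|_{L^p(P)}$, where (using $q=1$)
\[
g_P:=\sum_{j=j_P}^\infty\sum_{i=1}^{2^n-1}2^{j(s+\frac n2)}\sum_{\{m:\ Q_{j,m}\subset P\}}|\delta_{i,j,m}({\bf 1}_E)|\,{\bf 1}_{Q_{j,m}}.
\]
I also note at the outset that the hypothesis forces $|(\partial E)^\delta\cap P|\to0$ as $\delta\to0$, hence $|\partial E|=0$; this is needed so that the expansion is meaningful and no convergence issue arises.

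The first substantive step is the standard coefficient analysis, exactly as in the proof of Theorem \ref{Lipschitz1}: because each $h_i$ ($i\ge1$) has vanishing mean, $\delta_{i,j,m}({\bf 1}_E)=\langle{\bf 1}_E,h_{i,j,m}\rangle=0$ unless $Q_{j,m}\cap\partial E\neq\emptyset$, and in all cases $|\delta_{i,j,m}({\bf 1}_E)|\le2^{-jn/2}$. Writing $U_j:=\bigcup\{Q_{j,m}\subset P:\ Q_{j,m}\cap\partial E\neq\emptyset\}$ and using that at most one dyadic cube of a given level contains a fixed point, these two facts give the pointwise bound $g_P(x)\ls\sum_{j=j_P}^\infty2^{js}{\bf 1}_{U_j}(x)$. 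The key geometric observation is that the boundary cubes are \emph{nested}: $U_{j+1}\subset U_j$, since the level-$(j+1)$ parent of a boundary cube is again a boundary cube lying in $P$. Consequently, for each fixed $x$ the index set $\{j\ge j_P:\ x\in U_j\}$ is an initial segment $\{j_P,\dots,J(x)\}$, and the geometric sum (here $s>0$ is essential) yields the clean pointwise estimate $g_P(x)\ls2^{J(x)s}$, where $J(x):=\max\{j\ge j_P:\ x\in U_j\}$.

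The remaining step converts this into the claimed integral. Using $2^{J(x)sp}\le\sum_{j\ge j_P}2^{jsp}{\bf 1}_{U_j}(x)$ and integrating,
\[
\|g_P\|_{L^p(P)}^p\ls\int_P2^{J(x)sp}\,dx\ls\sum_{j=j_P}^\infty2^{jsp}\,|U_j|.
\]
Since every point of $U_j$ lies within $\diam(Q_{j,m})=\sqrt n\,2^{-j}$ of $\partial E$ and $U_j\subset P$, one has $|U_j|\le|(\partial E)^{\sqrt n\,2^{-j}}\cap P|$. Comparing the dyadic sum with the continuous integral via the monotonicity of $\delta\mapsto|(\partial E)^\delta\cap P|$ (on each interval $[\sqrt n\,2^{-j},\sqrt n\,2^{-j+1}]$ the factor $\delta^{-sp}$ is comparable to $2^{jsp}$) gives
\[
\sum_{j=j_P}^\infty2^{jsp}\,|U_j|\ls\int_0^{2\sqrt n\,\ell(P)}\delta^{-sp}\,|(\partial E)^\delta\cap P|\,\frac{d\delta}{\delta}.
\]
Splitting off the range $[\sqrt n\,\ell(P),2\sqrt n\,\ell(P)]$, on which $|(\partial E)^\delta\cap P|\le|P|$ and $\delta^{-sp}\ls\ell(P)^{-sp}$, contributes after division by $|P|^\tau$ a term $\ls\ell(P)^{\,n(1/p-\tau)-s}$, which is uniformly bounded for $\ell(P)\le1$ precisely because $s<n(1/p-\tau)$. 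Hence $\frac{1}{|P|^\tau}\|g_P\|_{L^p(P)}$ is dominated by the hypothesized supremum plus a bounded term, and taking the supremum over $P$ finishes the argument.

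The only genuinely delicate point is the passage from the discrete weighted sum of indicator functions to the $L^p$-norm; the nesting $U_{j+1}\subset U_j$ together with $q=1$ makes this transparent via a pointwise geometric collapse to $2^{J(x)s}$, so that no maximal-function or square-function machinery is required. I expect the main obstacle (really a bookkeeping matter) to be the careful matching of the dyadic sum $\sum_j2^{jsp}|U_j|$ with the continuous integral and the attendant constant $\sqrt n$ versus $2\sqrt n$ in the upper limit, which is resolved by the strict inequality $s<n(1/p-\tau)$.
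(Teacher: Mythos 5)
Your proof is correct and follows essentially the same route as the paper: the Haar characterization of $F^{s,\tau}_{p,1}(\mathbb{R}^n)$ via Proposition \ref{haarprop}, reduction to small cubes by Lemma \ref{support}, the vanishing/size analysis of the Haar coefficients, and a pointwise geometric collapse of the level sum followed by comparison with $\int \delta^{-sp}|(\partial E)^\delta\cap P|\,\frac{d\delta}{\delta}$. Your stratification by the nested sets $U_j$ (equivalently, by the last level $J(x)$ at which $x$ sits in a boundary cube) is just a reformulation of the paper's annular decomposition $O^\alpha_P$ by distance to $\partial E$, and your explicit treatment of the boundary range $[\sqrt n\,\ell(P),2\sqrt n\,\ell(P)]$ using $s<n(1/p-\tau)$ correctly absorbs the small indexing offset.
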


\begin{proof}
	By Lemma \ref{support} it will be enough to deal with small dyadic cubes $P$, say $|P|\le 1$.
	Let $P = Q_{\ell,m}$ for some $\ell \in \mathbb{N}_0$ and some $m \in \mathbb{Z}^n$.
	We suppose that $ \min\{|P\cap E|, |P \cap E^\complement|\} >0$.
	Next we observe that
	\[
	|\langle {\bf 1}_E , h_{i,j,m}  \rangle | \le \left\{
	\begin{array}{lll}
		0 &\quad & \mbox{if}\quad |Q_{j,m} \cap E|= 2^{-jn},
		\\
		0 &\quad & \mbox{if}\quad |Q_{j,m} \cap E|=0 ,
		\\
		2^{\frac{jn}2}\, |E \cap Q_{j,m}| &\quad & \mbox{if}\quad 0 < |Q_{j,m} \cap E| <  2^{-jn} .
	\end{array}
	\right.
	\]
	Hence, we only need to consider those Haar coefficients of ${\bf 1}_E$ which are related to the boundary of $E$. This yields
	\begin{align*}
		{\rm I}_P:=& \int_{P}
		\left[\sum_{i=1}^{2^n-1} \sum_{j=\ell}^\infty\,  \sum_{\{m\in\mathbb{Z}^n:~ Q_{j,m} \subset P\}} 2^{j(s+\frac n2)} |\langle {\bf 1}_E , h_{i,j,m} \rangle \cx(2^jx-m)|\right]^{p}\, dx
		\\
		\le  &
		\int_{P}
		\left[\sum_{i=1}^{2^n-1} \sum_{j=\ell}^\infty \sum_{m\in K_j} 2^{\frac{jn}2}\,  2^{j(s+\frac n2)} |E \cap Q_{j,m}|  \, \cx(2^jx-m)
		\right]^{p}\, dx
		\\
		  \ls&   \int_{P}
		\left[ \sum_{j=\ell}^\infty \sum_{m\in K_j} 2^{js} \cx (2^jx-m) \right]^{p}\, dx,
	\end{align*}
where, for any $j \in \mathbb{N}_0$,
\begin{equation}\label{kjse}
 K_j := \left\{m \in \mathbb{Z}^n:~  Q_{j,m} \subset P, ~0 <
 |E \cap Q_{j,m}|< 2^{-jn} \right\}.
\end{equation}
	Now we start to work with parts of the $\delta $-neighborhood of $\partial E$.
For any $\alpha \in \mathbb{Z}$, let
	\begin{equation}\label{oalphaP}
		O^\alpha_P:=  \left\{ x \in P:~
		\sqrt{n}  2^{-\alpha -1}\le {\rm dist}\, (x, \partial E)< \sqrt{n}  2^{-\alpha}\right\}.
	\end{equation}
	It follows
	\begin{align*}
		{\rm I}_P & \le  \sum_{\alpha=\ell}^\infty \int_{O^\alpha_P}
		\left[\sum_{j=\ell}^\infty \sum_{m\in K_j} 2^{js} \mathcal{X} (2^jx-m)  \right]^{p}\, dx.
	\end{align*}
Notice that, for any fixed $x$, the sum $\sum_{k\in K_j}
	\cdots$ consists of at most one nonzero term.
	If $x\in O^\alpha_P $, by $s>0$, we   conclude that
	\[
	\sum_{j=\ell}^\infty \sum_{m\in K_j}  	 2^{js} \mathcal{X}(2^jx-m)
	\le    \sum_{j=\ell}^{\alpha -1} \sum_{m\in K_j}
	2^{js} \mathcal{X} (2^jx-m)
	\ls  2^{(\alpha-1) s}.
	\]
	Clearly, $O^\alpha_P \subset (\partial E)^{\sqrt{n}2^{-\alpha}} \cap P$.
	This yields
	\[
	2^{\alpha s p}\, |O^\alpha_P| \ls
	\int_{\sqrt{n}2^{-\alpha-1}}^{\sqrt{n}2^{-\alpha}} \delta^{-sp}\, \left|(\partial E)^{\delta} \cap P\right|\,
	\frac{d\delta}{\delta}.
	\]
	Summing up we have found
	\[
	{\rm I}_P \ls
	\int_{0}^{\sqrt{n} 2^{-\ell}} \delta^{-sp}\,  |(\partial E)^{\delta} \cap P|\,
	\frac{d\delta}{\delta}.
	\]
	This finishes the proof of Theorem \ref{ok1}.
\end{proof}

From the classical case $\tau =0$, it is well known that Theorem \ref{ok1} is not sharp in general, we refer to \cite{Si23} for a concrete example.
However, under extra conditions, for instance, if $E$ is a weakly exterior thick domain, it becomes necessary and sufficient.
Therefore, after a short supplement using Besov-type spaces,   we will work with some restrictions concerning the regularity of the boundary.


\subsection{Sufficient conditions in case $0 < s<1/p$. Besov-type spaces}\label{suff2}


We shall derive a counterpart of Theorem \ref{ok1} for Besov-type spaces.
But this time we will work with the characterization by differences. This will allow us to deal with the limiting case $s=n(\frac 1p -\tau)$ and this will be important for us  in connection with the multiplier problem.
We concentrate on $q=\infty$.

\begin{theorem}\label{ok2}
Let   $\tau\in(0,1)$ and   $s:= n - n \tau < 1 $.	
Let $E \subset \mathbb{R}^n$ be a bounded open set.
Assume that for some positive constant $C$,
	\begin{equation}\label{w017}
	\sup_{\{P \in \mathcal{Q}, ~ |P|\le 1\}} \, \frac{1}{|P|^\tau} \, \sup_{0 < \delta < C \ell (P)} \, \delta^{-s}\,  \left|(\partial E)^{\delta} \cap P\right|< \infty.
	\end{equation}
Then  ${\bf 1}_E \in B^{s,\tau}_{1,\infty}(\mathbb{R}^n)$.
\end{theorem}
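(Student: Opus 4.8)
The plan is to reduce the Besov-type membership to a controllable difference quantity via Proposition~\ref{t4.7} and then exploit the structure of the characteristic function exactly as in Theorem~\ref{ok1}. First I would invoke Proposition~\ref{t4.7} with $M=1$ (legitimate since $0<s<1$ and $\tau=\frac 1p-0=1$ puts us at the endpoint $\tau=1/p$ for $p=1$), so that it suffices to bound
\[
\|{\bf 1}_E\|^\spadesuit_{B^{s,\tau}_{1,\infty}(\mathbb{R}^n)}
= \sup_{P\in\mathcal{Q}}\frac1{|P|^\tau}\,\sup_{0<t<2\min\{\ell(P),1\}}
t^{-s}\sup_{\frac t2\le|h|<t}\int_P|\Delta^1_h{\bf 1}_E(x)|\,dx,
\]
together with the easy term $\|{\bf 1}_E\|_{L^1_\tau(\mathbb{R}^n)}$, which is finite because $E$ is bounded. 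By Lemma~\ref{support2} (or directly) it is enough to treat dyadic cubes $P$ with $|P|\le 1$, so I would fix $P=Q_{\ell,m}$.

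The key geometric observation is that $|\Delta^1_h{\bf 1}_E(x)|=1$ exactly when $x$ and $x+h$ lie on opposite sides of $\partial E$, which forces $\mathrm{dist}(x,\partial E)<|h|$. Hence for $|h|<t$ the integrand is supported in $(\partial E)^{t}\cap P$, giving the pointwise-in-$t$ bound
\[
\sup_{\frac t2\le|h|<t}\int_P|\Delta^1_h{\bf 1}_E(x)|\,dx
\le |(\partial E)^{t}\cap P|.
\]
Combining this with hypothesis \eqref{w017}, for any $t<2\min\{\ell(P),1\}\le 2\ell(P)$ one obtains $t^{-s}|(\partial E)^{t}\cap P|\ls |P|^\tau$ as soon as $2\ell(P)\le C\ell(P)$; to align the ranges I would simply note that $(\partial E)^t\subset(\partial E)^{C\ell(P)}$ and split the $t$-range at $C\ell(P)$, or shrink the outer supremum using monotonicity of $\delta\mapsto|(\partial E)^\delta\cap P|$. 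Taking the supremum over $t$ and then over $P$ yields $\|{\bf 1}_E\|^\spadesuit_{B^{s,\tau}_{1,\infty}(\mathbb{R}^n)}\ls 1$, which is the desired conclusion.

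The main obstacle, and the place requiring care rather than deep ideas, is matching the two different radius conventions: the hypothesis controls $\delta$ up to $C\ell(P)$ whereas Proposition~\ref{t4.7} runs $t$ up to $2\min\{\ell(P),1\}$. I would handle this by choosing the constant in the splitting so the relevant range is covered, using $(\partial E)^{t}\cap P\subset (\partial E)^{C\ell(P)}\cap P$ together with the trivial bound $|(\partial E)^{C\ell(P)}\cap P|\le |P|$ for the leftover range $C\ell(P)\le t\le 2\ell(P)$; since there $t^{-s}\ls [\ell(P)]^{-s}$ and $s=n-n\tau$, this contributes $[\ell(P)]^{n-s}=[\ell(P)]^{n\tau}=|P|^\tau$, again consistent. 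A secondary point is the boundedness of the $L^1_\tau$-term, which follows since $E$ is bounded: $\|{\bf 1}_E\|_{L^1(P)}\le\min\{|E|,|P|\}$ and $\tau<1$ make $|P|^{-\tau}\|{\bf 1}_E\|_{L^1(P)}$ uniformly bounded over $|P|\ge 1$. No genuinely hard estimate arises; the content is entirely in the observation that $|\Delta^1_h{\bf 1}_E|$ is a characteristic function localized to the $|h|$-neighborhood of $\partial E$.
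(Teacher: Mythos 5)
Your argument is correct, and it rests on the same two pillars as the paper's proof: a characterization of $B^{s,\tau}_{1,\infty}(\mathbb{R}^n)$ by first-order differences, and the observation that $\Delta^1_h{\bf 1}_E$ is supported in the $|h|$-neighborhood of $\partial E$ (for $E$ open, a point with $|\Delta^1_h{\bf 1}_E(x)|=1$ satisfies $\dist(x,\partial E)\le|h|$ because the segment $[x,x+h]$ must meet $\partial E$). The only genuine difference is the characterization you invoke: you use the dyadic-cube quantity $\|\cdot\|^\spadesuit$ from Proposition~\ref{t4.7}, whereas the paper routes through Lemma~\ref{Guli2}, i.e.\ the Morrey-norm formulation $B(f)=\sup_{|h|<1}|h|^{-s}\|\Delta^1_h f\|_{\mathcal{M}^u_1(\mathbb{R}^n)}$ with $u$ chosen so that $\tau=1-\frac1u$, splitting into the regimes $|h|<r$ and $|h|\ge r$ over balls $B(x,r)$. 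The two are interchangeable here; your version is slightly more direct because the hypothesis \eqref{w017} is already stated on dyadic cubes, so no reformulation from cubes to balls is needed, while the paper's choice of Lemma~\ref{Guli2} fits its surrounding discussion of the multiplier problem. Your handling of the range mismatch ($t$ up to $2\min\{\ell(P),1\}$ versus $\delta<C\ell(P)$) via the trivial bound $|(\partial E)^t\cap P|\le|P|$ and the identity $[\ell(P)]^{n-s}=|P|^\tau$ is exactly the right fix, and the $L^1_\tau$ term is indeed trivial for bounded $E$. One cosmetic remark: your parenthetical justification for applying Proposition~\ref{t4.7} (``$\tau=\frac1p-0=1$'') is garbled — the correct reason is simply that the proposition permits all $\tau\in[0,1/p]=[0,1]$ and $s=n-n\tau\in(0,1)=(0,M)$ — but this does not affect the validity of the proof.
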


\begin{proof}
For simplicity we assume that $C= 1$.	Otherwise the needed modifications are straightforward.
We shall work with Lemma \ref{Guli2}.
Hence we only need to show that
\[
B ({\bf 1}_E):= \sup_{0 < |h|<1} \,|h|^{-s} \sup_{r\in(0, \infty)} r^{s-n}
\sup_{x\in \mathbb{R}^n} \, \int_{B(x,r)} \, |{\bf 1}_E(y+h)-{\bf 1}_E(y)|\, dy < \infty\, .
\]	
Since $E$ is bounded it will be enough to consider  values $r< 1$.
Next we need to distinguish the cases $0<|h|< r$ and $|h|\ge r$.
\\
\noindent
{\em Step 1.} Let $0<|h|< r$. We  choose $\delta$ such that $|h|< \delta \le \min\{r,2|h|\}$.
Clearly, $ |{\bf 1}_E(y+h)-{\bf 1}_E(y)| >0$ requires  $y, y+h  \in {(\partial E)^\delta}$.
Hence we may restrict the integration to $B(x,r)\cap {(\partial E)^\delta}$.
Observe that  our assumption \eqref{w017} can be equivalently formulated by using balls instead of dyadic cubes.
Employing this modified version of  \eqref{w017} we obtain
 \begin{align}\label{pm1}
 B_1 ({\bf 1}_E)  : =& \sup_{r\in(0, 1)} r^{s-n} \sup_{0 <|h|<r} \,|h|^{-s}  \sup_{x\in \mathbb{R}^n} \,
 \int_{B(x,r)\cap {(\partial E)^\delta}} \, |{\bf 1}_E(y+h)-{\bf 1}_E(y)| \,dy
 \nonumber
 \\
  \ls &
  \sup_{r\in(0, 1)} r^{s-n} \sup_{0 <|h|<r} |h|^{-s} \,  \sup_{x\in \mathbb{R}^n} \,
 \delta^s\, |B(x,r)|^\tau
  \nonumber
 \\
  \ls &
\sup_{r\in(0, 1)} r^{s-n + n\tau} \sim 1 \,,
 \end{align}
 where we used the choice of $\delta$ in the second inequality.
\\
{\em Step 2.} Let $0 < r \le |h|<1$. By using a trivial estimate of the integral we conclude
 \begin{align}\label{pm2}
B_2({\bf 1}_E) := & \sup_{0 < |h|<1} \,|h|^{-s} \sup_{r\in(0, |h|)} r^{s-n} \sup_{x\in \mathbb{R}^n} \, \int_{B(x,r)} \, |{\bf 1}_E(y+h)-{\bf 1}_E(y)| dy
\nonumber
\\
 \ls & \sup_{0 < |h|<1} \,|h|^{-s} \sup_{r\in(0, |h|)} r^{s} \ls 1\, .
 \end{align}
The inequalities \eqref{pm1} and \eqref{pm2} together then yield the claim.
\end{proof}

\begin{corollary}\label{allp}
	Let   $\tau_0\in(0,1)$   and   $s_0:= n - n \tau_0 < 1 $.	
	Let $E \subset \mathbb{R}^n$ be a bounded open set such that \eqref{w017} is satisfied.
\begin{itemize}
\item[{\rm (i)}] Then  ${\bf 1}_E \in B^{s,\tau}_{1,q}(\mathbb{R}^n)$ for all $s< s_0$, all $\tau \le \tau_0$ and all $q \in (0,\infty]$.
\item[{\rm (ii)}] Let $p\in(1,\infty)$. Then  ${\bf 1}_E \in B^{s_0/p,\tau_0/p}_{p,\infty}(\mathbb{R}^n)$.
\end{itemize}
\end{corollary}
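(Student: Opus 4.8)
The plan is to deduce everything from Theorem \ref{ok2}. Since $E$ is bounded, ${\bf 1}_E$ is a compactly supported bounded function, and the hypothesis \eqref{w017} (read with $\tau=\tau_0$ and $s=s_0=n-n\tau_0<1$) is precisely the assumption of Theorem \ref{ok2}. Hence the common starting point of both parts is
\[
{\bf 1}_E \in B^{s_0,\tau_0}_{1,\infty}(\mathbb{R}^n).
\]

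For part (i) I would argue by two successive monotonicity steps, taken in the right order so that the full range $q\in(0,\infty]$ is reached. First fix $\tau\le\tau_0$. Applying Lemma \ref{support2}(i) (monotonicity in the Morrey parameter for compactly supported functions) to ${\bf 1}_E$ with exponent $\tau\le\tau_0$ and $q=\infty$ gives ${\bf 1}_E\in B^{s_0,\tau}_{1,\infty}(\mathbb{R}^n)$, the case $\tau=\tau_0$ being trivial. Next, for any $s<s_0$ and any $q\in(0,\infty]$, the elementary monotonicity in $s$ of Remark \ref{grund}(ii), namely $B^{s_0,\tau}_{1,\infty}(\mathbb{R}^n)\hookrightarrow B^{s,\tau}_{1,q}(\mathbb{R}^n)$, yields ${\bf 1}_E\in B^{s,\tau}_{1,q}(\mathbb{R}^n)$. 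Carrying out the $\tau$-reduction first, while $q=\infty$, is what keeps the use of Lemma \ref{support2}(i) within its stated range $q\in[1,\infty]$; only afterwards do I pass to arbitrary $q\in(0,\infty]$ via the $s$-monotonicity, which imposes no lower restriction on $q$.

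For part (ii) I would invoke the scaling identity of Corollary \ref{t4.8}. From $\tau_0\in(0,1)$ we get $s_0=n(1-\tau_0)\in(0,1)$, so the target exponents satisfy $s_0/p\in(0,1/p)$ and $\tau_0/p\in(0,1/p)$, which are admissible in Corollary \ref{t4.8} for $p\in(1,\infty)$ and $q=\infty$. That corollary provides the equivalence
\[
{\bf 1}_E \in B^{s_0/p,\,\tau_0/p}_{p,\infty}(\mathbb{R}^n)
\quad\Longleftrightarrow\quad
{\bf 1}_E \in B^{s_0,\tau_0}_{1,\infty}(\mathbb{R}^n),
\]
and the right-hand side has already been secured from Theorem \ref{ok2}. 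Hence ${\bf 1}_E\in B^{s_0/p,\tau_0/p}_{p,\infty}(\mathbb{R}^n)$.

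No genuinely hard step arises: the corollary is merely a repackaging of Theorem \ref{ok2} through standard embeddings together with the $p=1$-to-$p$ transference furnished by Corollary \ref{t4.8}. The only point demanding care is the bookkeeping in part (i) — ensuring that the $\tau$-monotonicity (available a priori only for $q\in[1,\infty]$ through Lemma \ref{support2}(i)) is applied \emph{before} the $s$-monotonicity, so that the asserted range $q\in(0,\infty]$ is in fact covered.
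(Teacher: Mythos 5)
Your proposal is correct and follows essentially the same route as the paper, which likewise deduces part (i) from Theorem \ref{ok2} together with Lemma \ref{support2} and the elementary embeddings of Remark \ref{grund}, and part (ii) from Theorem \ref{ok2} combined with Corollary \ref{t4.8}. Your additional care in ordering the $\tau$-reduction (at $q=\infty$) before the $s$-monotonicity, so that the full range $q\in(0,\infty]$ is legitimately covered, is a worthwhile explicit detail that the paper leaves implicit.
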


\begin{proof}
	Part (i) is an immediate consequence of Theorem \ref{ok2}, the elementary embeddings in Remark \ref{grund},
and Lemma \ref{support2}.
	Part (ii) follows from Theorem \ref{ok2} and Corollary \ref{t4.8}.
\end{proof}

\begin{remark}
Let 	 $\tau =0$.
In many examples the regularity of a domain with a fractal boundary
 has a best possible regularity given by a Besov space  $B^s_{p,\infty} (\mathbb{R}^n)$,
 for instance, domains with a boundary being a $d$-set. We refer to \cite{Si21}.
 But here, in this more general framework of type spaces, we do not have a corresponding lower bound.
\end{remark}


\subsection{Thick domains and weakly exterior thick domains}
\label{wets}


For the necessity, we need  to consider more special
thick domains and weakly exterior thick domains.
To recall their definitions, we need the well-known Whitney decomposition.
 In what follows,  we  say that two cubes \emph{touch} if their boundaries
 have a common point. For any cube $Q$,
$\ell(Q)$ denotes its the edge length.


We now recall the \textbf{Whitney decomposition}. Let  $E$ be a non-trivial open set in $\mathbb{R}^n$. We define $F:= \mathbb{R}^n \setminus \overline{E}$ and $G:= E^\complement:=\mathbb{R}^n \setminus E$.
Then there exists a collection $\mathcal W$ of dyadic cubes having the following properties:
\begin{itemize}
	\item[(i)] $E = \bigcup_{Q \in \mathcal{W}} Q$;
	\item[(ii)] The interiors of cubes in $\mathcal{W}$   are pairwise disjoint;
	\item[(iii)] For all cubes $Q \in \mathcal{W}$ we have
	${\rm diam}\, Q \le {\rm dist}\, (Q,G) \le 4\,  {\rm diam}\, Q$;
	\item[(iv)] If $Q_1, Q_2 \in \mathcal{W}$ touch then
	$\frac{{\rm diam}\, Q_2}4 \le {\rm diam}\, Q_1 \le 4\,  {\rm diam}\, Q_2$ follows;
	\item[(v)] For any $Q \in \mathcal W$ there exist at most $N= (12)^n$ cubes in
	$\mathcal W$ which touch $Q$.
\end{itemize}
For the construction of such a Whitney decomposition, we refer, for instance, to \cite[Chapter~VI]{St70}.

Now we turn to various types of domains.
First we recall the notion of a thick domain.
Therefore we follow Triebel \cite[Section 3.1]{t08}.

\begin{definition}\label{thick}
\begin{itemize}
\item[{\rm(i)}]  	A non-trivial domain $E \subset \mathbb{R}^n$ is said to be \emph{E-thick}
(exterior thick) if one finds for any interior cube $Q_i \subset E$ (with edges parallel to the coordinate axes)
	with $\ell(Q_i)  \sim 2^{-j}$ and
$ {\rm dist}\,(Q_i , \partial E)  \sim 2^{-j}$, $ j \ge j_0 \in \mathbb{N}$,
	a complementing exterior cube $Q_e \subset \mathbb{R}^n \setminus E$  with
	\[
	\ell(Q_e)  \sim 2^{-j} \ \ \mbox{and}\ \
	{\rm dist}\,(Q_e , \partial E) \sim {\rm dist}\,(Q_i , Q_e)  \sim 2^{-j} ,
	\ \forall\,j \ge  j_0 \in \mathbb{N}.
	\]
	\item[{\rm (ii)}]
	A non-trivial  domain $E \subset \mathbb{R}^n$ is said to be \emph{I-thick} (interior thick) if one finds for any exterior cube $Q^e \subset E^\complement$ (with edges parallel to the coordinate axes) with
	$\ell(Q^e)  \sim 2^{-j}$ and ${\rm dist}\,(Q^e, \partial E)  \sim 2^{-j}$, $j \ge j_0 \in \mathbb{N}$,
	a reflected interior cube $Q_e \subset E$  with
	\[
	\ell\left(Q^i\right)  \sim 2^{-j}  \ \ \mbox{and}\ \
	{\rm dist}\,\left(Q^i, \partial E\right)  \sim {\rm dist}\,\left(Q^i, Q^e\right) \sim  2^{-j},
	\ \forall\, j \ge  j_0 \in \mathbb{N}.
	\]
	\item[{\rm (iii)}] A non-trivial  domain $E \subset \mathbb{R}^n$ is said to be \emph{thick}
	if it is both $E$-thick and $I$-thick.
\end{itemize}
\end{definition}

Most important for us is the following notion which represents a moderate modification of an exterior thick domain.
By ${\mathcal W}$ we denote the dyadic cubes of the Whitney decomposition of $E$.

\begin{definition}\label{wthick}
\begin{itemize}
\item[{\rm(i)}]
	A non-trivial domain $E \subset \mathbb{R}^n$ is said to be  \emph{weakly exterior thick}
	if there exist positive constants $A$  and $B$ such that  one finds for any interior cube $Q \in  {\mathcal W}$
	a complementing exterior set  $\Omega_Q \subset E^\complement$  and a point $x^0 \in Q$
	with
	\begin{equation*}
		\Omega_Q \subset B\left(x^0, A\, \ell(Q)\right)  \qquad \mbox{and}\qquad
		|\Omega_Q|  \ge B \, |Q|.
	\end{equation*}
	\item[{\rm (ii)}] A non-trivial domain $E \subset \mathbb{R}^n$ is said to be  \emph{weakly interior thick}
	if $F$  is  weakly exterior thick.
		\item[{\rm (iii)}] A non-trivial domain $E \subset \mathbb{R}^n$ is said to be \emph{weakly thick}  if it is  weakly interior and  weakly exterior thick.
\end{itemize}
\end{definition}

\smallskip

\begin{minipage}{0.5\textwidth}
	\includegraphics[height=5.2cm]{{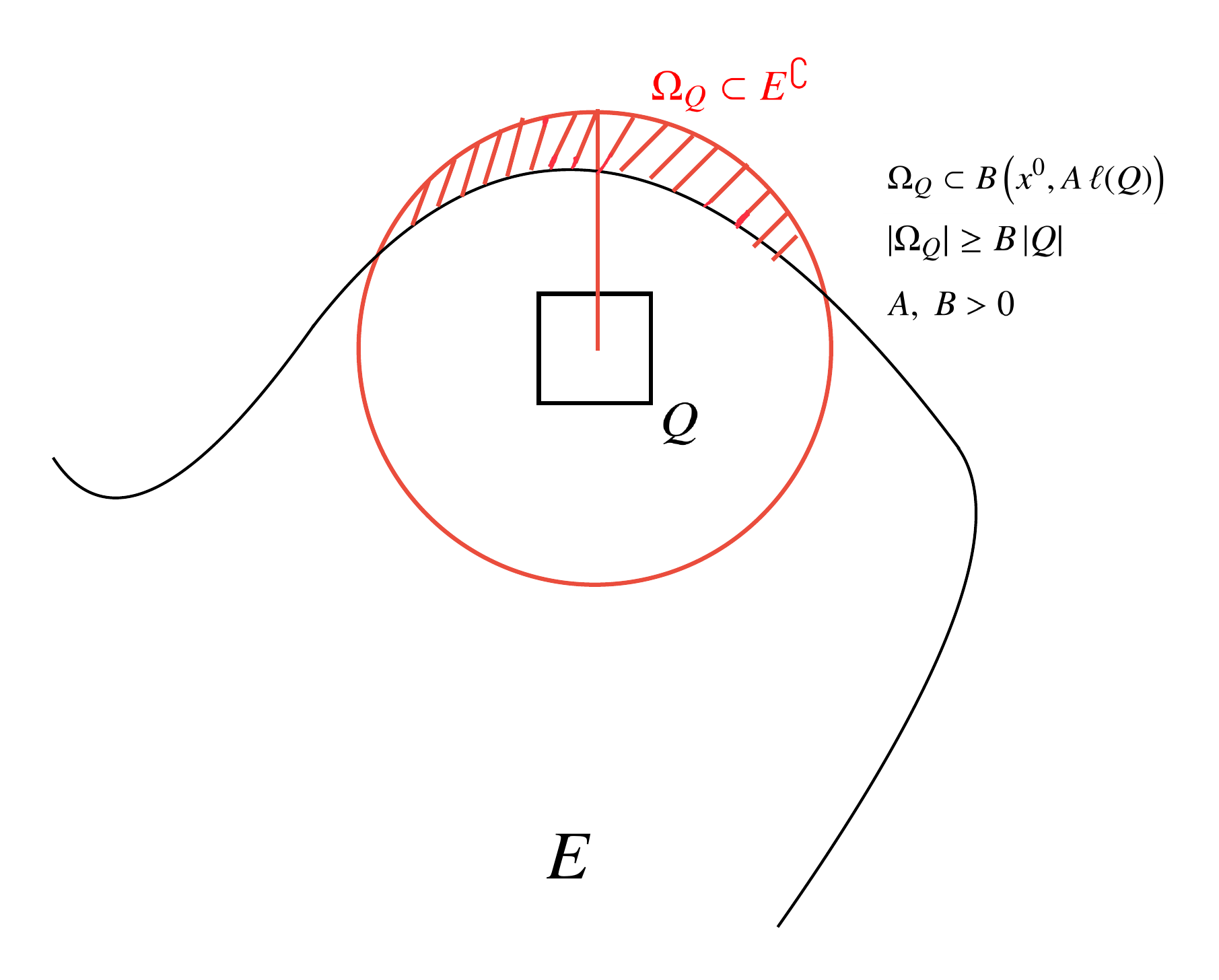}}
\end{minipage}\hfill
\begin{minipage}{0.38\textwidth}
In the left picture we plotted a typical situation. The dashed
region represents a possible choice of the set $\Omega_Q$.
Clearly, exterior thick domains are weakly exterior thick.
Below we shall use the following conventions. Let $(A,B)$ be
any admissible pair for the weakly exterior thick domain $E$.
 We fix $B_E:=B$, and define
\end{minipage}

\smallskip

\begin{equation*}
		A_Q := \inf \left\{A\in(0,\infty):~ \Omega_Q \subset B(x, A\, \ell(Q)) ~~ \forall x \in Q\right\}
\end{equation*}
as well as
\begin{equation*}
	A_E := \inf \left\{A_Q:~ Q \in \mathcal{W}\right\}.
\end{equation*}
Observe that in any case  $A_E$ is positive.
Next we would like to recall a first consequence for the boundaries of $E$ and $F$, see \cite{Si23}.

\begin{lemma}\label{dE=dF}
	Let $E$ be a non-trivial weakly exterior thick  domain in $\mathbb{R}^n$.
	Then $\partial E = \partial F$ follows.
\end{lemma}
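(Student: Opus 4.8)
The statement to prove is the set equality $\partial E=\partial F$, so the plan is to establish the two inclusions $\partial F\subseteq\partial E$ and $\partial E\subseteq\partial F$ separately. The first one is purely point-set topological and uses only that $E$ is open. Since $F=\mathbb{R}^n\setminus\overline{E}$ is the interior of $E^\complement$, it is open, whence $\partial F=\overline{F}\setminus F$. I would take $y\in\partial F$: from $y\notin F=\mathbb{R}^n\setminus\overline{E}$ I get $y\in\overline{E}$, while $\overline{F}\subseteq\overline{E^\complement}=E^\complement$ (here $E^\complement$ is closed because $E$ is open) gives $y\notin E$. Hence $y\in\overline{E}\setminus E=\partial E$, which proves $\partial F\subseteq\partial E$ without invoking any thickness.

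For the reverse inclusion $\partial E\subseteq\partial F$ I would fix $x\in\partial E$. Since $x\in\overline{E}$ we have $x\notin F$, so it suffices to show $x\in\overline{F}$; then $x\in\overline{F}\setminus F=\partial F$. To produce exterior mass arbitrarily close to $x$, I would combine the Whitney decomposition $\mathcal{W}$ of $E$ with Definition \ref{wthick}. Because $x\in\overline{E}\setminus E$, I can choose $x_k\in E$ with $|x_k-x|\to 0$; let $Q_k\in\mathcal{W}$ be the Whitney cube containing $x_k$. By Whitney property (iii),
\[
\diam Q_k\le \dist\left(Q_k,E^\complement\right)\le \dist\left(x_k,E^\complement\right)\le |x_k-x|\to 0,
\]
so $\ell(Q_k)\to 0$ and the cubes collapse onto $x$. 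Applying the weakly exterior thick hypothesis to each $Q_k$ furnishes an exterior set $\Omega_{Q_k}\subset E^\complement$ and a point $x^0_k\in Q_k$ with $\Omega_{Q_k}\subset B(x^0_k,A\,\ell(Q_k))$ and $|\Omega_{Q_k}|\ge B\,|Q_k|>0$. Since $x^0_k\to x$ and $\ell(Q_k)\to 0$, choosing $|x_k-x|$ small enough forces $\Omega_{Q_k}\subset B(x,\varepsilon)$ for any prescribed $\varepsilon>0$; thus there is exterior mass of positive measure in every ball about $x$.

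The decisive and most delicate step is to upgrade these positive-measure exterior sets $\Omega_{Q_k}$ into genuine points of $F=\operatorname{int}(E^\complement)$ near $x$. When the sets $\Omega_Q$ carry nonempty interior — as they do in the model case of an exterior thick domain, where $\Omega_Q$ is an exterior cube, and as the word \emph{exterior set} suggests — their interior lies in $\operatorname{int}(E^\complement)=F$, so $F\cap B(x,\varepsilon)\neq\emptyset$ for every $\varepsilon>0$ and $x\in\overline{F}$ follows immediately, completing the inclusion $\partial E\subseteq\partial F$. The main obstacle is the borderline reading in which $\Omega_Q$ is only assumed measurable: a priori a positive-measure subset of $E^\complement=\partial E\sqcup F$ could be absorbed entirely into $\partial E$, so one must exclude that $\partial E$ swallows the full measure of $\Omega_{Q_k}$. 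I would resolve this by showing $|\partial E|=0$ for weakly exterior thick domains, which is precisely where the \emph{uniform} lower volume bound $|\Omega_Q|\ge B\,|Q|$ over \emph{all} Whitney cubes enters (via a Lebesgue-density argument at a hypothetical density point of $\partial E$, contradicted by the interior cubes $Q_k$ of comparable size sitting in $E$). Granting $|\partial E|=0$, we get $|\Omega_{Q_k}\cap F|=|\Omega_{Q_k}|>0$, hence $\Omega_{Q_k}\cap F\neq\emptyset$, and therefore $x\in\overline{F}$, which finishes the proof.
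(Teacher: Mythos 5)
Your first inclusion $\partial F\subseteq\partial E$ and the reduction of the second inclusion to showing $x\in\overline{F}$ for $x\in\partial E$ are correct, and using Whitney property (iii) to produce cubes $Q_k\ni x_k$ with $\ell(Q_k)\to 0$, hence exterior sets $\Omega_{Q_k}\subset E^\complement$ of positive measure accumulating at $x$, is the right mechanism. (The paper itself gives no proof of this lemma; it is quoted from \cite{Si23}, so there is nothing to compare against line by line.) The genuine gap is exactly the step you flag as delicate, and your proposed repair does not close it. The Lebesgue-density argument for $|\partial E|=0$ fails: at a hypothetical density point $z$ of $\partial E$ one has $|B(z,r)\cap E|\le\varepsilon\,|B(z,r)|$ for small $r$, and since $B(y,\dist(y,E^\complement))\subset E$ for $y\in E$, this forces $\dist(y,E^\complement)\lesssim\varepsilon^{1/n}r$ for every $y\in E\cap B(z,r/2)$; consequently all Whitney cubes meeting $B(z,r/2)$ have side length $\lesssim\varepsilon^{1/n}r$ and carry negligible measure. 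There are no ``interior cubes of comparable size'' to contradict the density, so no contradiction arises.

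Worse, the claim $|\partial E|=0$ is actually false under the literal reading of Definition \ref{wthick}. Take $n=1$ and $E=(0,1)\setminus K$, where $K\ni 0,1$ is a fat (positive-measure) Cantor set with the uniform density property $|K\cap B(y,r)|\gtrsim r$ for all $y\in K$ and $r\in(0,1)$ (e.g., the Smith--Volterra--Cantor set). Every Whitney interval $Q$ of $E$ lies in a gap of $K$ at distance $\sim\ell(Q)$ from an endpoint $a\in K$, so $\Omega_Q:=K\cap B(x^0,A\ell(Q))\subset E^\complement$ satisfies $|\Omega_Q|\ge B\,|Q|$ for suitable $x^0\in Q$ and absolute constants $A,B$; thus $E$ is weakly exterior thick in the literal sense. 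Yet $\overline{E}=[0,1]$, so $\partial E=K$ has positive measure, while $F=\mathbb{R}\setminus[0,1]$ and $\partial F=\{0,1\}\neq\partial E$. Hence the inclusion $\partial E\subseteq\partial F$ --- and the lemma itself --- genuinely hinges on reading ``complementing exterior set'' as a subset of the exterior $F$ (or at least of $\overline{F}$, or as a set with nonempty interior), not merely of $E^\complement$. Under that reading your argument closes immediately: the sets $\Omega_{Q_k}$ meet $\overline{F}$, they accumulate at $x$, and $\overline{F}$ is closed, so $x\in\overline{F}\setminus F=\partial F$. You should state explicitly which reading you use; under the literal one the second inclusion cannot be established.
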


Here is a simple example which shows what is going on.
Let
\begin{equation*}
	E_1:= B(\textbf{0},1) \quad  \mbox{and} \quad E_2:= B(\textbf{0},1)\setminus \left\{(x,0)\in\mathbb{R}^2:~ 0<x<1\right \}	.
\end{equation*}
Then $F_1 = F_2 = \{(x,y)\in\mathbb{R}^2:~ x^2+ y ^2 > 1 \}$. Whereas $E_1$ is obviously
weakly exterior thick, $E_2$ is not.

It is essentially obvious that we can replace $\mathcal{W}$ in Definition
\ref{wthick} by all the cubes
satisfying property (iii) of the Whitney cubes.

\begin{lemma}\label{allcubes}
Let $E$ be weakly exterior thick.
	Then there exist positive
	constants $A'_E$ and $B'_E$ such that, for any interior cube $Q$ satisfying ${\rm diam}\,Q \le
{\rm dist}\, (Q,E^\complement)\le 4 \, {\rm diam}\,Q $,
	there exist  a complementing exterior set  $\Omega_Q \subset E^\complement$ and a point $x^0 \in Q$
	with
	\begin{equation*}
		\Omega_Q \subset B\left(x^0, A'_E\, \ell(Q)\right)  \quad \mbox{and}\quad
		|\Omega_Q|  \ge B'_E \, |Q|.
	\end{equation*}
	\end{lemma}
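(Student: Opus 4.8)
The plan is to show that every cube $Q$ obeying the Whitney-type separation condition $\diam Q \le \dist(Q, E^\complement) \le 4\,\diam Q$ is comparable in both size and location to a genuine Whitney cube $Q_0 \in \mathcal{W}$, and then simply to reuse the complementing exterior set supplied by Definition \ref{wthick} for $Q_0$, paying only for a change of the constants. First I would fix such a cube $Q$ and observe that, since $\dist(Q, E^\complement) \ge \diam Q > 0$, every point of $Q$ lies at positive distance from the closed set $E^\complement$, whence $Q \subset E$. Choosing any $x \in Q$ and using $E = \bigcup_{Q' \in \mathcal{W}} Q'$, I select a Whitney cube $Q_0 \in \mathcal{W}$ with $x \in \overline{Q_0}$.

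The key step is a two-sided comparison of $\dist(x, E^\complement)$ with the diameter of any cube $R$ through $x$ that satisfies property (iii) of the Whitney decomposition (with $G := E^\complement$). Since $\dist(\,\cdot\,, E^\complement)$ is $1$-Lipschitz, one has $\dist(x, E^\complement) \ge \dist(R, E^\complement) \ge \diam R$, and also $\dist(x, E^\complement) \le \diam R + \dist(R, E^\complement) \le 5\,\diam R$; that is, $\diam R \le \dist(x, E^\complement) \le 5\,\diam R$. Applying this with $R = Q$ and with $R = Q_0$ (legitimate because $Q_0 \in \mathcal{W}$ obeys (iii)) and eliminating $\dist(x, E^\complement)$ yields
\[
\tfrac{1}{5}\,\ell(Q) \le \ell(Q_0) \le 5\,\ell(Q).
\]
Thus $Q$ and $Q_0$ have comparable edge lengths, and they meet at $x$.

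Finally I would transfer the exterior set. Let $(A, B) := (A_E, B_E)$ be an admissible pair for the weakly exterior thick domain $E$ and let $\Omega_{Q_0} \subset E^\complement$ and $y^0 \in Q_0$ be as in Definition \ref{wthick}, so that $\Omega_{Q_0} \subset B(y^0, A\,\ell(Q_0))$ and $|\Omega_{Q_0}| \ge B\,|Q_0|$. Setting $\Omega_Q := \Omega_{Q_0}$ and recentering at $x \in Q$, for any $z \in \Omega_Q$ I estimate $|z - x| \le |z - y^0| + |y^0 - x| < A\,\ell(Q_0) + \diam Q_0 = (A + \sqrt{n})\,\ell(Q_0) \le 5(A + \sqrt{n})\,\ell(Q)$, so $\Omega_Q \subset B(x, A'_E\,\ell(Q))$ with $A'_E := 5(A + \sqrt{n})$; meanwhile $|\Omega_Q| = |\Omega_{Q_0}| \ge B\,\ell(Q_0)^n \ge B\,5^{-n}\,\ell(Q)^n = B'_E\,|Q|$ with $B'_E := 5^{-n}B$. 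I expect the main (though modest) obstacle to be the comparison estimate of the middle paragraph: everything hinges on forcing $\ell(Q_0) \sim \ell(Q)$ from property (iii) alone, exploiting that $Q$ and $Q_0$ share the point $x$, after which the remainder is only an adjustment of constants. A minor point to verify is that the chosen $Q_0$ is genuinely an element of $\mathcal{W}$, so that Definition \ref{wthick} applies to it directly.
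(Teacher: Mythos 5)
Your proof is correct and complete; the paper itself offers no argument for this lemma (it merely declares the replacement of $\mathcal{W}$ by all cubes satisfying the Whitney separation condition to be ``essentially obvious''), and your route --- locating a genuine Whitney cube $Q_0$ through a point $x\in Q$, forcing $\ell(Q_0)\sim\ell(Q)$ from the two-sided bound $\diam R\le \dist(x,E^\complement)\le 5\,\diam R$ valid for any such cube $R\ni x$, and then recycling $\Omega_{Q_0}$ with adjusted constants --- is exactly the argument the authors leave to the reader. One cosmetic remark: the pair $(A_E,B_E)$ need not itself be admissible since $A_E$ is defined as an infimum, but your proof only requires \emph{some} admissible pair $(A,B)$, which exists by Definition \ref{wthick}, so nothing is affected.
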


Indeed, it is easy to check that  $x^0$ in Lemma \ref{allcubes}
can   be any point in $Q$.


\subsection{Necessary conditions in case $0 < s<1/p$}\label{with}


Our approach to derive necessary conditions is in some sense standard.
Also Jaffard and Meyer \cite{JM}, Faraco and Rogers \cite{FR}, and Sickel \cite{s99b,s99,Si21}
have applied the following strategy.
Let $Q \subset E$ be a cube satisfying $[{\rm dist}\, (Q,E^\complement)]^n\sim |Q|$. We then suppose
the existence of a measurable set $\Omega_Q \subset E^\complement $
satisfying $$[{\rm dist}\, (Q, E^\complement)]^n \sim [{\rm diam}\, \Omega_Q]^n \sim |Q|\sim |\Omega_Q|.$$

Let $\mathcal{W}_1 $ be the collection of all dyadic cubes forming the Whitney decomposition of $E$.
By  $\mathcal{W}_P $ we denote   the collection of all dyadic cubes forming the Whitney decomposition of $E\cap P$.

\begin{lemma}\label{nec1}
Let $p\in[1,\infty)$, $\tau \in(0, \frac 1p)$, and $s \in(0,\min\{\frac 1p, n(\frac 1p - \tau)\})$.	Let $E$ be a  weakly exterior thick domain.
	Then there exists a positive constant $C$ such that
	\[
	\|{\bf 1}_E \|_{{F}^{s,\tau}_{p,\infty}(\mathbb{R}^n)}
	\ge C  \sup_{\{P\in\mathcal{Q}, |P|\le 1\}}\, \frac{1}{|P|^{\tau}}
	\,   \left\{	\sum_{Q \in \mathcal{W}_P}
	[l(Q)]^{n-sp}\right\}^{\frac1p} .
	\]
\end{lemma}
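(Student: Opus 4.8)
The plan is to establish a lower bound for the Besov-type-to-Triebel--Lizorkin-type norm by exhibiting, for each admissible dyadic cube $P$, enough nonzero Haar coefficients of ${\bf 1}_E$ associated to the Whitney cubes of $E \cap P$, and then invoking the Haar wavelet characterization of Proposition \ref{haarprop}(ii). Since $p \in [1,\infty)$, $\tau \in (0,1/p)$, and $0 < s < \min\{1/p, n(1/p-\tau)\}$, the conditions in Proposition \ref{haarprop}(ii) (the first branch, with $q=1$) hold, so $\|{\bf 1}_E\|_{F^{s,\tau}_{p,\infty}(\mathbb{R}^n)}$ is comparable to $\|\delta({\bf 1}_E)\|_{f^{s,\tau}_{p,\infty}(\mathbb{R}^n)}$; more precisely, because the sequence $f$-norm is monotone in the inner $q$-index we may bound the full $f^{s,\tau}_{p,\infty}$ norm from below by restricting attention to the dyadic cube $P$ and to the single scale $j = j_Q$ attached to each Whitney cube.

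First I would fix a dyadic cube $P$ with $|P|\le 1$ and take the Whitney decomposition $\mathcal{W}_P$ of $E\cap P$. For each $Q\in\mathcal{W}_P$, write $Q = Q_{j,m}$ with $j=j_Q$. The key geometric input is that $E$ is weakly exterior thick: by Lemma \ref{allcubes}, for each such interior Whitney cube $Q$ there is a complementing exterior set $\Omega_Q \subset E^\complement$ contained in $B(x^0, A'_E\,\ell(Q))$ with $|\Omega_Q|\ge B'_E|Q|$. This means that, within a fixed dilate of $Q$, the set $E$ occupies a cube of full measure while its complement also occupies a comparable portion, so at a bounded number of scales near $j_Q$ and within a bounded neighborhood of $Q$ there must exist at least one dyadic subcube on which ${\bf 1}_E$ is neither identically $0$ nor identically $1$, forcing a nonzero Haar coefficient $\langle {\bf 1}_E, h_{i,j',m'}\rangle$ of size $\sim 2^{-j'n/2}$ (using the computation from the proof of Theorem \ref{ok1}, where $|\langle {\bf 1}_E,h_{i,j,m}\rangle| = 2^{jn/2}|E\cap Q_{j,m}|$ when the cut is proper). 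Then the corresponding term in the sequence norm contributes $2^{j'(s+n/2)}\cdot 2^{-j'n/2} = 2^{j's}$, and raising to the power $p$ on a cube of measure $2^{-j'n}$ gives a contribution $\sim 2^{j'(sp-n)} = [\ell(Q)]^{n-sp}$ to the $L^p(P)$ integral.

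The main technical step, and the one I expect to be the genuine obstacle, is to guarantee that the Haar terms produced by distinct Whitney cubes $Q\in\mathcal{W}_P$ do not cancel or overlap destructively when summed inside the $L^p(P)$ norm. Because weak exterior thickness only controls $\Omega_Q$ up to the dilation constant $A'_E$, the ``proper cut'' subcube associated to $Q$ may sit in a bounded neighborhood rather than inside $Q$ itself, so I would need a Besicovitch/bounded-overlap argument: the finite-overlap property (v) of the Whitney decomposition, together with the uniform comparability $\ell(\Omega_Q)\sim \ell(Q)$, ensures that each point of $P$ lies in only a bounded number of these localized neighborhoods, and that each contributing scale $j'$ is within a bounded additive distance of $j_Q$. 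Combining these, the integral ${\rm I}_P := \int_P[\sum_{i,j,m}2^{j(s+n/2)}|\langle{\bf 1}_E,h_{i,j,m}\rangle|\,\mathcal{X}(2^jx-m)]^p\,dx$ is bounded below, up to a constant depending only on $n$, $A'_E$, $B'_E$, $s$, and $p$, by $\sum_{Q\in\mathcal{W}_P}[\ell(Q)]^{n-sp}$.

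Finally, dividing by $|P|^\tau$ and taking the supremum over all dyadic $P$ with $|P|\le 1$ yields
\[
\|{\bf 1}_E\|_{F^{s,\tau}_{p,\infty}(\mathbb{R}^n)}
\;\gtrsim\; \sup_{\{P\in\mathcal{Q},\,|P|\le 1\}}\,\frac{1}{|P|^\tau}\,
\left\{\sum_{Q\in\mathcal{W}_P}[\ell(Q)]^{n-sp}\right\}^{1/p},
\]
which is the asserted inequality with an appropriate positive constant $C$. Throughout I would rely on the equivalence $\|{\bf 1}_E\|_{F^{s,\tau}_{p,\infty}}\sim\|\delta({\bf 1}_E)\|_{f^{s,\tau}_{p,\infty}}$ from Proposition \ref{haarprop}, and on Lemma \ref{support} to discard the large cubes $|P|>1$ and the scaling function coefficients, restricting all estimates to the genuinely local regime $|P|\le 1$ where the Morrey parameter $\tau$ is active.
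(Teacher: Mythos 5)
Your overall strategy (Haar coefficients plus weak exterior thickness) is reasonable in spirit, but two steps do not hold up as written, and the paper avoids both by a different device. First, the reduction to Haar coefficients is not justified: Proposition \ref{haarprop}(ii) with $q=1$ gives an isomorphism between $F^{s,\tau}_{p,1}(\mathbb{R}^n)$ and $f^{s,\tau}_{p,1}(\mathbb{R}^n)$, and both the monotonicity in the inner index $q$ and the elementary embedding $F^{s,\tau}_{p,1}(\mathbb{R}^n)\hookrightarrow F^{s,\tau}_{p,\infty}(\mathbb{R}^n)$ run in the wrong direction for your purposes: they yield $\|\delta({\bf 1}_E)\|_{f^{s,\tau}_{p,\infty}(\mathbb{R}^n)}\lesssim\|{\bf 1}_E\|_{F^{s,\tau}_{p,1}(\mathbb{R}^n)}$, an upper bound by the \emph{larger} norm, not a lower bound for $\|{\bf 1}_E\|_{F^{s,\tau}_{p,\infty}(\mathbb{R}^n)}$. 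The case $q=\infty$ with $s>0$ is not covered by Proposition \ref{haarprop}(ii) at all (the requirement $s<1/q$ fails), so you would have to prove the boundedness of the analysis map $f\mapsto\delta(f)$ from $F^{s,\tau}_{p,\infty}(\mathbb{R}^n)$ into $f^{s,\tau}_{p,\infty}(\mathbb{R}^n)$ separately. Second, your key geometric claim --- that each Whitney cube $Q$ forces a nearby Haar coefficient of size $\sim 2^{-j'n/2}$ at a comparable scale --- is asserted but not proved. The computation you quote from the proof of Theorem \ref{ok1} is only the upper bound $|\langle{\bf 1}_E,h_{i,j,m}\rangle|\le 2^{jn/2}|E\cap Q_{j,m}|$; a proper cut does not prevent a coefficient from vanishing, and since $Q$ and $\Omega_Q$ may be separated by a dyadic boundary of arbitrarily low generation, no dyadic cube of scale $j_Q-O(1)$ need contain both a full-measure piece of $E$ and a comparable piece of $E^\complement$. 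By contrast, the overlap issue you single out as the main obstacle is actually harmless, since the Whitney cubes have pairwise disjoint interiors.

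The paper sidesteps all of this by using the difference characterization of Proposition \ref{H1} instead of Haar wavelets: writing $\tau=\frac1p-\frac1u$, it bounds $\|{\bf 1}_E\|^p_{F^{s,\tau}_{p,\infty}(\mathbb{R}^n)}$ from below by $|P|^{-\tau p}\sum_{Q\in\mathcal{W}_P}\int_Q\sup_{t>t_Q}t^{-sp-n}\int_{B({\bf 0},t)}|\Delta^1_h{\bf 1}_E(x)|^p\,dh\,dx$ with $t_Q:=A'_E\,\ell(Q)$; for $x\in Q\subset E$ and $x+h\in\Omega_Q\subset E^\complement$ the integrand equals $1$, and Lemma \ref{allcubes} gives $\Omega_Q\subset B(x,t_Q)$ for every $x\in Q$ together with $|\Omega_Q|\gtrsim|Q|$, so each Whitney cube contributes $t_Q^{-sp-n}|\Omega_Q|\,|Q|\sim[\ell(Q)]^{n-sp}$ directly, with no wavelet analysis and no interference between cubes. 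If you wish to keep a Haar-based proof you would need to supply the analysis bound for $q=\infty$ and a genuine lower bound for individual coefficients (e.g., via shifted dyadic grids); the difference route is the shorter path.
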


\begin{proof}
We restrict ourselves to dyadic cubes $P$ with  $|P|\le 1$ satisfying  that $|P \cap E|>0$ and $|P \cap E^\complement|>0$.
Let $Q \in \mathcal{W}_P$. Hence, $Q \subset E$, but not necessarily $Q \in \mathcal{W}$.
	By $\Omega_Q$ we denote the associated subset of $E^\complement$ according to Lemma \ref{allcubes}.
	Then, by Lemma \ref{allcubes}, one has  $\Omega_Q \subset B(x,t_Q)$ for all $x\in Q$, where $t_Q := A'_E \ell(Q)$.
Applying the characterization by differences in Proposition \ref{H1} with a dyadic cube $P$ satisfying
$P \cap E \neq \emptyset$, we find that
	\begin{align*}
	\|  {\bf 1}_E \|_{{F}^{s,\tau}_{p,\infty}(\mathbb{R}^n)}^p
& \ge
	   \left\| \sup_{0<t<\infty}  \frac1{t^{s}} \left[\frac1{t^{n}} \int_{B(\textbf{0},t)} |\Delta^{1}_{h}{\bf 1}_E(x) |^p \,dh \right]^{\frac1p} 	 \right\|_{\cm^{\frac{p}{1-\tau p}}_p (\mathbb{R}^n)}^p
	\\
	& \ge   \frac{1}{|P|^{\tau p}} \int_P
		 \sup_{0<t<\infty}  \frac1{t^{sp+n}}\, \int_{B(\textbf{0},t)} |{\bf 1}_E(x+h)- {\bf 1}_E (x)|^p \,dh \, dx 	
	\\
	& \ge   \frac{1}{|P|^{\tau p}} \sum_{{Q \in \mathcal{W}_P}}
	\int_{Q}
\sup_{t_Q<t<\infty}  \frac1{t^{sp+n}}\, \int_{B(\textbf{0},t)} |{\bf 1}_E(x+h)- {\bf 1}_E (x)|^p \,dh\, dx 	
\\	
	& \ge  \frac{1}{|P|^{\tau p}} \sum_{{Q \in \mathcal{W}_P}}
	\int_{Q}
\sup_{t_Q<t<\infty}   \frac1{t^{sp+n}}\, \int_{\Omega_Q} |{\bf 1}_E(y)- {\bf 1}_E (x)|^p \,dy \, dx.
\end{align*}
From this and  Lemma \ref{allcubes}, it follows that
\begin{align*}
		\| {\bf 1}_E  \|^p_{{F}^{s,\tau}_{p,\infty}(\mathbb{R}^n)}
& \gs  \frac{1}{|P|^{\tau p}} \sum_{{Q \in \mathcal{W}_P}} 	t^{-sp-n}_Q \, |\Omega_Q|\,  |Q|	
\\
& \gs   \frac{1}{|P|^{\tau p}} \sum_{{Q \in \mathcal{W}_P}} 	
[A'_E  \, \ell(Q)]^{-sp-n} \, B'_E\,  [\ell (Q)]^{n} \, |Q|	
\\
& \gs  \frac{1}{|P|^{\tau p}} \sum_{{Q \in \mathcal{W}_P}} 	[l(Q)]^{-sp+n} .
\end{align*}
This finishes the proof of Lemma \ref{nec1}.
\end{proof}

Let $\mathcal{W}^*_{P}$ denote the Whitney decomposition of $P \cap E^\complement$.
If we suppose that $E$ is a bounded and weakly thick domain, then we can interchange the role of $E$ and $E^\complement$.
Since we only deal with dyadic cubes $P$ satisfying $|P|\le 1$ with a nontrivial intersection with $\partial E$,
this yields the following lemma.

\begin{lemma}\label{nec11}
	Let $E$ be a bounded  weakly thick domain. Let $p,s$, and $\tau$ be the same as in  Lemma \ref{nec1}.
Then there exists a positive constant $C$ such that
		\[
	\| {\bf 1}_E \|_{{F}^{s,\tau}_{p,\infty}(\mathbb{R}^n)}
	\ge C \sup_{\{P\in\mathcal{Q},\, |P|\le 1\}}\, \frac{1}{|P|^{\tau}}
	\,   \left\{	\sum_{Q \in \mathcal{W}^*_P}
	[l(Q)]^{n-sp}\right\}^{\frac1p} .
	\]
\end{lemma}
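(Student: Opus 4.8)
The plan is to deduce Lemma \ref{nec11} from Lemma \ref{nec1} by interchanging the roles of $E$ and its complement, exactly as announced in the paragraph preceding the statement. Two observations make this possible. First, a bounded weakly thick domain is in particular weakly interior thick, so by Definition \ref{wthick}(ii) the set $F := \mathbb{R}^n \setminus \overline{E}$ is a genuine weakly exterior thick domain (it is nonempty and not all of $\mathbb{R}^n$ since $E$ is bounded and nontrivial). Second, first-order differences annihilate the constant relating the two characteristic functions: from ${\bf 1}_{E^\complement} = 1 - {\bf 1}_E$ one gets $\Delta_h^1 {\bf 1}_{E^\complement} = -\Delta_h^1 {\bf 1}_E$, hence $|\Delta_h^1 {\bf 1}_E(x)| = |\Delta_h^1 {\bf 1}_{E^\complement}(x)|$ for all $x,h \in \mathbb{R}^n$.

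The first step is to apply Proposition \ref{H1} with $u = p/(1-\tau p)$, so that $\frac1p - \frac1u = \tau$ and the admissible range $0 < s < \min\{\frac1p, n(\frac1p-\tau)\}$ is exactly the hypothesis. This bounds $\|{\bf 1}_E\|_{F^{s,\tau}_{p,\infty}(\mathbb{R}^n)}$ from below by the Morrey norm of the ball-means-of-differences quantity, as in the opening display of the proof of Lemma \ref{nec1}. By the pointwise identity above, this lower bound is unchanged if ${\bf 1}_E$ is replaced by ${\bf 1}_F$ inside the differences (using $|\partial E|=0$, discussed below, so that ${\bf 1}_{E^\complement} = {\bf 1}_F$ a.e.). In effect, the difference part of $\|{\bf 1}_E\|_{F^{s,\tau}_{p,\infty}}$ coincides with that of ${\bf 1}_F$, so one never needs $\|{\bf 1}_F\|_{F^{s,\tau}_{p,\infty}}$ itself (which is infinite because $F$ is unbounded).

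The second step is to run the chain of inequalities from the proof of Lemma \ref{nec1} verbatim, but on the weakly exterior thick domain $F$, summing over the Whitney cubes $Q \in \mathcal{W}^*_P$ of $F \cap P$. For each such $Q$, Lemma \ref{allcubes} applied to $F$ supplies a complementing set $\Omega_Q \subset F^\complement = \overline{E}$ with $\Omega_Q \subset B(x, A'_F\,\ell(Q))$ for every $x \in Q$ and $|\Omega_Q| \gs |Q|$. For $x \in Q \subset F$ one has ${\bf 1}_E(x)=0$, while $\int_{\Omega_Q} {\bf 1}_E(y)\,dy = |\Omega_Q \cap E| = |\Omega_Q| \gs |Q|$; restricting the inner integral to $B({\bf 0},t) \supset \Omega_Q - x$ for $t > t_Q := A'_F\,\ell(Q)$ and using the disjointness of the Whitney cubes reproduces
\[
\|{\bf 1}_E\|^p_{F^{s,\tau}_{p,\infty}(\mathbb{R}^n)} \gs \frac{1}{|P|^{\tau p}} \sum_{Q \in \mathcal{W}^*_P} t_Q^{-sp-n}\,|\Omega_Q|\,|Q| \gs \frac{1}{|P|^{\tau p}} \sum_{Q \in \mathcal{W}^*_P} [\ell(Q)]^{n-sp},
\]
which, after taking $p$-th roots and the supremum over dyadic $P$ with $|P|\le 1$ (only those meeting $\partial E$ contribute), is the assertion.

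The one point needing care is the identity $|\Omega_Q \cap E| = |\Omega_Q|$, i.e. that the sets furnished by weak interior thickness lie in $E$ up to a null set even though Definition \ref{wthick} only places them in $\overline{E}$. This rests on $|\partial E|=0$, which I would record separately: for a bounded weakly thick domain the two-sided thickness makes $\partial E$ porous and hence of measure zero, and this is also what legitimizes identifying ${\bf 1}_{E^\complement}$ with ${\bf 1}_F$ in $\mathcal{S}'(\mathbb{R}^n)$. Apart from this, no new estimate is required; the whole argument is the proof of Lemma \ref{nec1} transported to $F$, so the result follows as soon as the symmetry $E \leftrightarrow E^\complement$ is set up correctly.
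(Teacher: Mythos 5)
Your argument is exactly the paper's: the printed proof of Lemma \ref{nec11} consists of the single remark that one may interchange the roles of $E$ and $E^\complement$ and rerun Lemma \ref{nec1}, and you carry this out faithfully, including the two points the paper leaves implicit — that the ball-means-of-differences functional from Proposition \ref{H1} is the same for ${\bf 1}_{E^\complement}$ as for ${\bf 1}_E$ (so the infinite norm of the unbounded set $E^\complement$ is never invoked), and that $|\Omega_Q\cap E|=|\Omega_Q|$ for the complementing sets $\Omega_Q\subset\overline{E}$ requires $|\partial E|=0$. The only soft spot is your one-line justification of $|\partial E|=0$ via porosity: Definition \ref{wthick} only furnishes complementing sets of proportional \emph{measure}, not balls, so porosity of $\partial E$ does not follow immediately from weak thickness; the paper, however, uses the same fact silently, so your treatment is if anything the more careful one.
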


For later use we shall deal with a reformulation of this estimate.
For any integer $j \ge -\log_2 \ell (P)$, let
\[
\mathcal{W}_{P,j}:= \left\{Q \in  \mathcal{W}_{P}:\ |Q|= 2^{-jn}\right\}.
\]
It follows
\begin{equation}\label{w016}
\sum_{Q \in \mathcal{W}_P} [l(Q)]^{n-sp}
= \sum_{j=-\log_2 \ell (P)}^\infty \sum_{Q \in \mathcal{W}_{P,j}}
2^{-j(n-sp)} =  \sum_{j=-\log_2 \ell (P)}^\infty |\mathcal{W}_{P,j}|\,
2^{-j(n-sp)}.
\end{equation}
Let $Q=Q_{j,m} \in \mathcal{W}_{P,j}$.
Then $|Q \cap {O}^\alpha_P |>0$ implies $\alpha \le j$, where $O^\alpha_P$ is the same as in \eqref{oalphaP}.
Consequently, we obtain
\[
O^\alpha_P \cap E  \subset \bigcup_{j=\alpha}^\infty \left(\bigcup_{Q \in \mathcal{W}_{P,j}} Q\right)
\]
and therefore
\[
|O^\alpha_P \cap E| \le
\sum_{j= \alpha}^\infty  |\mathcal{W}_{P,j}| \, 2^{-jn}.
\]
From these and $s>0$, it follows that
\begin{align*}
\sum_{\alpha=\ell}^\infty 2^{\alpha sp} |O^\alpha_P \cap E|
& \le   \sum_{\alpha=\ell}^\infty
2^{\alpha sp}  \sum_{j= \alpha}^\infty |\mathcal{W}_{P,j}| \, 2^{-jn}
\\
&=
\sum_{j=\ell}^\infty\, |\mathcal{W}_{P,j}| \, 2^{-jn}
\sum_{\alpha = \ell}^j 2^{\alpha sp}
 \ls
\sum_{j=\ell}^\infty \, |\mathcal{W}_{P,j}| \, 2^{-jn} 2^{jsp}
\end{align*}
with the implicit positive constant independent of $E$ and $P$.
Now we combine this estimate with \eqref{w016} and find
\[
\sum_{\alpha=\ell}^\infty 2^{\alpha sp} |O^\alpha_P \cap E|\ls \sum_{Q \in \mathcal{W}_P} [l(Q)]^{n-sp},
\]
which further implies
\begin{align*}
\sup_{\{P\in\mathcal{Q}, |P|\le 1\}}\, \frac{1}{|P|^\tau} \left( \sum_{\alpha=\ell}^\infty 2^{\alpha sp}
|O^\alpha_P \cap E|\right)^{\frac1p}
&\ls \sup_{\{P\in\mathcal{Q}, |P|\le 1\}}\, \frac{1}{|P|^\tau}\left(\sum_{Q \in \mathcal{W}_P} [l(Q)]^{n-sp}
\right)^{\frac1p} \\
&\ls 	\|{\bf 1}_E \|_{{F}^{s,\tau}_{p,\infty}(\mathbb{R}^n)},
\end{align*}
where we used Lemma \ref{nec1}.
For this reason we have proved the following lemma.

\begin{lemma}\label{nec2}
Under the same assumptions as in Lemma \ref{nec1}, one has
\[
\sup_{\{P\in\mathcal{Q}, |P|\le 1\}}\, \frac{1}{|P|^\tau} \left( \sum_{\alpha=\ell}^\infty 2^{\alpha sp}
|O^\alpha_P \cap E|\right)^{\frac1p} \ls \|{\bf 1}_E \|_{{F}^{s,\tau}_{p,\infty}(\mathbb{R}^n)}
\]
with the implicit positive constant independent of $E$.
\end{lemma}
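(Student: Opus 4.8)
The plan is to derive the estimate as a direct consequence of Lemma \ref{nec1} once we establish the purely geometric inequality
\[
\sum_{\alpha=\ell}^\infty 2^{\alpha sp}\, \left|O^\alpha_P \cap E\right| \ls \sum_{Q \in \mathcal{W}_P} [\ell(Q)]^{n-sp}
\]
for every dyadic cube $P = Q_{\ell,m}$ with $|P|\le 1$, with an implicit constant independent of $E$ and $P$. Granting this, one divides by $|P|^{\tau p}$, takes the $p$-th root, passes to the supremum over all such $P$, and applies Lemma \ref{nec1} to control the right-hand side by $\|{\bf 1}_E\|_{F^{s,\tau}_{p,\infty}(\mathbb{R}^n)}$, which is exactly the asserted bound.

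The first step is to organize the Whitney cubes of $E \cap P$ by scale, setting $\mathcal{W}_{P,j}:= \{Q \in \mathcal{W}_P: |Q|=2^{-jn}\}$, so that, as in \eqref{w016},
\[
\sum_{Q \in \mathcal{W}_P} [\ell(Q)]^{n-sp} = \sum_{j=\ell}^\infty |\mathcal{W}_{P,j}|\, 2^{-j(n-sp)}.
\]
The key geometric observation is that a Whitney cube $Q = Q_{j,m} \in \mathcal{W}_{P,j}$ can only meet the annular set $O^\alpha_P$, defined in \eqref{oalphaP}, when $\alpha \le j$: indeed, for $x \in Q$ the defining property $\diam Q \ls \dist(Q,\partial E)$ of the Whitney decomposition forces $\dist(x,\partial E) \gs 2^{-j}$, whereas $x \in O^\alpha_P$ requires $\dist(x,\partial E) < \sqrt{n}\,2^{-\alpha}$. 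Since $\mathcal{W}_P$ covers $E \cap P \supset O^\alpha_P \cap E$, this yields the inclusion $O^\alpha_P \cap E \subset \bigcup_{j \ge \alpha}\bigcup_{Q \in \mathcal{W}_{P,j}} Q$, and hence $|O^\alpha_P \cap E| \le \sum_{j \ge \alpha} |\mathcal{W}_{P,j}|\, 2^{-jn}$.

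Finally I would multiply this bound by $2^{\alpha sp}$, sum over $\alpha \ge \ell$, and interchange the order of summation; since $s>0$ the inner geometric series satisfies $\sum_{\alpha=\ell}^{j} 2^{\alpha sp} \ls 2^{jsp}$, giving
\[
\sum_{\alpha=\ell}^\infty 2^{\alpha sp}\,|O^\alpha_P \cap E| \ls \sum_{j=\ell}^\infty |\mathcal{W}_{P,j}|\, 2^{-j(n-sp)} = \sum_{Q \in \mathcal{W}_P} [\ell(Q)]^{n-sp},
\]
which is the desired inequality. I expect the only subtle point to be the geometric bookkeeping isolating the Whitney level $j$ from the annulus index $\alpha$; the monotone rearrangement of the double sum and the concluding invocation of Lemma \ref{nec1} are then routine.
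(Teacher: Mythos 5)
Your proposal is correct and follows essentially the same route as the paper: you organize the Whitney cubes of $E\cap P$ by scale into $\mathcal{W}_{P,j}$, use the Whitney separation property to show that a cube of level $j$ can only meet $O^\alpha_P$ when $\alpha\le j$, interchange the resulting double sum using $s>0$, and conclude via Lemma \ref{nec1}. The paper states the implication ``$|Q\cap O^\alpha_P|>0\Rightarrow \alpha\le j$'' without proof, whereas you supply the (correct) one-line justification, so nothing is missing.
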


Again there is a supplement for bounded weakly thick domains $E$.

\begin{lemma}\label{nec21}
Under the same assumptions as in Lemma \ref{nec11}, one has
\[
\sup_{\{P\in\mathcal{Q}, |P|\le 1\}}\, \frac{1}{|P|^\tau} \left( \sum_{\alpha=\ell}^\infty 2^{\alpha sp}
\left|O^\alpha_P \cap E^\complement\right|\right)^{\frac1p} \ls \|  {\bf 1}_E  \|_{{F}^{s,\tau}_{p,\infty}(\mathbb{R}^n)}
\]
with the implicit positive constant independent of $E$.
\end{lemma}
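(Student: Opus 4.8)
The plan is to transcribe, almost verbatim, the passage that derives Lemma \ref{nec2} from Lemma \ref{nec1}, interchanging throughout the roles of $E$ and $E^\complement$. The only structural inputs needed are Lemma \ref{nec11} in place of Lemma \ref{nec1}, together with the fact that, for a bounded weakly thick domain, one has $\partial E = \partial F$ by Lemma \ref{dE=dF}. In particular the sets $O^\alpha_P$ defined in \eqref{oalphaP} through $\dist(\cdot,\partial E)$ are literally unchanged when the boundary is approached from the complementary side, so no recomputation of the $\delta$-neighborhoods is required; we interpret $\mathcal{W}^*_P$ as the Whitney cubes of the open set $F\cap P$, which differs from $E^\complement \cap P$ only on $\partial E$.

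First I would set, for any integer $j \ge -\log_2 \ell(P)$,
\[
\mathcal{W}^*_{P,j} := \left\{Q \in \mathcal{W}^*_P:\ |Q| = 2^{-jn}\right\},
\]
and record the analogue of \eqref{w016}, namely
\[
\sum_{Q \in \mathcal{W}^*_P} [\ell(Q)]^{n-sp}
= \sum_{j=-\log_2 \ell(P)}^\infty |\mathcal{W}^*_{P,j}|\, 2^{-j(n-sp)}.
\]
The key geometric step is to show that, for $Q = Q_{j,m} \in \mathcal{W}^*_{P,j}$, the condition $|Q \cap O^\alpha_P| > 0$ forces $\alpha \le j$. This is precisely where the interchange must be justified: since $Q \subset F$ is a Whitney cube of $F\cap P$, the Whitney property (iii) gives $\dist(Q,\overline{E}) \ge \diam Q = \sqrt{n}\,2^{-j}$, and because $\partial E = \partial F \subset \overline{E}$ we obtain $\dist(x,\partial E) \ge \sqrt{n}\,2^{-j}$ for every $x \in Q$. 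Hence any $x \in Q \cap O^\alpha_P$ satisfies $\sqrt{n}\,2^{-j} \le \dist(x,\partial E) < \sqrt{n}\,2^{-\alpha}$, forcing $\alpha \le j$. This yields the inclusion
\[
O^\alpha_P \cap E^\complement \subset \bigcup_{j=\alpha}^\infty \Bigg(\bigcup_{Q \in \mathcal{W}^*_{P,j}} Q\Bigg),
\]
and therefore $|O^\alpha_P \cap E^\complement| \le \sum_{j=\alpha}^\infty |\mathcal{W}^*_{P,j}|\, 2^{-jn}$.

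Finally I would multiply by $2^{\alpha s p}$, sum over $\alpha \ge \ell$, interchange the order of summation, and use $s>0$ to dominate the inner geometric sum $\sum_{\alpha=\ell}^j 2^{\alpha s p}$ by a constant multiple of $2^{jsp}$; combined with the displayed identity this gives
\[
\sum_{\alpha=\ell}^\infty 2^{\alpha s p}\,|O^\alpha_P \cap E^\complement|
\ls \sum_{Q \in \mathcal{W}^*_P} [\ell(Q)]^{n-sp},
\]
with implicit constant independent of $E$ and $P$. Taking the supremum over dyadic $P$ with $|P|\le 1$, normalizing by $|P|^{-\tau}$, and invoking Lemma \ref{nec11} then completes the argument. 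I do not anticipate a genuine obstacle here: the whole proof is a mirror image of that of Lemma \ref{nec2}, and the single point that deserves attention — that the very same sets $O^\alpha_P$ and the same distance-to-boundary lower bound remain valid on the complementary side — is guaranteed exactly by the identity $\partial E = \partial F$ for bounded weakly thick domains.
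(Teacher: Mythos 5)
Your proposal is correct and is exactly the argument the paper intends: it repeats the derivation of Lemma \ref{nec2} from Lemma \ref{nec1} with the roles of $E$ and $E^\complement$ interchanged, the key point being that a Whitney cube $Q\in\mathcal{W}^*_{P,j}$ satisfies $\dist(x,\partial E)\ge\sqrt{n}\,2^{-j}$ for $x\in Q$, so that $|Q\cap O^\alpha_P|>0$ forces $\alpha\le j$. Your extra care about $\partial E=\partial F$ and about interpreting $\mathcal{W}^*_P$ via the open set $F\cap P$ (note also $O^\alpha_P\cap\partial E=\emptyset$, so $O^\alpha_P\cap E^\complement=O^\alpha_P\cap F$) only makes explicit what the paper leaves implicit.
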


Now we would like to switch from $|O^\alpha_P \cap E|$ to
 $|(\partial E)^{\sqrt{n} 2^{-j} }\cap E|$.
Clearly
\[
(\partial E)^{\sqrt{n} 2^{-j}} \cap P = \bigcup_{\alpha=j}^\infty O^\alpha_P \,.
\]
Because of $s>0$ we find
\begin{align*}
\sum_{j=\ell}^\infty 2^{jsp}\, \left|(\partial E)^{\sqrt{n} 2^{-j}} \cap E \cap P\right|
& \le \sum_{j=\ell}^\infty 2^{jsp} \sum_{\alpha=j}^\infty |O^\alpha_P \cap E|
\\
& =  \sum_{\alpha = \ell}^\infty
\left|O^\alpha_P \cap E\right|  \sum_{j=0}^\alpha 2^{jsp}
\ls  \sum_{\alpha = \ell}^\infty  |O^\alpha_P \cap E| \, 2^{\alpha sp} .
\end{align*}
For aesthetic reasons we turn sums into  integrals. Observe
\begin{eqnarray*}
\sum_{j=\ell}^\infty 2^{jsp}\,\left|(\partial E)^{\sqrt{n} 2^{-j}} \cap E \cap P\right|
\gs
\int_{0}^{\sqrt{n} 2^{-\ell}} \delta^{-sp}\, \left|(\partial E)^{\delta} \cap E \cap P\right| \, \frac{d\delta}{\delta}.
\end{eqnarray*}
Hence, we have proved the following assertion.

\begin{theorem}\label{nec3}
Let $p\in[1,\infty)$, $\tau \in(0, \frac 1p)$, and $ s \in(0,\min\{\frac 1p, n(\frac 1p - \tau)\})$.	Let $E$ be a bounded  weakly  exterior thick domain.
	Then ${\bf 1}_E \in {F}^{s,\tau}_{p,\infty}(\mathbb{R}^n)$ implies
	\[
\sup_{\{P\in\mathcal{Q},\, |P|\le 1\}}\, \frac{1}{|P|^\tau}\, 	\left[ \int_{0}^{\sqrt{n} \ell (P)} \delta^{-sp}\,
	\left|(\partial E)^{\delta} \cap E \cap P\right| \, \frac{d\delta}{\delta}\right]^{\frac1p} < \infty .
	\]
\end{theorem}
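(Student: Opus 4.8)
The plan is to deduce Theorem \ref{nec3} directly from Lemma \ref{nec2} together with an elementary comparison between the integral appearing in the conclusion and the discrete quantity $\sum_{\alpha\ge\ell}2^{\alpha sp}|O^\alpha_P\cap E|$ that Lemma \ref{nec2} already controls. Recall that, for any dyadic $P$ with $|P|\le 1$ (here $\ell$ denotes the level of $P$, so that $\ell(P)=2^{-\ell}$), Lemma \ref{nec2} gives
\[
\sup_{\{P\in\mathcal{Q},\, |P|\le 1\}}\frac{1}{|P|^\tau}\left(\sum_{\alpha=\ell}^\infty 2^{\alpha sp}\,|O^\alpha_P\cap E|\right)^{\frac1p}\ls \|{\bf 1}_E\|_{F^{s,\tau}_{p,\infty}(\mathbb{R}^n)}.
\]
Since ${\bf 1}_E\in F^{s,\tau}_{p,\infty}(\mathbb{R}^n)$ makes the right-hand side finite, it will suffice to bound the integral in the theorem, uniformly in $E$ and $P$, by the inner sum above.

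First I would use the dyadic decomposition $(\partial E)^{\sqrt n\,2^{-j}}\cap P=\bigcup_{\alpha\ge j}O^\alpha_P$ of the $\delta$-neighborhood into the annuli $O^\alpha_P$ defined in \eqref{oalphaP}. Exchanging the order of summation and then summing the geometric progression $\sum_{j=\ell}^\alpha 2^{jsp}\ls 2^{\alpha sp}$, I obtain
\[
\sum_{j=\ell}^\infty 2^{jsp}\,\bigl|(\partial E)^{\sqrt n\,2^{-j}}\cap E\cap P\bigr|\le \sum_{\alpha=\ell}^\infty |O^\alpha_P\cap E|\sum_{j=\ell}^\alpha 2^{jsp}\ls \sum_{\alpha=\ell}^\infty 2^{\alpha sp}\,|O^\alpha_P\cap E|.
\]
The positivity of $s$ is essential precisely here: it is what makes the partial sums of $2^{jsp}$ comparable to their largest term.

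Next I would pass from the discrete sum to the integral. Since $\delta\mapsto\bigl|(\partial E)^\delta\cap E\cap P\bigr|$ is nondecreasing and $\delta^{-sp}\sim 2^{jsp}$ on each dyadic block $[\sqrt n\,2^{-j-1},\sqrt n\,2^{-j})$, splitting $(0,\sqrt n\,\ell(P))=(0,\sqrt n\,2^{-\ell})$ into these blocks yields the standard comparison
\[
\int_{0}^{\sqrt n\,\ell(P)}\delta^{-sp}\,\bigl|(\partial E)^\delta\cap E\cap P\bigr|\,\frac{d\delta}{\delta}\ls \sum_{j=\ell}^\infty 2^{jsp}\,\bigl|(\partial E)^{\sqrt n\,2^{-j}}\cap E\cap P\bigr|.
\]
Chaining the last two displays, dividing by $|P|^\tau$, raising to the power $1/p$, taking the supremum over all dyadic $P$ with $|P|\le 1$, and finally invoking Lemma \ref{nec2}, I arrive at the asserted finiteness.

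The genuine difficulty has already been absorbed into the earlier lemmas: the weak exterior thickness of $E$ enters only through Lemma \ref{nec1}, via the complementing sets $\Omega_Q$ and the difference characterization of Proposition \ref{H1}, and through its reformulation in Lemma \ref{nec2}. Relative to those, the only delicate point in the present step is the geometric-series summation, which forces the restriction $s>0$ (the upper bounds on $s$ are needed only to apply the cited lemmas); everything else is the routine interchange of sums and the comparison of a dyadic sum with its associated integral.
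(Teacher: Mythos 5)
Your argument is correct and follows essentially the same route as the paper: Lemma \ref{nec2} plus the decomposition $(\partial E)^{\sqrt n\,2^{-j}}\cap P=\bigcup_{\alpha\ge j}O^\alpha_P$, the interchange of summation with the geometric-series bound $\sum_{j\le\alpha}2^{jsp}\ls 2^{\alpha sp}$ (which is exactly where $s>0$ enters), and the standard comparison of the dyadic sum with the integral. Nothing is missing.
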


Similarly one can prove  the following result.

\begin{theorem}\label{nec31}
Let $p\in[1,\infty)$, $\tau \in(0, \frac 1p)$, and $s\in(0, \min\{\frac 1p, n(\frac 1p - \tau)\})$.	Let $E$ be a bounded weakly  thick domain.
	Then ${\bf 1}_E \in {F}^{s,\tau}_{p,\infty}(\mathbb{R}^n)$ implies
	\[
\sup_{\{P\in\mathcal{Q},\, |P|\le 1\}}\, \frac{1}{|P|^\tau}\, 	\left[ \int_{0}^{\sqrt{n} \ell (P)} \delta^{-sp}\,
	\left|(\partial E)^{\delta}  \cap P\right| \, \frac{d\delta}{\delta}\right]^{\frac1p} < \infty .
	\]
\end{theorem}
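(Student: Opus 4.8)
The plan is to run exactly the argument that produced Theorem \ref{nec3}, but now feeding in the one-sided estimates from \emph{both} sides of $\partial E$. Since $E$ is weakly thick, it is simultaneously weakly exterior thick and weakly interior thick, so the parameters $p\in[1,\infty)$, $\tau\in(0,\frac1p)$, and $s\in(0,\min\{\frac1p,n(\frac1p-\tau)\})$ are admissible for both Lemma \ref{nec2} (applied to $E$) and Lemma \ref{nec21} (available precisely for bounded weakly thick $E$). Writing $P=Q_{\ell,m}$ with $\ell\in\mathbb{N}_0$, these two lemmas give
\[
\sup_{\{P\in\mathcal{Q},\,|P|\le1\}}\frac{1}{|P|^\tau}\left(\sum_{\alpha=\ell}^\infty 2^{\alpha sp}\,\left|O^\alpha_P\cap E\right|\right)^{\frac1p}\ls \|{\bf 1}_E\|_{F^{s,\tau}_{p,\infty}(\mathbb{R}^n)}
\]
together with the same bound with $E$ replaced by $E^\complement$.

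The first step is to combine these into a two-sided estimate. Every point of $O^\alpha_P$ satisfies $\dist(\cdot,\partial E)\ge\sqrt{n}\,2^{-\alpha-1}>0$, so $O^\alpha_P$ is disjoint from $\partial E$ and hence $|O^\alpha_P|=|O^\alpha_P\cap E|+|O^\alpha_P\cap E^\complement|$ exactly. Using the subadditivity of $x\mapsto x^{1/p}$ for $p\ge1$, the two displayed bounds add to
\[
\sup_{\{P\in\mathcal{Q},\,|P|\le1\}}\frac{1}{|P|^\tau}\left(\sum_{\alpha=\ell}^\infty 2^{\alpha sp}\,\left|O^\alpha_P\right|\right)^{\frac1p}\ls \|{\bf 1}_E\|_{F^{s,\tau}_{p,\infty}(\mathbb{R}^n)}.
\]
The second step is the conversion of this annular sum into the full neighbourhood integral, which is a verbatim repetition of the computation carried out just before Theorem \ref{nec3}. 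With $(\partial E)^{\sqrt{n}2^{-j}}\cap P=\bigcup_{\alpha=j}^\infty O^\alpha_P$, the rearrangement
\[
\sum_{j=\ell}^\infty 2^{jsp}\,\Big|(\partial E)^{\sqrt{n}2^{-j}}\cap P\Big|\le\sum_{\alpha=\ell}^\infty|O^\alpha_P|\sum_{j=\ell}^\alpha 2^{jsp}\ls\sum_{\alpha=\ell}^\infty 2^{\alpha sp}\,|O^\alpha_P|
\]
(valid since $s>0$), followed by estimating $\int_0^{\sqrt{n}\ell(P)}\delta^{-sp}|(\partial E)^\delta\cap P|\frac{d\delta}{\delta}$ from above by $\sum_{j\ge\ell}2^{jsp}|(\partial E)^{\sqrt{n}2^{-j}}\cap P|$ on the dyadic intervals $[\sqrt{n}2^{-j-1},\sqrt{n}2^{-j})$, yields the claimed inequality.

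The one genuinely new point — and the place where this proof departs from that of Theorem \ref{nec3} — is the passage from the annuli $O^\alpha_P$ to the full neighbourhood $(\partial E)^\delta\cap P$. In Theorem \ref{nec3} the extra intersection with the open set $E$ automatically discards $\partial E$, since $\partial E\cap E=\emptyset$; here, by contrast, $(\partial E)^\delta\cap P$ contains $\partial E\cap P$, which lies in no $O^\alpha_P$, so the equality $(\partial E)^{\sqrt{n}2^{-j}}\cap P=\bigcup_{\alpha\ge j}O^\alpha_P$ holds only modulo the set $\partial E\cap P$. I therefore expect the main obstacle to be verifying that $|\partial E|=0$: if it were positive the left-hand integral would diverge like $\int_0\delta^{-sp}\,d\delta/\delta$ and the asserted implication would be vacuous, so this has to be ruled out. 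This is exactly where the \emph{two-sided} thickness (rather than mere exterior thickness, together with Lemma \ref{dE=dF}) is used: the simultaneous interior and exterior thickness prevents $\partial E$ from having a Lebesgue density point, forcing $|\partial E|=0$, after which $|(\partial E)^\delta\cap P|=|\bigcup_{\alpha\ge j}O^\alpha_P|$ up to a null set and the chain above closes. The remainder is a routine transcription of the proof of Theorem \ref{nec3} with $|O^\alpha_P\cap E|$ replaced throughout by $|O^\alpha_P|$.
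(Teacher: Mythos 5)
Your overall route is exactly the paper's: the authors dispose of Theorem \ref{nec31} with the single sentence ``Similarly one can prove the following result,'' meaning precisely what you do --- invoke Lemma \ref{nec2} for $E$ and Lemma \ref{nec21} for $E^\complement$ (both available since a bounded weakly thick domain is weakly exterior and weakly interior thick), add them via $|O^\alpha_P|=|O^\alpha_P\cap E|+|O^\alpha_P\cap E^\complement|$, and then repeat verbatim the dyadic resummation and sum-to-integral conversion that precedes Theorem \ref{nec3}. All of those steps in your write-up are correct.

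The one place where I cannot accept the argument as written is your treatment of $|\partial E|=0$. You are right that this is needed: $(\partial E)^{\delta}\cap P$ contains $\partial E\cap P$, which lies in no annulus $O^\alpha_P$, and if $|\partial E\cap P|>0$ the integral $\int_0\delta^{-sp}|(\partial E)^\delta\cap P|\,\frac{d\delta}{\delta}$ diverges, so the asserted conclusion forces $|\partial E|=0$. (The paper glosses over this; its displayed identity $(\partial E)^{\sqrt{n}2^{-j}}\cap P=\bigcup_{\alpha\ge j}O^\alpha_P$ is only true modulo $\partial E\cap P$, which is harmless in Theorem \ref{nec3} because of the extra intersection with the open set $E$, but not here.) However, your claim that ``the simultaneous interior and exterior thickness prevents $\partial E$ from having a Lebesgue density point'' is asserted, not proved, and it does not follow from Definition \ref{wthick} as stated: the complementing sets $\Omega_Q$ attached to Whitney cubes of $E$ are only required to lie in $E^\complement=F\cup\partial E$, and those attached to Whitney cubes of $F$ only in $F^\complement=\overline{E}=E\cup\partial E$; in both cases $\Omega_Q$ may sit entirely inside $\partial E$, so near a putative density point of $\partial E$ the thickness conditions supply no mass in $E$ or $F$ and the density argument does not close. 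Moreover the Whitney cubes witnessing points of $E$ near such a point can be arbitrarily small relative to the scale $r$, so they give no lower density bound either. You would need a separate argument (or an additional hypothesis, or a derivation of $|\partial E|=0$ from ${\bf 1}_E\in F^{s,\tau}_{p,\infty}(\mathbb{R}^n)$ together with thickness) to fill this in; as it stands this step is a genuine gap in your proof --- one that the paper's own one-line proof shares.
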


Applying Theorems \ref{ok1} and \ref{nec31} and  the elementary embeddings
\[ {F}^{s,\tau}_{p,1}(\mathbb{R}^n) \hookrightarrow  {F}^{s,\tau}_{p,q}(\mathbb{R}^n) \hookrightarrow  {F}^{s,\tau}_{p,\infty}(\mathbb{R}^n),
\]
we obtain  the following quite satisfactory conclusion.

\begin{theorem}\label{top}
	Let  $p\in[1,\infty)$, $\tau \in(0, \frac 1p)$,  $q\in[1,\infty]$, and $s\in(0, \min\{\frac 1p, n(\frac 1p - \tau)\})$. Let $E$ be a bounded weakly  thick domain.
	Then ${\bf 1}_E \in {F}^{s,\tau}_{p,q}(\mathbb{R}^n)$ holds if and only if
	\begin{equation*}
	\sup_{\{P\in\mathcal{Q}, |P|\le 1\}}\, \frac{1}{|P|^\tau}\, 	\left[ \int_{0}^{\sqrt{n} \, \ell (P)} \delta^{-sp}\,
	\left|(\partial E)^{\delta} \cap P\right| \, \frac{d\delta}{\delta}\right]^{\frac1p} < \infty .
	\end{equation*}
\end{theorem}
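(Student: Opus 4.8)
The plan is to obtain Theorem \ref{top} as a direct synthesis of the sufficient condition established in Theorem \ref{ok1}, the necessary condition established in Theorem \ref{nec31}, and the monotonicity of the scale $F^{s,\tau}_{p,q}(\mathbb{R}^n)$ with respect to the fine index $q$ recorded in Remark \ref{grund}(ii). No new estimate is required; the essential analytic content already resides in those two theorems, so the task reduces to checking that their hypotheses are met simultaneously and that the two displayed integral conditions literally coincide.

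First I would verify the parameter compatibility. In all three statements one has $p\in[1,\infty)$, $\tau\in(0,\frac 1p)$, and $s\in(0,\min\{\frac 1p, n(\frac 1p -\tau)\})$, while Theorem \ref{top} adds only the range $q\in[1,\infty]$. A bounded weakly thick domain is in particular a bounded domain and a bounded weakly exterior thick domain, hence admissible for both Theorem \ref{ok1} (which requires a bounded domain) and Theorem \ref{nec31} (which requires a bounded weakly thick domain). Moreover, the quantity
\[
\sup_{\{P\in\mathcal{Q},\, |P|\le 1\}}\, \frac{1}{|P|^\tau}\, \left[ \int_{0}^{\sqrt{n}\, \ell (P)} \delta^{-sp}\, \left|(\partial E)^{\delta} \cap P\right| \, \frac{d\delta}{\delta}\right]^{\frac1p}
\]
is exactly the expression appearing in both cited theorems, so there is no translation step.

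For the sufficiency direction I would argue as follows. Assuming the displayed supremum is finite, Theorem \ref{ok1} yields ${\bf 1}_E \in F^{s,\tau}_{p,1}(\mathbb{R}^n)$; since $1\le q$, the elementary embedding $F^{s,\tau}_{p,1}(\mathbb{R}^n)\hookrightarrow F^{s,\tau}_{p,q}(\mathbb{R}^n)$ from Remark \ref{grund}(ii) then gives ${\bf 1}_E \in F^{s,\tau}_{p,q}(\mathbb{R}^n)$, as claimed. For the necessity direction I would run the chain in reverse: if ${\bf 1}_E \in F^{s,\tau}_{p,q}(\mathbb{R}^n)$, then, because $q\le\infty$, the embedding $F^{s,\tau}_{p,q}(\mathbb{R}^n)\hookrightarrow F^{s,\tau}_{p,\infty}(\mathbb{R}^n)$ places ${\bf 1}_E$ in $F^{s,\tau}_{p,\infty}(\mathbb{R}^n)$, and Theorem \ref{nec31} then forces the finiteness of the same supremum.

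Since the proof is purely a matter of assembling cited results, there is no genuine analytic obstacle at this final stage; the real difficulties were already dispatched in Theorem \ref{ok1} (the Haar-wavelet sufficiency argument) and Theorem \ref{nec31} (the Whitney-decomposition lower bound exploiting weak thickness of both $E$ and $E^\complement$). The only point demanding attention is that sufficiency is proved at the endpoint $q=1$ while necessity is proved at the endpoint $q=\infty$, so these two endpoints must sandwich the intermediate range $1\le q\le\infty$; this is precisely what the embedding chain ${F}^{s,\tau}_{p,1}(\mathbb{R}^n) \hookrightarrow {F}^{s,\tau}_{p,q}(\mathbb{R}^n) \hookrightarrow {F}^{s,\tau}_{p,\infty}(\mathbb{R}^n)$ provides, and it is the sole reason the intermediate values of $q$ are covered without extra work.
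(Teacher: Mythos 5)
Your proposal is correct and coincides with the paper's own argument: the authors likewise obtain Theorem \ref{top} by combining the sufficiency in Theorem \ref{ok1} (at $q=1$), the necessity in Theorem \ref{nec31} (at $q=\infty$), and the elementary embeddings ${F}^{s,\tau}_{p,1}(\mathbb{R}^n) \hookrightarrow {F}^{s,\tau}_{p,q}(\mathbb{R}^n) \hookrightarrow {F}^{s,\tau}_{p,\infty}(\mathbb{R}^n)$. Your explicit check that the parameter ranges and the domain hypotheses of the two cited theorems are simultaneously satisfied is exactly the verification the paper leaves implicit.
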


\begin{remark}
This theorem represents a surprising extension of the known result in case $\tau =0$, see Sickel \cite{Si23}. We do not know whether the given restrictions on the parameters are necessary.
We also do not know  whether the property of being a bounded  weakly thick domain is indispensable. However, we know that for some more general domains Theorem \ref{top} is no longer true. A corresponding example has been treated in detail in \cite{Si23}.
\end{remark}


\subsection{The snowflake domain}
\label{Main6}


For later use we recall the construction of the boundary of the snowflake domain,  the von Koch curve.

\smallskip

\begin{center}
\includegraphics[height=3.6cm]{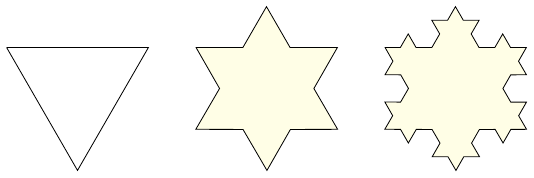}
\end{center}

\smallskip

The standard construction of the von Koch curve is as follows.
We start with an equilateral triangle. Then we subdivide each side into three equal parts and remove the middle one.
This middle part is replaced by an equilateral triangle again.

\smallskip

\begin{minipage}{0.5\textwidth}
\includegraphics[height=6cm]{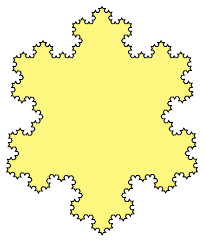}
 \end{minipage}\hfill\begin{minipage}{0.4\textwidth}
The edge length is now $1/3$ of the original one.
This procedure is iterated.
After a few further iterations one obtains the figure on the left which might be seen as a reasonable
approximation of the von Koch curve. As mentioned above the domain $\Omega$ with the {\em von Koch curve} as its boundary
is called the {\em snowflake domain}.
The von Koch curve  is a selfsimilar set satisfying
$\dim_H \partial \Omega = \dim_M \partial \Omega = \frac{\log 4}{\log 3}$,
see,  e.g.,   Falconer \cite[Example~9.5]{Fa90}.
\end{minipage}

\smallskip

To be able to apply  Theorems \ref{ok1}, \ref{ok2},  \ref{nec3}, and \ref{top},
we need to know more about the boundary.

\begin{lemma} \label{hrandsnow}
The von Koch curve is a $d$-set
 with $d:= \frac{\log 4}{\log 3}$.
 Moreover, there exist positive constants $A$ and $B$  such that
\begin{equation}\label{w060}
A\, r^{2-d}\le
\left|(\partial \Omega)^r\right|\le B\, r^{2-d}, \quad\forall\ r\in(0, 1).
\end{equation}
\end{lemma}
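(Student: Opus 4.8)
The plan is to first establish the $d$-set property from the self-similar structure, and then to deduce the two-sided neighborhood estimate \eqref{w060} by a standard net (covering/packing) argument based on Ahlfors regularity.

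First I would recall that the von Koch curve $K$ is the attractor of the iterated function system consisting of the four contracting similarities of $\mathbb{R}^2$, each with ratio $1/3$, arising in the construction described above. This system satisfies the open set condition, as witnessed by the open region spanned by the base segment together with the erected equilateral triangle. Hence, by Hutchinson's theorem and the classical theory of self-similar sets satisfying the open set condition (see, e.g., Falconer \cite{Fa90}), the similarity dimension coincides with the Hausdorff and Minkowski dimensions, $\dim_H K = \dim_M K = d = \log 4/\log 3$, and moreover $0 < \mathcal{H}^d (K) < \infty$ with the measure $\mu := \mathcal{H}^d|_K$ being Ahlfors $d$-regular, namely there exist positive constants $c_1,c_2$ such that
\[
c_1 r^d \le \mu (B(x,r)) \le c_2 r^d, \quad \forall\, x \in K, \ r \in (0,1).
\]
This is exactly the assertion that $K$ is a $d$-set. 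Since $\partial\Omega$ is the union of three congruent copies of $K$ (one per side of the initial triangle), meeting only in the three triangle vertices, it is again a $d$-set with Ahlfors-regular measure obtained by summing the three restricted copies of $\mathcal{H}^d$.

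Next I would derive \eqref{w060} from Ahlfors regularity. Fix $r\in(0,1)$ and choose a maximal $r$-separated subset $\{x_i\}_{i=1}^N \subset \partial\Omega$. By maximality the balls $\{B(x_i,r)\}_i$ cover $\partial\Omega$, while the balls $\{B(x_i,r/2)\}_i$ are pairwise disjoint. Summing the lower Ahlfors bound over the disjoint half-balls gives $N c_1 (r/2)^d \le \mu(\partial\Omega)$, and summing the upper bound over the covering balls gives $\mu(\partial\Omega) \le N c_2 r^d$; together these yield $N \asymp r^{-d}$, with constants depending only on $c_1, c_2$ and $\mu(\partial\Omega)$. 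For the upper bound in \eqref{w060}, every point of $(\partial\Omega)^r$ lies within $2r$ of some $x_i$, so $(\partial\Omega)^r \subset \bigcup_i B(x_i, 2r)$ and therefore $\left|(\partial\Omega)^r\right| \le N\, |B({\bf 0},2r)| \ls N r^2 \ls r^{2-d}$. For the lower bound, the disjoint balls $B(x_i, r/2)$ all lie inside $(\partial\Omega)^r$, whence $\left|(\partial\Omega)^r\right| \ge \sum_i |B(x_i,r/2)| \gs N r^2 \gs r^{2-d}$. Taking $A$ and $B$ to be the resulting lower and upper constants completes the estimate.

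The self-similarity bookkeeping (verifying the open set condition and the positivity $\mathcal{H}^d (K) > 0$) may be quoted verbatim from Falconer \cite{Fa90}, and the net argument is entirely routine, so I expect no serious analytic obstacle. The only point requiring genuine care is the passage from the single curve $K$ to the full boundary $\partial\Omega$: one must check that the three copies overlap only in the three triangle vertices, a set of $\mathcal{H}^d$-measure zero, so that the summed measure remains Ahlfors $d$-regular and the constants $A,B$ in \eqref{w060} stay uniform in $r$.
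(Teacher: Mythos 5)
Your proposal is correct, and for the key estimate \eqref{w060} it takes a genuinely different route from the paper. The paper disposes of the lemma in two citations: the $d$-set property is quoted from Falconer \cite[Theorem 9.3]{Fa90} (which is precisely the Hutchinson/open-set-condition argument you sketch), and the two-sided Minkowski-content bound \eqref{w060} is then obtained by observing that $d$-sets are special $h$-sets and invoking Bricchi's general theorem \cite{Bri02,Bri04} on neighborhoods of $h$-sets. You instead derive \eqref{w060} directly from Ahlfors $d$-regularity by a maximal $r$-separated net: the count $N \asymp r^{-d}$ follows from summing the regularity bounds over the disjoint half-balls and over the covering balls, the inclusion $(\partial\Omega)^r \subset \bigcup_i B(x_i,2r)$ gives the upper bound $\ls N r^2 \ls r^{2-d}$, and the disjoint balls $B(x_i,r/2) \subset (\partial\Omega)^r$ give the matching lower bound. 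This argument is sound (note $2-d>0$ here, and the regularity constants are uniform for $r\in(0,1)$ since $r<\operatorname{diam}\partial\Omega$), and it buys self-containedness and elementariness at the cost of a page of routine work; the paper's citation of Bricchi buys brevity and covers the more general $h$-set setting. Your final caveat about gluing the three congruent copies of the von Koch curve along the triangle vertices is handled correctly: the overlap is finite, hence $\mathcal{H}^d$-null, and the summed measure remains Ahlfors regular with at worst a factor of $3$ in the upper constant.
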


\begin{proof}
		For the fact that the von Koch curve is a $d$-set, we refer to Falconer \cite[Theorem 9.3]{Fa90}.
On the other hand, observe that $d$-sets are special $h$-sets. In this generality Bricchi \cite{Bri02,Bri04} has proved \eqref{w060}.
\end{proof}

\begin{lemma}\label{prep}
	The snowflake domain is a thick domain.
\end{lemma}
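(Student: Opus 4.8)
The statement to prove is Lemma \ref{prep}: the snowflake domain $\Omega$ is thick in the sense of Definition \ref{thick}, i.e. both $E$-thick and $I$-thick. The plan is to verify the two thickness conditions directly from the self-similar geometry of the von Koch boundary, exploiting the scaling/self-similarity at ratio $1/3$ together with the uniform measure estimate \eqref{w060} from Lemma \ref{hrandsnow}. The key structural feature is that $\partial\Omega$ is built from $4$ congruent copies of itself scaled by $1/3$, so the local picture around any boundary point at scale $3^{-k}$ looks (up to a bounded number of congruence types) the same as the global picture at scale $1$. This uniformity across scales and across locations on the boundary is exactly what the quantitative constants ``$\sim 2^{-j}$'' in Definition \ref{thick} encode.

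\textbf{Key steps.} First I would fix the dyadic framework: given an interior cube $Q_i\subset\Omega$ with $\ell(Q_i)\sim 2^{-j}$ and $\dist(Q_i,\partial\Omega)\sim 2^{-j}$, I locate the portion of $\partial\Omega$ nearest to $Q_i$. Because $\Omega$ is bounded and $|\partial\Omega|=0$ while $\mathbb{R}^2\setminus\overline{\Omega}$ has nonempty interior near every boundary point, I claim one can always find a complementing cube in the exterior. Concretely, the von Koch curve separates the plane into the bounded interior $\Omega$ and the unbounded exterior $F=\mathbb{R}^2\setminus\overline{\Omega}$; at every boundary point and every scale $r\in(0,1)$ there is a ``notch'' of the exterior reaching to within $\sim r$ of the curve with exterior thickness $\sim r$, by self-similarity. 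I would make this precise by the following scaling argument: it suffices to establish the existence of one exterior cube $Q_e$ with the required relations for all $Q_i$ at the single coarsest scale $j=j_0$ within one generating triangle; the similarity maps of ratio $3^{-k}$ then transport this configuration to all finer scales, and there are only finitely many congruence types of local configurations to check at the base scale. For $E$-thickness this produces the required $Q_e\subset F$ with $\ell(Q_e)\sim 2^{-j}$ and $\dist(Q_e,\partial\Omega)\sim\dist(Q_i,Q_e)\sim 2^{-j}$. For $I$-thickness I would run the symmetric argument: given an exterior cube $Q^e\subset\Omega^\complement$ with $\ell(Q^e)\sim 2^{-j}$ and $\dist(Q^e,\partial\Omega)\sim 2^{-j}$, I reflect across the nearby boundary arc to produce an interior cube $Q^i\subset\Omega$ with the matching relations. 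The lower bound $A\,r^{2-d}\le|(\partial\Omega)^r|$ in \eqref{w060} guarantees that the boundary genuinely occupies space at every scale, so neither $\Omega$ nor its complement can degenerate locally into a thin sliver, which is what rules out pathological configurations.

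\textbf{Main obstacle.} The delicate point is not the scaling itself but the uniformity of the comparison constants over \emph{all} boundary locations, in particular near the concave inward-pointing corners (the re-entrant vertices created when a middle third is pushed out). At such corners the interior and exterior interlock, and one must check that the ``complementing cube'' can still be chosen on the correct side with the stated distance comparabilities, rather than being forced arbitrarily far away. I expect the cleanest way to handle this is to reduce, via the finitely many similarity types of the generating construction, to a compactness/finite-check argument at the base generation: there are only boundedly many combinatorial types of how a cube of comparable size sits against a piece of the curve, and for each type one exhibits an admissible complementing cube by hand. Once the base cases are settled, self-similarity yields the result at all scales $j\ge j_0$ with constants independent of $j$, which is exactly the content of Definition \ref{thick}(i)--(iii). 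I would close by invoking the two verified conditions to conclude that $\Omega$ is thick.
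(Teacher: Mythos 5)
The paper does not prove this lemma at all: it simply cites Caetano, Hewett and Moiola \cite{CHM} and Triebel \cite[Proposition~3.8]{t08}, where the thickness of the snowflake domain is established. Your proposal is therefore a genuinely different route, namely a self-contained geometric argument: reduce, via the similarity maps of ratio $1/3$ generating the von Koch curve, to finitely many local configuration types at a base scale, verify $E$- and $I$-thickness for each type by hand, and transport the configuration to all finer scales with uniform constants. This is indeed the standard strategy (and essentially what the cited references carry out), and it is sound in outline; what it buys over the paper's citation is transparency about where the uniformity of the constants comes from, at the price of a finite-check step that you assert but do not actually perform, which is where all the real work sits. One caveat: since dyadic side lengths $2^{-j}$ and triadic scales $3^{-k}$ never align exactly, "finitely many congruence types" holds only up to a bounded distortion factor in $[1,3]$, which you should build into the comparability constants rather than into the classification.

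A smaller point: your appeal to the lower bound $A\,r^{2-d}\le|(\partial\Omega)^r|$ from Lemma \ref{hrandsnow} to "rule out thin slivers" does not do the work you want. The measure of the $r$-neighborhood of the boundary says nothing about whether the interior or the exterior contains a cube of side $\sim r$ near a given boundary point; a domain can have a boundary satisfying \eqref{w060} while failing to be thick (e.g.\ after removing a slit, cf.\ the example $E_2$ after Lemma \ref{dE=dF}). The corkscrew-type property on both sides must come from the explicit polygonal construction of the prefractals $\Gamma_j$ — each straight piece of $\Gamma_L$ of length $3^{-L}$ has, by \eqref{w062}, the true curve within distance $\frac{\sqrt3}{12}3^{-L}$ of it, leaving room for a cube of side $\sim 3^{-L}$ on each side — not from the measure estimate. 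With that substitution your argument closes; as written, that step is a gap.
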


A proof of Lemma \ref{prep} can be found in Caetano et al. \cite{CHM}, but see also \cite[Proposition~3.8]{t08}.

We need to go one step further.
Instead of $|(\partial \Omega)^\delta|$ we have to investigate $|(\partial \Omega)^\delta \cap P|$,
where $P$ is an arbitrary dyadic cube satisfying $|P|\le 1$.

\begin{lemma}\label{schneerand}
Let $d:= \frac{\log 4}{\log 3}$.
Then there exists a	positive constant $A$ such that
 \begin{equation}\label{w061}
\sup_{\{P \in \mathcal{Q},~|P|\le 1\}}  [\ell (P)]^{-d}\,  \sup_{0 <\delta \le A\, \ell (P)}\, \delta^{d-2}\,  \left|(\partial \Omega)^\delta \cap P\right|
< \infty \, .
\end{equation}
\end{lemma}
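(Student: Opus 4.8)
The plan is to reduce the local estimate \eqref{w061} to the global two-sided estimate \eqref{w060} of Lemma \ref{hrandsnow} by exploiting the self-similar structure of the von Koch curve. First I would fix a dyadic cube $P$ with $\ell(P)=:L\le 1$ and a number $\delta\in(0,A\ell(P)]$, where $A\in(0,1/3)$ is a small absolute constant to be specified at the end. Let $k=k(L)\in\mathbb{N}_0$ be the generation of the triadic construction determined by $3^{-k}\le L<3^{-k+1}$, so that $3^{-k}\asymp L$. Decomposing $\partial\Omega$ into its generation-$k$ subcurves $\{K_i\}_i$, each $K_i$ is a similarity copy of the standard von Koch curve with ratio $3^{-k}$, and hence $\diam K_i\asymp 3^{-k}\asymp L$.

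Next I would localize the contribution to $P$. Because $\delta\le AL\ls 3^{-k}$, any point $x\in P$ with $\dist(x,\partial\Omega)<\delta$ lies within distance $\delta$ of some piece $K_i$, and this $K_i$ then lies within distance $\ls L$ of $P$. Writing $\mathcal{I}_P$ for the collection of those generation-$k$ pieces meeting the $CL$-neighborhood of $P$ for a suitable $C$, I obtain the inclusion $(\partial\Omega)^\delta\cap P\subseteq\bigcup_{i\in\mathcal{I}_P}(K_i)^\delta$, whence
\[
\left|(\partial\Omega)^\delta\cap P\right|\le\sum_{i\in\mathcal{I}_P}\left|(K_i)^\delta\right|.
\]

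The two remaining ingredients are a uniform bound on $|\mathcal{I}_P|$ and a scaling identity for each summand. For the cardinality I would invoke that $\partial\Omega$ is a $d$-set, i.e., Ahlfors $d$-regular (Lemma \ref{hrandsnow}): each piece $K_i$ with $i\in\mathcal{I}_P$ is contained in a common ball $B$ of radius $\asymp L$ and carries $\mathcal{H}^d$-mass $\asymp(3^{-k})^d\asymp L^d$, whereas $\mathcal{H}^d(\partial\Omega\cap B)\ls L^d$; since distinct pieces overlap in at most finitely many points, this forces $|\mathcal{I}_P|\ls 1$ uniformly in $P$ and $L$. For each summand, the similarity $K_i=3^{-k}g_i(K^{(0)})$, with $K^{(0)}$ the standard curve and $g_i$ a rigid motion, gives $(K_i)^\delta=3^{-k}g_i\big((K^{(0)})^{3^k\delta}\big)$ and therefore $|(K_i)^\delta|=(3^{-k})^2\,|(K^{(0)})^{3^k\delta}|$. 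Since $3^k\delta\le 3^kAL<3A<1$ by the choice of $A$, the global upper bound in \eqref{w060} applies to $(K^{(0)})^{3^k\delta}$ and yields
\[
\left|(K_i)^\delta\right|\ls(3^{-k})^2\,(3^k\delta)^{2-d}=(3^{-k})^d\,\delta^{2-d}\asymp L^d\,\delta^{2-d}.
\]
Combining the three displays gives $|(\partial\Omega)^\delta\cap P|\ls L^d\delta^{2-d}$, so that $[\ell(P)]^{-d}\delta^{d-2}|(\partial\Omega)^\delta\cap P|\ls 1$ uniformly in $P$ and in $\delta\le A\ell(P)$, which is precisely \eqref{w061}.

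The main obstacle will be the counting step: making the uniform bound $|\mathcal{I}_P|\ls 1$ fully rigorous while simultaneously reconciling the dyadic scale $L=2^{-\ell}$ of $P$ with the triadic scale $3^{-k}$ of the construction, and handling the special positions of $P$ near the three original vertices of the triangle, where two von Koch curves meet, as well as the finitely many overlap points of neighbouring pieces. All of these are controlled by the Ahlfors $d$-regularity recorded in Lemma \ref{hrandsnow} together with the bounded geometry of the construction hierarchy, but they demand careful bookkeeping.
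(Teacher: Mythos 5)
Your argument is correct, and although its skeleton (decompose $\partial\Omega$ into generation-$k$ pieces at the scale of $P$, show that only boundedly many pieces are relevant, and estimate each piece's $\delta$-neighborhood) coincides with the paper's, both key sub-steps are carried out by genuinely different means. For the per-piece estimate the paper proceeds indirectly: it covers $(\partial \Omega^*)^\delta$ by the $4^L$ congruent neighborhoods $(S(\Lambda^L_m))^\delta$, proves via explicit metric separation estimates (\eqref{w062}--\eqref{w067}) that every third of these neighborhoods is pairwise disjoint for $\delta\ls 3^{-L}$, and then divides the global bound \eqref{w060} by $4^L$ to isolate $|(S(\Lambda^L_1))^\delta|\sim 4^{-L}\delta^{2-d}$. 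You instead use the exact self-similarity to rescale a single piece, $|(K_i)^\delta|=9^{-k}\,|(K^{(0)})^{3^k\delta}|$, and apply only the upper half of \eqref{w060} at the blown-up radius $3^k\delta<1$ (your choice $A<1/3$ guarantees this); that is shorter, dispenses with the separation analysis at this stage, and would generalize to any self-similar arc satisfying the open set condition together with an upper Minkowski-content bound. For the counting step the paper again invokes the separation estimate \eqref{w067} to obtain the uniform bound \eqref{w070}, whereas you use a packing argument based on Ahlfors $d$-regularity, $|\mathcal{I}_P|\cdot L^d\ls \mathcal{H}^d(\partial\Omega\cap B)\ls L^d$, which is cleaner but leans on the $d$-set property rather than on the concrete geometry of the polygonal approximations. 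The bookkeeping issues you flag are all benign: adjacent generation-$k$ subarcs of a simple arc meet in single points of vanishing $\mathcal{H}^d$-measure, the mismatch between the dyadic scale of $P$ and the triadic scale of the construction only affects constants (exactly as in the paper's choice \eqref{w069}), and near a vertex of the original triangle at most two of the three sides' copies of $K^{(0)}$ contribute, which merely enlarges the constant in $|\mathcal{I}_P|\ls 1$.
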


\begin{proof}
{\em Step 1.}
To derive \eqref{w061} we take a closer look at the construction of $\partial \Omega$.
Without loss of generality we may assume that the original triangle
on the left-hand side in the above figure has the edge length $1$.
Below we will concentrate on one edge of the triangle identified with
$\Gamma_0 := \{(x,0):\ 0 \le x < 1\}.$
Then we need the following four contractions $S_1, S_2, S_3, S_4$, defined on $\mathbb{R}^2$,  and given by
\begin{align*}
S_1 (\Gamma_0) & := \{(x,0):\ 0 \le x < 1/3\}, \\
S_2 (\Gamma_0) & :=\left\{(x,y):\ 1/3 \le x < 1/2, ~ y = \sqrt{3}\, (x-1/3)\right\}, \\
S_3 (\Gamma_0) & :=\left\{(x,y):\ 1/2 \le x < 2/3, ~ y= -\sqrt{3}x + 2/\sqrt{3}\right\}, \quad\mbox{and}\\
S_4 (\Gamma_0) & := \{(x,0):\ 2/3 \le x < 1\}.
\end{align*}	
These contractions map $\Gamma_0$ onto the following polygon.

\smallskip

\begin{center}
\hspace*{2cm} \includegraphics[height=3cm]{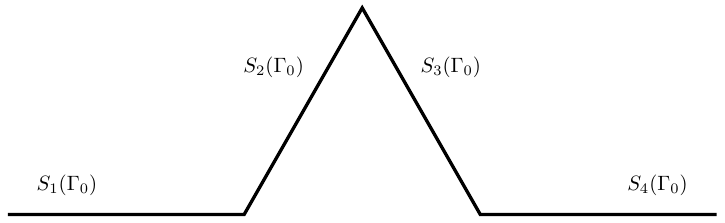}
\end{center}

\smallskip

We define
\[
\Gamma_1 :=\bigcup_{i=1}^4 S_i (\Gamma_0)  ,
\]
which consists of $4$ straight pieces of length $1/3$.
Now we iterate this procedure. After $j$ iterations we arrive at $\Gamma_j$ given by
\[
\Gamma_j := \bigcup_{\gfz{i_\ell \in \{ 1, \ldots, 4\}}{\ell \in\{ 1, \ldots , j\}}} ( S_{i_1} \circ \ldots \circ S_{i_j}) (\Gamma_0) .
\]
Clearly, $\Gamma_j$ is a polygon and  consists of $4^j$ straight pieces of length $3^{-j}$. This yields that the length $\ell (\Gamma_j)$ of  $\Gamma_j$ is given by $(4/3)^j$.
Now we take two integers $L$ and $M$ such that $L \in \mathbb{N}_0$, $M \in \mathbb{N}$, and $L < M$.
By $\Lambda_1^L, \ldots, \Lambda^L_{4^L}$ we denote the straight pieces of length $3^{-L}$ which form the set $\Gamma_L$. Then we can construct the set $\Gamma_M$ in two ways.
The first one is described above. The second one is as follows.
This time our point of departure is not $\Gamma_0$. We take any of the straight pieces $\Lambda^L_m$ and start the iteration on it. Let
\[
\mathcal{S}^{L,M}_m :=\, \bigcup_{\gfz{i_\ell \in \{ 1, \ldots, 4\}}{\ell \in\{1, \ldots , M-L\}}} ( S_{i_1} \circ \ldots \circ S_{i_{M-L}}) (\Lambda^L_m)\, .
\]
It follows
\[
\Gamma_M = \bigcup_{m=1}^{4^L} \, \mathcal{S}^{L,M}_m\, .
\]
By construction, for any $m$ the set $\mathcal{S}^{L,M}_m$ is the image of $\mathcal{S}^{L,M}_1$
under a rotation and a translation.
This means we can decompose the polygon $\Gamma_M$ into $4^L$ disjoint parts of length $4^{-L} (4/3)^M$.
The same argument applies for the $\delta$-neighborhoods.
It follows $|(\mathcal{S}^{L,M}_m )^{\delta}|= |(\mathcal{S}^{L,M}_1 )^{\delta}|$.
Clearly, one has $\lim_{M \to \infty} \, \Gamma_M = \partial \Omega^*$,
where $\partial \Omega^*$ denotes ``one third'' of the von Koch curve related to the fact that we started our iteration only on  $\Gamma_0$, not on the whole triangle.
Letting $M\to\infty$  we obtain a pairwise disjoint decomposition of $\partial \Omega^*$
into $4^L$ pieces which differ only by a rotation and a translation,

Let us denote by $S(\Lambda^L_m)$ the limit of the sets $\mathcal{S}^{L,M}_m$ for $M \to \infty$, namely  the part of $\partial \Omega^*$
which is the result of the iteration with the starting set $\Lambda^L_m$.
Hence
\[
\partial \Omega^* = \bigcup_{m=1}^{4^L} S(\Lambda^L_m).
\]
Similarly we may argue for $(\partial \Omega^*)^\delta$. Then there exists a cover of $(\partial \Omega^*)^\delta$
given by $4^L$  identical pieces (up to a rotation and a translation).
This cover is not pairwise disjoint because the neighborhoods are overlapping.
We then have
\[
(\partial \Omega^*)^\delta  \subset \bigcup_{m=1}^{4^L}\left (S(\Lambda^L_m)\right)^\delta
\]
and therefore
\begin{equation}\label{w064}
\left|(\partial \Omega^*)^\delta \right| \le {4^L}\, \left|\left (S(\Lambda^L_1)\right)^\delta\right|
\end{equation}
for all $\delta \in(0,1)$.
To control the overlap   we proceed as follows. Instead of considering all straight pieces of $\Gamma_L$ we consider only any third element.
Define
\[
 \Gamma_L^* := \bigcup_{m \in \nn:  ~ 3m-2 \le 4^L}\, \Lambda^L_{3m-2}.
\]
Here we suppose that the enumeration is done in a natural way, that is,
$\Lambda_m$ touches $\Lambda_{m+1}$ in exactly one point.
We observe that
\[
 {\rm dist}\, ((x,y),\partial \Omega^*)\le \sum_{\ell =1}^\infty \frac{\sqrt{3}}{6}\, 3^{-\ell} = \frac{\sqrt{3}}{12}, \quad\forall\ (x,y) \in \Gamma_0.
\]
From the same argument it follows
\begin{equation}\label{w062}
 {\rm dist}\, ((x,y),\partial \Omega^*)\le \sum_{\ell =L+1}^\infty \frac{\sqrt{3}}{6}3^{-\ell} = \frac{\sqrt{3}}{12}\, 3^{-L}, \quad
 \forall\ (x,y) \in \Gamma_L.
\end{equation}
Our next observation is also elementary.
Just looking at the pictures of $\Gamma_1$ and $\Gamma_2$ (the upper parts of the second and of the third iteration) we conclude that
\[
\max_{\ell}\,  \inf \Big\{|(x_0,y_0)-(x_1,y_1)|:\ (x_0,y_0) \in \Lambda^L_{3\ell-2},\ (x_1,y_1) \in \Lambda^L_{3\ell + 1}\Big\} =   3^{-L}\, , \quad L=1,2\, .
\]
By the construction of $\Gamma_L$, this situation is repeated on a lower scale in all $\Gamma_L$, $L \ge 3$.
Length of the straight pieces is $3 ^{-L}$ and angles between those pieces are either $60$ or $120$ degrees.
It follows
\begin{equation}\label{w062b}
\max_{\ell}\, \inf \Big\{|(x_0,y_0)-(x_1,y_1)|:\ (x_0,y_0) \in \Lambda^L_{3\ell-2},\ (x_1,y_1) \in \Lambda^L_{3\ell + 1}\Big\} = 3^{-L}, \qquad L \ge 1\, .
\end{equation}
Such estimates remain true when  $\Lambda^L_{3\ell +1}$ is replaced by $\Lambda^L_{3\ell + m}$ with
$m \ge 2$. Now we look on the situation on $\partial \Omega^*$. Based on our previous arguments, see \eqref{w062} and \eqref{w062b}, we obtain
\begin{align}\label{w067}
&\inf \left\{|(x_0,y_0)-(x_1,y_1)|:\ (x_0,y_0) \in S(\Lambda^L_{3\ell-2}),\ (x_1,y_1)
\in S(\Lambda^L_{3\ell + 1})\right\}\nonumber
\\
&\quad \ge    3^{-L} - 2\, \frac{\sqrt{3}}{12}\, 3^{-L} = \Big( 1- \frac{\sqrt{3}}{6}\Big) \, 3^{-L}.
\end{align}
Now we turn to the consequences for the $\delta$-neighborhoods.
The previous formula, together with \eqref{w062}, further implies
\begin{align*}
&\inf \left\{|(x_0,y_0)-(x_1,y_1)|:\ (x_0,y_0)
\in\left( S(\Lambda^L_{3\ell-2})\right)^\delta,\ (x_1,y_1) \in \left(S(\Lambda^L_{3\ell + 1})\right)^\delta\right\}
\\
 &\quad \ge  3^{-L} \left(1 - \frac{\sqrt{3}}{6}\right) - 2 \delta
 \ge  3^{-L} \, \frac{1}{6}
\end{align*}
if $0<\delta \le 3^{-L} (\frac{5-\sqrt{3}}{12}) $.
This shows that
\[
\left| \bigcup_{m \in \nn:~ 3m-1\le 4^L}\left (S(\Lambda^L_{3m-1})\right)^\delta\right| \ge \frac 14 \, 4^{L} \,
 \left|\left(S(\Lambda^L_{1})\right)^\delta\right|  \quad \mbox{if}\ \ 0<\delta \le 3^{-L} \left(\frac{5-\sqrt{3}}{24}\right), \quad L\ge 2\, ,
\]
which, together with \eqref{w064}, further implies
\begin{equation*}
	\left|(\partial \Omega^*)^\delta \right| \sim {4^L}\, \left|\left (S(\Lambda^L_1)\right)^\delta\right|  \quad \mbox{if}\ \
	0 < \delta \le 3^{-L} \left(\frac{5-\sqrt{3}}{12}\right)\, .
\end{equation*}
Using Lemma \ref{hrandsnow} we conclude
\begin{equation}\label{w066}
4^{-L}	\, \delta^{2-d} \sim \, \left|\left (S(\Lambda^L_1)\right)^\delta\right|  \quad \mbox{if}\
\	0 < \delta \le 3^{-L} \left(\frac{5-\sqrt{3}}{12}\right).
\end{equation}
Here the hidden constants do neither depend on $L \ge 2$ nor on  $\delta \in \Big(0,  \big(\frac{5-\sqrt{3}}{12}\big)\Big)$.\\
{\em Step 2.}
Now we turn to the proof of the claimed inequality  \eqref{w061}.
Let $P=Q_{\ell,k}$ be such that $P \cap (\partial \Omega)^\delta \neq \emptyset$.
For any given integer $\ell \ge 2$, we choose $L \in \mathbb{N}$ such that
\begin{equation}\label{w069}
 \ell \,  \frac{\log 2}{\log 3} \le L <  \ell \,  \frac{\log 2}{\log 3} + 1.
\end{equation}
It follows that $3^L \sim 2^\ell$.
Now we again work with $\Lambda^L_1, \ldots , \Lambda^L_{4^L}$.
There is only a finite number of images $S(\Lambda^L_m)$ which have a nontrivial intersection with $P$, more exactly
\begin{equation}\label{w070}
\sup_{k \in \mathbb{Z}^n}\,  \left|\left\{m\in\{1,\ldots, 4^L\}:~ S(\Lambda^L_m)\cap Q_{\ell,k} \neq \emptyset\right\} \right|\le C <\infty\, ,
\end{equation}
due to \eqref{w067}. Notice that $C$ is independent of $\ell$.
Hence, by Step 1 and \eqref{w069}, one has
\[
\left|(\partial \Omega)^\delta \cap P \right| \ls  \left|\left(S(\Lambda^L_1)\right)^\delta \right|  \sim 4^{-L}\, \delta^{2-d}  \sim 2^{-\ell d}\, \delta^{2-d}
\]
if $0 <\delta \ls 2^{-\ell}$. This finishes the proof of Lemma \ref{schneerand}.
\end{proof}

For the convenience of the reader we  collect what we know about the regularity of ${\bf 1}_\Omega$ in case $\tau =0$,
see Faraco and Rogers \cite{FR} and Sickel \cite{Si21}.

\begin{proposition}\label{schnee}
Let $p\in[1,\infty)$, $q\in[1,\infty]$, and  $s_0:= 2- \log 4/ \log 3$.
\begin{itemize}
\item[{\rm (i)}] The characteristic function  ${\bf 1}_\Omega$ of the snowflake domain $\Omega$ belongs to $B^{s}_{p,\infty} (\mathbb{R}^2)$ if and only if
$s \le   s_0/p$.
\item[{\rm (ii)}] ${\bf 1}_\Omega$  belongs to $F^{s}_{p,q} (\mathbb{R}^2)$ if and only if
$s <   s_0/p$.
\end{itemize}
\end{proposition}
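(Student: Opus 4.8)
The plan is to read off both statements from the two-sided information on the boundary collected in Lemmas \ref{hrandsnow}, \ref{prep}, and \ref{schneerand}, combined with the classical ($\tau=0$) characterizations by differences and by Haar wavelets. Throughout I write $d:=\log 4/\log 3$, so that $s_0=2-d$, and note $0<s_0<1$; since $p\ge 1$ we always have $s_0/p\le s_0<1$, so only smoothness $s\in(0,1)$ is relevant (for $s\le 0$ membership is trivial from ${\bf 1}_\Omega\in L^p(\mathbb{R}^2)$ and the monotonicity in $s$ from Remark \ref{grund}(ii)). The whole argument rests on controlling, from both sides, the quantity $\|\Delta^1_h{\bf 1}_\Omega\|_{L^p(\mathbb{R}^2)}^p=|E(h)|+|F(h)|$, with $E(h),F(h)$ as in \eqref{w040}--\eqref{w041}, together with its integrated and maximal-function analogues.

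For part (i) I would use the difference characterization of Proposition \ref{t4.7} at $\tau=0$ with $M=1$, which for $0<s<1$ gives $\|{\bf 1}_\Omega\|_{B^s_{p,\infty}(\mathbb{R}^2)}\sim\|{\bf 1}_\Omega\|_{L^p(\mathbb{R}^2)}+\sup_{0<t<1}t^{-s}\sup_{t/2\le|h|<t}\|\Delta^1_h{\bf 1}_\Omega\|_{L^p(\mathbb{R}^2)}$. Since $E(h)\cup F(h)\subset(\partial\Omega)^{|h|}$, Lemma \ref{hrandsnow} yields the upper bound $\|\Delta^1_h{\bf 1}_\Omega\|_{L^p}^p\le|(\partial\Omega)^{|h|}|\lesssim|h|^{s_0}$, whence $t^{-s}\sup_{|h|<t}\|\Delta^1_h{\bf 1}_\Omega\|_{L^p}\lesssim t^{s_0/p-s}$, which is bounded on $(0,1)$ exactly when $s\le s_0/p$. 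This proves sufficiency. For necessity I would establish the matching lower bound $|E(h)|+|F(h)|\gtrsim|h|^{s_0}$ along a sequence of shifts with $|h|\to0$ (see below); then $t^{-s}\|\Delta^1_h{\bf 1}_\Omega\|_{L^p}\gtrsim t^{s_0/p-s}\to\infty$ as $t\to0$ whenever $s>s_0/p$, so ${\bf 1}_\Omega\notin B^s_{p,\infty}(\mathbb{R}^2)$.

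For part (ii) the endpoint behaviour switches from $\le$ to $<$ because the governing quantity is an integral rather than a supremum. I would invoke the $\tau=0$ counterparts of Theorems \ref{ok1} and \ref{nec3}, as developed in \cite{Si21,Si23}: the Haar-wavelet argument of Theorem \ref{ok1} shows that $\int_0^{\sqrt2}\delta^{-sp}\,|(\partial\Omega)^\delta|\,\frac{d\delta}{\delta}<\infty$ implies ${\bf 1}_\Omega\in F^s_{p,1}(\mathbb{R}^2)$, while the thick-domain argument of Theorem \ref{nec3}, using Lemma \ref{prep} so that $|(\partial\Omega)^\delta\cap\Omega|\sim|(\partial\Omega)^\delta|$, shows that ${\bf 1}_\Omega\in F^s_{p,\infty}(\mathbb{R}^2)$ forces $\int_0^{\sqrt2}\delta^{-sp}\,|(\partial\Omega)^\delta|\,\frac{d\delta}{\delta}<\infty$. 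By Lemma \ref{hrandsnow} this integral equals, up to constants, $\int_0^{\sqrt2}\delta^{s_0-sp-1}\,d\delta$, which converges if and only if $s<s_0/p$. Chaining the elementary embeddings $F^s_{p,1}(\mathbb{R}^2)\hookrightarrow F^s_{p,q}(\mathbb{R}^2)\hookrightarrow F^s_{p,\infty}(\mathbb{R}^2)$ then gives membership in $F^s_{p,q}(\mathbb{R}^2)$ for every $q\in[1,\infty]$ if and only if $s<s_0/p$, which is precisely part (ii).

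The main obstacle is the geometric lower bound $|E(h)|+|F(h)|\gtrsim|h|^{s_0}$ (equivalently $|(\partial\Omega)^\delta\cap\Omega|\gtrsim|(\partial\Omega)^\delta|$), since the crude inclusion $E(h)\cup F(h)\subset(\partial\Omega)^{|h|}$ only delivers the upper estimate; one must show that a fixed proportion of the neighborhood is genuinely transported across $\partial\Omega$ by the shift. Here the thickness of $\Omega$ (Lemma \ref{prep}) is essential: at each dyadic scale $2^{-j}$ the self-similar construction produces $\sim 2^{jd}$ interior Whitney cubes of side $\sim 2^{-j}$, each paired with a complementing exterior cube of comparable size and distance. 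Choosing $h$ of length $\sim 2^{-j}$ in a direction pointing from an interior cube towards its exterior partner moves a fixed fraction of that cube into $\Omega^\complement$, contributing $\gtrsim 2^{jd}\cdot2^{-2j}=2^{-js_0}\sim|h|^{s_0}$ to $|E(h)|+|F(h)|$. Arranging the admissible directions to be uniform over the $\sim2^{jd}$ cubes, so that a single $h$ works simultaneously for a positive fraction of them, is the delicate point; this is exactly where the homogeneity and the $60^\circ/120^\circ$ angle structure of the von Koch construction—already exploited in the proof of Lemma \ref{schneerand} via \eqref{w067}—are used.
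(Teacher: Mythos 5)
The paper offers no proof of Proposition \ref{schnee}: it is stated as a known result with a pointer to Faraco--Rogers \cite{FR} and Sickel \cite{Si21}, so there is no internal argument to compare against. Your reconstruction is correct and is essentially the $\tau=0$ specialization of the machinery the paper builds for $\tau>0$: the inclusion $E(h)\cup F(h)\subset(\partial\Omega)^{|h|}$ together with the two-sided bound of Lemma \ref{hrandsnow} and the difference characterization of Proposition \ref{t4.7} gives (i), while the $\tau=0$ analogues of Theorems \ref{ok1} and \ref{nec3} (these are precisely the results of \cite{Si23} that the paper generalizes; as stated in the paper they assume $\tau>0$, but the proofs of Lemmas \ref{nec1}--\ref{nec2} work verbatim with $|P|^{-\tau}$ replaced by $1$ and $P$ large) reduce (ii) to the convergence of $\int_0^1\delta^{s_0-sp-1}\,d\delta$, i.e.\ to $s<s_0/p$. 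Two remarks. First, the one genuinely delicate step you flag --- producing a single shift $h$ that transports a fixed proportion of the scale-$|h|$ boundary neighborhood across $\partial\Omega$ --- can be bypassed entirely: the necessity in (i) follows from the necessity in (ii), since $s>s_0/p$ permits $s'\in(s_0/p,s)$ and $B^{s}_{p,\infty}(\mathbb{R}^2)\hookrightarrow B^{s'}_{p,1}(\mathbb{R}^2)\hookrightarrow F^{s'}_{p,1}(\mathbb{R}^2)$, contradicting (ii). Second, if you insist on the direct difference lower bound, your sketch does close: at generation $j$ the $4^j$ segments of the approximating polygon take only six oriented directions (multiples of $60^\circ$), so a positive proportion of them share a common outward normal, and by the E-thickness of $\Omega$ (Lemma \ref{prep}) together with the distance bound \eqref{w062} a shift of length $\sim 3^{-j}$ along that common normal moves a fixed fraction of each associated interior piece into $\Omega^{\complement}$, giving $|E(h)|+|F(h)|\gtrsim 4^{j}\,3^{-2j}\sim|h|^{s_0}$. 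So the proposal is sound; it simply supplies a proof where the paper supplies a citation.
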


Now we turn to consequences of Theorem \ref{ok2}.

\begin{theorem}\label{schneeflocke}
Let $p\in[1,\infty)$ and   $d:= \log 4/ \log 3$.
Let $\Omega$ be  the snowflake domain.
\begin{itemize}
\item[{\rm (i)}]
The characteristic function  ${\bf 1}_\Omega$
belongs to $B^{(2-d)/p,d/(2p)}_{p,\infty} (\mathbb{R}^2)$.
\item[{\rm (ii)}] Let  $q\in[1,\infty]$, $\tau\in(0, \frac{d}{2p}]$, and  $0 < s < \min
\{\frac 1p , ~ 2 (\frac 1p - \tau)\}$.
Then the  characteristic function  ${\bf 1}_\Omega$  belongs to $F^{s, \tau}_{p,q} (\mathbb{R}^2)$ if and only if $ s< (2-d) /p$.
\end{itemize}
\end{theorem}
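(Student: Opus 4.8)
The plan is to read off the regularity of ${\bf 1}_\Omega$ from the two–sided control of $|(\partial\Omega)^\delta\cap P|$ furnished by Lemma~\ref{schneerand} and \eqref{w060}, and then to feed this into the abstract sufficient and necessary conditions already established. Throughout $n=2$, so that $|P|=[\ell(P)]^2$ and $d=\log4/\log3\in(1,2)$. For part~(i) I would invoke Theorem~\ref{ok2} with $\tau_0:=d/2\in(0,1)$ and $s_0:=2-d\in(0,1)$. Its hypothesis \eqref{w017} demands $\sup_{|P|\le1}|P|^{-\tau_0}\sup_{0<\delta<A\ell(P)}\delta^{-s_0}|(\partial\Omega)^\delta\cap P|<\infty$; since $|P|^{\tau_0}=[\ell(P)]^{d}$, Lemma~\ref{schneerand} gives $\delta^{-(2-d)}|(\partial\Omega)^\delta\cap P|\lesssim[\ell(P)]^d$, so the quotient is bounded by a constant. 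Thus ${\bf 1}_\Omega\in B^{2-d,\,d/2}_{1,\infty}(\mathbb{R}^2)$, which is precisely the case $p=1$. For $p\in(1,\infty)$, Corollary~\ref{allp}(ii) (equivalently Corollary~\ref{t4.8}, whose side conditions $(2-d)/p\le1/p$ and $d/(2p)\le1/p$ hold because $1<d<2$) promotes this to ${\bf 1}_\Omega\in B^{(2-d)/p,\,d/(2p)}_{p,\infty}(\mathbb{R}^2)$.

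For part~(ii), the first step is to observe that by Lemma~\ref{prep} the snowflake is thick, hence both $E$- and $I$-thick, hence weakly exterior and weakly interior thick (as noted after Definition~\ref{wthick}), i.e. weakly thick. Because $\tau\le d/(2p)<1/p$ and $s<\min\{1/p,2(1/p-\tau)\}$, Theorem~\ref{top} applies and reduces ${\bf 1}_\Omega\in F^{s,\tau}_{p,q}(\mathbb{R}^2)$ to the single requirement
\[
\sup_{\{P\in\mathcal{Q},\,|P|\le1\}}\frac{1}{|P|^\tau}\left[\int_0^{\sqrt2\,\ell(P)}\delta^{-sp}\,|(\partial\Omega)^\delta\cap P|\,\frac{d\delta}{\delta}\right]^{1/p}<\infty.
\]
For the sufficiency ($s<(2-d)/p$) I would insert the upper bound $|(\partial\Omega)^\delta\cap P|\lesssim[\ell(P)]^d\delta^{2-d}$ from Lemma~\ref{schneerand}, which in fact persists for all $\delta\le\sqrt2\,\ell(P)$ since for $\delta\gtrsim\ell(P)$ its right-hand side already dominates $|P|$. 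As $-sp+2-d>0$ the inner integral converges to $\sim[\ell(P)]^{2-sp}$, whence the displayed quantity is $\lesssim[\ell(P)]^{2(1/p-\tau)-s}$; this stays bounded for $\ell(P)\le1$ exactly because $s<2(1/p-\tau)$.

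The necessity direction is where I expect the main difficulty. Assuming $s\ge(2-d)/p$, I must show the criterion above fails, and for this I need a matching lower bound $|(\partial\Omega)^\delta\cap P_\ell|\gtrsim[\ell(P_\ell)]^d\delta^{2-d}$ along a sequence of dyadic cubes with $\ell(P_\ell)=2^{-\ell}\to0$ and $0<\delta\lesssim\ell(P_\ell)$. Such a bound is latent in the two-sided relation \eqref{w066} used to prove Lemma~\ref{schneerand}: choosing $P_\ell$ to contain a self-similar piece $S(\Lambda^L_m)$, with $L$ linked to $\ell$ through \eqref{w069} so that $4^{-L}\sim2^{-\ell d}=[\ell(P_\ell)]^d$, gives $|(\partial\Omega)^\delta\cap P_\ell|\ge|(S(\Lambda^L_m))^\delta\cap P_\ell|\gtrsim4^{-L}\delta^{2-d}\sim[\ell(P_\ell)]^d\delta^{2-d}$. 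The delicate point is the geometric positioning: since $\diam S(\Lambda^L_m)$ is only comparable to $\ell(P_\ell)$ rather than strictly smaller, one may have to pass to a slightly finer piece $S(\Lambda^{L+c}_{m'})$ lying well inside $P_\ell$ so that its $\delta$-neighborhood genuinely contributes within the cube; this costs only constants and leaves the scaling intact. With the lower bound secured, the conclusion is immediate: for these cubes
\[
\int_0^{\sqrt2\,\ell(P_\ell)}\delta^{-sp}\,|(\partial\Omega)^\delta\cap P_\ell|\,\frac{d\delta}{\delta}\gtrsim[\ell(P_\ell)]^d\int_0^{c\,\ell(P_\ell)}\delta^{-sp+2-d-1}\,d\delta=+\infty
\]
whenever $-sp+2-d\le0$, that is $s\ge(2-d)/p$ (the integral diverges logarithmically at the endpoint $s=(2-d)/p$ and as a negative power for $s>(2-d)/p$). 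Hence the criterion of Theorem~\ref{top} is violated and ${\bf 1}_\Omega\notin F^{s,\tau}_{p,q}(\mathbb{R}^2)$, which together with the sufficiency completes the proof of part~(ii).
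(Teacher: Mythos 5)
Your proposal is correct and follows essentially the same route as the paper: part (i) is obtained exactly as you describe, by feeding Lemma \ref{schneerand} into Theorem \ref{ok2} and then lifting to general $p$ via Corollary \ref{allp}(ii); and the necessity in part (ii) is likewise proved by extracting a matching lower bound $|(\partial \Omega)^\delta \cap Q| \gtrsim 4^{-L}\delta^{2-d} \sim [\ell(P)]^d \delta^{2-d}$ from \eqref{w066} on the self-similar pieces $S(\Lambda^L_m)$ and inserting it into the criterion of Theorem \ref{top}. The ``geometric positioning'' issue you flag is real, and the paper resolves it not by shrinking to a finer piece but by showing that some neighboring dyadic cube $\widetilde Q_{k_\delta}$ of side $2^{-\ell}$ captures a fixed fraction of $|(S(\Lambda^L_m))^\delta|$ and then passing to a single slightly larger dyadic cube $Q$ that works uniformly in $\delta$; your alternative fix would serve the same purpose. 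The only genuine divergence is the sufficiency in (ii): you verify the integral criterion of Theorem \ref{top} directly from the upper bound in Lemma \ref{schneerand}, whereas the paper deduces it from part (i) combined with the elementary embeddings of Remark \ref{grund} and Lemma \ref{support2}; both arguments are valid, yours being marginally more self-contained and the paper's shorter.
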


\begin{proof}
{\em Step 1.} Proof of (i).
Because of Lemma \ref{schneerand} we can apply Theorem \ref{ok2} and Corollary \ref{allp}.
\\
{\em Step 2.} Proof of (ii).
Sufficiency can be derived from  the elementary embeddings stetd in Remark \ref{grund} and Lemma \ref{support2}.
More complicated to prove is the necessity.
\\
We consider all the dyadic cubes such that
$|P|= 2^{-2 \ell}$ with $\ell\in\mathbb{N}_0$.
We shall use the notation from the proof of Lemma \ref{schneerand}.
Let $L$ be the same as in \eqref{w069}. Then, for each set $S(\Lambda^L_m)$
there exists a dyadic cube covering $\{Q_i\}_{i=1}^N$
such that, for any $i\in\{1,\ldots,N\}$, $\ell(Q_i)=2^{-\ell}$,
$ Q_i\cap  S(\Lambda^L_m) \neq \emptyset$, and
\[
 S(\Lambda^L_m) \subset  \bigcup_{i=1}^N Q_i.
\]
Here $N$ is a natural number satisfying $N \le C$, see \eqref{w070}.
Hence, if $\delta \in(0, 2^{-\ell}]$, one has
\[
 \left(S(\Lambda^L_m)\right)^\delta \subset \bigcup_{i=1}^N (Q_i)^\delta
\subset O :=  \bigcup_{k=1}^{9N} \widetilde Q_{k}
 \]
because each cube $Q_i$ has $8$ dyadic cube neighbors of the same size, and here we use $\{\widetilde Q_k\}_{k=1}^{9N}$
to denote the set of all these neighbors together
 with $\{Q_i\}_{i=1}^N$ themselves.
 Next we choose $k_\delta \in \{1,  \ldots  , 9N\}$ such that
 \[
  \left|\left(S(\Lambda^L_m)\right)^\delta  \cap  \widetilde Q_{k_\delta}\right|
  = \max_{k\in\{1, \ldots , 9N\}} \left|\left(S(\Lambda^L_m)\right)^\delta  \cap \widetilde Q_{k}\right|.
 \]
 This further implies
 \begin{align*}
  \left|\left(S(\Lambda^L_m)\right)^\delta  \cap \widetilde Q_{k_\delta}\right|& \le  \left|\left(S(\Lambda^L_m)\right)^\delta \right|
  \le \sum_{k=1}^{9N} \left|\left(S(\Lambda^L_m)\right)^\delta \cap \widetilde Q_{k}\right|\\
  &\le 9N \, \left|\left(S(\Lambda^L_m)\right)^\delta  \cap \widetilde Q_{k_\delta}\right|.
\end{align*}
Therefore,
 \[
   \left|\left(S(\Lambda^L_m)\right)^\delta  \cap \widetilde Q_{k_\delta}\right| \,
   \sim \,  \left|\left(S(\Lambda^L_m)\right)^\delta\right|  \quad \mbox{if}\  \delta \in(0, 2^{-\ell}],
 \]
where the positive equivalence constants are independent of $m$ and $\ell$.
 We need to remove the dependence of the chosen cube on $\delta$.
 Therefore we argue as follows.
Observe that there exists a smallest  dyadic cube $Q$ such that $O \subset Q$.
 If necessary we apply a shift of the aforementioned snowflake domain, e.g., into the first quadrant such that
 \[
 \inf_{x,y\in \mathbb{R}}\, \min \left\{\dist (\Omega, (x,0)), \dist (\Omega, (0,y))\right\} > 1.  \]
Hence $|Q| \le (9N)^2 \, 2^{-2\ell}$.
 With respect to this cube we know
\[
\left|\left(S(\Lambda^L_m)\right)^\delta  \cap  Q\right| \le  \left|\left(S(\Lambda^L_m)\right)^\delta \right|
\le 9N \, \left|\left(S(\Lambda^L_m)\right)^\delta  \cap \widetilde Q_{k_\delta}\right|
  		\le 9N \, \left|\left(S(\Lambda^L_m)\right)^\delta  \cap  Q\right|.
 \]
Now we can apply \eqref{w066} and obtain
\begin{equation*}
4^{-L}	\, \delta^{2-d} \sim \, \left|\left (S(\Lambda^L_1)\right)^\delta\right|\sim
\left| \left(S(\Lambda^L_1)\right)^\delta \cap Q\right|.
\end{equation*}
After these preparations  Theorem \ref{top} yields the claim.
\end{proof}

It remains to deal with the pointwise multiplier properties
of ${\bf 1}_\Omega$.

\begin{theorem}
Let $\Omega$ be  the snowflake domain and let  $d:= \log 4/ \log 3$.
\begin{itemize}
\item[{\rm (i)}] We have ${\bf 1}_\Omega \in M(B^{s}_{1,1}(\rr^2))$
for all $s \in (0,2-d)$.
\item[{\rm (ii)}] Let $p\in(1,\infty)$. Then we have ${\bf 1}_\Omega \in
M(B^{(2-d)/p}_{p,1}(\rr^2),B^{(2-d)/p}_{p,\infty}(\rr^2)) $.
\item[{\rm (iii)}] Let $p\in(1,\infty)$. Then we have ${\bf 1}_\Omega \in
M(B^{s}_{p,p}(\rr^2)) $ for all $s \in (0,(2-d)/p)$.
\end{itemize}
\end{theorem}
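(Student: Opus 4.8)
The plan is to reduce each of the three multiplier statements to a regularity assertion for ${\bf 1}_\Omega$ in a suitable (uniform) Besov-type space, and then to feed in the sharp information recorded in Theorem \ref{schneeflocke} and Lemma \ref{schneerand}. The most direct is (ii). By Theorem \ref{Guli3}, taking $s=(2-d)/p\in(0,1/p)$ (note $0<2-d<1$), one has $\tau=\frac1p-\frac s2=\frac{d}{2p}$ and $u=n/s$, so that ${\bf 1}_\Omega\in M(B^{(2-d)/p}_{p,1}(\mathbb{R}^2),B^{(2-d)/p}_{p,\infty}(\mathbb{R}^2))$ is equivalent to ${\bf 1}_\Omega\in L^\infty(\mathbb{R}^2)\cap B^{(2-d)/p,\,d/(2p)}_{p,\infty,{\rm unif}}(\mathbb{R}^2)$. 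Membership in $L^\infty$ is trivial, and since $\Omega$ is bounded the uniform norm of the compactly supported function ${\bf 1}_\Omega$ is comparable to its ordinary $B^{s,\tau}_{p,\infty}$-norm (only finitely many translates $\psi(\cdot-\ell){\bf 1}_\Omega$ are nonzero and pointwise multiplication by $C_{\rm c}^\infty$-functions is bounded on $B^{s,\tau}_{p,\infty}$). Thus (ii) is exactly Theorem \ref{schneeflocke}(i). Part (iii) then follows by propagation: having (ii), I would apply Corollary \ref{Guli4} with $\tilde p=p$ and $q=p$, whose interpolation argument against $L^p$ yields ${\bf 1}_\Omega\in M(B^{\tilde s}_{p,p}(\mathbb{R}^2))$ for $\tilde s\in(0,(2-d)/p)$, which is precisely (iii).

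Part (i) is the delicate one and I expect it to be the main obstacle. By Proposition \ref{FAchar}(i) (with $p=1$, $n=2$, and $s\in(0,2-d)\subset(\sigma_1,2)$) we have $M(B^s_{1,1}(\mathbb{R}^2))=L^\infty(\mathbb{R}^2)\cap B^{s,\,1-s/2}_{1,1,{\rm unif}}(\mathbb{R}^2)$, so, exactly as in (ii), the task reduces to showing ${\bf 1}_\Omega\in B^{s,\,1-s/2}_{1,1}(\mathbb{R}^2)$. The decisive feature is that the required Morrey exponent $\tau=1-s/2$ satisfies $s+2\tau=2=n/p$, i.e.\ the target point lies exactly on the critical line $s=n(\frac1p-\tau)$. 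Neither sufficient condition of Section \ref{Main4} applies verbatim: Theorem \ref{ok1}, which produces the fine index $q=1$, requires the strict inequality $s<n(\frac1p-\tau)$, while Theorem \ref{ok2} works on the critical line but only delivers $q=\infty$. I would therefore argue directly through the difference characterization of Proposition \ref{t4.7} with $M=1$ (legitimate since $0<s<1$ and $\tau=1-s/2\in(0,1]$).

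Concretely, since $\Omega$ is bounded, $\|{\bf 1}_\Omega\|_{L^1_\tau(\mathbb{R}^2)}<\infty$, so it remains to bound $\|{\bf 1}_\Omega\|^\spadesuit_{B^{s,\,1-s/2}_{1,1}(\mathbb{R}^2)}$. For $x\in P$ the difference ${\bf 1}_\Omega(x+h)-{\bf 1}_\Omega(x)$ is nonzero only when $\dist(x,\partial\Omega)\le|h|$, whence $\int_P|\Delta_h^1{\bf 1}_\Omega|\,dx\le|(\partial\Omega)^t\cap P|$ whenever $|h|<t$. Lemma \ref{schneerand} gives $|(\partial\Omega)^t\cap P|\lesssim[\ell(P)]^d\,t^{2-d}$ for $0<t\lesssim\ell(P)$, so that
\[
\frac{1}{|P|^\tau}\int_0^{2\ell(P)}t^{-s}\,\left|(\partial\Omega)^t\cap P\right|\,\frac{dt}{t}
\lesssim[\ell(P)]^{d-2\tau}\int_0^{2\ell(P)}t^{1-d-s}\,dt
\sim[\ell(P)]^{2-s-2\tau}.
\]
The inner integral converges at $t=0$ precisely because $s<2-d$ (so that $1-d-s>-1$), and the final exponent vanishes on the critical line $s+2\tau=2$; hence the $\spadesuit$-seminorm is uniformly bounded in $P$. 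This yields ${\bf 1}_\Omega\in B^{s,\,1-s/2}_{1,1}(\mathbb{R}^2)$ and thus (i). The crux, and what I regard as the main obstacle, is exactly this borderline point: it is the \emph{strict} inequality $s<2-d$, i.e.\ the value $d=\log4/\log3$ of the fractal dimension of the von Koch curve, that supplies the integrability at $t=0$ needed to upgrade from $q=\infty$ to the fine index $q=1$ on the critical line, where the general theorems of Section \ref{Main4} leave a gap.
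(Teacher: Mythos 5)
Your proposal is correct, and for parts (ii) and (iii) it essentially coincides with the paper's (very terse) argument: part (ii) is exactly the combination of Theorem \ref{Guli3} with Theorem \ref{schneeflocke}(i) that the paper intends, and your part (iii) differs only in that you propagate from (ii) via the real-interpolation argument behind Corollary \ref{Guli4}, whereas the paper propagates from (i) via Corollary \ref{Gulidual}; both are the same interpolation-against-$L^p$ mechanism and yield the same range $\tilde s\in(0,(2-d)/p)$. The genuine divergence is in part (i). The paper merely cites Theorem \ref{schneeflocke} and Proposition \ref{FAchar}, but Theorem \ref{schneeflocke} only records regularity for Morrey exponents $\tau\le d/(2p)$, while Proposition \ref{FAchar}(i) demands membership in $B^{s,1-s/2}_{1,1,{\rm unif}}(\mathbb{R}^2)$ with $\tau=1-s/2>d/2$ for every $s<2-d$; closing this gap requires either an unstated ``slanted'' embedding along the line $s+2\tau=2$ for compactly supported functions, or a direct computation. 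You correctly diagnose that neither Theorem \ref{ok1} (which needs $s<n(\frac1p-\tau)$ strictly) nor Theorem \ref{ok2} (which only gives $q=\infty$) applies, and you supply the missing step directly from Proposition \ref{t4.7} with $M=1$ together with the localized neighborhood estimate of Lemma \ref{schneerand}; the computation $[\ell(P)]^{d-2\tau}\int_0^{2\ell(P)}t^{1-d-s}\,dt\sim[\ell(P)]^{2-s-2\tau}$ is correct, the convergence at $t=0$ indeed hinges on the strict inequality $s<2-d$, and the exponent vanishes exactly on the line $s+2\tau=2$. (Two harmless omissions: dyadic cubes with $|P|>1$ are handled by the global bound $|(\partial\Omega)^t|\lesssim t^{2-d}$ of Lemma \ref{hrandsnow}, and for $A\ell(P)<t\le 2\ell(P)$ one uses the trivial bound $|P|$.) In short, your route is not only correct but more self-contained than the paper's for part (i), where it makes explicit a step the paper leaves implicit.
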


\begin{proof}
Part (i) follows from Theorem \ref{schneeflocke}, Proposition
\ref{FAchar} and Corollary \ref{Gulidual}.
Also part (iii) can be derived from
Theorem \ref{schneeflocke}, Proposition
\ref{FAchar} and Corollary \ref{Gulidual}.
Finally, part (ii) is a consequence of Theorem \ref{Guli3}
and Theorem \ref{schneeflocke}.
\end{proof}


\subsection{Characteristic functions of some spiral type domains}
\label{Main9}


As a second example we consider a family of domains lying between two spirals (red and blue) with endpoints in the origin.
Let $\alpha\in(0,\infty)$.
Then $E_\alpha$,  defined in polar coordinates $(r,\varphi)$, is given by
\[
E_\alpha := \left\{(r,\varphi): \ \frac{1}{(\varphi + \pi)^\alpha}
\le r < \frac 1{\varphi^\alpha}, \  2\pi \le \varphi < \infty\right\}.
\]

\smallskip

\begin{minipage}{0.5\textwidth}
	\includegraphics[width=4.6cm]{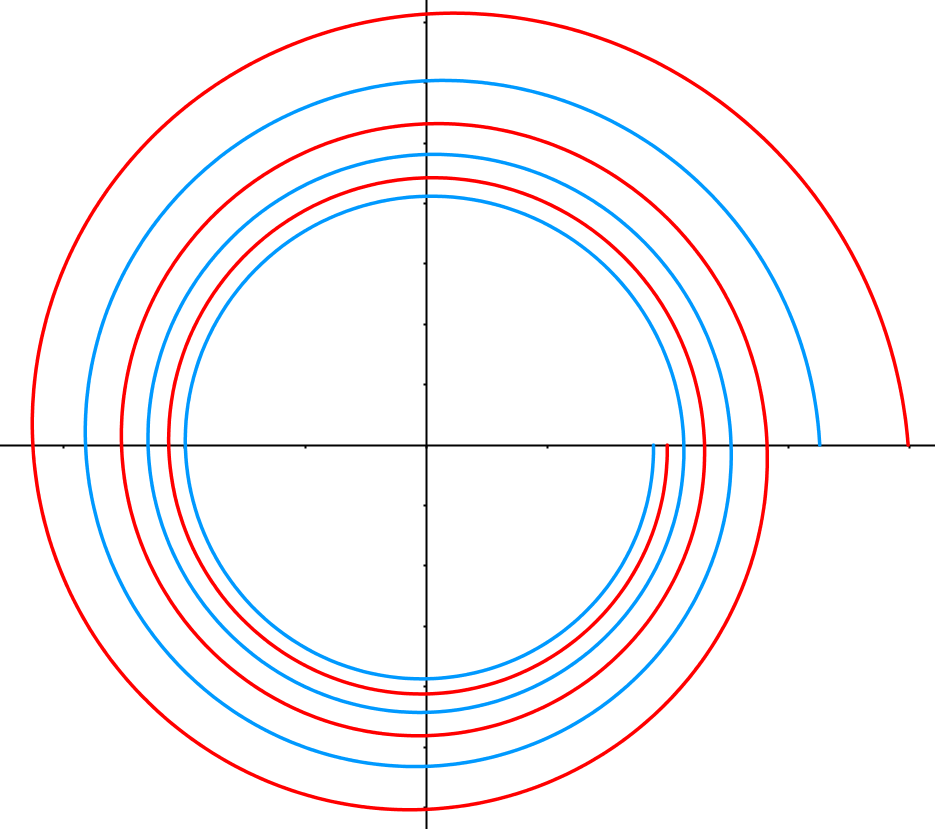}
\end{minipage}\hfill
\begin{minipage}{0.3\textwidth} In the left picture
	a part of the domain $E_{1/2}$ is shown. Elementary arguments show that
	the length of the boundary of $E_\alpha$ is infinite if and only if
$\alpha \in(0,1]$. Below we will calculate the smoothness of the characteristic functions ${\bf 1}_{E_\alpha}$.
\end{minipage}

\smallskip

\begin{theorem}\label{spiral4}
	Let $p \in [1,\infty)$, $\tau \in [0,1/p]$, $ s\in(0,1]$, and $\alpha\in(0,\infty)$.
\begin{itemize}
\item[{\rm (i)}]   Let $ \alpha\in(0, 1)$. Then
	${\bf 1}_{E_\alpha} \in B^{s,\tau}_{p, \infty} (\mathbb{R}^2)$  if and only if
	$s\le \frac{2\alpha}{\alpha+1} \Big(\frac 1p - \tau\Big)$.
\item[{\rm (ii)}] Let $\alpha =1$. Then ${\bf 1}_{E_\alpha} \in B^{s,\tau}_{p, \infty} (\mathbb{R}^2)$ if and only if $ s \le 1/p - \tau$ and $s<1/p$.
\item[{\rm (iii)}]   Let $\alpha\in(1,\infty)$. Then
	${\bf 1}_{E_\alpha} \in B^{s,\tau}_{p, \infty} (\mathbb{R}^2)$  if and only if
		$s\le \frac{2\alpha}{\alpha+1} \Big(\frac 1p - \tau\Big)$
 and	$s\le \frac 1p $.
\end{itemize}
	\end{theorem}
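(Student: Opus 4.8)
The plan is to follow closely the strategy of the proof of Theorem \ref{spirale1}, reducing everything to the case $p=1$ and then carrying out a geometric analysis of the first-order differences of ${\bf 1}_{E_\alpha}$ on the dyadic cubes anchored at the origin. Since $s\le 1/p$ holds in all the sufficiency claims, Corollary \ref{t4.8} gives that ${\bf 1}_{E_\alpha}\in B^{s,\tau}_{p,\infty}(\mathbb{R}^2)$ if and only if ${\bf 1}_{E_\alpha}\in B^{sp,\tau p}_{1,\infty}(\mathbb{R}^2)$; as $sp\le\frac{2\alpha}{\alpha+1}(1-\tau p)$ is equivalent to $s\le\frac{2\alpha}{\alpha+1}(\frac1p-\tau)$, it suffices to prove the three statements for $p=1$. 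Using Proposition \ref{t4.7} with $M=1$ (the endpoint $s=1$ being handled through second-order differences exactly as in Corollary \ref{t4.8}), I would work with
\[
{\rm I}:=\sup_{P\in\mathcal{Q}}\frac1{|P|^\tau}\sup_{0<t<2\min\{\ell(P),1\}}t^{-s}\sup_{t/2\le|h|<t}\int_P|\Delta_h^1{\bf 1}_{E_\alpha}(x)|\,dx.
\]
As in Theorem \ref{spirale1}, since $\partial E_\alpha$ is smooth away from the origin, $P\cap E_\alpha$ has a Lipschitz boundary whenever ${\bf 0}\notin\overline P$, so only the cubes $P=Q_{\ell,{\bf 0}}$, $\ell\in\mathbb{N}$, are relevant.

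The core is the geometric estimate. Decomposing $E_\alpha=\bigcup_k E_{\alpha,k}$ into the angular pieces $2\pi k\le\varphi<2\pi(k+1)$, a Taylor expansion shows that the $k$-th piece has radius $\sim k^{-\alpha}$, width $\sim k^{-\alpha-1}$, and measure $|E_{\alpha,k}|\sim k^{-2\alpha-1}$; moreover only pieces with $k\gs 2^{\ell/\alpha}$ meet $Q_{\ell,{\bf 0}}$. For $|h|\sim 2^{-j}$, $j\ge\ell$, I would split the sum over $k$ into the two cases of \eqref{w031}--\eqref{w033}: when the width of $E_{\alpha,k}$ lies below $|h|$ the trivial bound $|E_{\alpha,k}(P,h)|+|F_{\alpha,k}(P,h)|\ls k^{-2\alpha-1}$ applies, while when it exceeds $|h|$ the contribution is $\sim|h|\,k^{-\alpha}$ (shift times arc length). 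The threshold is $k\sim 2^{j/(\alpha+1)}$, and summing yields $\|\Delta_h^1{\bf 1}_{E_\alpha}\|_{L^1(Q_{\ell,{\bf 0}})}\sim 2^{-2\ell}$ for $\ell\le j\ls\frac{\alpha+1}{\alpha}\ell$, whereas for $j\gs\frac{\alpha+1}{\alpha}\ell$ the behaviour of $\sum k^{-\alpha}$ is governed by the sign of $\alpha-1$: for $\alpha<1$ the sum is dominated by its top and gives $\sim 2^{-2\alpha j/(\alpha+1)}$; for $\alpha=1$ it produces a logarithmic factor $\sim 2^{-j}(j/2-\ell)$; for $\alpha>1$ it is dominated by its bottom and gives $\sim 2^{-j}2^{-(\alpha-1)\ell/\alpha}$. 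This trichotomy is precisely what produces the three cases of the theorem.

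Finally I would assemble ${\rm I}\sim\sup_\ell 2^{2\ell\tau}\sup_{j\ge\ell}2^{js}\|\Delta_h^1{\bf 1}_{E_\alpha}\|_{L^1(Q_{\ell,{\bf 0}})}$. In each regime the inner supremum is attained near $j\sim\frac{\alpha+1}{\alpha}\ell$ and equals, up to constants, $2^{\ell[s(\alpha+1)/\alpha-2]}$, so ${\rm I}<\infty$ exactly when $2\tau+s\frac{\alpha+1}{\alpha}-2\le0$, i.e. $s\le\frac{2\alpha}{\alpha+1}(1-\tau)$; the matching lower bound (necessity) is obtained by retaining only the case-(b) terms, as in Step 2 of the proof of Theorem \ref{spirale1}. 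For $\alpha=1$ the logarithmic factor forces the strict inequality $s<1$ (equivalently $s<1/p$), since at $s=1$ the factor $j/2-\ell$ is unbounded in $j$; this yields case (ii). For $\alpha>1$ the extra constraint $s\le 1/p$ in case (iii) is derived separately: by Lemma \ref{support2} membership in $B^{s,\tau}_{p,\infty}(\mathbb{R}^2)$ forces ${\bf 1}_{E_\alpha}\in B^{s}_{p,\infty}(\mathbb{R}^2)$, and since $E_\alpha$ has a boundary arc of positive length one has $\|\Delta_h^1{\bf 1}_{E_\alpha}\|_{L^p(\mathbb{R}^2)}\gs|h|^{1/p}$, whence $s\le 1/p$ is necessary (cf. Proposition \ref{prima0}). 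The main obstacle I expect is the bookkeeping of the two-regime sum over $k$ and the correct identification, in each range of $\alpha$, of the dominant term that produces the exponent $\frac{2\alpha}{\alpha+1}$, together with the careful treatment of the logarithmic borderline at $\alpha=1$.
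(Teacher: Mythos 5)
Your proposal is correct and follows essentially the same route as the paper's proof: reduction to $p=1$ via Corollary \ref{t4.8}, the difference characterization of Proposition \ref{t4.7} applied on the cubes $Q_{\ell,\mathbf{0}}$, the decomposition into the annular pieces $E_{\alpha,k}$ with the same two-regime estimate (trivial bound $|E_{\alpha,k}|\sim k^{-2\alpha-1}$ versus $|h|\,k^{-\alpha}$), the critical scale $j_\alpha=(1+\frac1\alpha)\ell$, the trichotomy in $\alpha$, and necessity by retaining the case-(b) terms at $j=j_\alpha$. The only cosmetic deviation is that you obtain the extra constraint $s\le 1/p$ for $\alpha>1$ from Lemma \ref{support2} combined with the classical fact that no nontrivial characteristic function lies in $B^{s}_{p,\infty}(\mathbb{R}^2)$ with $s>1/p$, whereas the paper absorbs this into the parameter restrictions of Corollary \ref{t4.8}; both are valid.
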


\begin{proof}
 We proceed as in the proof of Theorem \ref{spirale1} and use the characterization by differences
given in Proposition \ref{t4.7}. Thus, we divide the proof into four steps, in which Steps 2 and 3 focus on
the case $p=1$  and Step 4 deals with the case $p\in(1,\fz)$.

{\em Step 1.} Preparations.
For any $k \in \nn$, we put	
	\[
	E_{\alpha,k} := \left\{(r,\varphi):\ \frac{1}{(\varphi + \pi)^\alpha} \le r < \frac {1}{\varphi^\alpha}, \  2\pi k \le \varphi < 2\pi (k+1) \right\}.
	\]
	Observe that the width of $E_{\alpha,k}$ is comparable to $k^{-(1+\alpha)}$ and therefore
	\begin{equation}\label{vol1}
		|	E_{\alpha,k}| \sim \frac{1}{k^{2\alpha +1}}, \ \forall\,  k \in \nn.
	\end{equation}
Furthermore,  there exists a positive constant $c$ such that
\[
|	E_{\alpha,k} \cap Q_{\ell,\textbf{0}}| = 0 \quad \mbox{if}\quad k \le c\, 2^{\ell/\alpha}.
\]

{\em Step 2.} 	Sufficiency in case $p=1$.
 As in the proof of Theorem \ref{spirale1}
 we only consider the dyadic  cubes $P =Q_{\ell,\textbf{0}}$ with $\ell \in \nn_0$.	
As there we shall use the notation
	\begin{align*}
		E_\alpha(P,h) & : =   \left\{x \in E_\alpha \cap P: ~ x+h \not\in E_\alpha \right\},
		\\
		F_\alpha(P,h) & : =  \left\{x \in P: ~x \not\in E_\alpha, ~x+h \in E_\alpha \right\},
	\end{align*}
and, in addition, for any $k \in \nn$,
	\begin{align*}
	E_{\alpha,k} (P,h) & : =  \left\{x \in E_{\alpha,k} \cap P: ~ x+h \not\in E_\alpha \right\},
	\\
	F_{\alpha,k}(P,h)  & : =   \left\{x \in P: ~x \not\in E_{\alpha}, ~x+h \in E_{\alpha,k} \right\}.
\end{align*}	
It follows
\[
\dint_P
|{\bf 1}_{E_\alpha}(x+h)-{\bf 1}_{E_\alpha} (x) |\,dx = |	E_\alpha(P,h)|+  |	F_\alpha(P,h)|
 = \sum_{k=1}^\infty \left[|E_{\alpha,k}(P,h)|+  |	F_{\alpha,k}(P,h)|\right].
\]
Hence
\begin{equation}\label{w054}
\left\| \, \Delta_h^1 {\bf 1}_{E_\alpha} \,\right\|_{L^1(P)}= \sum_{k=c\, 2^{\ell/\alpha}}^\infty \left[ |E_{\alpha,k}(P,h)|+  |F_{\alpha,k}(P,h)|\right].
\end{equation}
Let $|h| \in [2^{-j}, 2^{-j+1}]$ with some $j \ge \ell$.
As in the proof of Theorem \ref{spirale1} above, we have to consider two cases:
\begin{itemize}
\item[(a)] the width of $E_{\alpha,k}$ is less then $|h|$;
\item[(b)] the width of $E_{\alpha,k}$ is larger than or equal to $|h|$.
\end{itemize}

{\bf Case (a)}. Since the width of $E_{\alpha,k}$ is comparable to $k^{-(\alpha+1)}$,  using our restriction concerning $h$, in this case, we only need to
consider $k\gtrsim 2^{j/(\alpha +1)}.$ We will apply the trivial estimate
\begin{equation}\label{w051}
 |	E_{\alpha,k}(P,h)| + |	F_{\alpha,k}(P,h)| \ls |E_{\alpha,k}| \ls  \frac{1}{k^{2\alpha +1}},
\end{equation}
see \eqref{vol1}.

{\bf Case (b)}. By the assumption of this case, we only
consider $c \, 2^{\ell/\alpha} \le k \le C\,  2^{j/(\alpha+1)}.$
This case shows up if and only if $j \ge j_\alpha:= (1+ \frac 1\alpha)\, \ell$
(where we ignored the constants $c,C$ to reduce the technicalities).
Observe that between $E_{\alpha,k}$ and $E_{\alpha,k+1}$ there is a strip of almost the same
size as $E_{\alpha,k}$  but contained in $E^\complement$.
Hence, in this situation we have	
\begin{equation}\label{w052}
	|	E_{\alpha,k} (P,h)| + |F_{\alpha,k} (P,h)| \sim |h| \, k^{-\alpha}.
\end{equation}

Now we are in position to turn to the estimate of $\| \, \Delta_h^1 {\bf 1}_{E_\alpha} \,\|_{L^1(P)}$.
For  $\ell \le j \le j_\alpha$,	we insert \eqref{w051} into \eqref{w054} and obtain
\begin{equation}\label{w055}
\left\| \, \Delta_h^1 {\bf 1}_{E_\alpha} \,\right\|_{L^1(P)} \ls \sum_{k=c\, 2^{\ell/\alpha}}^\infty
\frac{1}{k^{2\alpha +1}} \ls  2^{-2\ell}.
\end{equation}	
In case of $j > j_\alpha $, we apply \eqref{w054} and \eqref{w052} and find
\begin{equation}\label{w056a}
	\left\| \, \Delta_h^1 {\bf 1}_{E_\alpha} \,\right\|_{L^1(P)}  \ls
	\sum_{k=c\, 2^{\ell/\alpha}}^{2^{j/(\alpha+1)}}
	 \frac{|h|}{k^\alpha}  + \sum_{k=2^{j/(\alpha + 1)}}^\infty
	 \frac{1}{k^{2\alpha + 1}} .
\end{equation}	
At this point
we need to distinguish among the three cases  $\alpha \in(0,1)$, $\alpha =1$, and $\alpha\in(1,\fz)$.

\noindent{\em Substep 2.1.} Let $\alpha\in(0,1)$. For any $j > j_\alpha $ the inequality
\eqref{w056a} yields
\[
 \left\| \, \Delta_h^1 {\bf 1}_{E_\alpha} \,\right\|_{L^1(P)}  \ls
	|h| \, 2^{\frac{j(1-\alpha)}{\alpha+1}}
+ 2^{\frac{-2 j \alpha}{\alpha+1}} \sim    2^{\frac{-2 j \alpha}{\alpha+1}}.
\]
This further implies
\begin{equation}\label{w057b}
\sup_{0 < t< 2^{-j_\alpha}}\, 	t^{-s} \dsup_{\frac t2 <|h|<t} \, \dint_P |{\bf 1}_{E_\alpha}(x+h)-{\bf 1}_{E_\alpha} (x) |\,dx
 \ls  \left\{ \begin{array}{lll}
 \infty & \mbox{if}\quad s> \frac{2\alpha}{\alpha +1},
 \\
 1 & \mbox{if}\quad s = \frac{2\alpha}{\alpha +1},
 \\
 2^{j_\alpha (s-\frac{2\alpha}{\alpha + 1})} &
 \mbox{if}\quad s < \frac{2\alpha}{\alpha +1}.
\end{array} \right.
\end{equation}
In case $\ell \le j \le j_\alpha$, from \eqref{w055},  we derive
\begin{equation}\label{w058}
\sup_{2^{-j_\alpha} \le t< 2^{-\ell}}\,
t^{-s}\dsup_{\frac t2 <|h|<t} \, \dint_P
|{\bf 1}_{E_\alpha}(x+h)-{\bf 1}_{E_\alpha} (x) |\,dx \ls  2^{\ell [s (1+\frac1\alpha)-2]}
\end{equation}
because $s >0$.
Both estimates \eqref{w057b} and \eqref{w058} together lead to
\[
{\rm I}\ls \sup_{\ell \in\nn_0} \, 2^{2 \ell \tau} 2^{\ell [s (1+\frac1\alpha)-2]} <\infty
\]
if $2 \tau + s (1+ \frac 1\alpha) -2 \le 0 $ and $s\le \frac{2\alpha}{\alpha + 1}$, where ${\rm I}$ is
the same as in \eqref{XXX}.

\noindent{\em Substep 2.2.} Let $\alpha = 1$. For any $j > j_1=2\ell$ the inequality
\eqref{w056a} yields
\[
 \left\|  \Delta_h^1 {\bf 1}_{E_1} \right\|_{L^1(P)}  \ls
	2^{-j}\, \left(\frac{j}{2} - \ell \right)
+ 2^{-j} \sim    2^{-j}\, \left(\frac{j}{2} - \ell \right).
\]
This further implies that
\begin{equation}\label{w057bb}
\sup_{0 < t< 2^{-2\ell}}\, 	t^{-s} \dsup_{\frac t2 <|h|<t} \, \dint_P
|{\bf 1}_{E_1}(x+h)-{\bf 1}_{E_1} (x) |\,dx
 \ls  \left\{ \begin{array}{lll}
 \infty   & \mbox{if}\quad s\ge 1,
 \\
 2^{2\ell (s-1)}  &
 \mbox{if}\quad s < 1.
\end{array} \right.
\end{equation}
In case $\ell \le j \le j_1= 2\ell$,
from \eqref{w055}, we derive
\begin{equation}\label{w058b}
\sup_{2^{-2\ell} \le t< 2^{-\ell}}\,
t^{-s}\dsup_{\frac t2 <|h|<t} \, \dint_P
|{\bf 1}_{E_{1}}(x+h)-{\bf 1}_{E_1} (x) |\,dx \ls  2^{2\ell (s -1)}
\end{equation}
because $s >0$.
Combining \eqref{w057bb} and \eqref{w058b}, we obtain
\[
{\rm I}\ls \sup_{\ell \in\nn_0} \, 2^{2 \ell \tau} 2^{2\ell (s -1)} <\infty
\]
if $\tau + s  -1 \le 0 $ and $s< 1$.

\noindent{\em Substep 2.3.} Let $\alpha\in(1,\fz)$. For  any $j > j_\alpha$ the inequality
\eqref{w056a} yields
\[
 \left\| \, \Delta_h^1 {\bf 1}_{E_\alpha} \,\right\|_{L^1(P)}  \ls
	2^{-j}\, 2^{-\ell \frac{\alpha-1}{\alpha}}
+ 2^{-j\frac{2\alpha}{\alpha + 1}} \,  .
\]
Observe that $j > j_\alpha$ yields
\[
2^{-j\frac{2\alpha}{\alpha + 1}} < 2^{-j}\, 2^{-\ell \frac{\alpha-1}{\alpha}}\,.
\]
Collecting these estimates we obtain
\begin{equation*}
\sup_{0 < t< 2^{-j_\alpha}}\, 	t^{-s} \dsup_{\frac t2 <|h|<t} \, \dint_P
|{\bf 1}_{E_\alpha}(x+h)-{\bf 1}_{E_\alpha} (x) |\,dx
 \ls  \left\{ \begin{array}{lll}
 \infty & \mbox{if}\quad s > 1,
 \\
  2^{-\ell \frac{\alpha-1}{\alpha}}  & \mbox{if}\quad s = 1,
 \\
2^{-\ell \frac{\alpha-1}{\alpha}}
  2^{j_\alpha (s-1)}  &
 \mbox{if}\quad s < 1 .
\end{array} \right.
\end{equation*}
In case $\ell \le j \le j_\alpha $,
from \eqref{w055}, we derive
\begin{equation*}
\sup_{2^{-j_\alpha} \le t< 2^{-\ell}}\,
t^{-s}\dsup_{\frac t2 <|h|<t} \, \dint_P
|{\bf 1}_{E_\alpha} (x+h)-{\bf 1}_{E_\alpha} (x) |\,dx \ls  2^{\ell[(1+\frac1\alpha)s -2]}
\end{equation*}
because $s >0$.
Hence, we obtain
\[
{\rm I}\ls \sup_{\ell \in\nn_0 } \, 2^{2 \ell \tau} 2^{\ell (1+ \frac1\alpha)s - 2} <\infty
\]
if $2\tau + (1+ \frac1\alpha) s  -2 \le 0 $ and $s\le  1$.

{\em Step 3.} We show the necessity. The critical case is always given by $j=j_\alpha$.

\noindent{\em Substep 3.1}. Let $ \alpha\in(0, 1)$.
We argue by contradiction.
Therefore we assume that  $2 \tau + s (1+ \frac 1\alpha) -2 > 0 $.
Let $|h|= 2^{-j}$ and choose $k$ such that  $c \, 2^{\ell/\alpha} \le k \le C \, 2^{j/(\alpha + 1)}.$
Based on \eqref{w054} and \eqref{w052} we conclude
\[
\left\| \, \Delta_h^1 {\bf 1}_{E_\alpha} \,\right\|_{L^1(P)} \ge
\sum_{k=c\, 2^{\ell/\alpha}}^{C\, 2^{j/(\alpha + 1)}} \int_{E_{\alpha,k}} |\Delta_h^1 {\bf 1}_{E_\alpha} (x)|\, dx \gtrsim  2^{-j} \,
\sum_{k=c\, 2^{\ell/\alpha}}^{C \, 2^{j/(\alpha + 1)}}
\frac{1}{ k^\alpha} .
\]
Choosing $j=j_\alpha = (1+1/\alpha)\ell $ we find
\[
2^{-j_\alpha}\, \sum_{k=c\, 2^{\ell/\alpha}}^{C \, 2^{j_\alpha/(\alpha + 1)}} \frac{1}{ k^\alpha} \gtrsim
2^{\ell \frac{1-\alpha}{\alpha}}  2^{-\ell (1+1/\alpha)}
\gtrsim
2^{\ell \frac{1-\alpha - (\alpha +1)}{\alpha}} \gtrsim 2^{-2\ell}.
\]
For that reason we obtain
\begin{align*}
&\dsup_{P\in\mathcal{Q}}  \frac1{|P|^\tau} \, \sup_{0 < t< 2\min\{\ell(P),1\}}
	t^{-s}\dsup_{\frac t2\le|h|<t} \, \dint_P 		|{\bf 1}_E(x+h)-{\bf 1}_E (x) |\,dx
\\
&\quad\gtrsim  	\sup_{\ell\in\nn} \, 2^{2\ell\tau} 2^{\ell [(1+1/\alpha)s-2]}=\infty.\nonumber	
\end{align*}
The used argument is independent of $s$. This proves the necessity of $2 \tau + s (1+ \frac 1\alpha) -2 \le0 $.

Now we turn to the necessity of
$s\le \frac{2\alpha}{\alpha + 1}$.
We assume $s > \frac{2\alpha}{\alpha + 1}$. Then it is enough to apply the argument from \eqref{w057b} with $j=j_\alpha$.

\noindent{\em Substep 3.2}. Let $\alpha = 1$.
We proceed as above.
Therefore we assume that  $\tau + s  -1 > 0 $.
Let $|h|= 2^{-j}$ and let $c \, 2^{\ell} \le k \le C \, 2^{j/2}.$
Based on \eqref{w054} and \eqref{w052} we conclude
\[
\left\| \, \Delta_h^1 {\bf 1}_{E_1} \,\right\|_{L^1(P)} \ge
\sum_{k=c\, 2^{\ell}}^{C\, 2^{\frac j2}} \int_{E_{1,k}} |\Delta_h^1 {\bf 1}_{E_1} (x)|\, dx \gtrsim  2^{-j} \,
\sum_{k=c\, 2^{\ell}}^{C \, 2^{\frac j2}}
\frac{1}{ k} .
\]
Choosing $j=j_1 = 2 \ell $ we find
\[
2^{-2\ell}\, \sum_{k=c\, 2^{\ell}}^{C \, 2^{\ell} } \frac{1}{ k} \gtrsim
2^{-2\ell} .
\]
As above we conclude
\begin{align*}
&\dsup_{P\in\mathcal{Q}}  \frac1{|P|^\tau} \, \sup_{0 < t< 2\min\{\ell(P),1\}}
	t^{-s}\dsup_{\frac t2\le|h|<t} \, \dint_P 		|{\bf 1}_{E_1}(x+h)-{\bf 1}_{E_1} (x) |\,dx
\\
&\quad\gtrsim 	\sup_{\ell\in\nn} \, 2^{2\ell\tau} 2^{\ell (2s-2)} =\infty.\nonumber	
\end{align*}
This proves the necessity of $\tau + s  -1 \le 0 $ for all $s$.

To show the necessity of $s>1$ we argue again with choosing $t=|h|= 2^{-j_{\alpha}}$.

\noindent{\em Substep 3.3} Let $\alpha \in(1,\fz)$.  Also in this case  we argue by contradiction.
Therefore we assume that  $2\tau + s\frac{\alpha + 1}{\alpha}  -2 > 0 $.
Let $|h|= 2^{-j}$ and  $c \, 2^{\frac {\ell}{\alpha}} \le k \le C \, 2^{\frac j{\alpha + 1}}.$
Based on \eqref{w052} we conclude
\[
\| \, \Delta_h^1 {\bf 1}_{E_\alpha} \,\|_{L^1(P)} \ge
\sum_{k=c\, 2^{\frac{\ell}{\alpha}}}^{C\, 2^{\frac j{\alpha + 1}}} \int_{E_{\alpha,k}} |\Delta_h^1 {\bf 1}_{E_\alpha} (x)|\, dx \gtrsim  2^{-j} \,
\sum_{k=c\, 2^{\frac{\ell}{\alpha}}}^{C \, 2^{\frac j{\alpha + 1}}}
\frac{1}{ k^\alpha} .
\]
Choosing $j=j_\alpha = (1+\frac1\alpha)\ell $ we find
\[
2^{-j_\alpha}\, \sum_{k=c\, 2^{\frac{\ell}{\alpha}}}^{C \, 2^{\frac{j_\alpha}{\alpha + 1}}} \frac{1}{ k^\alpha} \gtrsim
2^{\ell \frac{1-\alpha}{\alpha}}  2^{-\ell (1+\frac1\alpha)}
\gtrsim
2^{\ell \frac{1-\alpha - (\alpha +1)}{\alpha}} \gtrsim 2^{-2\ell}.
\]
Similarly as above we obtain
\begin{align*}
&\dsup_{P\in\mathcal{Q}}  \frac1{|P|^\tau} \, \sup_{0 < t< 2\min(\ell(P),1)}
	t^{-s}\dsup_{\frac t2\le|h|<t} \, \dint_P 		|{\bf 1}_{E_1}(x+h)-{\bf 1}_{E_1} (x) |\,dx
\\
&\quad\gtrsim 	\sup_{\ell\in{\mathbb N}} \, 2^{2\ell\tau} 2^{\ell[(1+\frac1\alpha)s-2]}=\infty.\nonumber	
\end{align*}
This proves the necessity of $2\tau + s\frac{\alpha + 1}{\alpha}  -2 \le  0 $
for all $s$.
Concerning the necessity of $s\le \frac{2\alpha}{\alpha +1}$ we argue by working with $j=j_\alpha$.

{\em Step 4.} Let $p\in (1,\infty)$. Then Corollary \ref{t4.8} yields that ${\bf 1}_E \in B^{sp,\tau p}_{1, \infty} (\rr^2) $ implies ${\bf 1}_E \in B^{s,\tau }_{p, \infty} (\rr^2) $.
From  $\tau p +sp -1 \le 0$ and $s p\le \frac{2\alpha}{\alpha+1}$ we derive $ \tau   + s - \frac 1p \le 0$ and
 $s\le \frac{2\alpha}{\alpha+1}\, \frac 1p$.
This then finishes the proof of Theorem \ref{spiral4}.
\end{proof}

\begin{remark}
A detailed inspection of Theorem 	\ref{spiral4} shows that the regularity of ${\bf 1 }_{E_\alpha}$ increases with $\alpha$, whereas the length of the boundary decreases with $\alpha$. In case $\alpha =1$ we are almost in the same situation as in Theorem  \ref{spirale1}.
But the tiny difference in the definition of the domains by a logarithmic factor yields a difference of the regularity in the limiting situation. Whereas  ${\bf 1}_{E_1} \not\in B^{1/p,0}_{p, \infty} (\mathbb{R}^2)$ we know that
 ${\bf 1}_{E} \in B^{1/p,0}_{p, \infty} (\mathbb{R}^2)$, see  Theorem  \ref{spirale1}.
\end{remark}

Let us have a short look onto the  pointwise multiplier properties of ${\bf 1}_E$.

\begin{theorem}\label{multi3}
	Let $1\le p <\infty$, $0<q\le \infty$, $0 < \alpha < \infty$ and $s\in \rr$. For $s\neq 0$  the function ${\bf 1}_{E_\alpha}$ does not belong to $M(B^s_{p,q}(\rr^2))$.
\end{theorem}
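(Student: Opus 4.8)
The plan is to run the strategy of Theorem \ref{multi2}, simply feeding in the sharp regularity supplied by Theorem \ref{spiral4} in place of Theorem \ref{spirale1}. First I would dispose of the case $s>0$. The decisive point is the critical weight $\tau:=\frac1p-\frac s2$, for which $\frac1p-\tau=\frac s2$ in dimension $n=2$. Plugging this weight into Theorem \ref{spiral4}, the admissibility condition in each of the three regimes becomes impossible for $s>0$: for $\alpha\in(0,1)$ and $\alpha\in(1,\infty)$ it reads $s\le\frac{2\alpha}{\alpha+1}\cdot\frac s2=\frac{\alpha}{\alpha+1}s<s$, and for $\alpha=1$ it reads $s\le\frac s2$, both of which fail. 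Hence ${\bf 1}_{E_\alpha}\notin B^{s,\tau}_{p,\infty}(\rr^2)$ for every $\alpha\in(0,\infty)$ and every $s>0$. Since ${\bf 1}_{E_\alpha}$ is compactly supported, on its support $\sum_\ell\psi(\cdot-\ell)=1$ is a finite sum, so membership in $B^{s,\tau}_{p,\infty,\unif}(\rr^2)$ would force membership in $B^{s,\tau}_{p,\infty}(\rr^2)$; therefore ${\bf 1}_{E_\alpha}\notin B^{s,\tau}_{p,\infty,\unif}(\rr^2)$ as well.

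Next I would convert this into failure of the relevant endpoint multiplier spaces. For $p=1$ and $s\in(0,1)$, Proposition \ref{FAchar}(i) gives $M(B^s_{1,1}(\rr^2))=L^\infty(\rr^2)\cap B^{s,1-s/2}_{1,1,\unif}(\rr^2)$; combining this with the elementary embedding $B^{s,\tau}_{1,1}\hookrightarrow B^{s,\tau}_{1,\infty}$ (Remark \ref{grund}) and the previous paragraph yields ${\bf 1}_{E_\alpha}\notin M(B^s_{1,1}(\rr^2))$ for all $s\in(0,1)$. For $p\in(1,\infty)$ and $s\in(0,1/p]$, Theorem \ref{Guli3} identifies $M(B^s_{p,1}(\rr^2),B^s_{p,\infty}(\rr^2))$ with $L^\infty(\rr^2)\cap B^{s,1/p-s/2}_{p,\infty,\unif}(\rr^2)$, and the embedding $M(B^s_{p,p})\hookrightarrow M(B^s_{p,1},B^s_{p,\infty})$ (which follows from $B^s_{p,1}\hookrightarrow B^s_{p,p}\hookrightarrow B^s_{p,\infty}$, see Section \ref{multi}) then gives ${\bf 1}_{E_\alpha}\notin M(B^s_{p,p}(\rr^2))$ for all $s\in(0,1/p]$. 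Here one checks that the weights $\tau=\frac1p-\frac s2$ satisfy $\tau\in[0,1/p]$ and the smoothness lies in $(0,1]$, so Theorem \ref{spiral4} does apply.

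To reach arbitrary $q$ I would interpolate. Since ${\bf 1}_{E_\alpha}\in L^\infty(\rr^2)\subset M(L^p(\rr^2))$, an assumption ${\bf 1}_{E_\alpha}\in M(B^s_{p,q}(\rr^2))$ with $s>0$ makes the multiplication operator bounded on both $B^s_{p,q}(\rr^2)$ and $L^p(\rr^2)$, hence on every real interpolation space $(B^s_{p,q},L^p)_{\theta,r}=B^{(1-\theta)s}_{p,r}(\rr^2)$, $\theta\in(0,1)$, $r\in(0,\infty]$. Choosing $r=p$ (resp. $r=1$ when $p=1$) and $\theta$ so that $(1-\theta)s=\min\{s/2,1/p\}$ (resp. $\min\{s/2,1/2\}$) produces membership in $M(B^{s'}_{p,p})$ (resp. $M(B^{s'}_{1,1})$) for an admissible small $s'<s$, contradicting the endpoint failures. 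This settles $s>0$ for all $p\in[1,\infty)$ and $q\in(0,\infty]$.

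Finally, for $s<0$ I would invoke the duality $M(B^s_{p,q}(\rr^2))=M(B^{-s}_{p',q'}(\rr^2))$ of \eqref{dual1}. For $p\in(1,\infty)$ one has $p'\in(1,\infty)$ and $-s>0$, so the case already treated applies; for $q\in(0,1)$ one first reduces to $q\ge1$ by the real interpolation scheme used in Corollary \ref{Gulidual}. The one genuinely separate point, which I expect to be the main obstacle, is the edge case $p=1$ (so $p'=\infty$): here I would argue directly that ${\bf 1}_{E_\alpha}\notin M(B^{-s}_{\infty,q'}(\rr^2))$ by testing against the constant function $1\in B^{-s}_{\infty,q'}(\rr^2)$, because a multiplier would force ${\bf 1}_{E_\alpha}={\bf 1}_{E_\alpha}\cdot 1\in B^{-s}_{\infty,q'}(\rr^2)\hookrightarrow C(\rr^2)$ as $-s>0$, contradicting the fact that ${\bf 1}_{E_\alpha}$ has no continuous representative. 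This exhausts all cases and proves the theorem.
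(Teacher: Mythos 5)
Your proposal is correct and follows exactly the route the paper intends: its proof of Theorem \ref{multi3} simply says ``same arguments as Theorem \ref{multi2}'', and you have carried out precisely that scheme, substituting Theorem \ref{spiral4} for Theorem \ref{spirale1} and verifying that the critical weight $\tau=\frac1p-\frac s2$ violates the admissibility condition in all three regimes of $\alpha$, then passing through Proposition \ref{FAchar}/Theorem \ref{Guli3}, real interpolation with $L^p$, and the duality \eqref{dual1}. Your extra care with the edge cases $q<1$ and $p'=\infty$ in the duality step (testing against the constant function) is a welcome refinement of the same argument rather than a different approach.
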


\begin{proof}
The proof uses the same arguments as that one of Theorem \ref{multi2} and is therefore skipped.
\end{proof}

Next we turn to the regularity of ${\bf 1}_{E_\alpha} $ in the framework of Triebel--Lizorkin--Morrey spaces.
We need some preparations.

\begin{lemma}\label{volume}
\begin{itemize}
\item[{\rm (i)}] 	For any $\delta\in(0,1)$ one has
	\[
	\left|(\partial E_\alpha)^\delta\right| \, \sim \, \left\{ \begin{array}{lll}
		\delta^{\frac{2\alpha}{\alpha +1}} & \quad & \mbox{if}\quad  \alpha \in(0, 1),\\
		- \delta\, \log \delta & \quad & \mbox{if}\quad \alpha = 1,\\
		\delta & \quad & \mbox{if}\quad   \alpha \in(1, \infty).		
	\end{array} \right.
	\]
	\item[{\rm (ii)}]	Let $P=Q_{\ell,\textbf{0}}$ for some
$\ell \in \mathbb{N}_0$. In case $2^{-\ell(1+1/\alpha)} \le \delta \le 2^{-\ell}$ one has
	\begin{equation*}
		\left|(\partial E_\alpha)^\delta \cap P\right| \, \sim \, |P| \, .
	\end{equation*}
	In case $0<\delta <  2^{-\ell(1+1/\alpha)} $ it holds
	\begin{equation*}
		\left|(\partial E_\alpha)^\delta \cap P\right| \, \sim \, \left\{ \begin{array}{lll}
			\delta^{\frac{2\alpha}{\alpha +1}} & \quad & \mbox{if}\quad \alpha \in(0, 1),\\
			\delta\, |\log \delta +  \ell  | & \quad & \mbox{if}\quad \alpha = 1,\\
			\delta \, 2^{\ell \frac{1-\alpha}{\alpha}} & \quad & \mbox{if}\quad \alpha \in(1, \infty).		
		\end{array} \right.
	\end{equation*}
\end{itemize}
Here, all positive equivalence constants are independent of $\delta$.
\end{lemma}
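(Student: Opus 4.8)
The plan is to reduce everything to a coil-by-coil analysis of $\partial E_\alpha$. Writing $\partial E_\alpha=\bigcup_{k\ge 1}\partial E_{\alpha,k}$, I first record the elementary geometry of one coil: $E_{\alpha,k}$ sits at radius $r_k\sim k^{-\alpha}$ from the origin, has radial width $\sim k^{-(1+\alpha)}$ (hence $|E_{\alpha,k}|\sim k^{-(2\alpha+1)}$, as in \eqref{vol1}), its bounding spiral arcs have length $\sim k^{-\alpha}$ over one turn (because $|dr/d\varphi|\sim k^{-\alpha-1}\ll r_k$, so the arcs are almost circular), and consecutive coils $E_{\alpha,k}$, $E_{\alpha,k+1}$ are separated by a complementary gap of width comparable to the coil width. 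The decisive quantity is the threshold $k_*=k_*(\delta):=\delta^{-1/(1+\alpha)}$, at which the coil width $k^{-(1+\alpha)}$ equals $\delta$.

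For part (i) I would split the sum over coils at $k_*$. For the \emph{fat} coils $k<k_*$ the gaps exceed $\delta$, so the tubular $\delta$-neighborhoods of the distinct boundary arcs are pairwise disjoint; each has area $\sim\delta\,k^{-\alpha}$, and hence they contribute $\sim\delta\sum_{k<k_*}k^{-\alpha}$ (the disjointness gives both the upper and the lower bound). For the \emph{thin} coils $k\ge k_*$ the coils and the intervening gaps are all thinner than $\delta$, so their neighborhoods merge and fill the entire central disk $B(\mathbf 0,r_{k_*})$ of radius $r_{k_*}\sim\delta^{\alpha/(1+\alpha)}$, contributing $\sim\delta^{2\alpha/(1+\alpha)}$. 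It then remains to evaluate $\delta\sum_{k<k_*}k^{-\alpha}$ and compare it with $\delta^{2\alpha/(1+\alpha)}$: for $\alpha\in(0,1)$ the partial sum is $\sim k_*^{1-\alpha}$, so both terms equal $\delta^{2\alpha/(1+\alpha)}$; for $\alpha=1$ it is a harmonic sum $\sim|\log\delta|$, producing the dominant term $-\delta\log\delta$; for $\alpha\in(1,\infty)$ it converges, so the fat part is $\sim\delta$ and dominates the central term. The lower bounds are supplied respectively by the central disk ($\alpha<1$), by the disjoint harmonic family ($\alpha=1$), and by a single outer coil ($\alpha>1$).

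For part (ii) I would run the same dichotomy but restricted to the coils that actually meet $P=Q_{\ell,\mathbf 0}$. By the fact used in the proof of Theorem \ref{spiral4}, $|E_{\alpha,k}\cap Q_{\ell,\mathbf 0}|=0$ once $k\lesssim 2^{\ell/\alpha}$, so only coils with $k\gtrsim k_\ell:=2^{\ell/\alpha}$ are visible; note $k_\ell^{-(1+\alpha)}=2^{-\ell(1+1/\alpha)}$, which is exactly the paper's crossover scale. In the regime $2^{-\ell(1+1/\alpha)}\le\delta\le 2^{-\ell}$ every visible coil is thinner than $\delta$, so the neighborhood fills the intersection of $P$ with the disk of radius $r_{k_\ell}\sim 2^{-\ell}$; since the origin is a corner of $P$ this intersection has area $\sim 2^{-2\ell}=|P|$, and together with the trivial bound $|(\partial E_\alpha)^\delta\cap P|\le|P|$ this gives the stated $\sim|P|$. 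In the regime $0<\delta<2^{-\ell(1+1/\alpha)}$ one has $k_*>k_\ell$, and summing over $k\ge k_\ell$ gives a central disk contribution $\sim\delta^{2\alpha/(1+\alpha)}$ (coils $k\ge k_*$) plus a fat-coil contribution $\sim\delta\sum_{k_\ell\le k<k_*}k^{-\alpha}$ (here each visible arc keeps a fixed positive fraction of its length inside $P$, again because the origin is a corner of $P$). Evaluating the truncated sum reproduces the three cases: $\sim\delta^{2\alpha/(1+\alpha)}$ for $\alpha<1$; $\sim\delta\log(k_*/k_\ell)\sim\delta\,|\log\delta+\ell|$ for $\alpha=1$; and $\sim\delta\,k_\ell^{1-\alpha}=\delta\,2^{\ell(1-\alpha)/\alpha}$ for $\alpha>1$, where the fat part now dominates the central term, as one checks using $\delta<2^{-\ell(1+1/\alpha)}$.

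The hard part will be the geometric bookkeeping behind the fat/thin dichotomy rather than the series estimates. Specifically, I expect the main work to be proving cleanly that (a) for $k<k_*$ the complementary gaps genuinely exceed $\delta$, so that the single-coil $\delta$-tubes are pairwise disjoint---this is what converts the per-coil area $\sim\delta k^{-\alpha}$ into a two-sided bound and prevents overcounting---and (b) for $k\ge k_*$ the merged neighborhoods fill a full disk, with only a bounded number of borderline coils near $k\sim k_*$ requiring separate attention. The almost-circular arc-length estimate (length of $\partial E_{\alpha,k}\sim k^{-\alpha}$) and, in part (ii), the claim that $P$ retains a fixed fraction of each visible coil must also be made precise. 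Finally, the $\alpha=1$ case needs careful control of the harmonic sum's endpoints $k_\ell$ and $k_*$ to pin down the logarithmic factor $|\log\delta+\ell|$.
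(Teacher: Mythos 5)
Your proposal is correct and follows essentially the same route as the paper: the same coil decomposition $\partial E_\alpha=\bigcup_k\partial E_{\alpha,k}$, the same threshold $k_*\sim\delta^{-1/(1+\alpha)}$ separating fat coils (tube area $\sim\delta k^{-\alpha}$) from thin coils (which merge into the central disk of radius $\sim\delta^{\alpha/(1+\alpha)}$), the same truncated series evaluation in the three regimes, and in part (ii) the same restriction to visible coils $k\gtrsim 2^{\ell/\alpha}$ with the crossover at $\delta\sim 2^{-\ell(1+1/\alpha)}$. The geometric bookkeeping you flag as the hard part (disjointness of the fat-coil tubes, filling of the central disk) is treated in the paper at the same level of rigor you outline, via the inclusions $(\partial E_{\alpha,k})^\delta\subset B(\mathbf 0,C\delta^{\alpha/(\alpha+1)})$ and $B(\mathbf 0,c\delta^{\alpha/(\alpha+1)})\subset\bigcup_{k\ge k_*}(\partial E_{\alpha,k})^\delta$.
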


\begin{proof}
	{\em Step 1.} Proof of (i). Let $\delta\in(0,1)$.
	In case that the width of $E_{\alpha,k}$ is larger than $\delta$ we have the obvious estimate
	\begin{equation*}
		\left|(\partial E_{\alpha, k})^\delta\right| \sim \, \delta \, k^{-\alpha} \, .
	\end{equation*}
	In the remaining case, i.e., if $k^{-(1+\alpha)} <\delta $ we observe that
	\[
	(\partial E_{\alpha,k})^\delta \subset B\left(\textbf{0}, C\, \delta^{\frac{\alpha}{\alpha+1}}\right)
	\]	 	
	for some positive constant $C$ independent of $\delta$ and $k$. In addition we have
	\[
	B\left(\textbf{0}, c\, \delta^{\frac{\alpha}{\alpha+1}}\right) \subset \bigcup_{\{k\in\nn:~k^{-(1+\alpha)\}} <\delta\}}
	(\partial E_{\alpha,k})^\delta
	\]
	for some positive constant $c$.
	This further implies
	\begin{align*}
		\left|(\partial E_{\alpha})^\delta\right| & \le   \sum_{k=1}^\infty 	\left|(\partial E_{\alpha, k})^\delta\right|
\le
		\sum_{k=1}^{\delta^{-\frac{1}{\alpha+1}}} 	\left|(\partial E_{\alpha, k})^\delta\right| + \left|B\left(\textbf{0}, C\, \delta^{\frac{\alpha}{\alpha+1}}\right)\right|\\
		&\ls
		\sum_{k=1}^{\delta^{-\frac{1}{\alpha+1}}} 	 \frac{\delta}{k^{\alpha}}  +
		\delta^{\frac{2\alpha}{\alpha+1}}\, .
	\end{align*}	
	For a further discussion we need to distinguish into the cases $ \alpha \in(0,1)$, $\alpha =1$, and
	$\alpha \in(1, \infty)$.\\
	{\em Case  $ \alpha \in(0,1)$.} Since
	\[
	\sum_{k=1}^{\delta^{-\frac{1}{\alpha+1}}} 	 \frac{\delta}{k^{\alpha}} \, \sim \, \delta^{\frac{2\alpha}{\alpha+1}},
	\]
	we conclude that
	\begin{equation*}
		\left|(\partial E_{\alpha})^\delta\right| \,  \sim \,  \delta^{\frac{2\alpha}{\alpha+1}}\, .
	\end{equation*}
	Here we used \eqref{w070}.
	\\
	{\em Case $\alpha =1$.} The same arguments as used in the previous case yield
	\begin{equation*}
		|(\partial E_{\alpha})^\delta| \,  \sim \,  - \delta \, \log \delta\, .
	\end{equation*}
	{\em Case $\alpha \in(1, \infty)$.}
	This time we find
	\[
	\sum_{k=1}^{\delta^{-\frac{1}{\alpha+1}}} 	 \frac{\delta}{k^{\alpha}}  +
	\delta^{\frac{2\alpha}{\alpha+1}} \, \sim \delta \, .
	\]
Altogether we complete the proof of (i).

	{\em Step 2.} Proof of (ii).
	Let $\delta = 2^{-j}$ for some $j \ge \ell$.
	If $j\ge j_\alpha$, then
	\[
	2^{-j}\, \sum_{k=2^{\frac\ell\alpha}}^{2^{\frac{j}{\alpha+1}}} 	 \frac{1}{k^{\alpha}}  +
	2^{\frac{-2\alpha j}{\alpha+1}} \, \sim
	\left\{ \begin{array}{lll}
		2^{\frac{-2\alpha j}{\alpha +1}} & \  & \mbox{if}\  \alpha\in(0, 1),\\
		2^{-j}\,\left( \frac{j}{2} -  \ell  \right) & \  & \mbox{if}\  \alpha = 1,\\
		2^{-j}  \, 	2^{\ell \, \frac{1-\alpha}{\alpha}}& \  & \mbox{if}\   \alpha \in(1, \infty).		
	\end{array} \right.
	\]
	If $\ell \le j\le j_\alpha$,  we obtain
	\[	\left|(\partial E_{\alpha})^\delta \cap P\right| \,  \sim \, 	|P|
	\]
because the width of the $E_{\alpha,k} \subset P$, defined in Step 1 of the proof of
Theorem \ref{spiral4},	is less than $\delta$. 	This proves the desired result of (ii)  and hence finishes the proof of
Lemma \ref{volume}.
\end{proof}

Mainly as a consequence of Theorems \ref{spiral4} and \ref{top} we obtain the following assertions.

\begin{theorem}\label{spiral5}
	Let $p \in [1,\infty)$, $q \in [1,\infty]$, $\tau \in [0,1/p)$, $ s\in(0,1]$, and $\alpha\in(0,\infty)$.
Then the following assertions hold.
	\begin{itemize}
\item[{\rm (i)}]  $E_\alpha$ is  a bounded thick domain.		
\item[{\rm (ii)}]
The function ${\bf 1}_{E_\alpha} $ does not belong to any space
		$F^{1/p,\tau}_{p,q} (\mathbb{R}^2)$.
\item[{\rm (ii)}]   Let  $0 <  s < \min\{\frac 1p , ~ 2(\frac 1p - \tau)\}$.
\\
{\rm (a)} Let $ \alpha\in(0, 1)$. Then
${\bf 1}_{E_\alpha} \in F^{s,\tau}_{p,q} (\mathbb{R}^2)$  if and only if
$2\tau + s\, \frac{\alpha + 1}{\alpha} \le \frac 2p$ and
$s < \frac{2\alpha}{\alpha+1} \, \frac 1p$.
\\
{\rm (b)} Let $\alpha =1$. Then ${\bf 1}_{E_\alpha} \in F^{s,\tau}_{p,q} (\mathbb{R}^2)$ if and only if $\tau + s \le 1/p$ and $s < \frac 1p $.
\\
{\rm (c)}   Let $\alpha\in(1,\infty)$. Then
${\bf 1}_{E_\alpha} \in F^{s,\tau}_{p, q} (\mathbb{R}^2)$  if and only if
$2\tau + s\, \frac{\alpha + 1}{\alpha} \le \frac 2p$ and
$s < \frac 1p $.
\end{itemize}
\end{theorem}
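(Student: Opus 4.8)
The plan is to handle the three assertions in turn, funnelling the entire statement through the neighborhood criterion of Theorem \ref{top}. For part (i), boundedness is immediate, since $r<\varphi^{-\alpha}\le(2\pi)^{-\alpha}$ forces $E_\alpha\subset B(\mathbf{0},(2\pi)^{-\alpha})$. To verify thickness in the sense of Definition \ref{thick} I would exploit the spiral geometry: the $k$-th arm $E_{\alpha,k}$ has width comparable to $k^{-(\alpha+1)}$, and between consecutive arms $E_{\alpha,k}$ and $E_{\alpha,k+1}$ lies a complementary strip of $E_\alpha^\complement$ of the same order of width. Hence any interior cube $Q$ with $\ell(Q)\sim\mathrm{dist}(Q,\partial E_\alpha)\sim 2^{-j}$ admits a complementing exterior cube across the nearest boundary arc of comparable edge length and distance, and symmetrically for exterior cubes; this yields both $E$- and $I$-thickness, so $E_\alpha$ is a bounded weakly thick domain and Theorem \ref{top} applies. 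The non-membership ${\bf 1}_{E_\alpha}\notin F^{1/p,\tau}_{p,q}(\mathbb{R}^2)$ is not proved via the neighborhood criterion but follows at once from Theorem \ref{wichtig10}(ii) when $\tau\in(0,1/p)$ and from Proposition \ref{max} together with the embedding $F^{1/p}_{p,q}\hookrightarrow F^{1/p}_{p,\infty}$ when $\tau=0$.

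For the main assertion I would invoke Theorem \ref{top}: under the standing hypothesis $0<s<\min\{1/p,2(1/p-\tau)\}$, membership ${\bf 1}_{E_\alpha}\in F^{s,\tau}_{p,q}(\mathbb{R}^2)$ for any $q\in[1,\infty]$ is equivalent to finiteness of
\[
\sup_{\{P\in\mathcal{Q},\,|P|\le1\}}\frac{1}{|P|^\tau}\left[\int_0^{\sqrt2\,\ell(P)}\delta^{-sp}\,|(\partial E_\alpha)^\delta\cap P|\,\frac{d\delta}{\delta}\right]^{1/p}.
\]
As in the proof of Theorem \ref{spiral4}, the only singular point of $\partial E_\alpha$ is the origin, and on any cube not containing $\mathbf{0}$ the set $P\cap E_\alpha$ has Lipschitz boundary; hence it suffices to test the supremum on the cubes $P=Q_{\ell,\mathbf{0}}$, for which $\ell(P)=2^{-\ell}$ and $|P|=2^{-2\ell}$. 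On such a cube I would split the $\delta$-integral at $\delta=2^{-\ell(1+1/\alpha)}$ and substitute the two regimes of Lemma \ref{volume}(ii): on the outer range $2^{-\ell(1+1/\alpha)}\le\delta\le2^{-\ell}$ one has $|(\partial E_\alpha)^\delta\cap P|\sim|P|$, while on the inner range $0<\delta<2^{-\ell(1+1/\alpha)}$ the measure is $\sim\delta^{2\alpha/(\alpha+1)}$, $\sim\delta(\tfrac12\log_2\tfrac1\delta-\ell)$, or $\sim\delta\,2^{\ell(1-\alpha)/\alpha}$ according as $\alpha\in(0,1)$, $\alpha=1$, or $\alpha\in(1,\infty)$.

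Evaluating these integrals is then elementary calculus. The outer range always contributes $\sim2^{\ell[(1+1/\alpha)sp-2]}$. The inner integral converges at $\delta=0$ precisely under the strict threshold isolating the second inequality in each case: $s<\frac{2\alpha}{\alpha+1}\frac1p$ when $\alpha\in(0,1)$ (from $\int_0\delta^{2\alpha/(\alpha+1)-sp-1}\,d\delta<\infty$) and $s<\frac1p$ when $\alpha\ge1$ (from $\int_0\delta^{-sp}\,d\delta<\infty$); when it converges, a direct computation shows the inner range also contributes $\sim2^{\ell[(1+1/\alpha)sp-2]}$. Dividing by $|P|^\tau=2^{-2\ell\tau}$ and taking $p$-th roots, the tested quantity is $\sim2^{\ell[2\tau+s(\alpha+1)/\alpha-2/p]}$, bounded in $\ell$ if and only if $2\tau+s\frac{\alpha+1}{\alpha}\le\frac2p$. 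Combining the two thresholds gives exactly the characterizations (a), (b), (c).

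The delicate step, which I expect to require the most care, is the borderline case $\alpha=1$, where the inner measure carries the logarithmic weight $\tfrac12\log_2\tfrac1\delta-\ell$. Writing $\delta=2^{-j}$ and summing over $j\ge2\ell$, the weight $(j-2\ell)/2$ vanishes at the transition $j=2\ell$, so the series $\sum_{j\ge2\ell}2^{j(sp-1)}(j-2\ell)$ still sums to $\sim2^{\ell(2sp-2)}$ without an extra factor of $\ell$; this is exactly what permits the equality $\tau+s=\frac1p$ in (b). I would verify this vanishing carefully, since a careless bound would spuriously yield $\ell^{1/p}$ growth and wrongly force a strict inequality at the critical line. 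A secondary technicality is the rigorous reduction to the cubes $Q_{\ell,\mathbf{0}}$ and the harmless adjustment of the integration limit from $2^{-\ell}$ to $\sqrt2\,2^{-\ell}$, both mirroring the corresponding arguments in the proof of Theorem \ref{spiral4}.
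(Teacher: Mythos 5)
Your proposal is correct in substance and, on the borderline cases, coincides with the paper's method (Theorem \ref{top} applied to the cubes $Q_{\ell,\mathbf{0}}$ with the two-regime measures of Lemma \ref{volume}(ii)); the genuine difference is that you run the \emph{entire} characterization through the neighborhood criterion, whereas the paper first settles every strict-inequality case by transferring Theorem \ref{spiral4} through the elementary embeddings of Remark \ref{grund} and reserves Theorem \ref{top} and Lemma \ref{volume} for the limiting lines $2\tau+s\frac{\alpha+1}{\alpha}=\frac2p$. Your route is more self-contained; the paper's buys the non-limiting cases for free from the Besov computation already done. Two remarks. First, your handling of the $\alpha=1$ borderline is in fact \emph{more} careful than the paper's: the proof of Lemma \ref{volume} produces the weight $\frac{j}{2}-\ell+O(1)$, which vanishes at the transition $j=2\ell$, rather than the weight $j-\ell$ that the displayed formula $\delta\,|\log\delta+\ell|$ literally suggests, and your observation that the series consequently sums to $\sim 2^{2\ell(sp-1)}$ with no extra factor of $\ell$ is exactly what makes membership on the critical line $\tau+s=1/p$ come out right; the paper's own Step 2 carries a spurious $\ell^{1/p}$ that sits uneasily with the conclusion it then draws, so your flagged ``delicate step'' is a real one and you resolve it correctly. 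Second, a small gap: Theorem \ref{top} (and Theorems \ref{ok1} and \ref{nec31} behind it) is stated only for $\tau\in(0,1/p)$, so your single funnel does not literally cover $\tau=0$, which the theorem permits. For $\tau=0$ you need either the classical version of the criterion from \cite{Si23} or, as the paper does, the embedding argument from Theorem \ref{spiral4}; note that at $\tau=0$ the equality case $2\tau+s\frac{\alpha+1}{\alpha}=\frac2p$ collapses onto the excluded endpoint $s=\frac{2\alpha}{\alpha+1}\frac1p$, so only strict inequalities remain there. Everything else --- the reduction to $P=Q_{\ell,\mathbf{0}}$, the split of the $\delta$-integral at $2^{-\ell(1+1/\alpha)}$, and the resulting bound $2^{\ell[2\tau+s(\alpha+1)/\alpha-2/p]}$ --- matches the paper, as does your treatment of parts (i) and of the non-membership in $F^{1/p,\tau}_{p,q}(\mathbb{R}^2)$.
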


\begin{proof}
For spiral-type domains it is obvious that they are bounded and thick.

Next we employ Remark \ref{grund} and 	Theorem \ref{spiral4} and conclude that the following assertions hold:
\begin{itemize}
	\item[{\rm (i)}]   Let $ \alpha\in(0, 1)$. Then
	${\bf 1}_{E_\alpha} \in F^{s,\tau}_{p,q} (\mathbb{R}^2)$  if
	$2\tau + s\, \frac{\alpha + 1}{\alpha} < \frac 2p$ and
	$s < \frac{2\alpha}{\alpha+1} \, \frac 1p$.
	\item[{\rm (ii)}] Let $\alpha =1$. Then ${\bf 1}_{E_\alpha} \in F^{s,\tau}_{p, q} (\mathbb{R}^2)$ if $\tau + s < \frac1p$ and $s<1/p$.
	\item[{\rm (iii)}]   Let $\alpha\in(1,\infty)$. Then
	${\bf 1}_{E_\alpha} \in F^{s,\tau}_{p,q} (\mathbb{R}^2)$  if	$2\tau + s\, \frac{\alpha + 1}{\alpha} < \frac 2p$ and
	$s < \frac 1p $.
\end{itemize}	

Hence, it remains to deal with certain limiting situations.
For spiral-type domains it is obvious that they are bounded and thick.
Next we employ Remark \ref{grund} and 	Theorem \ref{spiral4} and conclude that the following assertions hold:
\begin{itemize}
	\item[{\rm (i)}]   Let $ \alpha\in(0, 1)$. Then
	${\bf 1}_{E_\alpha} \in F^{s,\tau}_{p,q} (\rr^2)$  if
	$2\tau + s\, \frac{\alpha + 1}{\alpha} < \frac 2p$ and
	$s < \frac{2\alpha}{\alpha+1} \, \frac 1p$.
	\item[{\rm (ii)}] Let $\alpha =1$. Then ${\bf 1}_{E_\alpha} \in F^{s,\tau}_{p, q} (\rr^2)$ if $\tau + s < \frac1p$ and $s<1/p$.
	\item[{\rm (iii)}]   Let $\alpha\in(1,\fz)$. Then
	${\bf 1}_{E_\alpha} \in F^{s,\tau}_{p,q} (\rr^2)$  if	$2\tau + s\, \frac{\alpha + 1}{\alpha} < \frac 2p$ and
	$s < \frac 1p $.
\end{itemize}	

Hence, it remains to deal with certain limiting situations.
First, we exclude the case $s=1/p$.
It is immediate that
${\bf 1}_{E_\alpha} \not \in F^{s,\tau}_{p,q} (\rr^2)$.
This follows from ${\bf 1}_{E_\alpha} \not \in F^{1/p,0}_{p,q} (\rr^2)$ (see Proposition \ref{max})  and Lemma \ref{support2}.

For the remaining limiting cases we employ Theorem \ref{top}. Observe that compared to Theorem \ref{spiral4}
this will cause some extra restrictions, namely $\tau < \frac1p$ and $0 <  s < 2(\frac 1p - \tau)$.
We need to check whether the supremum
\[
{\rm I}:= \sup_{\{P\in\mathcal{Q}, |P|\le 1\}}\, \frac{1}{|P|^\tau}\, 	\left[ \int_{0}^{\sqrt{n} \, \ell (P)} \delta^{-sp}\,
\left|(\partial E_\alpha)^{\delta} \cap P\right| \, \frac{d\delta}{\delta}\right]^{\frac1p}
\]
is finite or not. As done several times before we will concentrate on $P= Q_{\ell,\textbf{0}}$, $\ell \in \nn_0$.
\\
{\em Step 1.} Let $\alpha\in(0, 1)$.

\noindent{\em Substep 1.1}. We assume that $s \ge \frac{2\alpha}{\alpha+1} \, \frac 1p$.
Then
Lemma \ref{volume} yields
\[
\int_{0}^{2^{-\ell(1+\frac1\alpha)}} \delta^{-sp}\,
\left|(\partial E_\alpha)^{\delta} \cap Q_{\ell, \textbf{0}}\right| \, \frac{d\delta}{\delta} \sim
\int_{0}^{2^{-\ell(1+\frac1\alpha)}} \delta^{-sp}\,
\delta^{\frac{2\alpha}{\alpha + 1}} \, \frac{d\delta}{\delta} = \infty \, .
\]
Hence, ${\bf 1}_{E_\alpha} \not \in F^{s,\tau}_{p,q} (\rr^2)$  if
$s \ge \frac{2\alpha}{\alpha+1} \, \frac 1p$.

\noindent{\em Substep 1.2}. We assume that $s < \frac{2\alpha}{\alpha+1} \, \frac 1p$ but
	$2\tau + s\, \frac{\alpha + 1}{\alpha} = \frac 2p$.
Applying Lemma \ref{volume} we obtain
\[
\left[\int_{0}^{2^{-\ell(1+\frac1\alpha)}} \delta^{-sp}\,
\left|(\partial E_\alpha)^{\delta} \cap Q_{\ell, \textbf{0}}\right| \, \frac{d\delta}{\delta}\right]^{\frac1p}
\sim 2^{\ell [(1+\frac1\alpha)s -\frac 2p]}
\]
and
\[
\left[\int^{2^{-\ell}}_{2^{-\ell(1+\frac1\alpha)}} \delta^{-sp}\,
\left|(\partial E_\alpha)^{\delta} \cap Q_{\ell, \textbf{0}}\right| \, \frac{d\delta}{\delta}\right]^{\frac1p}
\sim 2^{\ell [(1+\frac1\alpha)s -\frac 2p]}   .
\]
Consequently  ${\bf 1}_{E_\alpha}  \in F^{s,\tau}_{p,q} (\rr^2)$ under these conditions.

\noindent{\em Substep 1.3.} Necessity.  We assume that $s < \frac{2\alpha}{\alpha+1} \, \frac 1p$ and
$2\tau + s\, \frac{\alpha + 1}{\alpha} > \frac 2p$.
Then 	Theorem \ref{spiral4}(i) yields
${\bf 1}_{E_\alpha}  \not\in B^{s,\tau}_{p,\infty} (\rr^2)$
and therefore, by elementary embeddings in Remark \ref{grund},
${\bf 1}_{E_\alpha}  \not\in F^{s,\tau}_{p,q} (\rr^2)$.\\
{\em Step 2.} Let $\alpha = 1$.
We assume that $\tau + s = 1/p$ and $s<1/p$.
By using  Lemma \ref{volume} we find
\[
\left[\int_{0}^{2^{-2\ell}} \delta^{-sp}\,
\left|(\partial E_1)^{\delta} \cap Q_{\ell, \textbf{0}}\right| \, \frac{d\delta}{\delta}\right]^{\frac1p}
\sim \ell^{1/p}\, 2^{2\ell (s -\frac 1p)}
\]
and
\[
\left[\int^{2^{-\ell}}_{2^{-2\ell}} \delta^{-sp}\,
\left|(\partial E_1)^{\delta} \cap Q_{\ell, \textbf{0}}\right| \, \frac{d\delta}{\delta}\right]^{\frac1p}
\sim 2^{\ell (s -\frac 2p)}.
\]
This further implies that ${\bf 1}_{E_\alpha}  \in F^{s,\tau}_{p,q} (\rr^2)$
if and only if $s\le 2( \frac 1p - \tau)$.
But $ 0 < s = \frac 1p - \tau$ implies $s < 2s = 2( \frac 1p - \tau)$.
If we assume  $\tau + s > \frac1p$ we can argue as in Substep 1.3.
\\
{\em Step 3}. Let $\alpha \in(1, \infty)$.
We assume that  $2\tau + s\, (1+\frac1\alpha) = \frac 2p$. Then
Lemma \ref{volume} yields
\[
\left[\int_{0}^{2^{-2\ell}} \delta^{-sp}\,
\left|(\partial E_1)^{\delta} \cap Q_{\ell, \textbf{0}}\right| \, \frac{d\delta}{\delta}\right]^{\frac1p}
\sim  2^{\ell [(1+\frac1\alpha)s -\frac 2p]}
\]
and
\[
\left[\int^{2^{-\ell}}_{2^{-2\ell}} \delta^{-sp}\,
\left|(\partial E_1)^{\delta} \cap Q_{\ell, \textbf{0}}\right| \, \frac{d\delta}{\delta}\right]^{\frac1p}
\sim  2^{\ell[(1+\frac1\alpha)s -\frac 2p]}.
\]
Consequently
${\bf 1}_{E_\alpha}  \in F^{s,\tau}_{p,q} (\rr^2)$ under these restrictions.
If we assume that  $2\tau + s\, (1+\frac1\alpha) > \frac 2p$, then Theorem \ref{spiral4} yields ${\bf 1}_{E_\alpha}  \not\in B^{s,\tau}_{p,\infty} (\rr^2)$
and therefore, by elementary embeddings in Remark \ref{grund},
${\bf 1}_{E_\alpha}  \not\in F^{s,\tau}_{p,q} (\rr^2)$.
This finishes the proof of Theorem \ref{spiral5}.
\end{proof}

\begin{remark}
 There is only a tiny difference between the B-case and the F-case. In the cases $\alpha \neq 1$
 we have the additional inequalities $s < \frac{2\alpha}{\alpha+1} \, \frac 1p$ and $s<\frac1p$, respectively.
\end{remark}

\smallskip

\noindent\textbf{Acknowledgements}\quad
The authors   would   like to thank the anonymous referees for the excellent job they have done. Based on their  valuable remarks and suggestions we have been able to seriously  improve  our manuscript.

\normalsize

\bigskip

\smallskip

\noindent Wen Yuan and  Dachun Yang

\smallskip

\noindent Laboratory of Mathematics and Complex Systems
(Ministry of Education of China),
School of Mathematical Sciences, Beijing Normal University,
Beijing 100875, The People's Republic of China

\smallskip

\noindent {\it E-mails}:   \texttt{dcyang@bnu.edu.cn} (D. Yang)

\noindent\phantom{{\it E-mails:}} \texttt{wenyuan@bnu.edu.cn} (W. Yuan)

\bigskip

\noindent Winfried Sickel (Corresponding author)

\medskip

\noindent Mathematisches Institut, Friedrich-Schiller-Universit\"at Jena,
Jena 07743, Germany

\smallskip

\noindent {\it E-mail}: \texttt{winfried.sickel@uni-jena.de}

\end{document}